\numberwithin{equation}{section}
\theoremstyle{plain}
\newtheorem{theorem}{Theorem}[subsection]
\newtheorem{lemma}[theorem]{Lemma}
\newtheorem{proposition}[theorem]{Proposition}
\newtheorem{corollary}[theorem]{Corollary}
\newtheorem*{theorem*}{Theorem}
\theoremstyle{definition}
\newtheorem{definition}[theorem]{Definition}
\newtheorem{example}[theorem]{Example}
\newtheorem{construction}[theorem]{Construction}
\newtheorem{Convention}[theorem]{Convention}
\theoremstyle{remark}
\newtheorem{remark}[theorem]{Remark}
\def\Q{{\bb Q}}
\def\Z{{\bb Z}}
\def\N{{\bb N}}
\def\Hom{\mathrm{Hom}}
\def\Sym{\mathrm{Sym}}
\def\Hom{\mathrm{Hom}}
\def\iHom{R\underline{\mathrm{Hom}}}
\def\epsilon{\varepsilon}
\def\et{{\rm \acute{e}t}}
\def\Sp{{\mathbf{Sp}}}
\def\GL{\mathbf{GL}}
\def\Cond{\mathrm{Cond}}
\def\cond{\mathrm{cond}}
\def\iHom{\underline{\mathrm{Hom}}}
\def\sol{\mathsmaller{\square}}
\DeclareMathOperator{\Mod}{Mod}
\DeclareMathOperator{\Spa}{Spa}
\DeclareMathOperator{\Spec}{Spec}
\DeclareMathOperator{\id}{id}
\DeclareMathOperator{\Betti}{Betti}
\DeclareMathOperator{\Map}{Map}
\DeclareMathOperator{\dR}{dR}
\DeclareMathOperator{\AnSpec}{AnSpec}
\DeclareMathOperator{\Corr}{Corr}
\DeclareMathOperator{\HT}{HT}
\DeclareMathOperator{\pr}{pr}
\newcommand{\n}[1]{\mathcal{#1}}
\newcommand{\ob}[1]{\mathrm{#1}}
\newcommand{\cat}[1]{\mathsf{#1}}
\newcommand{\bb}[1]{\mathbb{#1}}
\newcommand{\s}[1]{\mathscr{#1}}
\DeclareMathOperator{\op}{\begin{scriptsize}
op
\end{scriptsize}}
\DeclareMathOperator{\an}{\begin{scriptsize}
an
\end{scriptsize}}
\title{Cartier duality for gerbes of vector bundles}
\author{Juan Esteban Rodr\'iguez Camargo}
\def\@tocline#1#2#3#4#5#6#7{\relax
  \ifnum #1>\c@tocdepth 
  \else
    \par \addpenalty\@secpenalty\addvspace{#2}%
    \begingroup \hyphenpenalty\@M
    \@ifempty{#4}{%
      \@tempdima\csname r@tocindent\number#1\endcsname\relax
    }{%
      \@tempdima#4\relax
    }%
    \parindent\z@ \leftskip#3\relax \advance\leftskip\@tempdima\relax
    \rightskip\@pnumwidth plus4em \parfillskip-\@pnumwidth
    #5\leavevmode\hskip-\@tempdima
      \ifcase #1
       \or\or \hskip 1em \or \hskip 2em \else \hskip 3em \fi%
      #6\nobreak\relax
    \dotfill\hbox to\@pnumwidth{\@tocpagenum{#7}}\par
    \nobreak
    \endgroup
  \fi}
\begin{document}

\begin{abstract}
We prove a Cartier duality for gerbes of algebraic and analytic vector bundles as an anti-equivalence of Hopf algebras in the category of kernels of analytic stacks. As an application, we prove that the category of solid quasi-coherent sheaves on the  Hodge-Tate stack of a smooth rigid variety over an algebraically closed field $C$ of mixed characteristic $(0,p)$ is equivalent to the category of weight $1$ sheaves on Bhatt-Zhang's Simpson gerbe. 
\end{abstract}

\maketitle
\tableofcontents

\section{Introduction}

\subsection{The motivation behind}

This paper has two main motivations. First, it is more natural to isolate the Cartier duality   of analytic   vector bundles discussed in \cite{camargo2024analytic} in a different document as these results could be of independent interest. Moreover,  from the construction of the Cartier duality in \cite{camargo2024analytic}, it is not clear in which sense Cartier duality is functorial, and whether it  exchanges tensor products with convolutions (as it should be  the case). In this paper we consider these questions more seriously, and provide a conceptual answer  using the category of kernels of a six functor formalism. The first main goal of the paper is to establish a  stacky formulation of the Cartier duality for vector bundles in \Cref{TheoCartierDualityVectorBundles}.

Second, in   joint work in progress with Ansch\"utz, Le Bras and Scholze on the analytic prismatization \cite{AnPrismatization},  we introduce the \textit{analytic Hodge-Tate stack} $X^{\ob{HT}}$ of a smooth rigid space $X$, an object whose category of perfect complexes is naturally equivalent to the category of perfect $\widehat{\s{O}}_X$-modules in the $v$-topology of $X$. Almost in parallel, Bhatt and Zhang introduced the \textit{Simpson gerbe} $\s{S}_X$ \cite{BhattpAdicHodge2026}, that is, a $B\bb{G}_m$-torsor over the (Tate-twisted)  analytic cotangent bundle $T^{*,\an}_X(-1)$ of $X$. It is expected that the Hodge-Tate stack and the Simpson gerbe are Cartier dual to each other in a concrete sense. Indeed, if $X$ is a rigid space over a perfectoid field $K$, the  Hodge-Tate stack $X^{\HT}$ is a $BT_X^{\dagger}(1)$-torsor over $X$ (when considered as analytic stacks), with $T_X^{\dagger}\subset T_X^{\an}$ the overconvergent neighbourhood at $0$, and the Cartier duality of vector bundles produces an equivalence of categories $\ob{D}(BT_X^{\dagger})\cong \ob{D}(T_X^{*,\an})$. Thus, the Cartier duality between the Simpson gerbe and the Hodge-Tate stack ought to be a twisted version of the Cartier duality of vector bundles, producing in particular a natural equivalence 
\[
\ob{D}(X^{\HT})\cong \ob{D}(\s{S}_X)^{\ob{wt}=1}
\]
where the right hand side consists of sheaves of weight $1$ on the Simpson gerbe.  The final result comparing the Hodge-Tate stack and the Simpson gerbe is \Cref{CartierDualityHodgeTateSimpson}.

 It turns out that the Cartier duality  for the Simpson gerbe is not special, and  it holds \textit{universally} for arbitrary gerbes of (analytic or algebraic) vector bundles. The second main goal of this paper is to establish such universal Cartier duality for gerbes  in \Cref{PropDualityForGerbes}. For that, we need to develop some technical tools that allow us to compute and produce Cartier dualities from six functor formalisms. This is the content of   \Cref{s:Preliminaries} which concludes with  (the rather technical but useful) \Cref{CoroCartierDualityAnStkQAff}.

\subsection{Content of the paper}

One of the most elementary instances of Cartier duality  can be stated in terms of  commutative and cocommutative Hopf algebras over a field $K$, or equivalently, in terms of affine commutative group schemes over $K$. Let $\cat{Aff}_K$ be the category of affine schemes over $K$, and let $G$ be an affine commutative group scheme over $\Spec K$ with underlying Hopf algebra $A$. The Cartier dual of $G$ is the functor $\bb{D}(G)\colon \cat{Aff}_K^{\op}\to \cat{Mod}(\Z)$  sending an affine scheme $X$ to the module of group homomorphisms $\Hom_{\Z}(G_X, \bb{G}_{m,K})$. In the case $G$ is a finite flat group scheme, that is, $A$ is a finite dimensional $K$-vector space, the functor $\bb{D}(G)$ is corepresented by the Hopf algebra $A^{\vee}=\Hom_K(A,K)$.  In this way, Cartier duality can be understood  more algebraically as the equivalence of categories 
\begin{equation}\label{eqo0jonalmasifq3eq}
\ob{Hopf}(\ob{Vect}_K^{\ob{fd}})^{\op} \xrightarrow{(-)^{\vee}} \ob{Hopf}(\ob{Vect}_K^{\ob{fd}})
\end{equation}
between (commutative and cocommutative) Hopf algebras on finite dimensional vector spaces obtained by passing to the $K$-linear dual object, and exchanging the multiplication and comultiplication maps.  Both interpretations (the geometric one as the dual $\iHom_{\Z}(-,\bb{G}_m)$ and the algebraic as in \eqref{eqo0jonalmasifq3eq}) are useful in practice and lead to different properties of Cartier duality.  The geometric interpretation implies that  a $\Z$-bilinear pairing $H\times G\to \bb{G}_m$  yields a natural map of groups $H\to \bb{D}(G)$. The algebraic interpretation  implies the anti-involutive property of Cartier duality.  Unfortunately, the formulation of Cartier duality of  \eqref{eqo0jonalmasifq3eq} is rather restrictive.

 Experience has told us that there are many more examples of Cartier duality in many other different contexts. For instance, still in algebraic geometry, one has a naive duality between $\bb{Z}$ and $\bb{G}_m$ in the sense that the group algebra of $\bb{Z}$ over $K$ is precisely the ring of functions of $\s{O}(\bb{G}_m)$. Similarly, if $K$ is of characteristic zero, the dual of the natural comultiplication map $K[T]\to K[X,Y]$ induced by the additive law $T\mapsto X+Y$ is the multiplication map of the power series ring over $K$, yielding a sort of duality between $\bb{G}_{a,K}$ and $\widehat{\bb{G}}_{a,K}$. A way to relate these examples with the Cartier duality  for finite dimensional $K$-algebras, is to work one  categorical level up and consider (derived) categories of quasi-coherent sheaves.  In the previous examples, the Cartier duality is justified by  the classical equivalences 
\begin{equation}\label{eqoojmkwepqnwo13eqwd}
\ob{D}(B\bb{Z})\cong \ob{D}(\bb{G}_m) \mbox{ and } \ob{D}(B\widehat{\bb{G}}_{a,K})\cong \ob{D}(\bb{G}_{a,K})
\end{equation}
that one can prove by hand. With some additional effort one can see that these equivalences exchange tensor products with the convolution products arising from the group action, suggesting that they should promote to an equivalence  of  (commutative and cocommutative) Hopf algebras in a precise sense. It turns out that the categories above can be promoted to honest Hopf algebra objects in the symmetric monoidal category $\cat{Pr}_{\ob{D}(\Z)}$ of $\ob{D}(\Z)$-linear presentable categories, and what is even better, that they are dualizable objects therein. The abstract framework of Cartier duality as  an antiequivalence of (dualizable) Hopf algebra objects in a symmetric monoidal category $\n{S}$ has been carried out by Lurie in \cite{LurieElliptic}.  Lurie  offers not only the algebraic version of \eqref{eqo0jonalmasifq3eq}, but also explains how to think of Cartier duality in more geometric terms by mapping to a suitable variant of $\bb{G}_m$ that is  denoted $\mathbf{GL}_{1,\n{S}}$. This object represents invertible elements in commutative algebras $\n{V}$ in $\n{S}$, that is, maps $x\colon 1_{\n{S}}\to \n{V}$ for which there some $y\colon 1_{\n{S}}\to \n{V}$ whose multiplication $x y\colon 1_{\n{S}}\to \n{V}$ is homotopic to the identity. Thus, if we write $\Spec \n{V}$ for an object in the opposite category $\cat{Aff}_{\ob{D}(\Z)}$ of $\ob{CAlg}(\cat{Pr}_{\ob{D}(\Z)})$, the equivalences \eqref{eqoojmkwepqnwo13eqwd} can be stated as isomorphisms
\[
\iHom( \Spec \ob{D}(B\bb{Z}), \mathbf{GL}_{1,\ob{D}(\Z)}) = \Spec \ob{D}(\bb{G}_m) \mbox{ and }  \iHom( \Spec \ob{D}(B\widehat{\bb{G}}_{a,K}), \mathbf{GL}_{1,\ob{D}(\Z)})  = \Spec \ob{D}(\bb{G}_{a,K})
\]
in presheaves on $\cat{Aff}_{\ob{D}(\Z)}$ valued in connective spectra. 

Consequently, working with linear presentable categories over a fixed  symmetric monoidal stable category $\n{V}$ produces a more general framework for Cartier duality that captures many more examples. However, there are  three main problems of working only in this setup.  First, some important examples  would get excluded, most notably those arising from \'etale sheaves of vector bundles \cite{PMIHES_1987__65__131_0} or Banach-Colmez spaces \cite{zbMATH08075692}.\footnote{We shall not discuss these examples of Cartier duality in this paper, so this is not a serious problem for us.}  Second, it is unclear how to state an honest Cartier duality for more stacky objects, for instance, for  vector bundles on  an arbitrary algebraic or analytic stacks. Third, it is not clear how to obtain the appropriate Hopf algebra structure in the category of modules only from the group structure of, say, a  commutative group scheme $G$. It is even less clear how to describe the Cartier dual of $\ob{D}(G)$ only from the datum of $G$.

 In order to solve the previous problems we work in a more general setup than just linear presentable categories, namely, we work with presentable kernel categories of six functor formalisms. Given a geometric setup $(\n{C},E)$ and $\ob{D}$ a presentable six functor formalism on $(\n{C},E)$ (cf. \cite{HeyerMannSix}), for an object $S\in \n{C}$ one has a \textit{$2$-category of kernels} $\ob{K}_{\ob{D},S}$ whose objects are the objects in $\n{C}^E_{/S}$ consisting on maps $X\to S$ in $E$, and for $Y,X\in \ob{K}_{\ob{D},S}$ the $1$-category of morphisms  given by $\ob{Fun}_S(X,Y) = \ob{D}(Y\times_S X)$. It turns out that $\ob{K}_{\ob{D},S}$  is naturally enriched in presentable categories, passing to enriched presheaves one constructs a \textit{presentable category of kernels} $\cat{Pr}_{\ob{D},S}$ (e.g. as in \cite[Appendix to Lecture V]{SixFunctorsScholze}). This is  a symmetric monoidal $2$-category where one can apply Lurie's theory of Cartier duality. The advantage of working in this framework is that we have at our disposal a symmetric monoidal functor 
 \[
 \ob{Corr}(\n{C}^E_{/S})\to \cat{Pr}_{\ob{D},S}
 \]
from the category of correspondences to the presentable category of kernels. This will allow us to produce Hopf algebras in $\cat{Pr}_{\ob{D},S}$ from commutative group objects in $\n{C}^E_{/S}$. Furthermore, since the objects of $ \ob{Corr}(\n{C}^E_{/S})$ are naturally self dual, and this duality exchanges $*$ and $!$-maps, it is  tautological that Cartier duality is the identity on objects in $\ob{K}_{\ob{D},S}$, and that it will exchange tensor products  (obtained from $*$-pullbacks) with convolution products (obtained from $!$-pushforwards).

\begin{remark}\label{RemarkScholzeStefanich}
A general framework of Cartier duality capturing all existing examples has been missing for years. In the current  work in progress of Peter Scholze and Germ\'an  Stefanich on \textit{Gestalten} (\cite{GestaltenScholze}) they found a clean Cartier duality statement that is an honest anti-equivalence of stable categories. In very heuristic terms, the  key idea is that  a general notion of geometry ought to be captured not just by rings or categories of modules as it occurs in usual algebraic and analytic geometry, but by all the higher categories of modules. In the situation of Cartier duality, this is reflected in the fact that there are instances where working only with algebras does not suffice to obtain a satisfactory duality, and working with categories of modules is necessary to  obtain dualizable Hopf algebras; Scholze and Stefanich's theory takes this idea to the very extreme and shows that to obtain a \textit{perfect Cartier duality theory}  one has to go all the way up. 

The perspective on Cartier duality of this paper is highly inspired from their theory  and it  uses many of their ideas, most notably we use  the  $1$-\'etale topology on linear categories in \Cref{ss:DescentCartierDuality} which is nothing but a $1$-categorical approximation to the natural Grothendieck topology on Gestalten. The main advantage of this point of view is that it allows us to compute Cartier duals by descent and/or devisage, notably reducing some complicated Cartier dualities of stacky objects to much simpler Cartier dualities of  quite concrete objects. 
\end{remark}

  After developing the tools for discussing Cartier duality in six functor formalisms, we focus our attention on  Cartier duality for different incarnations of vector bundles.   For concreteness, let us discuss only the analytic variant over the ring of $p$-adic rational numbers with the induced   solid structure  $\Q_{p,\sol}$. 

\begin{theorem}[\Cref{TheoCartierDualityVectorBundles}]\label{TheoIntroCartierVB}
Let $\mathbf{Vect}^{\an}/\AnSpec \bb{Q}_{p,\sol}$ be the stack of analytic vector bundles over $\AnSpec \bb{Q}_{p,\sol}$, equivalently, $\mathbf{Vect}^{\an}=\bigsqcup_{n\in \N} \GL_{n,\Q_p}^{\an}$ where $\GL_{n,\Q_p}^{\an}$ is the rigid analytic group over $\Q_p$  of invertible $n\times n$-matrices. Let $V^{\an}/\mathbf{Vect}^{\an}$ be the universal vector bundle and let $V^{*,\an}$ be its dual. For an analytic vector bundle $W^{\an}$ we let $W^{\dagger}\subset W^{\an}$ denote its overconvergent neighbourhood at $0$. Consider the exponential pairing 
\[
\exp(YX)\colon V^{\an}\times  V^{*,\dagger}\to \bb{G}_{a}^{\dagger}\to \bb{G}_m
\]
where $\bb{G}_{a}^{\dagger}\subset \bb{G}_{a,\Q_p}^{\an}$ is the overconvergent neighbourhood at $0$, and  $\exp\colon \bb{G}_{a}^{\dagger}\to \bb{G}_m$ is the exponential map. The exponential map gives rise to  equivalences $V^{\an}\cong BV^{*,\dagger}$ and $BV^{\an}\cong V^{*,\dagger}$  in the category of kernels of $\mathbf{Vect}^{\an}$ exchanging the usual tensor product obtained by $*$-functors and the convolution product obtained by $!$-functors. In particular, for any analytic stack $X$ over $\AnSpec \Q_{p,\sol}$ and any analytic vector bundle $W^{\an}/X$  we have an equivalence of categories
\begin{equation}\label{eqoo1jolqmwpqenfq}
\ob{D}(W^{\an})\cong \ob{D}(BW^{*,\dagger}) \mbox{ and } \ob{D}(BW^{\an})\cong \ob{D}(W^{*,\dagger}) 
\end{equation}
exchanging tensor products with convolution products.  The equivalences  \eqref{eqoo1jolqmwpqenfq} arise via the Fourier-Mukai kernel obtained  by the pullback of the universal line bundle of $B\bb{G}_m$ along the maps $W^{\an}\times BW^{*,\dagger}\to B\bb{G}_m$ and $BW^{\an}\times W^{*,\dagger}\to B\bb{G}_m$ induced by taking suspensions of the exponential. 
\end{theorem}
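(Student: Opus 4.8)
The plan is to deduce the general statement from the universal case over $\mathbf{Vect}^{\an}$, to prove the universal case by descent and d\'evissage down to the additive group $\bb{G}_a$ over $\AnSpec\Q_{p,\sol}$, and there to recognise the duality as the classical $p$-adic Fourier duality implemented by the exponential. Once the base case is in hand, the compatibility with tensor and convolution products will be formal in the kernel framework.

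First I would reduce to the universal situation. An analytic vector bundle $W^{\an}/X$ is classified by a map $f\colon X\to\mathbf{Vect}^{\an}$ with $W^{\an}\cong f^{*}V^{\an}$, $W^{*,\dagger}\cong f^{*}V^{*,\dagger}$, and the exponential pairing for $W^{\an}$ is the pullback along $f$ of the one for $V^{\an}$. Since Cartier duality in the category of kernels is stable under base change along $f$, the equivalences \eqref{eqoo1jolqmwpqenfq} follow once the universal equivalences $V^{\an}\cong BV^{*,\dagger}$ and $BV^{\an}\cong V^{*,\dagger}$ are established in $\ob{K}_{\ob{D},\mathbf{Vect}^{\an}}$; the second is proven by an entirely analogous argument, so I focus on the first.

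Next, to show that $\exp(YX)$ exhibits $BV^{*,\dagger}$ as the Cartier dual of $V^{\an}$, I would invoke the descent criterion \Cref{CoroCartierDualityAnStkQAff}. The candidate duality is defined globally by the exponential pairing, and to check that the resulting Fourier--Mukai transform is an equivalence it suffices to check it over a cover by quasi-affine pieces. Writing $\mathbf{Vect}^{\an}=\bigsqcup_{n}B\GL_{n,\Q_p}^{\an}$ and descending along the $\GL_n$-torsor $\AnSpec\Q_{p,\sol}\to B\GL_{n}^{\an}$ trivialises the universal bundle to $V=\bb{G}_a^{\oplus n}$; because the pairing $\exp(\langle Y,X\rangle)$ is manifestly $\GL_n$-equivariant (with $\GL_n$ acting on $V$ and dually on $V^{*}$), the statement descends from its $\GL_n$-equivariant form over the point. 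Finally, the duality for $\bb{G}_a^{\oplus n}$ is the $n$-fold convolution power of that for $\bb{G}_a$, so the monoidal compatibility of Cartier duality reduces everything to the rank one case.

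The heart of the argument is the base case: the pairing $\exp(YX)\colon\bb{G}_a\times\bb{G}_a^{\dagger}\to\bb{G}_m$ exhibits $\ob{D}(B\bb{G}_a^{\dagger})$ as the Cartier dual of $\ob{D}(\bb{G}_a^{\an})$. Here I would compute both sides explicitly: $\ob{D}(\bb{G}_a^{\an})$ is solid modules over the ring of entire functions of the analytic affine line, while $\ob{D}(B\bb{G}_a^{\dagger})$ is modules over the distribution algebra of the overconvergent disk $\bb{G}_a^{\dagger}$. The key analytic input is that $\exp$ converges exactly on $\bb{G}_a^{\dagger}$ and identifies it with an open subgroup of $\bb{G}_m$, so that the characters $X\mapsto\exp(YX)$ of $\bb{G}_a$ are parametrised precisely by $Y\in\bb{G}_a^{\dagger}$; this makes the Fourier--Mukai transform a perfect pairing and hence an equivalence, recovering the explicit computation of \cite{camargo2024analytic}. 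I expect this convergence and perfectness statement, carried out with care for the solid functional analysis, to be the main obstacle.

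It remains to promote the equivalence of underlying categories to an equivalence of Hopf algebras exchanging the $*$-tensor product with the $!$-convolution product. This is essentially tautological in the kernel framework: objects of $\ob{Corr}(\n{C}^{E}_{/\mathbf{Vect}^{\an}})$ are self-dual and this self-duality swaps $*$- and $!$-maps, so the symmetric monoidal functor $\ob{Corr}(\n{C}^{E}_{/\mathbf{Vect}^{\an}})\to\cat{Pr}_{\ob{D},\mathbf{Vect}^{\an}}$ automatically carries the group law of $V^{\an}$ (a $!$-pushforward) to the comultiplication of $BV^{*,\dagger}$ (a $*$-pullback) and vice versa. Finally, the identification of the Fourier--Mukai kernel with the pullback of the universal line bundle on $B\bb{G}_m$ along the suspended exponential maps $W^{\an}\times BW^{*,\dagger}\to B\bb{G}_m$ is read off directly from the construction of the pairing, which completes the proof.
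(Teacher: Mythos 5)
Your outline follows the paper's proof of \Cref{TheoCartierDualityVectorBundles} essentially step for step: reduce by $*$-descent on $\mathbf{Vect}^{\an}=\bigsqcup_n B\ob{GL}_{n,\Q_p}^{\an}$ (this is \Cref{RemarkDescentBaseChange} applied to the cover trivializing the universal bundle) to the trivial rank-one bundle over a point, split off the rank via the componentwise structure of the pairing, and then settle the base case $\exp(YX)\colon \bb{G}_{a,\Q_p}^{\an}\times\bb{G}_{a,\Q_p}^{\dagger}\to\bb{G}_m$ by the suave/prim criterion, after which the exchange of tensor and convolution is formal from the self-duality of correspondences. Two concrete ingredients are still missing from your plan. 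First, \Cref{CoroCartierDualityAnStkQAff} is not the descent step (that is \Cref{RemarkDescentBaseChange}); it is the criterion for the base case, and among its hypotheses is that $\AnSpec \Q_{p,\sol}\to B\bb{G}_{a,\Q_p}^{\dagger}$ be descendable. This is not free: in the paper it is \Cref{LemaDescetGadagger}, proved via the Poincar\'e resolution $0\to R\to R\langle T\rangle^{\ob{DP}}_{\leq r}\xrightarrow{\partial_T} R\langle T\rangle^{\ob{DP}}_{\leq r}\to 0$ of the trivial comodule. Without it neither the $!$-ability of $BV^{*,\dagger}$ (needed even to place it in the kernel category) nor the duality $[V^{\an}]^*\cong[BV^{*,\dagger}]_!$ is available.

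Second, the ``perfectness'' you defer is exactly where the paper does its work, and your heuristic for it is slightly off. The statement is not that $\exp$ converges exactly on $\bb{G}_a^{\dagger}$ (it converges on the disc of radius $<p^{-1/(p-1)}$); it is that the homology $g_{\natural}1$ of $g\colon\bb{G}_{a}^{\an}\to\AnSpec R$, i.e.\ the continuous dual $\{\sum_n a_nX^{n,\vee}\}$ of the Fr\'echet algebra of entire functions, is carried isomorphically by $X^{n,\vee}\mapsto Y^n/n!$ onto the \emph{divided-power} germ algebra $\s{O}(\bb{G}_a^{\sharp,\dagger})$. Over a general base this is genuinely different from $\s{O}(\bb{G}_a^{\dagger})$, and the identification $\bb{G}_{a,\Q_p}^{\sharp,\dagger}=\bb{G}_{a,\Q_p}^{\dagger}$ of \Cref{RemarkBaseChangeQp} rests on $v_p(n!)=(n-s_p(n))/(p-1)$. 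Matching these growth conditions is the entire content of the base case (\Cref{PropBasicAnVectCD} via \Cref{PropBasicDiscVectCD}); with that and the descendability supplied, your proposal coincides with the paper's argument.
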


The Cartier duality for vector bundles can  be  extended to gerbes after working stacky enough, see \Cref{PropDualityForGerbes} for the general result we prove in this direction. As an application of the stacky Cartier duality for gerbes we obtain the Cartier duality between the Hodge-Tate stack and the Simpson gerbe (we refer to \Cref{ss:CartierDualityHTSImp} for a brief introduction to these objects):

\begin{theorem}[\Cref{CartierDualityHodgeTateSimpson}]\label{TheoIntroHTSimpson}
Let $X$ be a smooth rigid variety  over an algebraically closed complete non-archimedean extension $C$ of $\Q_p$. Let $X^{\HT}$ be its analytic Hodge-Tate stack and let $\s{S}_X\to T^{*,\an}_X(-1)$  be Bhatt and Zhang's Simpson gerbe (where the $(-1)$ refers to the inverse of the Tate twist). The following holds:
\begin{enumerate}

\item The category of solid quasi-coherent sheaves $\ob{D}(\s{S}_X)$ admits a natural  $\ob{D}(T^{*,\an}_X(-1))$-linear decomposition 
\[
\ob{D}(\s{S}_X)=\prod_{n\in \Z} \ob{D}(\s{S}_X)^{\ob{wt}=n}
\]
as a product of $\ob{D}(T^{*,\an}_X(-1))$-invertible categories. 

\item The decomposition of (1) is multiplicative, that is, for $n,m\in \Z$ there is a natural equivalence 
\[
\ob{D}(\s{S}_X)^{\ob{wt}=n}\otimes_{\ob{D}(T^{*,\an}_X(-1))} \ob{D}(\s{S}_X)^{\ob{wt}=m}=\ob{D}(\s{S}_X)^{\ob{wt}=n+m}
\]
and $\ob{D}(\s{S}_X)^{\ob{wt}=0}= \ob{D}(T^{*,\an}_X(-1))$ is the full subcategory of $\ob{D}(\s{S}_X)$ obtained as the essential image of the pullback along $\s{S}_X\to T^{*,\an}_X(-1)$.

\item There is a natural $\ob{D}(T^{*,\an}_X(-1))$-linear equivalence of categories 
\[
\ob{D}(X^{\HT})\cong \ob{D}(\s{S}_X)^{\ob{wt}=1},
\]
where $\ob{D}(T^{*,\an}_X(-1))$ acts on the left term via Cartier duality $\ob{D}(T^{*,\an}_X(-1))\cong \ob{D}(BT^{\dagger}_X(1))$ and by the $!$-convolution action  arising from  the natural $BT^{\dagger}_X(1)$-torsor structure of $X^{\HT}\to X$.  

\end{enumerate}
\end{theorem}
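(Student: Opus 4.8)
The plan is to dispatch parts (1) and (2) as formal consequences of the $B\Gm$-gerbe structure of $\s{S}_X\to T^{*,\an}_X(-1)$, and to reserve the substance for (3). Since $\s{S}_X$ is banded by $\Gm$, the band acts on every quasi-coherent sheaf and splits it into $\Gm$-weight eigenspaces; because $\ob{D}(B\Gm)$ itself decomposes as the product $\prod_{n\in\Z}$ of its weight components indexed by the characters of $\Gm$, this globalizes to the decomposition $\ob{D}(\s{S}_X)=\prod_{n\in\Z}\ob{D}(\s{S}_X)^{\ob{wt}=n}$ of part (1). Each factor is a module over the weight-$0$ factor $\ob{D}(T^{*,\an}_X(-1))$, and is invertible because the weight-$1$ piece is an invertible $\ob{D}(T^{*,\an}_X(-1))$-module category (locally free of rank one over the base, twisted by the gerbe class) whose tensor powers produce the remaining weights. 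Part (2) then follows formally: the $*$-monoidal product on $\ob{D}(\s{S}_X)$ adds $\Gm$-weights, so the product of a weight-$n$ and a weight-$m$ sheaf lies in weight $n+m$, and invertibility of the pieces promotes this to the asserted equivalence; weight $0$ is exactly the essential image of pullback along $\s{S}_X\to T^{*,\an}_X(-1)$.

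For (3) the essential input is the Cartier duality for vector bundles of \Cref{TheoCartierDualityVectorBundles}, applied to the analytic vector bundle $W^{\an}=T^{*,\an}_X(-1)$, whose overconvergent dual is $W^{*,\dagger}=T^{\dagger}_X(1)$. This supplies the equivalence $\ob{D}(T^{*,\an}_X(-1))\cong \ob{D}(BT^{\dagger}_X(1))$ of dualizable Hopf algebras, realized by the Fourier-Mukai kernel built from the exponential pairing $\exp(YX)$ and the universal line bundle of $B\Gm$. Through this identification, the $!$-convolution action of $\ob{D}(BT^{\dagger}_X(1))$ on $\ob{D}(X^{\HT})$ coming from the $BT^{\dagger}_X(1)$-torsor structure of $X^{\HT}\to X$ makes $\ob{D}(X^{\HT})$ into an invertible $\ob{D}(T^{*,\an}_X(-1))$-module category, just as $\ob{D}(\s{S}_X)^{\ob{wt}=1}$ is one by part (1). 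The content of (3) is that these two invertible modules agree, and I would establish this by feeding the Hodge-Tate gerbe $X^{\HT}$ into the universal gerbe Cartier duality of \Cref{PropDualityForGerbes}.

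To compute that dual I would reduce to the split case by descent: choosing a cover of $X$ trivializing $T_X$ (in the $1$-\'etale topology of \Cref{ss:DescentCartierDuality}, using the functoriality recorded in \Cref{CoroCartierDualityAnStkQAff}), over each piece $X^{\HT}$ splits as $X\times BT^{\dagger}_X(1)$ and $\s{S}_X$ splits as $T^{*,\an}_X(-1)\times B\Gm$, where the sought equivalence is literally the vector bundle duality $\ob{D}(BT^{\dagger}_X(1))\cong \ob{D}(T^{*,\an}_X(-1))$ read off in weight $1$. The main obstacle is the gluing: one must check that the cocycle presenting $X^{\HT}$ as a $BT^{\dagger}_X(1)$-torsor over $X$ is intertwined, under the exponential pairing, with the cocycle presenting $\s{S}_X$ as a $B\Gm$-torsor over $T^{*,\an}_X(-1)$ in weight $1$. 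Since the Hodge-Tate gerbe comes from the analytic prismatization of \cite{AnPrismatization} while the Simpson gerbe comes from the Bhatt-Zhang construction, the two classes are defined differently \emph{a priori}; I expect both to be identified with the canonical extension class of Hodge-Tate/Sen theory attached to the cotangent complex, so that the pairing $\exp(YX)\colon T^{*,\an}_X(-1)\times T^{\dagger}_X(1)\to\Gm$ carries one cocycle to the other. Granting this compatibility, naturality of the Fourier-Mukai kernel upgrades the fiberwise equivalences to a global $\ob{D}(T^{*,\an}_X(-1))$-linear equivalence matching the $!$-convolution action on $\ob{D}(X^{\HT})$ with the tensor action on $\ob{D}(\s{S}_X)^{\ob{wt}=1}$, which is the assertion of (3).
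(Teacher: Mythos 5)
Your overall strategy for part (3) --- the vector bundle Cartier duality of \Cref{TheoCartierDualityVectorBundles} fed into the universal gerbe duality of \Cref{PropDualityForGerbes}, checked in the split case --- is indeed the paper's strategy, and your derivation of (1)--(2) from the $B\bb{G}_m$-banding is a reasonable variant of the paper's route (the paper instead introduces the \emph{extended} Hodge--Tate stack $X^{\HT,\ob{ext}}$, fibered over $\bb{Z}_{\Betti,X}$ with fibers $X^{\HT,(n)}$, proves the single duality $[X^{\HT,\ob{ext}}]^*\cong[\s{S}_X]_!$, and reads off the weight decomposition, the invertibility and the multiplicativity all at once from the decomposition $\ob{D}(X^{\HT,\ob{ext}})=\prod_{n}\ob{D}(X^{\HT,(n)})$ and the group structure of $X^{\HT,\ob{ext}}$ over $\bb{Z}_{\Betti,X}$). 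However, there are two genuine gaps in your argument for (3).

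First, the cocycle compatibility that you explicitly ``grant'' is not a deferrable side condition: it is exactly where the two constructions meet. The paper does not prove it as a comparison of two independently defined classes; it \emph{defines} the Simpson gerbe (\Cref{DefinitionSimpsonGerbe}) as the pullback of $\ob{Simp}_X\colon T^{*,\an}_X(-1)\to B^2\bb{G}_{m,X}$, where $\ob{Simp}_X$ is manufactured from $\eta_{\HT}$ by tensoring with $T^{*,\an}_X(-1)$, pairing into $B^2\bb{G}_{a,X}^{\dagger}$ and composing with $\exp$ --- precisely the recipe of \Cref{DefinitionNotationGerbes}. With that definition, $\s{S}_X$ and $X^{\HT,\ob{ext}}$ are by construction the matched pair $(\s{N}_{\psi},\s{M})$ of \Cref{PropDualityForGerbes} pulled back along $\eta_{\HT}\colon X\to B^2T^{\dagger}_X(1)$, so the compatibility is tautological; the identification with Bhatt--Zhang's gerbe is handled in a separate remark (via the sequence $\bb{G}_a^{\dagger}\to\bb{G}_m^{\an}\to\bb{G}_m^{\an,\dR}$ and $R\rho_*$) and is not an input to the theorem. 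Second, your reduction to the split case is justified by ``a cover of $X$ trivializing $T_X$'', but trivializing the band does not split either gerbe: $\eta_{\HT}$ is the tautological class in $H^2(X,T_X^{\dagger}(1))$ and does not die on such a cover. The paper avoids this by proving the duality \emph{universally} over $S=B^2T^{\dagger}_X(1)$, where $*\to B^2M$ is an epimorphism along which both extensions split by construction, and then pulling back along $\eta_{\HT}$ using \Cref{RemarkDescentBaseChange}. If you insist on descending over $X$ itself you must exhibit a cover on which $\eta_{\HT}$ actually vanishes (e.g.\ toric charts), which is extra work the universal argument renders unnecessary.
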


\subsection{Overview}

This paper is divided in two main sections: a Preliminary \Cref{s:Preliminaries} and an Example \Cref{s:Examples}. 

 In \Cref{ss:PresentableKernels} we introduce the presentable category of kernels. In \Cref{ss:AnStk} we introduce the setup of analytic stacks that we shall use. Next, in \Cref{ss:CartierDualitySix} we recall the abstract set up of Cartier duality of  \cite{LurieElliptic}.   In \Cref{ss:CartierDualitySixFunctors} we specialize  Cartier duality to six functor formalisms and the presentable category of kernels. In particular, we see that  Cartier duality exchanges tensor product with convolution. Finally, in \Cref{ss:DescentCartierDuality} we restrict ourselves to study Cartier duality in presentable linear categories. Motivated from the theory of Gestalten of Scholze and Stefanich,  we introduce the $1$-\'etale topology in presentable commutative algebras over a presentable symmetric monoidal category $\n{V}$ and refine  Lurie's Cartier duality from presheaves to sheaves for that topology. We conclude with  a technical but very useful result that  simplifies some computations in Cartier duality, see \Cref{CoroCartierDualityAnStkQAff}.

We continue with \Cref{ss:CartierTori} where we state  a Cartier duality between tori and finite free $\Z$-modules, see \Cref{TheoCartoerDualityTori}. In \Cref{ss:CartierDualityVectorBundles} we discuss several examples of Cartier dualities for vector bundles. The strategy is always the same; first to prove a basic Cartier duality for the trivial rank $1$-case (\Cref{PropBasicAlgVectCD,PropBasicSolidVectCD,PropBasicDiscVectCD,PropBasicAnVectCD,PropBasicZpVectCD})
 where we heavily use \Cref{CoroCartierDualityAnStkQAff}. Then, using descent techniques of the category of kernels, we easily deduce the general stacky version of Cartier duality as in \Cref{TheoCartierDualityVectorBundles}. In \Cref{ss:CDGerbesVB} we discuss a formal consequence of Cartier duality for tori and vector bundles obtaining Cartier duality for gerbes, see \Cref{PropDualityForGerbes} and \Cref{ExampleCartierDualityGerbes}. Finally, in \Cref{ss:CartierDualityHTSImp} we specialize the Cartier duality for gerbes to the Hodge-Tate stack obtaining the comparison with the Simpson gerbe in \Cref{CartierDualityHodgeTateSimpson}.

\subsection{Conventions}

This paper uses the language of higher category theory and higher algebra as developed in \cite{HigherTopos,HigherAlgebra,LurieSpectralAlg,kerodon}. Our main working tool is the theory of abstract six functor formalisms as in \cite{HeyerMannSix}, see also \cite{SixFunctorsScholze}. We will work with a presentable variant of the category of kernels of a six functor formalism as discussed in \cite[Appendix to lecture V]{SixFunctorsScholze}, for that, we will use the theory of higher presentable categories of \cite{aoki2025higher}. In particular, we let $\cat{Pr}$ be the \textit{big} category of presentable categories with colimit preserving linear functors, and given $\kappa$ a regular cardinal we let $\cat{Pr}^{\kappa}$ the (presentable) category of $\kappa$-presentable categories, that is the (non-full) subcategory $\cat{Pr}^{\kappa}\subset \cat{Pr}$ of $\kappa$-compactly generated presentable categories with maps given by functors of presentable categories that preserve $\kappa$-compact objects. Given a presentable category $\n{C}$, we let $\n{C}_{\kappa}$ denote the full subcategory of $\kappa$-compact objects.

 Occasionally we will need to work with enriched categories, for this we refer to   \cite{GepnerHaugsengEnriched},  \cite{HinichEnriched}, \cite{HinichColimitsEnriched} and \cite{HeineEnriched}. We also refer to \cite[Appendix C]{HeyerMannSix} for a great summary of the theory of enriched categories. In particular, given $\n{V}$ a presentable monoidal category, we let $\cat{Cat}^{\n{V}}$ be the  category of \textit{essentially small} $\n{V}$-enriched categories, if $\n{V}$ is symmetric monoidal then $\cat{Cat}^{\n{V}}$ is naturally symmetric monoidal by \cite{HinichEnriched}.

In some of the applications we will work in the category of analytic stacks of Clausen and Scholze as introduced in \cite{AnalyticStacks}, see also \cite{SolidNotes} and \cite{anschutz2025analytic} for written  references. In particular, we will always work in the light set up of condensed mathematics.

\subsection*{Acknowledgements} I would like to thank Johannes Ansch\"utz,  Arthur-C\'esar Le Bras and Peter Scholze for several discussions in our project in progress on the \textit{analytic prismatization} that lead to the question of how to concretely state the Cartier duality between the Hodge-Tate stack and Bhatt-Zhang's Simpson gerbe.    I also  like to thank Arthur-C\'esar Le Bras for helpful  comments in a draft. I heartily thank   Ko Aoki, Shay Ben-Moshe,  and  Germ\'an Stefanich for many technical discussions regarding category theory and six functor formalisms. Special thanks to Peter Scholze and Germ\'an Stefanich for explaining  their new beautiful theory of Cartier duality in Gestalten, this paper is highly inspired in their theory and actually uses a very rough version of it in \Cref{ss:DescentCartierDuality}.  I thank the  Max Planck Institute  for Mathematics for the excellent working conditions.

\section{Preliminaries}\label{s:Preliminaries}

In this section we recall the construction of the presentable category of kernels of a six functor formalism \cite[Appendix to lecture V]{SixFunctorsScholze}, and the general framework of Cartier duality on symmetric monoidal categories of \cite[Section 3]{LurieElliptic}.  Then, we discuss in some more detail the Cartier duality in the category of kernels of a six functor formalism and analytic stacks.

\subsection{Presentable category of kernels}\label{ss:PresentableKernels}

In this section we introduce the key character that will notably simplify the theory of Cartier duality over stacks, that is, the presentable category of kernels.

 Let $(\n{C},E)$ be a small geometric set up with $\n{C}$ admitting finite limits, let $\n{C}^E$ be the wide non-full subcategory of $\n{C}$ spanned by the arrows in $E$.  We let $\Corr(\n{C},E)$  denote the category of correspondences of $(\n{C},E)$  (cf. \cite[Definition 2.2.3]{HeyerMannSix}), if $E=\ob{all}$ consists of all morphisms we simply write $\Corr(\n{C}):=\Corr(\n{C},\ob{all})$. Let $\ob{D}$ be a presentable  six functor formalism on $(\n{C},E)$  (cf. \cite[Definition 3.1.1]{HeyerMannSix}), by definition  $\ob{D}$ is a lax symmetric monoidal functor $\ob{D}\colon \Corr(\n{C},E)\to \cat{Pr}$. 

Since $\n{C}$ is   small, there is some regular cardinal $\kappa$ such that $\ob{D}$ factors through the (non-full) subcategory $\cat{Pr}^{\kappa}$ of $\cat{Pr}$ of $\kappa$-presentable categories. We say that $\ob{D}$ is a  \textit{$\kappa$-presentable six functor formalism} if this holds. From now on we shall fix an uncountable regular cardinal $\kappa$ unless otherwise specified (in practice $\kappa=\aleph_1$ will suffice). 

 We recall the definition of the category of kernels \cite[Definition 4.1.3]{HeyerMannSix}.

\begin{definition}\label{DefKernelCategory}
Let $(\n{C},E)$ be a geometric set up with finite limits and $\ob{D}$ a $\kappa$-presentable six functor formalism on $(\n{C},E)$.
Given $S\in \n{C}$, consider the restriction $\ob{D}_S$ to a six functor formalism on the geometric setup $(\n{C}^E_{/S},\ob{all})$. 
The \textit{category of kernels of $S$}, denoted by $\ob{K}_{\ob{D},S}$, is the $\cat{Pr}^{\kappa}$-enriched category obtained by transfer of enrichement of $\ob{Corr}(\n{C}^{E}_{/S})$ along the lax symmetric monoidal functor $\ob{D}_{S}\colon \ob{Corr}(\n{C}^{E}_{/S})\to \cat{Pr}^{\kappa}$. 
Given two objects $Y,X\in \ob{K}_{\ob{D},S}$, we let $\ob{Fun}_{\ob{D},S}(Y,X)=\ob{D}(X\times_S Y)$ be the $\kappa$-presentable category of morphisms from $Y\to X$. If $\ob{D}$ is clear from the context we also denote $\ob{Fun}_{S}$ instead of $\ob{Fun}_{\ob{D},S}$. 
\end{definition}

\begin{remark}\label{RemarkFunctorsToCorrespondence}
Keep the notation of \Cref{DefKernelCategory}.  By construction, the objects of $\ob{K}_{\ob{D},S}$ are the same as objects of $\n{C}^E_{/S}$. We have a diagram of symmetric monoidal maps  (see the discussion before Remark 4.1.6 of \cite{HeyerMannSix})
\[
\begin{tikzcd}
(\n{C}^E_{/S})^{\op,\sqcup} \ar[rd, "(-)^*"] & & \\ 
&  \Corr(\n{C}^E_{/S}) \ar[r,"\Phi_{\ob{D}}"] & \ob{K}_{\ob{D},S}, \\ 
(\n{C}^{E}_{/S})^{\times} \ar[ru,"(-)_!"'] & &
\end{tikzcd}
\]
 We note that \textit{loc. cit.} only mentions that the two diagonal arrows are symmetric monoidal. To see that $\ob{K}_{\ob{D},S}$ has a natural symmetric monoidal structure and that $\Phi_D$ is symmetric monoidal it is more convenient to use the \textit{presentable category of kernels} (to be defined in \Cref{DefPresentableCategoryKernels}); this is justified in \Cref{RemarkPresentableCategoryKernlesComparison}.

We call the upper and lower diagonal maps the \textit{$*$ and $!$-realization} of $\n{C}^E_{/S}$ respectively. By construction, both the $*$ and $!$-realization are the identity on objects.  Given $f\colon Y\to X$  a map in $\n{C}^E_{/S}$, we let $f^*\colon X\to Y$ and $f_!\colon Y\to X$ be the image along the $*$ and $!$-realization respectively. This name is justified because after taking global sections, i.e. maps from the unit $S\in \ob{K}_{\ob{D},S}$, the map $f^*\colon X\to Y$ gives rise to the $*$-pullback map 
\[
f^*\colon \ob{D}(X)=\ob{Fun}_S(S,X)\to \ob{Fun}_S(S,Y)=\ob{D}(Y).
\]
Similarly, the map $f_!\colon Y\to X$ gives rise the $!$-pushforward $f_!\colon \ob{D}(Y)\to \ob{D}(X)$.   The self duality in the category of correspondences \cite[Corollary 2.4.2]{HeyerMannSix} swaps the two diagonal arrows, hence $f^*\colon X\to Y$ is the dual of $f_!\colon Y\to X$ in $\ob{K}_{\ob{D},S}$. 
\end{remark}

 In order to construct the ($\kappa$-)presentable category of kernels of $\ob{D}$, we recall the following theorem.

\begin{theorem}[{\cite[Theorem 4.2.4]{HeyerMannSix}}]\label{TheoremFunctorialityKernelCategory}
Let $(\n{C},E)$ be an  small geometric set up with finite limits, $\kappa$ an uncountable regular cardinal and $\ob{D}$ a $\kappa$-presentable  six functor formalism on $(\n{C},E)$.   The formation $X\mapsto \ob{K}_{\ob{D},X}$ promotes to a lax symmetric monoidal functor 
\begin{equation}\label{EqPresentableKernelCat}
\ob{K}_{\ob{D},-}\colon \Corr(\n{C},E)\to \cat{Cat}^{\cat{Pr}^{\kappa}}
\end{equation}
where $\cat{Cat}^{\cat{Pr}^{\kappa}}$ is the symmetric monoidal $2$-category of $\cat{Pr}^{\kappa}$-enriched  small categories. 
\end{theorem}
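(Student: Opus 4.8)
The plan is to assemble the functor \eqref{EqPresentableKernelCat} from three layers of functoriality: the self-enrichment of correspondence categories, transfer of enrichment along a lax symmetric monoidal functor, and the functoriality of $S\mapsto \Corr(\n{C}^E_{/S})$ in correspondences of the base. First I would record the fibrewise picture underlying \Cref{DefKernelCategory}. Each $\Corr(\n{C}^E_{/S})$ is symmetric monoidal under fibre product over $S$, and by the self-duality of objects in a correspondence category (\cite[Corollary 2.4.2]{HeyerMannSix}) every object is dualizable with $X^\vee\simeq X$. A symmetric monoidal category all of whose objects are dualizable is compact closed, hence canonically enriched over itself with mapping object $\iHom(Y,X)\simeq Y^\vee\otimes X\simeq Y\times_S X$. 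Transferring this self-enrichment along the lax symmetric monoidal $\ob{D}_S\colon \Corr(\n{C}^E_{/S})\to\cat{Pr}^{\kappa}$ via Hinich's change-of-enrichment \cite{HinichEnriched} reproduces $\ob{K}_{\ob{D},S}$ with $\ob{Fun}_S(Y,X)=\ob{D}(Y\times_S X)$, exactly as in \Cref{DefKernelCategory}.

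Next I would upgrade this to a statement in $S$. The crucial point is that $S\mapsto \Corr(\n{C}^E_{/S})$ is itself the object part of a lax symmetric monoidal functor $\Corr(\n{C},E)\to\CAlg(\Cat)$: a correspondence $S\xleftarrow{a}T\xrightarrow{b}S'$ acts by $*$-pullback along $a$ (base change $\n{C}^E_{/S}\to\n{C}^E_{/T}$, symmetric monoidal for fibre products) followed by $!$-pushforward along $b$ (post-composition $\n{C}^E_{/T}\to\n{C}^E_{/S'}$). On mapping objects it sends $\ob{D}(Y\times_S X)$ via $*$-pullback to $\ob{D}\big((Y\times_S X)\times_S T\big)=\ob{D}(Y_T\times_T X_T)$ and then via $!$-pushforward along $Y_T\times_T X_T\to Y_T\times_{S'}X_T$ to $\ob{Fun}_{S'}(Y_T,X_T)$, where $Y_T,X_T$ are the base changes to $T$. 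Every value lands in the sub-$2$-category of compact closed symmetric monoidal categories, where self-enrichment is defined, and formation of enriched categories $\cat{Cat}^{(-)}$ is functorial and lax symmetric monoidal in the enriching base (\cite{HinichEnriched,HeineEnriched}). Post-composing the self-enrichments with the base-change of enrichment determined by $\ob{D}$ then yields $\ob{K}_{\ob{D},-}$ valued in $\cat{Cat}^{\cat{Pr}^{\kappa}}$; lax monoidality comes from the external product $\n{C}^E_{/S}\times\n{C}^E_{/S'}\to\n{C}^E_{/S\times S'}$ together with the lax structure maps $\ob{D}(A)\otimes\ob{D}(B)\to\ob{D}(A\times B)$ of the six functor formalism.

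The main obstacle is coherence: turning the fibrewise transfers of \Cref{DefKernelCategory} into a genuine $\infty$-functor on $\Corr(\n{C},E)$ forces the base-change isomorphisms of $\ob{D}$ and the pullback--pushforward decomposition of correspondences to be compatible up to all higher homotopies, and the monoidal coherences must be tracked simultaneously with the variation of the enriching base. Rather than building these cells by hand, the strategy is to never leave the world of lax symmetric monoidal functors: realize $S\mapsto\Corr(\n{C}^E_{/S})$ together with its self-enrichment as a single lax symmetric monoidal functor, and realize $\ob{D}$-transfer as post-composition with the (lax symmetric monoidal) change-of-base $2$-functor on enriched categories, so that all coherences are inherited formally from the universal property of $\Corr(\n{C},E)$ and the already-established lax monoidality of $\ob{D}$. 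The only genuinely new input beyond bookkeeping is checking that $!$-pushforward along $b$ is lax symmetric monoidal on the self-enriched correspondence categories, which is precisely where the projection-formula compatibility built into the six functor formalism enters.
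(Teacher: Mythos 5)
This statement is not proved in the paper at all: it is imported verbatim from the reference in its header, \cite[Theorem 4.2.4]{HeyerMannSix}, so there is no in-paper argument to compare yours against. Measured against the proof in that reference, your outline does identify the right ingredients: the self-enrichment of $\Corr(\n{C}^E_{/S})$ coming from the self-duality of its objects, the transfer of that enrichment along the lax symmetric monoidal $\ob{D}_S$ (which is exactly how \Cref{DefKernelCategory} defines $\ob{K}_{\ob{D},S}$), and the functoriality of $S\mapsto \Corr(\n{C}^E_{/S})$ over $\Corr(\n{C},E)$ by $*$-pullback along the left leg and $!$-pushforward along the right leg of a correspondence.

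Two caveats. First, your Hom-level pushforward is a $!$-pushforward along $Y_T\times_T X_T\to Y_T\times_{S'}X_T$, which is a base change of the relative diagonal of $b\colon T\to S'$; a geometric setup does not force such diagonals to lie in $E$, so this functor is not supplied directly by $\ob{D}$ on $(\n{C},E)$. It exists because $\ob{D}$ restricts to a six functor formalism on $(\n{C}^E_{/S'},\ob{all})$ --- the assertion built into \Cref{DefKernelCategory}, which is itself a nontrivial lemma of the reference that your argument relies on silently. Second, and more seriously, your closing paragraph names the coherence problem --- assembling the fibrewise self-enrichments, the base-change and projection-formula compatibilities, and the varying enriching bases into a single lax symmetric monoidal functor to $\cat{Cat}^{\cat{Pr}^{\kappa}}$ --- but defers rather than solves it. That coherent construction is essentially the entire content of the cited theorem, so what you have written is a correct statement of strategy rather than a proof; to complete it you would have to carry out the construction you describe in the last paragraph, which is precisely what \cite[Section 4.2]{HeyerMannSix} does.
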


To pass from the category of kernels to its presentable version we need to recall the enriched Yoneda embedding\footnote{We thank Ben-Moshe for the reference to his paper.}, we follow  \cite{ShayNaturalityYoneda}. Let $\n{V}$ be a presentable monoidal category, and let $\cat{Cat}^{\n{V}}$ be the (big)  category of  $\n{V}$-enriched small categories. Let $\cat{Pr}_{\n{V}}=\ob{LMod}_{\n{V}}(\cat{Pr})$ be the category of presentable left $\n{V}$-linear categories and let  $(\cat{Pr}_{\n{V}})^{L}$ be the (non-full) subcategory spanned by left adjoint functors in $\cat{Pr}_{\n{V}}$. 
Given $M\in \cat{Cat}^{\n{V}}$ one constructs a presentable category of $\n{V}$-enriched presheaves $\n{P}^{\n{V}}(M):=\ob{Fun}^{\n{V}^{\ob{rev}}}(M^{\op}, \n{V}^{\ob{rev}})$ together with a $\n{V}$-enriched Yoneda embedding $M\to \n{P}^{\n{V}}(M)$ (see \cite{HinichEnriched}). The formation $M\mapsto \n{P}^{\n{V}}(M)$ is natural in $M$ and extends to a functor 
\[
\n{P}^{\n{V}}\colon \cat{Cat}^{\n{V}}\to \cat{Pr}_{\n{V}}.
\]
By \cite[Theorem 5.20]{ShayNaturalityYoneda} the functor $\n{P}^{\n{V}}$ factors through $(\ob{Pr}_{\n{V}})^L$ and one has an adjunction of categories 
\[
\n{P}^{\n{V}}\colon \cat{Cat}^{\n{V}} \rightleftharpoons (\cat{Pr}_{\n{V}})^L\colon(-)^{\ob{at}} 
\]
where $(-)^{\ob{at}}$ is the functor that sends a presentable $\n{V}$-linear category $\n{M}$ to its full subcategory $\n{M}^{\ob{at}}$ of atomic objects\footnote{An object in  $\n{M}$ is called \textit{atomic} if the induced pullback map  $\n{V}\to \n{M}$ lies in $(\cat{Pr}_{\n{V}})^L$, i.e. it is a left adjoint map in $\n{V}$-linear categories.}. In particular, the category of $\n{V}$-enriched presheaves of $M$ is the free $\n{V}$-linear category  making the objects of $M$ $\n{V}$-atomic. 
If in addition $\n{V}$ is symmetric monoidal, both $\cat{Cat}^{\n{V}}$ and $\cat{Pr}_{\n{V}}$ are symmetric monoidal, and the functor $\n{P}^{\n{V}}\colon \cat{Cat}^{\n{V}}\to \cat{Pr}_{\n{V}}$ is symmetric monoidal \cite[Corollary E]{reutter2025enriched}.

\begin{lemma}\label{LemmaAtomicPrkappa}
Let $\kappa$ be an uncountable regular cardinal and let $\n{V}$ be a $\kappa$-presentable monoidal category. Then the functor $\n{P}^{\n{V}}\colon \cat{Cat}^{\n{V}}\to \cat{Pr}_{\n{V}}$ factors through $\cat{Pr}^{\kappa}_{\n{V}}:=\ob{LMod}_{\n{V}}(\cat{Pr}^{\kappa})$. 
\end{lemma}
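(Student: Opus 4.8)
The plan is to produce an essentially small set of $\kappa$-compact generators of $\n{P}^{\n{V}}(M)$ out of the enriched Yoneda image of $M$ together with the $\kappa$-compact objects $\n{V}_{\kappa}\subset \n{V}$, to check that these generate under colimits, and then to verify that both the $\n{V}$-action and the functoriality in $M$ preserve $\kappa$-compactness. Throughout I read ``$\kappa$-presentable monoidal category'' in the strong sense that $\n{V}$ is an algebra object $\n{V}\in \Alg(\cat{Pr}^{\kappa})$, so that the unit $1_{\n{V}}$ is $\kappa$-compact and the tensor product restricts to $\n{V}_{\kappa}\times \n{V}_{\kappa}\to \n{V}_{\kappa}$; this is exactly the hypothesis needed for $\cat{Pr}^{\kappa}_{\n{V}}=\ob{LMod}_{\n{V}}(\cat{Pr}^{\kappa})$ to even be defined.

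First I would fix the enriched Yoneda embedding $M\to \n{P}^{\n{V}}(M)$, writing $h_{m}$ for the presheaf represented by $m\in M$, and consider the collection $G=\{\, v\otimes h_{m} \;:\; v\in \n{V}_{\kappa},\ m\in M\,\}$, where $v\otimes h_{m}$ is the copower of $h_{m}$ by $v$ for the $\n{V}$-module structure on $\n{P}^{\n{V}}(M)$. Since $\n{V}$ is $\kappa$-presentable, $\n{V}_{\kappa}$ is essentially small, and $M$ is essentially small, so $G$ is essentially small. The key input is the enriched Yoneda lemma, which identifies the enriched mapping object $\underline{\ob{Fun}}(h_{m},F)\simeq F(m)$ for any $F\in \n{P}^{\n{V}}(M)$, and hence yields a natural equivalence $\Map(v\otimes h_{m},F)\simeq \Map_{\n{V}}(v,F(m))$. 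Because colimits of enriched presheaves are computed pointwise, evaluation $F\mapsto F(m)$ preserves $\kappa$-filtered colimits; composing with $\Map_{\n{V}}(v,-)$, which preserves $\kappa$-filtered colimits as $v$ is $\kappa$-compact, shows each $v\otimes h_{m}$ is $\kappa$-compact in $\n{P}^{\n{V}}(M)$.

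Next I would show that $G$ generates under colimits. The enriched co-Yoneda (density) lemma expresses every $F\in \n{P}^{\n{V}}(M)$ as a colimit of copowers $F(m)\otimes h_{m}$; writing each $F(m)\in \n{V}$ as a $\kappa$-filtered colimit of objects of $\n{V}_{\kappa}$ and using that the copower $(-)\otimes h_{m}$ is a left adjoint, hence preserves colimits, exhibits $F$ as a colimit of objects of $G$. Combined with the previous step this proves $\n{P}^{\n{V}}(M)\in \cat{Pr}^{\kappa}$. To upgrade this to the module statement, observe that for $v\in \n{V}_{\kappa}$ the action satisfies $v\otimes(v'\otimes h_{m})\simeq (v\otimes v')\otimes h_{m}$ with $v\otimes v'\in \n{V}_{\kappa}$, so the action functor carries $\kappa$-compact generators to $\kappa$-compact objects; being colimit-preserving it then preserves all $\kappa$-compact objects, whence $\n{V}\otimes \n{P}^{\n{V}}(M)\to \n{P}^{\n{V}}(M)$ is a morphism in $\cat{Pr}^{\kappa}$ and $\n{P}^{\n{V}}(M)\in \ob{LMod}_{\n{V}}(\cat{Pr}^{\kappa})=\cat{Pr}^{\kappa}_{\n{V}}$.

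Finally, for functoriality, given a $\n{V}$-functor $f\colon M\to N$ the map $\n{P}^{\n{V}}(f)$ is the enriched left Kan extension, a colimit-preserving $\n{V}$-linear functor characterized by $h_{m}\mapsto h_{f(m)}$; it therefore sends $v\otimes h_{m}$ to $v\otimes h_{f(m)}$, a $\kappa$-compact generator of $\n{P}^{\n{V}}(N)$. Since a colimit-preserving functor carrying a set of $\kappa$-compact generators into $\kappa$-compact objects preserves all $\kappa$-compact objects, $\n{P}^{\n{V}}(f)$ is a morphism in $\cat{Pr}^{\kappa}_{\n{V}}$, and the whole functor $\n{P}^{\n{V}}$ factors through $\cat{Pr}^{\kappa}_{\n{V}}$. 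The main obstacle is not conceptual but one of bookkeeping in the enriched setting: correctly invoking the $\infty$-categorical enriched Yoneda and co-Yoneda lemmas from the cited references through the $\n{V}^{\ob{rev}}$-conventions defining $\n{P}^{\n{V}}$, and confirming that the copower $(-)\otimes h_{m}$ genuinely commutes with the chosen $\kappa$-filtered presentation of $F(m)$; once these identifications are pinned down the compactness arguments are formal.
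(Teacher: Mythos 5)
Your proof is correct, but it takes a genuinely different route from the paper's. You argue via an explicit small family of $\kappa$-compact generators $v\otimes h_m$ ($v\in\n{V}_{\kappa}$, $m\in M$), using the enriched Yoneda lemma to identify $\Map(v\otimes h_m,F)\simeq \Map_{\n{V}}(v,F(m))$ and the enriched density theorem to see that these generate; functoriality is then checked directly on generators. The paper instead stays inside the algebraic model of enriched categories: it unwinds Hinich's definition of $M$ as an algebra $\s{C}\in\ob{Alg}(\ob{Fun}(X\times X,\n{V}))$ with $X=M^{\simeq}$, writes $\ob{Fun}^{\n{V}}(M,\n{V})$ as the relative tensor product $\ob{LMod}_{\s{C}}(\ob{Fun}(X\times X,\n{V}))\otimes_{\ob{Fun}(X\times X,\n{V})}\ob{Fun}(X,\n{V})$, and reduces to checking that $\ob{Fun}(X\times X,\n{V})$ is a $\kappa$-presentable monoidal category and $\ob{Fun}(X,\n{V})$ a $\kappa$-presentable bimodule (first for $\kappa$-small $X$, then by Kan extension along $\kappa$-small subanima); for functoriality it cites Ben-Moshe's theorem that $f_!$ is an internal left adjoint whose right adjoint preserves colimits, which formally forces preservation of $\kappa$-compact objects. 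Your argument is more elementary and self-contained, at the price of leaning on the $\infty$-categorical enriched Yoneda and density theorems (which are genuine theorems of Hinich, not formalities, but are available in the references the paper already cites); the paper's argument avoids density entirely and makes the handling of a possibly non-$\kappa$-small anima of objects completely explicit. Both treatments correctly use the standing hypothesis that $\n{V}\in\ob{Alg}(\cat{Pr}^{\kappa})$, so that $\n{V}_{\kappa}$ contains the unit and is closed under the tensor product, and both correctly observe that $\n{V}$-linearity plus preservation of $\kappa$-compact objects is exactly what is needed to land in $\ob{LMod}_{\n{V}}(\cat{Pr}^{\kappa})$.
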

\begin{proof}
Let us recall the definition of an enriched category and of the enriched category of sheaves from \cite[Proposition 4.5.3]{HinichEnriched}. 

Let $M$ be a small $\n{V}$-enriched category with anima of objects $X=M^{\simeq}$. By definition   $M$ is an algebra $\s{C}\in \ob{Alg}(\ob{Fun}(X\times X, \n{V}))$ where $\ob{Fun}(X\times X, \n{V})$ is the endomorphism category of the  $\n{V}$-linear category $\ob{Fun}(X, \n{V})\in \cat{Pr}^{\kappa}_{\n{V}}$.  We have that  
\[
\ob{Fun}^{\n{V}}(M,\n{V})=\ob{LMod}_{\s{C}}(\ob{Fun}(X, \n{V}))
\]
seen as a right $\n{V}$-linear module, or equivalently, as an object in $\cat{Pr}_{\n{V}^{\op}}$. The $\n{V}$-enriched presheaf category is by definition 
\[
\n{P}^{\n{V}}(M):=\ob{Fun}^{\n{V}^{\ob{rev}}}(M^{\op},\n{V}^{\ob{rev}})
\]
which is endowed with a right $\n{V}^{\ob{rev}}$-linear structure, or equivalently, with a left $\n{V}$-linear  structure, i.e. an object in $\cat{Pr}_{\n{V}}$. Thus, to prove $\kappa$-presentability of $\n{P}^{\n{V}}(M)$, by passing to the opposite category we can argue  with the functor category seen as a $\n{V}^{\ob{rev}}$-linear module. 

By \cite[Theorem 4.8.4.6]{HigherAlgebra}, we have that 
\begin{equation}\label{eqTensorkappaPresentableV}
\ob{Fun}^{\n{V}}(M,\n{V})= \ob{LMod}_{\s{C}}(\ob{Fun}(X\times X, \n{V}))\otimes_{\ob{Fun}(X\times X, \n{V})}  \ob{Fun}(X, \n{V})
\end{equation}
as right $\n{V}$-module. Hence, to show that $\ob{Fun}^{\n{V}}(M,\n{V})\in \cat{Pr}^{\kappa}_{\n{V}^{\ob{rev}}}$ it suffices to prove the following facts: 

\begin{enumerate}

\item $\ob{Fun}(X\times X, \n{V})$ is a $\kappa$-presentable monoidal category. 

\item $\ob{Fun}(X, \n{V})$ is a $\kappa$-presentable   $(\ob{Fun}(X\times X, \n{V}), \n{V})$-bimodule. 

\end{enumerate}
Indeed, if that is the case then $\ob{LMod}_{\s{C}}(\ob{Fun}(X\times X, \n{V}))$ is a $\kappa$-presentable right $\ob{Fun}(X\times X, \n{V})$-module by \cite[Lemma 2.1]{aoki2025higher} and hence so is the tensor product \eqref{eqTensorkappaPresentableV} as a right $\n{V}$-module.

We now prove the previous two claims. First suppose that $X$ is itself a $\kappa$-small anima. Then the category of $\kappa$-compact objects of $\ob{Fun}(X, \n{V})$ is given by 
\[
\ob{Fun}(X, \n{V})_{\kappa}= \ob{Fun}(X, \n{V}_{\kappa}).
\]
Similarly, the $\kappa$-compact objects of the endormorphism category $\ob{Fun}(X\times X, \n{V})$ is $\ob{Fun}(X\times X, \n{V}_{\kappa})$. Given $F,G\in \ob{Fun}(X\times X, \n{V})$ their composition $F\circ G\colon X\times X\to \n{V}$ is the functor 
\[
F\circ G (y,x)=\varinjlim_{z\in X} F(y,z)\otimes_{\n{V}} G(z,x).
\]
As $X$ is $\kappa$-small, we see that the composition  $\circ$ leaves $\ob{Fun}(X\times X, \n{V}_{\kappa})$ stable, proving that $\ob{Fun}(X\times X, \n{V})$ is a $\kappa$-presentable monoidal category. The same argument also shows that $\ob{Fun}(X,\n{V})$ is a $\kappa$-presentable $(\ob{Fun}(X\times X, \n{V}), \n{V})$-bimodule.

For a general anima $X$, note that by taking left Kan extensions one has that 
\[
\ob{Fun}(X,\n{V})=\varinjlim_{X'\subset X} \ob{Fun}(X',\n{V})
\]
where $X'$ runs over $\kappa$-small anima mapping to $X$. By passing to the endomorphism category, and since colimits are $2$-functors, we get that 
\[
\ob{Fun}(X\times X, \n{V})=\varinjlim_{X'\to X} \ob{Fun}(X'\times X', \n{V})
\]
as monoidal category. From the discussion when $X$ is $\kappa$-small,  the previous shows that $\ob{Fun}(X\times X, \n{V})$ is a $\kappa$-presentable monoidal category, and that $\ob{Fun}(X, \n{V})$ is a $\kappa$-presentable $(\ob{Fun}(X\times X, \n{V}), \n{V})$-bimodule as wanted.

Let $f\colon N\to M$ be a functor of $\n{V}$-enriched  small categories. We want to see that $f_!\colon \n{P}^{\n{V}}(N)\to \n{P}^{\n{V}}(M)$   preserves $\kappa$-compact objects (as $f_!$ is already $\n{V}$-linear, and $N$ and $M$ are $\kappa$-presentable left $\n{V}$-modules,  this automatically implies that the morphism $f$ lands in $\ob{LMod}_{\n{V}}(\cat{Pr}^{\kappa})$). By \cite[Theorem B]{ShayNaturalityYoneda} we know that the map  $f_!$ is an internal left adjoint in $\n{V}$-linear categories and its right adjoint $f^{\circledast}$ also preserves colimits.  This formally implies that $f_!$ preserves $\kappa$-compact objects, proving what we wanted.
\end{proof}

\begin{corollary}\label{CoroPresentableYoneda}
Let $\kappa$ be an uncountable regular cardinal. The enriched presheaf functor $\n{P}^{\ob{Pr}^{\kappa}}\colon \cat{Cat}^{\cat{Pr}^{\kappa}}\to \ob{LMod}_{\cat{Pr}^{\kappa}}(\cat{Pr})$ factors through a functor 
\[
\n{P}^{\cat{Pr}^{\kappa}}\colon \cat{Cat}^{\cat{Pr}^{\kappa}}\to 2\cat{Pr}^{\kappa}
\]
where $2\cat{Pr}^{\kappa}:=\ob{LMod}_{\cat{Pr}^{\kappa}}(\cat{Pr}^{\kappa})$ (see \cite[Remark 2.8]{aoki2025higher}). 
\end{corollary}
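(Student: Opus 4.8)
The plan is to deduce the corollary directly from \Cref{LemmaAtomicPrkappa} by specializing the enriching category to $\n{V}=\cat{Pr}^{\kappa}$ itself. \Cref{LemmaAtomicPrkappa} says that for any $\kappa$-presentable monoidal category $\n{V}$, the enriched presheaf functor $\n{P}^{\n{V}}\colon \cat{Cat}^{\n{V}}\to \cat{Pr}_{\n{V}}$ factors through $\cat{Pr}^{\kappa}_{\n{V}}=\ob{LMod}_{\n{V}}(\cat{Pr}^{\kappa})$. Taking $\n{V}=\cat{Pr}^{\kappa}$, the source becomes $\cat{Cat}^{\cat{Pr}^{\kappa}}$ and the asserted factorization lands in $\ob{LMod}_{\cat{Pr}^{\kappa}}(\cat{Pr}^{\kappa})$, which is by definition $2\cat{Pr}^{\kappa}$. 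Thus, modulo the hypotheses, the statement is a verbatim instance of the preceding lemma, and the whole content of the proof reduces to checking that the enriching category $\cat{Pr}^{\kappa}$ does satisfy the hypothesis of \Cref{LemmaAtomicPrkappa}, namely that it is a $\kappa$-presentable \emph{monoidal} category.

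To verify this I would proceed in two steps. First, $\cat{Pr}^{\kappa}$ is $\kappa$-presentable as a (large) category: this is exactly the structural input recorded in \cite[Remark 2.8]{aoki2025higher}, and it can be seen through the equivalence between $\cat{Pr}^{\kappa}$ and the category of small idempotent-complete categories admitting $\kappa$-small colimits with colimit-preserving functors, the latter being $\kappa$-presentable. Second, I would check that the Lurie tensor product restricts to $\cat{Pr}^{\kappa}$ and is compatible with the $\kappa$-presentable structure: the tensor product preserves colimits separately in each variable, so in particular it is $\kappa$-accessible, and the unit object, being the free $\kappa$-cocompletion of a point (the category of anima), is $\kappa$-compact in $\cat{Pr}^{\kappa}$. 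Together these two facts say precisely that $\cat{Pr}^{\kappa}$ is a $\kappa$-presentable monoidal category in the sense required.

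With this input granted, \Cref{LemmaAtomicPrkappa} applies with $\n{V}=\cat{Pr}^{\kappa}$ and yields the desired factorization $\n{P}^{\cat{Pr}^{\kappa}}\colon \cat{Cat}^{\cat{Pr}^{\kappa}}\to 2\cat{Pr}^{\kappa}$, with no further computation. I expect the only genuine obstacle to be the $\kappa$-presentability of the monoidal category $\cat{Pr}^{\kappa}$ itself, i.e.\ the combination of $\kappa$-presentability of the underlying category together with $\kappa$-compactness of the unit and $\kappa$-accessibility of the tensor product; this is a nontrivial structural statement, but it is exactly the one established in \cite{aoki2025higher}, so once it is cited the corollary is purely formal.
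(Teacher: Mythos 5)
Your proposal is correct and matches the paper's (implicit) argument: the corollary is stated without proof precisely because it is the specialization of \Cref{LemmaAtomicPrkappa} to $\n{V}=\cat{Pr}^{\kappa}$, with the only required input being that $\cat{Pr}^{\kappa}$ is itself a $\kappa$-presentable (symmetric) monoidal category one universe up, which is exactly what \cite[Remark 2.8]{aoki2025higher} supplies. You correctly isolate that hypothesis as the sole nontrivial point and delegate it to the right reference, so nothing further is needed.
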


After the previous discussion, we can define the $\kappa$-presentable category of kernels. 

\begin{definition}\label{DefPresentableCategoryKernels}
Let $(\n{C},E)$ be a  small geometric set up with finite limits, let $\ob{D}$ be a $\kappa$-presentable  six functor formalism on $(\n{C},E)$ and $S\in \n{C}$. The \textit{presentable category of kernels over $S$} is the $\cat{Pr}^{\kappa}$-enriched presheaf category
\[
\cat{Pr}^{\kappa}_{\ob{D},S}:=\n{P}^{\cat{Pr}^{\kappa}}(\ob{K}_{\ob{D},S}). 
\]
More functorially, we define the lax symmetric monoidal functor $\cat{Pr}_{\ob{D},-}^{\kappa}$ from $\Corr(\n{C},E)$ to $2\cat{Pr}^{\kappa}$ to be the composite  of the kernel category functor and the enriched presheaves functor
\begin{equation}\label{eqPresentableKernels}
\cat{Pr}_{\ob{D},-}^{\kappa}\colon \ob{Corr}(\n{C},E)\xrightarrow{\ob{K}_{\ob{D},-}} \cat{Cat}^{\cat{Pr}^{\kappa}} \xrightarrow{\n{P}^{\cat{Pr}^{\kappa}}} \ob{LMod}_{\cat{Pr^{\kappa}}}(\cat{Pr}^{\kappa})=2\cat{Pr}^{\kappa}.
\end{equation}
Given $f\colon Y\to X$ a map in $\n{C}$ we let $f^*_1\colon \cat{Pr}^{\kappa}_{\ob{D},X}\to \cat{Pr}^{\kappa}_{\ob{D},Y}$ be the corresponding pullback functor. If $f\in E$ we let $f_{1,!}\colon \cat{Pr}^{\kappa}_{\ob{D},Y}\to \cat{Pr}^{\kappa}_{\ob{D},X}$ be the lower $!$-functor\footnote{The lower-index ``$1$'' refers to the categorical level where the $*$-pullback and the $!$-pushforward are defined.}. We refer to \cite[Theorem 4.2.4]{HeyerMannSix} for a more explicit description of such maps when restricted to the category of kernels $\ob{K}_{\ob{D},-}$. 
\end{definition}

\begin{remark}\label{RemarkImprovementStefanich}
The functor \eqref{eqPresentableKernels} has a significant enhancement if $E=\ob{all}$ consists of all $!$-able arrows, this can always be assumed by restricting the six functor formalism from $\n{C}$ to $\n{C}_E$.  Namely, if $\ob{D}\colon \Corr(\n{C})\to \cat{Pr}^{\kappa}$ is a six functor formalism in the full span category of $\n{C}$, Stefanich proved in \cite{StefanichCategorification} that $\ob{D}$ promotes to a lax symmetric monoidal functor of symmetric monoidal $2$-categories 
\[
\cat{Pr}^{\kappa}_{\ob{D},-}\colon 2\Corr(\n{C})\to 2\cat{Pr}^{\kappa}
\]
from the $2$-category of spans of $\n{C}$ (he even proved all the higher categorical analogues of this statement). A special consequence of this is the \textit{ambidexterity} of the six functors at the $2$-categorical level, namely, if $f\colon Y\to X$ is a morphism in $\n{C}$ then the induced functor  $f_!\colon \cat{Pr}^{\kappa}_{\ob{D},Y}\to \cat{Pr}^{\kappa}_{\ob{D},X}$ is naturally both left and right adjoint to the pullback functor $f^*\colon \cat{Pr}^{\kappa}_{\ob{D},X}\to \cat{Pr}^{\kappa}_{\ob{D},Y}$, and this adjunction is  natural and preserved under pullbacks along $X$, see \cite[Lemma 4.2.7]{HeyerMannSix}.
\end{remark}

The presentable category of kernels has a simple description in  case the six functor formalism satisfies the  categorical K\"unneth formula. We borrow some definitions from \cite{kesting2025categorical}. 

\begin{definition}\label{DefinitionKunneth}
Let $(\n{C},E)$ be a geometric set up with finite limits and $\ob{D}$ a $\kappa$-small presentable six functor formalism. Let $f\colon X\to S$ be a map in $E$, we say that $f$ is \textit{K\"unneth} if for all $Y\to S$ in $\n{C}$ the natural map 
\[
\ob{D}(Y)\otimes_{\ob{D}(S)} \ob{D}(X)\to \ob{D}(Y\times_S X)
\]
is an equivalence. We say that $\ob{D}$ is \textit{K\"unneth over $S$} if any map $f\colon X\to S$ in $E$ is K\"unneth. 
\end{definition}

\begin{proposition}\label{PropKunnethKernel}
Let $(\n{C},E)$ be a small geometric set up and let $\ob{D}$ be a $\kappa$-presentable six functor formalism. Let $S\in \n{C}$ and suppose that all objects in $\n{C}^E_{/S}$ are K\"unneth over $S$. Then the natural map 
\begin{equation}\label{eqMapPresentableKunneth}
\ob{Fun}_S(S,-)\colon \ob{K}_{\ob{D},S} \to \cat{Pr}^{\kappa}_{\ob{D}(S)}
\end{equation}
is $2$-fully faithful. Moreover, the natural map 
\begin{equation}\label{eqpkampsasdfmpasd}
\cat{Pr}_{\ob{D},S}^{\kappa}\xrightarrow{\sim} \cat{Pr}_{\ob{D}(S)}^{\kappa}
\end{equation}
obtained by the adjunction of \cite[Theorem 6.4.4]{HinichColimitsEnriched}  is an equivalence of $\cat{Pr}^{\kappa}$-linear categories. 
\end{proposition}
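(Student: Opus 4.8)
The plan is to prove the two assertions in turn: the $2$-fully faithfulness of \eqref{eqMapPresentableKunneth} is the crux, and the equivalence \eqref{eqpkampsasdfmpasd} will then follow formally from the universal property of the enriched presheaf category. Throughout I would use that each $\ob{D}(Y)$, for $Y\in\n{C}^E_{/S}$, is a dualizable, indeed self-dual, object of $\cat{Pr}^{\kappa}_{\ob{D}(S)}$: this is the image under global sections of the self-duality of $Y$ in $\ob{Corr}(\n{C}^E_{/S})$ recorded in \Cref{RemarkFunctorsToCorrespondence}, the evaluation and coevaluation being realized by the $*$- and $!$-functors along the diagonal $Y\to Y\times_S Y$ and the structure map $Y\to S$.

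For the first statement, I would fix $Y,X\in\ob{K}_{\ob{D},S}$ and examine the functor induced by $\ob{Fun}_S(S,-)$ on mapping categories,
\[
\ob{Fun}_S(Y,X)=\ob{D}(X\times_S Y)\longrightarrow \ob{Fun}^{L}_{\ob{D}(S)}(\ob{D}(Y),\ob{D}(X)),
\]
which is the Fourier--Mukai transform sending a kernel $K$ to the integral functor $M\mapsto p_{X,!}(K\otimes p_Y^{*}M)$, the right-hand side being the internal mapping object of the closed symmetric monoidal category $\cat{Pr}^{\kappa}_{\ob{D}(S)}$. Using self-duality of $\ob{D}(Y)$ and then the Künneth hypothesis of \Cref{DefinitionKunneth} I obtain canonical equivalences
\[
\ob{Fun}^{L}_{\ob{D}(S)}(\ob{D}(Y),\ob{D}(X))\simeq \ob{D}(Y)^{\vee}\otimes_{\ob{D}(S)}\ob{D}(X)\simeq \ob{D}(Y)\otimes_{\ob{D}(S)}\ob{D}(X)\simeq \ob{D}(Y\times_S X),
\]
whose target is $\ob{D}(X\times_S Y)=\ob{Fun}_S(Y,X)$. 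It then remains to check that this composite is inverse to the Fourier--Mukai transform above; this is a matter of unwinding the transfer of enrichment along $\ob{D}_S$ from \Cref{DefKernelCategory} together with the explicit duality data, and it is precisely here — matching the enriched-functor structure with the abstract dualizability--Künneth identification — that I expect the only genuine bookkeeping to lie. This yields the $2$-fully faithfulness.

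For the second statement, I would feed the $\cat{Pr}^{\kappa}$-enriched functor $\iota=\ob{Fun}_S(S,-)\colon \ob{K}_{\ob{D},S}\to\cat{Pr}^{\kappa}_{\ob{D}(S)}$ into the adjunction $\n{P}^{\cat{Pr}^{\kappa}}\dashv(-)^{\ob{at}}$, obtaining the colimit-preserving $\cat{Pr}^{\kappa}$-linear functor $L\colon\cat{Pr}^{\kappa}_{\ob{D},S}\to\cat{Pr}^{\kappa}_{\ob{D}(S)}$ of the statement, which restricts to $\iota$ along the enriched Yoneda embedding and admits a right adjoint $R$. Two points make $L$ an equivalence. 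First, each $\ob{D}(Y)$ is dualizable, hence atomic, so $R$ preserves colimits; combined with the $2$-fully faithfulness just proved, the unit $\id\to RL$ is an equivalence on representables, and therefore, since both $\id$ and $RL$ preserve colimits and representables generate $\cat{Pr}^{\kappa}_{\ob{D},S}$, an equivalence everywhere, so $L$ is fully faithful. Second, the essential image of $L$ is closed under colimits and contains $\ob{D}(S)=L(h_S)$, the image of the unit $S\in\ob{K}_{\ob{D},S}$; since $\cat{Pr}^{\kappa}_{\ob{D}(S)}=\ob{LMod}_{\ob{D}(S)}(\cat{Pr}^{\kappa})$ is, by the same enriched Yoneda description, the free $\cat{Pr}^{\kappa}$-linear cocompletion of the one-object enriched category $B\ob{D}(S)$, it is generated under $\cat{Pr}^{\kappa}$-colimits by the single free rank-one module $\ob{D}(S)$, whence $L$ is essentially surjective. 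Therefore $L$ is an equivalence of $\cat{Pr}^{\kappa}$-linear categories, which is the content of \eqref{eqpkampsasdfmpasd}. The single delicate point of the whole argument is the compatibility check closing the first statement; everything else is formal manipulation of dualizable objects, the Künneth formula, and the enriched Yoneda adjunction.
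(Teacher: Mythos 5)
Your strategy matches the paper's on the first statement and diverges on the second. For the $2$-fully faithfulness, both you and the paper reduce to self-duality of the objects of $\ob{K}_{\ob{D},S}$ together with the K\"unneth hypothesis; but be aware that you never actually carry out the one step you correctly single out as delicate, namely that the chain of abstract equivalences $\ob{Fun}^{L}_{\ob{D}(S)}(\ob{D}(Y),\ob{D}(X))\simeq \ob{D}(Y)\otimes_{\ob{D}(S)}\ob{D}(X)\simeq \ob{D}(Y\times_S X)$ is inverse to the map induced by $\ob{Fun}_S(S,-)$ --- exhibiting an abstract equivalence between the source and target of a functor does not show that the functor itself is an equivalence, so as written this is a gap at the crux. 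The paper disposes of it by a reorganization worth adopting: the K\"unneth hypothesis says precisely that $\ob{Fun}_S(S,-)$ is a \emph{symmetric monoidal} $\cat{Pr}^{\kappa}$-enriched functor, and a symmetric monoidal functor automatically transports the duality data of $X$ to that of $\ob{D}(X)$ and identifies $\ob{Fun}_S(X,Y)\simeq\ob{Fun}_S(S,X^{\vee}\otimes Y)$ with $\ob{Fun}_{\ob{D}(S)}(\ob{D}(X),\ob{D}(Y))\simeq \ob{D}(X^{\vee}\otimes Y)$ compatibly with the map in question; the ``bookkeeping'' then becomes a general fact about dualizable objects in symmetric monoidal $2$-categories rather than something to unwind by hand. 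For the second statement your route is genuinely different: the paper constructs the functor $\cat{Pr}^{\kappa}_{\ob{D}(S)}=\n{P}^{\cat{Pr}^{\kappa}}(B\ob{D}(S))\to\cat{Pr}^{\kappa}_{\ob{D},S}$ induced by the one-object subcategory on the unit and checks that both composites are the identity, whereas you prove full faithfulness plus essential surjectivity of $L$ directly, using that dualizable objects are atomic (so the right adjoint preserves colimits and the unit map can be checked on representables, where it is the $2$-fully faithfulness) and that $\ob{D}(S)$ generates $\cat{Pr}^{\kappa}_{\ob{D}(S)}$ under $\cat{Pr}^{\kappa}$-colimits. Both arguments are formal and correct; yours avoids constructing an explicit quasi-inverse at the cost of invoking atomicity, while the paper's avoids atomicity at the cost of checking a second composite.
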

\begin{proof}
To see that the map \eqref{eqMapPresentableKunneth} is $2$-fully faithful, it suffices to see that for $X,Y\in \n{C}^E_{/S}$, the natural map 
\[
\ob{Fun}_S(X,Y)\to \ob{Fun}_{\ob{D}(S)}(\ob{D}(X), \ob{D}(Y))
\]
is an equivalence. Since $X$ is naturally self dual in $\ob{K}_{\ob{D},S}$,  it suffices to show that $\ob{Fun}_S(S,-)$ is a symmetric monoidal functor, this is precisely the condition that  $\ob{D}$ is K\"unneth over $S$. 

Next, we show that \eqref{eqpkampsasdfmpasd} is an equivalence of $\cat{Pr}^{\kappa}$-enriched categories. Let $B\ob{D}(S)$ be the $\cat{Pr}^{\kappa}$-enriched category with one object $*$ and endomorphism category $\ob{D}(S)$. We have a fully faithful map $B\ob{D}(S)\to \ob{K}_{\ob{D},S}$, and passing to $\cat{Pr}^{\kappa}$-enriched categories we get a functor 
\begin{equation}\label{eqom2wfpqnwhf3of}
\cat{Pr}_{\ob{D}(S)}^{\kappa}=\n{P}^{\cat{Pr}^{\kappa}}(B\ob{D}(S))\to \cat{Pr}_{\ob{D},S}^{\kappa}
\end{equation}
whose composite with \eqref{eqpkampsasdfmpasd} is the identity. It is left to show that the composite $\cat{Pr}_{\ob{D},S}^{\kappa}\to \cat{Pr}_{\ob{D}(S)}^{\kappa} \to \cat{Pr}_{\ob{D},S}^{\kappa}$ is equivalent to the identity, by passing to  enriched  presheaves it suffices to show that it induces the identity on $\ob{K}_{\ob{D},S}$, but this follows from the fully faithful embedding  $\ob{Fun}_{S}(S,-)\colon \ob{K}_{\ob{D},S}\to \cat{Pr}_{\ob{D}(S)}^{\kappa}$ and the fact that $\cat{Pr}_{\ob{D}(S)}^{\kappa}\to \cat{Pr}_{\ob{D},S}^{\kappa}\to \cat{Pr}_{\ob{D}(S)}^{\kappa}$ is the  identity. 
\end{proof}

\begin{Convention}\label{ConventionPresentable}
From now on we shall fix an uncountable cardinal $\kappa$ and assume that all our six functor formalisms are $\kappa$-presentable. Given $\ob{D}$ a six functor formalism in a small geometric set up $(\n{C},E)$ and $S\in \n{C}$, we   write $\cat{Pr}_{\ob{D},S}$ for the $\kappa$-presentable category of kernels over $S$ (making $\kappa$ implicit in the notation).
\end{Convention}

\begin{remark}\label{RemarkPresentableCategoryKernlesComparison}
In \cite[Appendix to Lecture V]{SixFunctorsScholze} it is discussed a different construction of the presentable category of kernels that we recall here. Let $\n{C}$ be a small category with finite limits and $\ob{D}\colon \ob{Corr}(\n{C})\to \cat{Pr}^{\kappa}$ a $\kappa$-presentable six functor formalism, that is, a lax symmetric monoidal functor. Consider the functor category $\cat{Fun}(\ob{Corr}(\n{C}), \cat{Pr}^{\kappa})$ endowed with Day's convolution, then $\ob{D}$ is equivalent to a commutative algebra $D\in \ob{CAlg}(\cat{Fun}(\ob{Corr}(\n{C}), \cat{Pr}^{\kappa}))$. Denote by $S$ the final object of $\n{C}$, then \textit{loc. cit.} defines the $\kappa$-presentable category of kernels as the symmetric monoidal $\cat{Pr}^{\kappa}$-linear category  
\[
\cat{Pr}^{\kappa}_{\ob{D},S}:=\ob{Mod}_D(\cat{Fun}(\ob{Corr}(\n{C}), \cat{Pr}^{\kappa})). 
\]
By construction, the Yoneda embedding together with the natural anti-involution of $\ob{Corr}(\n{C})$  produces a natural symmetric monoidal map $\Psi' \colon \ob{Corr}(\n{C})\to \cat{Pr}^{\kappa}_{\ob{D},S}$. We claim that the $\cat{Pr}^{\kappa}$-enriched essential image of this map is nothing but the category of kernels $\ob{K}_{\ob{D},S}$ of \Cref{DefKernelCategory},  that $\Psi'$ identifies with the map $\Psi_{\ob{D}}$ of \Cref{RemarkFunctorsToCorrespondence}, and that the natural map of $\cat{Pr}^{\kappa}$-enriched categories
\[
\n{P}^{\cat{Pr}^{\kappa}}(\ob{K}_{\ob{D},S})\to \cat{Pr}^{\kappa}_{\ob{D},S}
\]
is an equivalence.  This is precisely the content of \cite[Corollary  2.7]{aoki2026berkovich2motivesnormedring}.  
\end{remark}

\subsection{Analytic stacks}\label{ss:AnStk}

In this paper we will study Cartier duality of certain analytic stacks, this requires to set up the theory.  
 We will work with the category of analytic stacks as in \cite[Section 6.3]{SolidNotes}. We say that an analytic stack $X$ is \textit{affinoid} if it is of the form $X=\ob{AnSpec} A$ for $A$ an analytic ring.\footnote{The choice of the terminology \textit{affinoid} instead of \textit{affine} analytic stack is justified by two main reasons. In one hand, the theory of Berkovich and adic spaces uses the term \textit{affinoid} to refer to the spectrum associated to Banach algebras and Huber pairs respectively. Since analytic geometry generalizes these geometric frameworks, and not just that of algebraic varieties, it is reasonable to keep the standard terminology from classical analytic geometry. 
 
  On the other hand, the theory of Gestalten of Scholze and Stefanich already defines for each $n\in \bb{N}$ the notion of \textit{$n$-affine map}. Using this language, it turns out that morphisms between affine schemes are always $0$-affine (that is, they only depend on the underlying rings),  while maps between affinoid analytic stacks are only $1$-affine in general (that is, they depend on the full category of complete modules and not just in the underlying ring). Moreover, the notions of $0$ and $1$-affine maps will appear later in the discussion of descent of Cartier duality, for this reason it is more conveniable to use a different terminology for analytic stacks.}  Given a condensed ring $R$ we let $R^{\cond}$ be the trivial analytic ring structure on $R$.    We let $\n{C}^0$ be a small full subcategory of affinoid analytic stacks such that $\n{C}^{0,\op}\subset \cat{AnRing}$ contains at least $\Z^{\cond}$, $\Z_{\sol}$, $\Z[T]_{\sol}$, is stable under countable colimits, and is stable under taking countably presented algebras  with induced analytic ring structure. We let $\n{C}\subset \cat{AnStk}$ be the full subcategory generated under countable colimits by $\n{C}^0$. In particular, $\n{C}$ is also stable under finite limits. We let $\ob{D}$ denote the six functor formalism  of quasi-coherent sheaves on $\n{C}$,  and let $E$ be the class of $!$-able arrows as in \cite[Definition 6.3.24]{SolidNotes}. We fix $\kappa$ a regular cardinal  such that $\ob{D}\colon \ob{Corr}(\n{C},E)\to \cat{Pr}$ factors through $\cat{Pr}^{\kappa}$. In the next sections we will omit $\kappa$ from the notation following \Cref{ConventionPresentable}.

 By \cite[Corollary 1.5.1]{kesting2025categorical} the category $\n{C}^E_{/S}$ of $!$-able analytic stacks over a base stack $S$ satisfies the categorical K\"unneth formula under the following condition: there is an affinoid analytic stack  $S_0$, and a  $!$-cover $S_0\to S$ in $E$ whose \v{C}ech nerve  consists of affinoid analytic stacks. In this paper we will need a slightly more general version of this statement. 
 
 \begin{definition}\label{DefQuasiAffineAnStk}
A morphism of analytic stacks $f\colon Y\to X$ is an \textit{open immersion} if it is an immersion and suave. We say that $f$ is a \textit{closed immersion} if it is an immersion and prim.

 An analytic stack $Y$ is said \textit{quasi-affinoid} if there is an open immersion $Y\to \overline{Y}$  where $\overline{Y}$ is an affinoid analytic stack.\footnote{For similar reasons justifying the choice of \textit{affinoid analytic stack}, we prefer the terminology \textit{quasi-affinoid} instead of the more standard notion \textit{quasi-affine} appearing in algebraic geometry. Furthermore, it is not true that, for the usual quasi-coherent six functor formalism, a quasi-affine scheme is quasi-affinoid as Zariski open immersions are not suave in general. } 
 \end{definition}
 
 \begin{lemma}\label{LemmaOpenClosed}
 Let $f\colon Y\to X$ be a map of analytic stacks. 
 
 \begin{enumerate}
 
\item Suppose that $f$ is an open immersion. Then $f$ is cohomologically \'etale, K\"unneth and the pullback map 
\[
f^*\colon \ob{D}(X)\to \ob{D}(Y)
\]
is an open localization of symmetric monoidal categories in the sense of \cite[Definition 5.1.3]{SolidNotes}. In particular, there is a unique coidempotent coalgebra $C$ in $\ob{D}(X)$ such that $\ob{D}(Y)=\ob{cLMod}_{C}(\ob{D}(X))$.

\item Suppose that $f$ is a closed immersion. Then $f$ is cohomologically proper, K\"unneth and the pullback map 
\[
f^*\colon \ob{D}(X)\to \ob{D}(Y)
\]
is a closed localization of symmetric monoidal categories in the sense of \cite[Definition 5.1.3]{SolidNotes}. In particular, there is an idempotent algebra $A\in \ob{D}(X)$ such that $\ob{D}(Y)=\ob{LMod}_A(\ob{D}(X))$.
 
 \end{enumerate}
 \end{lemma}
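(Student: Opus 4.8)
\emph{Proof plan.} The whole lemma is powered by a single structural observation: since $f$ is an immersion it is in particular a monomorphism, so the diagonal $\delta\colon Y\to Y\times_X Y$ is an equivalence. Hence $Y\times_X Y\cong Y$ with both projections equal to $\id_Y$. Feeding the resulting Cartesian square into the base change isomorphism $g^{*}f_{!}\cong f'_{!}g'^{*}$ (a structural axiom of the six functor formalism $\ob{D}$ on $\n{C}$, applied with $g=f$) gives $f^{*}f_{!}\cong\id_{\ob{D}(Y)}$. Combined with the projection formula this already shows that $f_{!}\mathbf{1}_Y$ is (co)idempotent, namely $f_{!}\mathbf{1}_Y\otimes_{\ob{D}(X)} f_{!}\mathbf{1}_Y\cong f_{!}(f^{*}f_{!}\mathbf{1}_Y)\cong f_{!}\mathbf{1}_Y$. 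I would take this identity as the engine for both parts.

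For part (1), I would first upgrade \emph{suave} to \emph{cohomologically étale}, i.e. $f^{!}\cong f^{*}$, by computing the relative dualizing object $\omega_f=f^{!}\mathbf{1}_X$ through the diagonal. The projection $p_1\colon Y\times_X Y\to Y$ is the base change of $f$ along $f$, so it is again suave with $\omega_{p_1}\cong p_2^{*}\omega_f$; but $p_1=p_2=\id_Y$, whence $\mathbf{1}_Y\cong\omega_{\id_Y}=\omega_{p_1}\cong\omega_f$, so $\omega_f\cong\mathbf{1}_Y$ and $f$ is cohomologically étale. Consequently $f_{!}\dashv f^{!}=f^{*}$, and the engine $f^{*}f_{!}\cong\id$ exhibits the left adjoint $f_{!}$ as fully faithful. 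The projection formula identifies the induced comonad with $f_{!}\mathbf{1}_Y\otimes_{\ob{D}(X)}-$, so $f_{!}$ is an equivalence of $\ob{D}(Y)$ onto the localizing subcategory $\ob{cLMod}_{C}(\ob{D}(X))$, where $C=f_{!}\mathbf{1}_Y$ is the coidempotent coalgebra of the previous paragraph; this is precisely an open localization in the sense of \cite[Definition 5.1.3]{SolidNotes}, and $C$ is unique by the uniqueness of localizations.

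Part (2) is the mirror image. \emph{Prim} gives cohomologically proper, i.e. $f_{!}\cong f_{*}$, so the engine rewrites as $f^{*}f_{*}\cong\id$, now exhibiting the right adjoint $f_{*}$ (of $f^{*}\dashv f_{*}$) as fully faithful. Since $f_{*}=f_{!}$ preserves colimits and is conservative (being fully faithful), Barr--Beck--Lurie makes it monadic with monad $f_{*}f^{*}\cong A\otimes_{\ob{D}(X)}-$, for $A=f_{*}\mathbf{1}_Y=f_{!}\mathbf{1}_Y$, so $\ob{D}(Y)\cong\ob{LMod}_{A}(\ob{D}(X))$ with $A$ the idempotent algebra above, a closed localization. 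The Künneth property in both parts I would then deduce formally rather than by hand: suave and prim morphisms are stable under base change, and the defining (co)idempotent $f_{!}\mathbf{1}_Y$ pulls back along any $g\colon Y'\to X$ to the corresponding (co)idempotent of the base-changed immersion $f'\colon Y'\times_X Y\to Y'$ (base change for $f_{!}$ together with symmetric monoidality of $g^{*}$); hence base change of (co)modules yields $\ob{D}(Y')\otimes_{\ob{D}(X)}\ob{D}(Y)\cong\ob{LMod}_{g^{*}A}(\ob{D}(Y'))\cong\ob{D}(Y'\times_X Y)$ (and its $\ob{cLMod}$ analogue), which is exactly \Cref{DefinitionKunneth}.

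I expect the genuine work to be concentrated in the non-formal inputs: the stability of suaveness/primness under base change together with the base-change compatibility of $\omega_f$ (which I would cite from \cite{HeyerMannSix}), the identification of \emph{prim} with cohomological properness in the analytic-stack formalism of \cite{SolidNotes}, and above all the careful matching of the abstract (co)monadic localization with the open and closed localizations of \cite[Definition 5.1.3]{SolidNotes}, including the precise (co)algebra structures carried by $f_{!}\mathbf{1}_Y$. The diagonal computation of $\omega_f$ and the base-change engine $f^{*}f_{!}\cong\id$ are the conceptual crux; once they are in place the remaining assertions are formal six functor manipulations.
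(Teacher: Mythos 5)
Your proposal is correct, and it is essentially a self-contained expansion of what the paper outsources. The paper's own proof consists of two moves: (i) since an immersion has invertible diagonal, being cohomologically \'etale (resp.\ proper) reduces to being suave (resp.\ prim), which holds by the definition of open (resp.\ closed) immersion --- this is exactly your $\omega_f\cong \mathbf{1}_Y$ computation via the two projections of $Y\times_X Y\cong Y$; and (ii) the K\"unneth property and the open/closed localization statements are cited from \cite[Proposition 3.12]{kesting2025categorical}, with the remark that the cited statement is itself missing the ``suave and immersion'' hypotheses. Your argument supplies precisely the content of that citation: the monomorphism engine $f^*f_!\cong\id$, the projection formula identifying the (co)monad with $f_!\mathbf{1}_Y\otimes(-)$, and (co)monadicity giving $\ob{cLMod}_C$ resp.\ $\ob{LMod}_A$, followed by base change of the (co)idempotent to get K\"unneth. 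What each approach buys: the paper's is three lines but leans on an external result that it acknowledges is imprecisely stated; yours is longer but makes explicit exactly where the immersion and suave/prim hypotheses enter. Two small points to watch if you write this up in full: the base-change isomorphism $f^*f_!\cong\id$ has to be identified with the unit of $f_!\dashv f^!$ (resp.\ the counit of $f^*\dashv f_*$) to conclude fully faithfulness, which is a standard but nontrivial compatibility; and in the open case the K\"unneth base change is cleanest via the description of $\ob{D}(Y)$ as the image of the idempotent comonad $C\otimes(-)$ (a retract in $\ob{D}(X)$-modules, hence preserved by $-\otimes_{\ob{D}(X)}\ob{D}(Y')$), since $f_*$ for an open immersion need not satisfy base change, whereas $f_!$ does.
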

 \begin{proof}
 As $f$ is an immersion, the diagonal is an isomorphism, and to check whether it is cohomologically \'etale or proper it suffices to check whether $f$ is suave or prim respectively (\cite[Definition 6.4.1]{HeyerMannSix}). 
 
The proof that $f$ is K\"unneth and that the pullback is an open immersion of symmetric monoidal categories follows from \cite[Proposition 3.12]{kesting2025categorical} (where the hypothesis that $f$ is suave and an immersion are missing). The same  argument works for $f$ a closed immersion.  
 \end{proof}

\begin{proposition}\label{PropositionKunnethQuasiAffine}
Let $S\in \n{C}$ and suppose that there is a quasi-affinoid analytic stack $S_0$, and a $!$-cover $S_0\to S$ in $E$ whose \v{C}ech nerve consists of quasi-affinoid analytic stacks. Then all the objects in $\n{C}^E_{/S}$ are K\"unneth over $S$. 
\end{proposition}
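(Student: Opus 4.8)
The plan is to isolate two inputs: a \emph{base case} asserting that every quasi-affine analytic stack is itself a K\"unneth base (i.e.\ $\ob{D}$ is K\"unneth over it), and a \emph{descent principle} asserting that being a K\"unneth base is local on the target for the $!$-topology. Granting both, the proposition is immediate: by hypothesis $S$ admits a $!$-cover $S_0\to S$ whose \v{C}ech nerve $S_\bullet$ consists of quasi-affine stacks, each of which is a K\"unneth base by the base case, so $S$ is a K\"unneth base by descent. The descent principle is precisely what underlies \cite[Corollary 1.5.1]{kesting2025categorical}: its proof reduces the K\"unneth property over $S$ to the K\"unneth property over the terms of the \v{C}ech nerve, and never uses that those terms are affinoid beyond the fact that they are themselves K\"unneth bases. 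Thus I would reuse that descent verbatim, and the only genuinely new step is the base case.

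For the base case I would argue as follows. Let $S$ be quasi-affine and choose an open immersion $j\colon S\hookrightarrow \overline{S}$ into an affinoid analytic stack $\overline{S}$. Fix $X\to S$ in $E$ and $Y\to S$ in $\n{C}$; the goal is that $\ob{D}(Y)\otimes_{\ob{D}(S)}\ob{D}(X)\to \ob{D}(Y\times_S X)$ is an equivalence. Since $j$ is an immersion, hence a monomorphism, we have $S\times_{\overline{S}}S\cong S$ and therefore $Y\times_S X\cong Y\times_{\overline{S}} X$; and since $j$ is suave (\Cref{LemmaOpenClosed}(1)) it lies in $E$, so the composite $X\to \overline{S}$ again lies in $E$. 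The affinoid stack $\overline{S}$ is a K\"unneth base by \cite[Corollary 1.5.1]{kesting2025categorical} (equivalently, because $\ob{D}(\overline{S})$ is the module category over an analytic ring), so $\ob{D}(Y)\otimes_{\ob{D}(\overline{S})}\ob{D}(X)\xrightarrow{\sim}\ob{D}(Y\times_{\overline{S}}X)$. It then remains to identify the relative tensor product over $\ob{D}(S)$ with the one over $\ob{D}(\overline{S})$, and this is where \Cref{LemmaOpenClosed}(1) enters a second time: it presents $\ob{D}(S)=\ob{cLMod}_C(\ob{D}(\overline{S}))$ as an open, hence \emph{smashing}, localization, whence the relative tensor product over $\ob{D}(S)$ of two $\ob{D}(S)$-linear categories agrees with their relative tensor product over $\ob{D}(\overline{S})$. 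Combining the three identifications gives the chain
\[
\ob{D}(Y)\otimes_{\ob{D}(S)}\ob{D}(X)\;\cong\;\ob{D}(Y)\otimes_{\ob{D}(\overline{S})}\ob{D}(X)\;\xrightarrow{\sim}\;\ob{D}(Y\times_{\overline{S}}X)\;=\;\ob{D}(Y\times_S X),
\]
as desired.

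The step I expect to require the most care is the descent principle, i.e.\ the claim that K\"unneth is $!$-local on the target. The clean way to package it is to compare the two $!$-sheaves $S\mapsto \cat{Pr}_{\ob{D},S}$ and $S\mapsto \cat{Pr}_{\ob{D}(S)}$ — the former satisfies descent by \Cref{RemarkImprovementStefanich} (Stefanich's promotion of $\ob{D}$ to a functor on $2\Corr(\n{C})$), the latter because $\ob{D}(-)$ satisfies $!$-descent and the cover is of effective descent for module categories — together with the comparison functor of \Cref{PropKunnethKernel}, which is an equivalence over $S$ exactly when $\ob{D}$ is K\"unneth over $S$. The base case makes this comparison an equivalence over each quasi-affine $S_n$, and descent then propagates it to $S$. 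The delicate point hidden here, and the reason the argument is not formal, is that the relative tensor product defining the source of the K\"unneth map is a colimit (a bar construction) which must be commuted past the cosimplicial descent limit; this commutation is precisely the descendability input for the $!$-cover, and it is exactly the technical heart that I would borrow from \cite[Proposition 3.12]{kesting2025categorical} and \cite[Corollary 1.5.1]{kesting2025categorical} rather than reprove.
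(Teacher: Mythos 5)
Your proposal is correct and follows essentially the same route as the paper: reduce to the quasi-affine case by $!$-descent (the paper cites \cite[Proposition 1.2]{kesting2025categorical} for this step), then embed $S$ openly into an affinoid $\overline{S}$, identify $Y\times_S X$ with $Y\times_{\overline{S}}X$, apply the affinoid K\"unneth formula of \cite[Corollary 1.5.1]{kesting2025categorical}, and use that $j^*\colon \ob{D}(\overline{S})\to\ob{D}(S)$ is an idempotent (smashing) localization to identify the two relative tensor products. The only divergence is cosmetic: you elaborate the descent principle via the comparison of $\cat{Pr}_{\ob{D},-}$ and $\cat{Pr}_{\ob{D}(-)}$ where the paper simply cites the reduction lemma.
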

\begin{proof}
By \cite[Proposition 1.2]{kesting2025categorical}, it suffices to prove the statement when $S$ is itself quasi-affinoid. Let $j\colon S\to \overline{S}$ be an open immersion with $\overline{S}$ an affinoid analytic stack. Since $j$ is  an immersion, we have a fully faithful embedding $\n{C}_{/S}\hookrightarrow \n{C}_{/\overline{S}}$ preserving fiber products. Furthermore, given $Y,X\to S$ two  maps over $S$ with $X\to S$ in $E$, we have an equivalence 
\[
X\times_{S} Y=X\times_{\overline{S}} Y.
\]
By \cite[Corollary 1.5.1]{kesting2025categorical} the natural map 
\[
\ob{D}(X)\otimes_{\ob{D}(\overline{S})} \ob{D}(Y)\to \ob{D}(X\times_{\overline{S}} Y)=\ob{D}(X\times_{S} Y)
\]
is an equivalence. Since $j^*\colon \ob{D}(\overline{S})\to\ob{D}(S) $ is an idempotent map of symmetric monoidal categories, being an open map, we have that 
\[
\ob{D}(X)\otimes_{\ob{D}(\overline{S})} \ob{D}(Y)=\ob{D}(X)\otimes_{\ob{D}(S)} \ob{D}(Y),
\]
proving that $X\to S$ is K\"unneth as wanted. 
\end{proof}

We end this section with a  technical lemma in representation theory that will be helpful later to study classifying stacks of affinoid groups in analytic stacks.

\begin{lemma}\label{LemmaRepTheoryFlat}
Let $A$ be a static analytic ring, that is, an analytic ring with $A[S]$ static for all $S$ light profinite set. Let $G\to \AnSpec A$ be an analytic group stack  that is corepresented by an $A$-algebra $R$ with induced analytic ring structure.  Suppose that that $R$ is flat with respect to $\otimes_A$, and that the counit map $R\to A$ is of finite tor amplitude. Let $BG=\AnSpec A/G$ be the classifying stack of $G$ over $\AnSpec A$ and let $e\colon \AnSpec A\to BG$ be the quotient map. Then $\ob{D}(BG)$ has a natural $t$-structure making $e^*$ a $t$-exact functor. Furthermore, the following hold: 

\begin{enumerate}

\item $\ob{D}(BG)^{\heartsuit}$ is naturally equivalent to the category of right  $R$-comodules in $\ob{D}(A)^{\heartsuit}$. Similarly, $\ob{D}(BG)$ is equivalent to the category of right $R$-comodules in $\ob{D}(A)$.

\item   The full subcategory $\ob{D}_{-}(BG)\subset \ob{D}(BG)$ of eventually coconnective objects is the left bounded derived category of the heart.

\item $\ob{D}(BG)$ is the left completion of $\ob{D}_{-}(BG)$.

\end{enumerate}
\end{lemma}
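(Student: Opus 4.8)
The plan is to establish the $t$-structure on $\ob{D}(BG)$ by descent along the quotient map $s\colon \AnSpec A \to BG$, and then to prove (1)--(3) by transferring the corresponding statements from the $t$-structure on $\ob{D}(A)$. First I would note that since $G$ is corepresented by a flat $A$-algebra $R$, the map $s$ is a $!$-cover (in fact a $G$-torsor) whose \v{C}ech nerve is the bar construction $\AnSpec (R^{\otimes_A \bullet})$, so that by descent one has $\ob{D}(BG)=\varprojlim_{[n]\in \Delta} \ob{D}(\AnSpec R^{\otimes_A n})$. Flatness of $R$ over $A$ guarantees that all the pullback functors $e^*$ in this cosimplicial diagram are $t$-exact for the natural $t$-structure on each $\ob{D}(R^{\otimes_A n})$, since base change along a flat map is exact on the heart. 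I would then \emph{define} the connective part $\ob{D}(BG)_{\geq 0}$ to be those objects whose pullback along $s$ lands in $\ob{D}(A)_{\geq 0}$; since $s$ is a flat cover this is the standard way to descend a $t$-structure and it makes $e^*=s^*$ $t$-exact essentially by construction. The finite tor amplitude hypothesis on the counit $R\to A$ will be used to ensure that the left adjoint $s_! = s_*$ (by ambidexterity, as $G$ is cohomologically proper once $R\to A$ has finite tor amplitude) has bounded $t$-amplitude, which is what is needed to control connectivity in the descent spectral sequence and to verify that the putative truncation functors are well-defined.

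For part (1), the identification of the heart with $R$-comodules is the standard comparison of descent data along a torsor with comodule structures: unwinding the cosimplicial limit, an object of $\ob{D}(BG)$ is an object $M\in \ob{D}(A)$ together with a coaction $M\to M\otimes_A R$ satisfying the cosimplicial coherences, which is precisely the data of a right $R$-comodule. Restricting to hearts and using that $R$ is flat (so that $-\otimes_A R$ is $t$-exact and the comodule structure is compatible with the heart) gives $\ob{D}(BG)^\heartsuit \simeq \ob{Comod}_R(\ob{D}(A)^\heartsuit)$; the unbounded statement is the same argument before passing to hearts. For part (2), I would show that the canonical functor from the bounded-below derived category of the heart $D^+(\ob{D}(BG)^\heartsuit)\to \ob{D}_-(BG)$ is an equivalence by checking it induces isomorphisms on $\Ext$-groups between objects of the heart and that both sides are generated under the relevant colimits by the heart; the key input is that $s_*$ is $t$-exact up to the finite tor amplitude bound, so that cohomology of comodules can be computed via the $A$-linear bar resolution, which is a finite-length-in-each-degree complex of comodules.

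For part (3), I would invoke that $\ob{D}(A)$ is left complete for its $t$-structure (a property of the trivial/solid analytic ring spectra, since $A$ is static) and that left completeness is inherited by a $t$-structure defined by descent along a flat cover with the appropriate boundedness of $s_*$; concretely, $\ob{D}(BG)=\varprojlim_n \ob{D}(BG)_{\leq n}$ follows from the same limit statement on $\ob{D}(A)$ together with the fact that $s^*$ detects equivalences and commutes with the relevant limits. The main obstacle I expect is the careful verification that the $t$-structure descends well — that is, that the subcategory $\ob{D}(BG)_{\geq 0}$ defined by pullback is closed under the truncation functors and that truncation commutes with $s^*$ — because this requires genuinely using both flatness of $R$ (for $t$-exactness of $s^*$ and of $-\otimes_A R$) \emph{and} the finite tor amplitude of the counit (to bound the $t$-amplitude of $s_*$ and hence control the connectivity estimates in the \v{C}ech/bar descent). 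Once the $t$-structure is shown to descend with these exactness and boundedness properties, statements (1)--(3) follow by the standard comparison between descent along a flat affine cover and comodules, exactly as in the classical theory of $\ob{D}(BG)$ for a flat affine group scheme.
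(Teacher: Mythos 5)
Your overall strategy coincides with the paper's: descend along $s\colon \AnSpec A\to BG$, identify $\ob{D}(BG)$ with $R$-comodules, get the $t$-structure from flatness of $R$, use the finite tor amplitude hypothesis for the left-bounded comparison, and conclude (3) by left completion. However, two of your justifications are misplaced, and one step is under-argued. First, the fact that $s_!\simeq s_*$ and that $s_*$ is $\ob{D}(A)$-linear has nothing to do with the finite tor amplitude of the counit: $s$ is prim because its base change along itself is $G\to\AnSpec A$, an affinoid map with induced analytic ring structure, and primness is local on the target. The actual role of the finite tor amplitude of $R\to A$ is to guarantee that pullback along the \emph{degeneracy} maps of the \v{C}ech nerve (which are base-changed from the unit section) preserves eventually coconnective objects, so that the totalization $\ob{D}(BG)=\ob{Tot}(\ob{D}(G^{\bullet}))$ restricts to an equivalence on the left-bounded subcategories; it is this restricted equivalence that gives (2) and, after left completion, (3). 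The functor $s_*$ itself is already $t$-exact by flatness of $R$ (since $s_*M\simeq M\otimes_A R$ by proper base change), so "bounding the $t$-amplitude of $s_*$" is not where the hypothesis is consumed.

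Second, your identification of the unbounded category with right $R$-comodules by "unwinding the cosimplicial limit" is too quick in the $\infty$-categorical setting: a totalization over $\Delta$ is not literally "an object with a coaction satisfying coherences" until you invoke the Barr--Beck--Lurie comonadicity theorem, which requires checking that $s^*$ is conservative (it is, since $s$ is an epimorphism of analytic stacks) and that $s_*$ preserves colimits (it does, since $s$ is prim). This is how the paper proceeds: $\ob{D}(BG)\simeq\ob{cLMod}_{U(A)}(\ob{D}(A))$ for the comonad $U=s^*s_*$, with $U(A)\simeq R$ flat, which is what actually produces the $t$-structure with $s^*$ $t$-exact — cleaner than trying to verify by hand that the truncations descend. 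Finally, at the level of hearts there is a genuine check you elide: a descent datum is an $R$-linear \emph{isomorphism} $\alpha\colon M\otimes_A R\to M\otimes_A R$, whereas a comodule only provides the linearization of a coaction map, and one must verify that this linearization is automatically invertible (the paper does this via the bar resolution, reducing to the regular comodule $R$, where $\alpha$ comes from the shearing automorphism of $G\times G$). None of these points derails your plan, but each needs to be supplied for the argument to close.
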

\begin{proof}
Let $e\colon \AnSpec A\to BG$, the map $e$ is prim being locally represented by an affinoid analytic stack with the induced analytic ring structure. In particular, both functors $e^*$ and $e_*$ are $\ob{D}(A)$-linear. The functor $e^*$ is conservative as $e$ is an epimorphism of analytic stacks. Hence, by the (co)monadicity theorem, we have that 
\[
\ob{D}(BG)= \ob{cLMod}_{U(A)}(\ob{D}(A))
\]
is the category of left comodules, where $U$ is the comonad $e^*e_*$. We have that $G=(\AnSpec A)\times_{BG} (\AnSpec A)$, so  proper base change identifies $U(A)=R$ as $A$-modules. In particular, the comonad $U(A)$ is flat over $\ob{D}(A)$, which produces a natural $t$-structure on  $\ob{D}(BG)$ such that $e^*$ is a $t$-exact functor.  To compute the  co-algebra $U(A)$, it suffices to determine the abelian category  $\ob{D}(BG)^{\heartsuit}$. This abelian category is given by the category of descent data 
\[
\ob{D}(BG)^{\heartsuit}=\varprojlim_{\Delta_{\leq 2}} \ob{D}(G^{\bullet})^{\heartsuit}
\]
with non-derived $*$-pullbacks as transition maps.  A standard computation yields that $U(A)=R^{\op}$ as coalgebras (the reverse coalgebra of $R$), proving (1). For completeness, let us give the computation. We follow the convention that the classifying stack $BG$ is the colimit of the simplicial diagram of  \cite[Definition B.3.3]{HeyerMannSix}.  The category $\ob{D}(BG)^{\heartsuit}$ is the limit of the diagram 
\[
\begin{tikzcd}
\ob{D}(A)^{\heartsuit} \ar[r, shift left =1 ex]
\ar[r, shift right =1 ex] & \ob{D}(G)^{\heartsuit} \ar[r,"m^*"] \ar[r, "\pi_2^*", shift left =2ex] \ar[r,"\pi_1^*"', shift right =2ex] & \ob{D}(G\times G)^{\heartsuit}. 
\end{tikzcd}
\]
 Thus, an object in the category of descent data is given by a tuple $(M,\alpha)$ where $M\in \ob{D}(A)^{\heartsuit}$ and $\alpha\colon  M\otimes_A R\to M\otimes_A R$ is an  isomorphism of $R$-modules (the orbit map)  satisfying  the following properties:
 \begin{itemize}
 
 \item[(i)] The pullback of $\alpha$ along the counit map $\mu\colon R\to A$ is the identity.
 \item[(ii)]  $\alpha$ satisfies a  cocycle condition, that is,  $\pi_1^* \alpha \circ \pi_2^* \alpha=m^* \alpha$ (see \cite[Lemma A.4.23]{HeyerMannSix}).  
 
\end{itemize} 
  A morphism of descent data $(M,\alpha)\to (N,\beta)$ is a map $M\to N$ of $A$-modules that intertwines the isomorphisms $\alpha$ and $\beta$ respectively. Given $(M,\alpha)$ as above, its associated \textit{right} $R$-comodule is given by the pair $(M,\rho)$ where $\rho\colon M\xrightarrow{\id_M\otimes 1} M\otimes_A R \xrightarrow{\alpha} M\otimes_A R$.    We need to see that $(M,\rho)$ satisfies the following axioms:
  
  \begin{itemize}
  
\item[(a)] The composite of $\rho$ with the counit $\mu$ is the identity of $M$.

\item[(b)]   One has $(\rho\otimes \id_R)\circ \rho= (\id_M\otimes \Delta)\circ \rho$ where $\Delta\colon R\to R\otimes R$ is the comultiplication map. 
  
  \end{itemize}
 
The axiom (a) follows immediately from the property (i). For the axiom (b), by (ii) we have the following commutative diagram 
 \[
 \begin{tikzcd}
M\otimes_A 1\otimes_A 1 \ar[r,"\rho"]  \ar[d] & M\otimes_A 1  \otimes_A R  \ar[d] \ar[rd, "\rho\otimes \id_R"] & \\ 
M\otimes_A R \otimes_A R   \ar[r,"\pi_2^* \alpha"] &   M\otimes_A R\otimes_A R \ar[r,"\pi_1^* \alpha"]& M\otimes_A R\otimes_A R  
 \end{tikzcd}
 \]
On the other hand, we have a commutative diagram 
\[
\begin{tikzcd}
M \otimes_A 1\ar[d] \ar[rd,"\rho"] & \\ 
M\otimes_A R \ar[r,"\alpha"] \ar[d,"\id_M\otimes \Delta"] & M\otimes_A R \ar[d,"\id_M\otimes \Delta"] \\ 
M\otimes_A R\otimes_A R \ar[r,"m^*\alpha"] & M\otimes_A R\otimes_A R
\end{tikzcd}
\]
Combining both we see that (ii) yields the axiom (b). 

Conversely, given a right $R$-comodule $(M,\rho)$, the $R$-linearization of the comodule action $\rho$ yields a map of $R$-modules $\alpha\colon M\otimes_A R\to M\otimes_A R$. It is immediate to see that the $R$-linearization of the axiom (a) yields the axiom (i), and that the $R\otimes_A R$-linearization of (b) yields (ii).  It is left to see that $\alpha$ is an isomorphism. Let $\mathcal{C}\subset \ob{cRMod}_R(\ob{D}(A)^{\heartsuit})$ be the full subcategory of  right $R$-comodules $(M,\rho)$ for which  the linearization $\alpha\colon M\otimes_A R\to M\otimes_A R$ is an isomorphism, we want to see that the inclusion is an equivalence. Note that, since $R$ is flat,  $\mathcal{C}$ is stable under finite limits, colimits, extensions and tensor products by objects in $A$.  By the Bar construction, the right $R$-comodule $M$ is equivalent to the totalization (in the abelian category) of $(M\otimes_A R^{\otimes_A n+1})_{[n]\in \Delta_{\leq 2}}$, where the right comodule structure on $M\otimes_A R^{\otimes_A n+1}$ arises from the  last  copy of $R$ on the right. Therefore, to show that $\mathcal{C}=\ob{cRMod}_R(\ob{D}(A)^{\heartsuit})$ it suffices to show that $R\in \mathcal{C}$,  but this is obvious since the map $\alpha\colon  R\otimes_A R\to R\otimes_A R$ arises from the isomorphism of stacks  $G\times G\xrightarrow{\sim} G\times G$ sending $(h,g)\mapsto (hg, g)$.     The previous discussion produces an equivalence of categories $\ob{D}(BG)^{\heartsuit}\cong \ob{cRMod}_R(\ob{D}(A)^{\heartsuit})=\ob{cLMod}_{R^{\op}}(\ob{D}(A)^{\heartsuit})$ compatible with the forgetful functor towards $A$-modules, proving that $U(A)=R^{\op}$ as wanted.

For (2), since $R$ is $A$-flat and the augmentation map $R\to A$ has finite tor amplitude, the diagram 
\[
\ob{D}(BG)=\ob{Tot}(\ob{D}(G^{\bullet}))
\]
restricts to an equivalence 
\begin{equation}\label{eqoj1okqwndkqwd}
\ob{D}_{-}(BG) = \ob{Tot}(\ob{D}_{-}(G^{\bullet}))
\end{equation}
on the left-bounded full subcategories (i.e. on eventually coconnective objects).  Part (2) then follows from the  argument of  \cite[Proposition A.1.2]{MannSix}.  Part (3) is clear from \eqref{eqoj1okqwndkqwd} after passing to left completions, and since the categories $\ob{D}(G^{\bullet})$ are left complete. 
\end{proof}

\begin{remark}\label{RemarkLeftActionChange}
In \Cref{LemmaRepTheoryFlat} we have identified $\ob{D}(BG)$ with right $R$-comodules on $\ob{D}(A)$. The inverse map produces an isomorphism of groups $G\xrightarrow{\sim} G^{\op}$ between $G$ and its reversed group structure where multiplication is switched. This produces an equivalence of Hopf algebras $\iota\colon R\xrightarrow{\sim} R^{\op}$, which yields an isomorphism between left and right $R$-comodules. 
\end{remark}

\subsection{Cartier duality in symmetric monoidal categories}\label{ss:CartierDualitySix}

In this section we briefly recall the abstract Cartier duality of \cite[Section 3]{LurieElliptic}. Let $\n{C}$ be a small symmetric monoidal category, let $\ob{CAlg}(\n{C})$ be the category of commutative algebra objects in $\n{C}$, and let $\cat{Aff}_{\n{C}}:=\ob{CAlg}(\n{C})^{\op}$ be its opposite category, given $A\in \ob{CAlg}(\n{C})$ we write $\Spec^{\n{C}} A\in \cat{Aff}_{\n{C}}$ for its spectrum.  We work with the category $\ob{CMon}(\ob{PShv}(\cat{Aff}_{\n{C}}))=\ob{PShv}(\cat{Aff}_{\n{C}}, \cat{CMon})$ of presheaves on $\cat{Aff}_{\n{C}}$ valued in commutative monoids in anima. We let  $\ob{PShv}(\cat{Aff}_{\n{C}}, \Sp_{\geq 0}) \subset \ob{PShv}(\cat{Aff}_{\n{C}}, \cat{CMon})$ be the full subcategory of presheaves in connective spectra, equivalently, the full-subcategory of commutative  grouplike objects \cite[Definition 1.3.8]{LurieElliptic}. We see $\ob{PShv}(\cat{Aff}_{\n{C}}, \cat{CMon})$  endowed with the smashing symmetric monoidal tensor product. 

\begin{definition}\label{DefinitionAffineGl}
We let $\bb{A}^1_{\n{C}}$ be the functor in $\ob{PShv}(\cat{Aff}_{\n{C}}, \cat{CMon})$ sending an object $\Spec^{\n{C}} A$ to the monoid $\ob{Map}_{\n{C}}(1, A)$. We let ${\bf{GL}}_{1,\n{C}}\subset \bb{A}^1_{\n{C}}$ be the full sub-presheaf of invertible elements in $\ob{Map}_{\n{C}}(1, A)$. 
\end{definition}

The main result in the abstract  Cartier duality set up of \cite{LurieElliptic} is the following theorem. 

\begin{theorem}\label{ThmCartierDualitySymmetricmonoidal}
Define the Cartier  duality functor $\bb{D}\colon \ob{PShv}(\cat{Aff}_{\n{C}}, \cat{CMon})\to \ob{PShv}(\cat{Aff}_{\n{C}}, \cat{CMon})$ to be 
\[
\bb{D}(H):= \iHom(H, \bb{A}^1_{\n{C}})
\]
where the internal Hom is taking place in $\ob{PShv}(\cat{Aff}_{\n{C}}, \cat{CMon})$.  The following holds: 
\begin{enumerate}

\item Suppose that $H$ is grouplike, then the natural map 
\[
\iHom(H, {\bf{GL}}_{1,\n{C}})\to \iHom(H, \bb{A}^1_{\n{C}})
\]
is an equivalence and $\bb{D}(H)$ is grouplike. 

\item Let $\n{C}^{\ob{dual}}\subset \n{C}$ be the full subcategory of dualizable objects in $\n{C}$. Let $A\in \ob{bCAlg}(\n{C}^{\ob{dual}})$ be a commutative and cocommutative bialgebra in $\n{C}^{\ob{dual}}$ (\cite[Definition 3.3.1]{LurieElliptic}) with spectrum  $H:=\Spec^{\n{C}} A\in \ob{CMon}(\cat{Aff}_{\n{C}})$. Then  the  presheaf $\bb{D}(H)$ is corepresented by the algebra $A^{\vee}$, and the Cartier  duality functor restricts to the natural equivalence of categories
\begin{equation}\label{eqpmapsnfoawsd}
\ob{bCAlg}(\n{C}^{\ob{dual}}) \cong \ob{bCAlg}(\n{C}^{\ob{dual},\op})
\end{equation}
induced by the symmetric monoidal equivalence $\n{C}^{\ob{dual}}\cong \n{C}^{\ob{dual},\op}$ given  by mapping $A\mapsto A^{\vee}$. 

\item We say that an object $A\in \ob{cBAlg}(\n{C})$ is Hopf if $\Spec^{\n{C}} A$ is grouplike. Suppose that the underlying object $A$ is dualizable. Then $A$ is Hopf if and only if $A^{\vee}$ is Hopf. 

\item  Let $A, B\in \ob{bCAlg}(\n{C})$ with $A$ dualizable in $\n{C}$. Let $H:=\Spec^{\n{C}} A$ and $G=\Spec^{\n{C}} B$, and suppose that we are given with a linear pairing $\Psi\colon G\otimes H\to \bb{A}^1_{\n{C}}$. Consider  the natural map $G\to \bb{D}(H)$ induced by  $\Psi$, and let $\Psi^*\colon A^{\vee}\to B$ be its associated map of bialgebras. Let $\theta\colon 1\to A\otimes B$ be the map that corresponds to the composition 
\[
G\times H\to G\otimes H\to \bb{A}^1_{\n{C}}.
\]
Then $\Psi^*$ is the adjoint of $\theta$ under the natural equivalence $A\otimes B=\iHom_{\n{C}}(A^{\vee}, B)$.  In particular, $G\to \bb{D}(H)$ is an equivalence if and only if $\theta$ is the unit of a duality in $\n{C}$. 
\end{enumerate}
\end{theorem}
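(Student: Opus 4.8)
The plan is to reduce the whole assertion to the comparison of two morphisms $A^{\vee}\to B$ in $\n{C}$, both extracted from $\Psi$, and then to show they agree by unwinding the relevant adjunctions on universal elements. First I would invoke part (2): since $A$ is dualizable and $H=\Spec^{\n{C}}A$ is grouplike, the presheaf $\bb{D}(H)=\iHom(H,\bb{A}^{1}_{\n{C}})$ is corepresented by $A^{\vee}$, so $\bb{D}(H)=\Spec^{\n{C}}A^{\vee}$. As $G=\Spec^{\n{C}}B$ is also representable, the Yoneda lemma turns the induced map $G\to\bb{D}(H)$ into a map of commutative algebras $A^{\vee}\to B$, which is by definition $\Psi^{*}$ (a bialgebra map by parts (2)--(3)). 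On the other side, dualizability of $A$ gives the canonical equivalence $A\otimes B\cong (A^{\vee})^{\vee}\otimes B\cong\iHom_{\n{C}}(A^{\vee},B)$, under which $\theta\colon 1\to A\otimes B$ is adjoint to a morphism $A^{\vee}\to B$. Thus the claim is precisely that these two morphisms $A^{\vee}\to B$ coincide.

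Next I would make both morphisms explicit through tautological points. The map $G\to\bb{D}(H)$ is determined by the image of the universal point $\id_{B}\in G(\Spec^{\n{C}}B)=\Map_{\ob{CAlg}(\n{C})}(B,B)$ inside $\bb{D}(H)(\Spec^{\n{C}}B)=\Map_{\ob{CAlg}(\n{C})}(A^{\vee},B)$, and that image is $\Psi^{*}$. Unwinding the tensor--hom adjunction in $\ob{PShv}(\cat{Aff}_{\n{C}},\cat{CMon})$ that produces $G\to\bb{D}(H)$ from $\Psi$, this image is the character $\chi\colon H_{B}\to\bb{A}^{1}_{\n{C}}$ over $\Spec^{\n{C}}B$ (where $H_{B}:=H\times\Spec^{\n{C}}B$) obtained by freezing the first slot of $\Psi$ at $\id_{B}$. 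On the other hand $G\times H=\Spec^{\n{C}}(A\otimes B)$ is representable, so the composite $G\times H\to G\otimes H\xrightarrow{\Psi}\bb{A}^{1}_{\n{C}}$ is classified by Yoneda by an element $\theta\in\bb{A}^{1}_{\n{C}}(\Spec^{\n{C}}(A\otimes B))=\Map_{\n{C}}(1,A\otimes B)$, namely the value of $\Psi$ on the tautological bi-point of $G\times H$ over $A\otimes B$.

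The heart of the argument, and the step I expect to be the main obstacle, is to identify these two descriptions. The character $\chi$ classifying $\Psi^{*}$ via part (2) has an underlying $\bb{A}^{1}$-section, namely an element of $\bb{A}^{1}_{\n{C}}(H_{B})=\bb{A}^{1}_{\n{C}}(\Spec^{\n{C}}(A\otimes B))=\Map_{\n{C}}(1,A\otimes B)$, and a short unwinding of the adjunction shows this element is exactly $\theta$. It then remains to check that the two maps $A^{\vee}\to B$ assigned to $\chi$ agree: the algebra map corepresenting $\chi$ (that is, $\Psi^{*}$), and the duality adjoint of its underlying element $\theta$ under $A\otimes B=\iHom_{\n{C}}(A^{\vee},B)$. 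This is the compatibility of the corepresentability identification of part (2) (characters of the coalgebra $A$ over $B$ = algebra maps $A^{\vee}\to B$) with the duality adjunction on $\n{C}$ (elements of $A\otimes B$ = maps $A^{\vee}\to B$). The only genuine work is to rule out a twist, by making sure the comultiplication on $A$ and the induced multiplication on $A^{\vee}$ enter both adjunctions in the same way; this is coherence bookkeeping for the Day-convolution internal hom of $\ob{PShv}(\cat{Aff}_{\n{C}},\cat{CMon})$ rather than a computation.

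Finally I would deduce the criterion. Since the $\Spec^{\n{C}}$ (Yoneda) embedding $\cat{Aff}_{\n{C}}\hookrightarrow\ob{PShv}(\cat{Aff}_{\n{C}},\cat{CMon})$ is fully faithful, the map $G\to\bb{D}(H)$ is an equivalence if and only if $\Psi^{*}\colon A^{\vee}\to B$ is an isomorphism of algebras, equivalently an isomorphism of underlying objects (a bialgebra map that is an underlying isomorphism is a bialgebra isomorphism). By the identity just proved this is the same as asking that the adjoint $A^{\vee}\to B$ of $\theta$ be an isomorphism. By the standard characterization of dualizable objects, a map $\theta\colon 1\to A\otimes B$ whose adjoint $A^{\vee}\to B$ is an isomorphism is exactly a coevaluation exhibiting $B$ as a dual of $A^{\vee}$, i.e. the unit of a duality in $\n{C}$ (with evaluation transported from the evaluation of the dualizable object $A$). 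Hence $G\to\bb{D}(H)$ is an equivalence if and only if $\theta$ is the unit of a duality, as claimed.
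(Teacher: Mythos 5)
Your plan is structurally the same as the paper's proof of part (4) (parts (1)--(3) are citations to Lurie in the paper, and you correctly treat (2) as an input): both arguments reduce to showing that the algebra map $\Psi^*\colon A^{\vee}\to B$ obtained from the corepresentability of $\bb{D}(H)$ coincides with the duality adjoint of $\theta$. However, the step you yourself single out as ``the heart of the argument'' --- that the corepresentability identification $\bb{D}(H)=\Spec^{\n{C}}A^{\vee}$ of part (2) is compatible with the duality adjunction $\Map_{\n{C}}(1,A\otimes B)\simeq\Map_{\n{C}}(A^{\vee},B)$ --- is exactly the content of the assertion, and you defer it as ``coherence bookkeeping'' without supplying an argument. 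As written this is a genuine gap: the statement of part (2) does not tell you \emph{which} identification of $\bb{D}(H)(\Spec^{\n{C}}B)$ with $\Map_{\ob{CAlg}(\n{C})}(A^{\vee},B)$ is in play, so the ``twist'' you worry about cannot be ruled out without opening up that identification.

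The paper closes this gap with one concrete device your proposal is missing: it factors $G\to\bb{D}(H)$ through the \emph{unpointed} mapping prestack $[H,\bb{A}^1_{\n{C}}]\colon X\mapsto\bb{A}^1_{\n{C}}(X\times H)$, which for $X=\Spec^{\n{C}}C$ evaluates to $\Map_{\n{C}}(1,C\otimes A)=\Map_{\n{C}}(A^{\vee},C)=\Map_{\ob{CAlg}(\n{C})}(\Sym_{\n{C}}A^{\vee},C)$ and is therefore corepresented by the free algebra $\Sym_{\n{C}}A^{\vee}$. The forgetful map $\bb{D}(H)\to[H,\bb{A}^1_{\n{C}}]$ corresponds to the counit $\Sym_{\n{C}}A^{\vee}\to A^{\vee}$, the composite $G\to[H,\bb{A}^1_{\n{C}}]$ classifies $\theta$ essentially by definition, and the compatibility you need collapses to the triangle identity: since $A^{\vee}\to\Sym_{\n{C}}A^{\vee}\to A^{\vee}$ is the identity, $\Psi^*$ is the restriction to $A^{\vee}$ of the algebra map classifying $\theta$, i.e.\ the duality adjoint of $\theta$. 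If you insert this factorization your argument becomes the paper's; your final paragraph deducing the equivalence criterion from the standard characterization of duality data is fine as written.
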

\begin{proof}
Part (1) is clear since ${\bf{GL}}_{1,\n{C}}$ is the largest submonoid of $\bb{A}^1_{\n{C}}$  which is grouplike. Part (2) is \cite[Propositions 3.8.1 and 3.8.5]{LurieElliptic}. Part (3) is \cite[Proposition 3.9.9]{LurieElliptic}. Finally, part (4) is \cite[Remark 3.8.4 and Proposition 3.8.5 (3)]{LurieElliptic}, but we give the details in the next paragraph.

By enlarging $\n{C}$ via Yoneda and the Day convolution if necessary, we can assume that $\n{C}$ admits countable colimits and that the symmetric monoidal structure commutes with countable colimits in each variable. This assumption on the tensor product is necessary for applying \cite[Proposition 3.8.1]{LurieElliptic}, but does not affect the conclusion for general $\n{C}$ thanks to the Yoneda embedding as both $\Spec^{\n{C}} A$ and $\Spec^{\n{C}} A^{\vee}$ belong to the essential image of $\n{C}$ in the presheaf category.

 Consider the prestack $[H, \bb{A}^1_{\n{C}}]$ on $\cat{Aff}_{\n{C}}$ sending $X$ to $\bb{A}^1_{\n{C}}(X\times H)$. If $X=\Spec^{\n{C}} C$ one has  that 
\[
\bb{A}^1_{\n{C}}(X\times H)= \Map_{\n{C}}(1, C\otimes A)= \Map_{\n{C}}(A^{\vee}, C)=\Map_{\ob{CAlg}(\n{C})}(\Sym_{\n{C}} A, C).
\]
Then, $[H, \bb{A}^1_{\n{C}}]$ is corepresented by the algebra $\Sym_{\n{C}} A^{\vee}$ and the composition  
\[
G\to \bb{D}(H)\to [H,\bb{A}^1_{\n{C}}]
\]
arises from morphism of algebras
\[
\Sym_{\n{C}} A^{\vee}\to A^{\vee}\xrightarrow{\Psi^*} B. 
\]
By definition, the composite $A^{\vee}\to \Sym_{\n{C}} A^{\vee} \to B$ is dual to the map $\theta$. Since the composite $A^{\vee}\to \Sym_{\n{C}} A^{\vee}\to A^{\vee}$ is the identity, we deduce that $\Psi^*$ is adjoint to $\theta$ as wanted. 
\end{proof}

\subsection{Cartier duality in a six functor formalism}
\label{ss:CartierDualitySixFunctors}

We now specialize the  discussion of Cartier duality from \Cref{ss:CartierDualitySix} to a six functor formalism.  We fix a regular cardinal $\kappa$ and assume that the six functor formalisms are $\kappa$-presentable as in \Cref{ss:PresentableKernels}, see \Cref{ConventionPresentable}.

Let $(\n{C},E)$ be a small geometric set up with finite limits and let $\ob{D}$ be a six functor formalism on $(\n{C},E)$.

\begin{construction}\label{ConstructionCartierDualityKernel}
 Let $S\in \n{C}$, recall from \Cref{RemarkFunctorsToCorrespondence} that we have a symmetric monoidal functor 
\[
(-)^*\colon (\n{C}_{/S}^E)^{\sqcup,\op}\to \Corr(\n{C}_{/S})\to \ob{K}_{\ob{D},S}\to \cat{Pr}_{\ob{D},S}.
\]
Composing with the natural map $(\n{C}_{/S}^{E})^{\op}\to \ob{CAlg}((\n{C}^E_{/S})^{\op,\sqcup})$ one gets a coproduct-preserving functor
\begin{equation}\label{eqImageAlgebrasPr}
[-]^*\colon (\n{C}_{/S}^{E})^{\op}\to  \ob{CAlg}(\ob{K}_{\ob{D},S}) \to \ob{CAlg}(\cat{Pr}_{\ob{D},S}).
\end{equation}
In other words, for $X\in \n{C}^E_{/S}$, we  also write $X\in \ob{Pr}_{\ob{D},S}$ when considered only as an object in the presentable category of kernels, and $[X]^*$ when considered with its natural commutative algebra structure arising from the map \eqref{eqImageAlgebrasPr}. More explicitly, let $f\colon X\to S$ be the structural map and $\Delta_X\colon X\to X\times_S X$ be the diagonal.   The unit $S\to [X]^*$ is given by the $*$-pullback  map $f^*\colon S\to X$, and  the multiplication $[X]^*\otimes  [X]^*\to [X]^*$ is given by $*$-pullback $\Delta^*_X\colon X\times_S X\to X$.

Since $\cat{Pr}_{\ob{D},S}$ is symmetric monoidal, the evaluation at the unit 
\[
\ob{Fun}_{S}(S, -)\colon \cat{Pr}_{\ob{D},S}\to \cat{Pr}^{\kappa}
\]
is lax symmetric monoidal. In particular, for $\s{A}\in \ob{CAlg}( \cat{Pr}_{\ob{D},S})$ a commutative algebra, the category $\ob{D}(\s{A}):=\ob{Fun}_{S}(S, \s{A})$ is naturally endowed with a symmetric monoidal structure. In case $\s{A}=[X]^*$ arises from an object $X\in \n{C}^E_{/S}$ via \eqref{eqImageAlgebrasPr}, this is nothing but the natural symmetric monoidal structure of the category $\ob{D}(X)$ arising from the six functor formalism.

Passing to  duals in the kernel category (cf. \Cref{RemarkFunctorsToCorrespondence}) and using its self duality (\cite[Proposition 4.1.4]{HeyerMannSix}), we get a functor 
\begin{equation}\label{eqShierkRealization}
[-]_!\colon  \n{C}_{/S}^E \to \ob{cCAlg}(\ob{K}_{\ob{D},S}). 
\end{equation}
Informally, $[X]_!$ is the cocommutative coalgebra with counit $[X]_!\to S$ given by the $!$-pushforward $f_!\colon X\to S$, and with comultiplication $[X]_!\to [X]_!\otimes [X]_!$ given by $\Delta_!\colon X\to X\times_S X$.

 Passing to cocommutative algebra objects in \eqref{eqImageAlgebrasPr} and \eqref{eqShierkRealization} we obtain  functors 
\[
[-]^*\colon (\ob{CMon}(\n{C}_{/S}^E))^{\op}\to  \ob{bCAlg}(\ob{K}_{\ob{D},S}),
\]
and
\[
[-]_!\colon \ob{CMon}(\n{C}_{/S}^E) \to \ob{bCAlg}(\ob{K}_{\ob{D},S}). 
\]
If $f\colon X\to S$ is a commutative algebra object with multiplication $m\colon X\times_S X\to X$ and unit $e\colon S\to X$, the coalgebra structure of $[X]^*$ arises from the $*$-functors $e^*\colon X\to S$ and $m^*\colon X\to X\times_S X$. Similarly, the algebra structure of $[X]_!$ arises from the $!$-functors $e_!\colon S\to X$ and $m_!\colon X\times_S X\to X$. In particular, the algebra structure of $\ob{D}([X]_!)$ arises from convolution along the $!$-pushforward of $m\colon X\times_S X\to X$. 
\end{construction}

\begin{remark}
The essential image of \eqref{eqImageAlgebrasPr} lands in the full subcategory of commutative algebras in dualizable objects in  $\cat{Pr}_{\ob{D},S}$. As  $\cat{Pr}_{\ob{D},S}$ is $\kappa$-presentable by design, the functor \eqref{eqImageAlgebrasPr} lands in the \textit{small} full subcategory of commutative algebras of $\kappa$-compact objects $\cat{Pr}_{\ob{D},S,\kappa}$.   Therefore, for the sake of applying the Cartier duality formalism of \Cref{ss:CartierDualitySix}, it will suffice to work with  $\cat{Pr}_{\ob{D},S,\kappa}$. 
\end{remark}

%

We denote $\ob{Aff}_{\ob{D},S}^{\kappa}:= \ob{CAlg}(\cat{Pr}_{\ob{D},S,\kappa})^{\op}$. Given $\s{A}\in \ob{CAlg}(\cat{Pr}_{\ob{D},S,\kappa})$ we write $\Spec^{S}(\s{A})\in \ob{Aff}_{\ob{D},S}^{\kappa}$ for its spectrum. In particular, if $X\in \n{C}^{E}_{/S}$, we write  $\Spec^{S} [X]^{*}\in \ob{Aff}_{\ob{D},S}^{\kappa}$.\footnote{One could just denote this object by $X$, but we prefer to make explicit the distinction in order to keep track in which category the object lands.}   Let $\bb{A}^1_{\ob{D,S}}\in \ob{PShv}(\ob{Aff}^{\kappa}_{\ob{D},S}, \cat{CMon})$ be the presheaf on commutative monoids  sending an object $\Spec^{S} \s{A}$ to the commutative monoid 
\[
\ob{D}(\s{A})^{\simeq}_{\kappa} \subset \ob{D}(\s{A})_{\kappa}
\]
consisting on the largest anima contained in the symmetric monoidal category   $\ob{D}(\s{A})_{\kappa}$ of $\kappa$-compact objects.  We let $\GL_{1,\ob{D},S}\subset \bb{A}^1_{\ob{D,S}}$ be the group-like submonoid representing those elements $\s{L}\in \ob{D}(\s{A})$ which are invertible (note that since the six functor formalism is $\kappa$-presentable, any invertible object in $\ob{D}(\s{A})$ is automatically $\kappa$-compact).

The following lemma describes the Cartier dual of a commutative grouplike object in $\n{C}^{E}_{/S}$.  

\begin{proposition}\label{PropGL1restriction}
Let $\bb{D}(-)\colon\ob{PShv}(\ob{Aff}^{\kappa}_{\ob{D},S}, \cat{CMon}) \to \ob{PShv}(\ob{Aff}^{\kappa}_{\ob{D},S}, \cat{CMon})$ be the Cartier duality functor of \Cref{ThmCartierDualitySymmetricmonoidal}.  Let  $\ob{K}_{\ob{D},S}\subset \cat{Pr}_{\ob{D},S}$ be the category of kernels given by the essential image of the functor $\cat{Corr}(\n{C}^{E}_{/S})\to  \cat{Pr}_{\ob{D},S}$. Let $\cat{Aff}_{\ob{K},S}:=\ob{CAlg}(\ob{K}_{\ob{D},S})^{\op}\subset \ob{Aff}^{\kappa}_{\ob{D},S}$.  Then the Cartier duality  functor $\bb{D}$ preserves the full subcategory  $\ob{CMon}(\cat{Aff}_{\ob{K},S})\subset \ob{PShv}(\ob{Aff}^{\kappa}_{\ob{D},S}, \cat{CMon})$ and its restriction is given by the (opposite of the) natural functor 
\begin{equation}\label{eqCartierDualityKernel}
\ob{bCAlg}(\ob{K}_{\ob{D},S})^{\op} \xrightarrow{(-)^{\vee}} \ob{bCAlg}(\ob{K}_{\ob{D},S}^{\op}) = \ob{bCAlg}(\ob{K}_{\ob{D},S})
\end{equation}
given by passing to the dual in the kernel category (and where in the second equality we use the natural self-duality of \cite[Proposition 4.1.4]{HeyerMannSix}).

In particular, the composite 
\[
  \ob{CMon}(\n{C}^{E}_{/S})^{\op} \xrightarrow{\Spec^{S} ([-]^*)} \ob{CMon}(\cat{Aff}_{\ob{K},S})^{\op} \xrightarrow{\bb{D}} \ob{CMon}(\cat{Aff}_{\ob{K},S})
\]
 (where the left arrow is induced from \eqref{eqImageAlgebrasPr} after taking $\Spec^{S}$) is given by the map 
 \[
\Spec^S ([-]_!) \colon \ob{CMon}(\n{C}^{E}_{/S})^{\op} \to   \ob{CMon}(\cat{Aff}_{\ob{K},S})
 \]
 arising from \eqref{eqShierkRealization} after taking $\Spec^S$.
\end{proposition}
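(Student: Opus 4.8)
The plan is to reduce everything to \Cref{ThmCartierDualitySymmetricmonoidal} applied with the ambient symmetric monoidal category taken to be $\cat{Pr}_{\ob{D},S,\kappa}$, exploiting that the objects of the kernel category are self-dual. First I would recall that, by the self-duality of the category of correspondences \cite[Proposition 4.1.4]{HeyerMannSix} transported along the symmetric monoidal $!$- and $*$-realizations of \Cref{RemarkFunctorsToCorrespondence}, every object $X\in \ob{K}_{\ob{D},S}$ is dualizable in $\cat{Pr}_{\ob{D},S}$ with $X^{\vee}=X$, and since the six functor formalism is $\kappa$-presentable these objects land in the small subcategory $\cat{Pr}_{\ob{D},S,\kappa}$. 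Moreover this self-duality upgrades to a symmetric monoidal equivalence $\ob{K}_{\ob{D},S}\cong \ob{K}_{\ob{D},S}^{\op}$, which is precisely the equivalence underlying the functor \eqref{eqCartierDualityKernel}; in particular $(-)^{\vee}$ carries the subcategory of kernel objects (and their commutative–cocommutative bialgebras) into itself.

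With this in hand, the preservation statement and the identification of the restriction of $\bb{D}$ with \eqref{eqCartierDualityKernel} follow directly from \Cref{ThmCartierDualitySymmetricmonoidal}(2). Indeed, a bialgebra $\s{A}\in \ob{bCAlg}(\ob{K}_{\ob{D},S})$ is a commutative and cocommutative bialgebra supported on dualizable objects of $\cat{Pr}_{\ob{D},S,\kappa}$, so the theorem shows that $\bb{D}(\Spec^S \s{A})$ is corepresented by $\s{A}^{\vee}$ and that $\bb{D}$ agrees with the equivalence $\s{A}\mapsto \s{A}^{\vee}$ of \eqref{eqpmapsnfoawsd}. Since $\s{A}^{\vee}$ is again a bialgebra on a kernel object by the previous paragraph, $\bb{D}$ preserves $\ob{CMon}(\cat{Aff}_{\ob{K},S})$, and its restriction is the (opposite of the) functor \eqref{eqCartierDualityKernel}, the extra ``opposite'' accounting for the contravariance built into $\bb{D}=\iHom(-,\bb{A}^1_{\ob{D},S})$ that is recorded by the variance of the composite in the statement.

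For the final ``in particular'' clause I would compute $([X]^*)^{\vee}$ for a commutative group object $X\in \ob{CMon}(\n{C}^E_{/S})$ by tracking the structure maps through the duality. By \Cref{ConstructionCartierDualityKernel} the bialgebra $[X]^*$ has unit $f^*$, multiplication $\Delta_X^*$, counit $e^*$, and comultiplication $m^*$. By \Cref{RemarkFunctorsToCorrespondence} the self-duality sends each $*$-map to the corresponding $!$-map, namely $f^*\mapsto f_!$, $\Delta_X^*\mapsto (\Delta_X)_!$, $e^*\mapsto e_!$, and $m^*\mapsto m_!$, while the operation $\s{A}\mapsto \s{A}^{\vee}$ simultaneously interchanges multiplication with comultiplication and unit with counit. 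Reading off the resulting data, $([X]^*)^{\vee}$ has counit $f_!$, comultiplication $(\Delta_X)_!$, unit $e_!$, and multiplication $m_!$, which are exactly the defining structure maps of $[X]_!$ in \Cref{ConstructionCartierDualityKernel}. Hence $([X]^*)^{\vee}=[X]_!$ as bialgebras, and applying $\Spec^S$ identifies the composite with $\Spec^S([-]_!)$.

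I expect the main obstacle to be the bookkeeping in this last step: one must verify that the symmetric monoidal self-duality of $\Corr(\n{C}^E_{/S})$ genuinely intertwines the algebra structure on $[X]^*$ coming from the diagonal $*$-pullback (and the coalgebra structure coming from the group-law $*$-pullback) with, respectively, the coalgebra structure on $[X]_!$ coming from the diagonal $!$-pushforward and the algebra structure coming from the group-law $!$-pushforward, all compatibly with the cocommutativity/commutativity coherence. Concretely this means checking that the equivalence $\ob{K}_{\ob{D},S}\cong\ob{K}_{\ob{D},S}^{\op}$ is symmetric monoidal at the level of bialgebra objects and respects the direction of every structure map; this is functorial but requires care in matching the coherence data inherited from \cite[Corollary 2.4.2]{HeyerMannSix}. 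Everything else is a formal invocation of \Cref{ThmCartierDualitySymmetricmonoidal}.
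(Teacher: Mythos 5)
Your proposal is correct and follows essentially the same route as the paper: invoke \Cref{ThmCartierDualitySymmetricmonoidal}(2) together with the stability of $\ob{K}_{\ob{D},S}$ under duals to get preservation and the identification with $(-)^{\vee}$, and then observe that the self-duality of correspondences exchanges the $*$- and $!$-structure maps so that $([X]^*)^{\vee}=[X]_!$. The only difference is that you spell out the bookkeeping of the structure maps that the paper compresses into a citation of \Cref{ConstructionCartierDualityKernel}.
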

\begin{proof}
By \Cref{ThmCartierDualitySymmetricmonoidal} (2) Cartier duality leaves stable bialgebras in dualizable objects of $\cat{Pr}_{\ob{D},S,\kappa}$. Since $\ob{K}_{\ob{D},S}$ is stable under passing to the dual, then Cartier duality also leaves stable bialgebras in $\ob{K}_{\ob{D},S}$ which yields \eqref{eqCartierDualityKernel}. The second claim that Cartier duality exchanges $[X]^*$ and $[X]_!$ follows from \Cref{ConstructionCartierDualityKernel} and \Cref{ThmCartierDualitySymmetricmonoidal}.  
\end{proof}

Let $X\in \ob{CMon}(\n{C}^E_{/S})$ be a commutative monoid object.  \Cref{PropGL1restriction} tells us that the Cartier dual of $[X]^*$ in the category of kernels is given by $[X]_!$ (in other words, it is the identity in the underlying object but exchanges $*$ and $!$ (co)multiplications). In many examples, at least those discussed in this paper, the Cartier dual of $X$ already exists in $\n{C}^E_{/S}$. Using \Cref{ThmCartierDualitySymmetricmonoidal} (4), the standard way to witness this is by constructing a pairing with values in  $\bb{A}^1_{\ob{D},S}$ (or $\GL_{1,\ob{D},S}$ if $X$ is grouplike). We finish this section by explaining how such pairing can  be constructed from $\ob{PShv}(\n{C}^E_{/S}, \cat{CMon})$, the category of commutative  monoid prestacks on $\n{C}^E_{/S}$.

 The natural functor  $F:=\Spec^S([-]^*)\colon \n{C}^E_{/S}\to \ob{Aff}_{\ob{D},S}^{\kappa}$ arising from \eqref{eqImageAlgebrasPr} preserves finite products (as $[-]^*$ is symmetric monoidal). Passing to left Kan extensions we obtain a colimit preserving and finite product preserving functor of presheaves categories 
\[
F_!\colon \ob{PShv}(\n{C}^{E}_{/S})\to \ob{PShv}(\ob{Aff}_{\ob{D},S}^{\kappa}).
\]
The   right adjoint is the restriction functor $F^*\colon \ob{PShv}(\ob{Aff}_{\ob{D},S}^{\kappa})\to \ob{PShv}(\n{C}^E_{/S})$, which also preserves limits and colimits.   In particular, both $F_!$ and $F^*$ are symmetric monoidal with respect to the cartesian symmetric monoidal structure, and passing to commutative monoid objects (or equivalently, taking $-\otimes_{\cat{Ani}} \cat{CMon}$ in $\cat{Pr}$) we obtain an adjunction 
\[
F_! \colon \ob{PShv}(\n{C}^E_{/S}, \cat{CMon}) \rightleftarrows \ob{PShv}(\ob{Aff}_{\ob{D},S}^{\lambda}, \cat{CMon})\colon F^*
\]
where both functors are symmetric monoidal with respect to the smashing tensor product of $\cat{CMon}$.

 The pullback $F^*\bb{A}^1_{\ob{D},S}$ is the presheaf on $\n{C}^{E}_{/S}$ sending an element $X$ to the anima  $\ob{D}(X)_{\kappa}$ of $\kappa$-compact objects endowed with the monoidal structure given by the tensor product of $\ob{D}(X)$. Similarly, $F^*\GL_{1,\ob{D},S}$ sends $X$ to the commutative monoid of invertible elements in $\ob{D}(X)$ with respect to the natural tensor product. 
 
Suppose that $X,Y\in \ob{CMon}(\n{C}^E_{/S})$ are commutative monoid objects and that we are given with a map of commutative monoids $h\colon X\otimes Y\to F^* \bb{A}^1_{\ob{D},S} $ in $\cat{PShv}(\n{C}^E_{/S}, \cat{CMon})$ (or a pairing $h\colon X\otimes Y\to F^* \GL_{1,\ob{D},S}$ in case $X$ and $Y$ are grouplike). Taking $F_!$, and using its symmetric monoidal structure for the smashing tensor product, we get a map of commutative monoids in $\cat{PShv}(\cat{Aff}^{\kappa}_{\ob{D},S}, \cat{CMon})$
\[
F_!h\colon \Spec^S [X]^* \otimes \Spec^S [Y]^*= F_!(X\otimes Y )\to F_!F^*\bb{A}^1_{\ob{D},S}\to \bb{A}^1_{\ob{D},S}
\] 
 (and a map $\Spec^S [X]^* \otimes \Spec^S [Y]^*\to \GL_{1,
 \ob{D},S}$ in case $X$ and $Y$ are grouplike). Applying \Cref{PropGL1restriction},  the map $F_!h$ induces a natural map of commutative monoids in $\cat{Aff}^{\kappa}_{\ob{D},S}$
 \[
 \Spec([X]^*)\to \Spec([Y]_!),
 \] or equivalently, a natural map of commutative bialgebras in $\ob{K}_{\ob{D},S}$
 \begin{equation}\label{eqFourierMukai}
\n{FM}_h \colon [Y]_!\to  [X]^*. 
 \end{equation}
 that we call the \textit{Fourier-Mukai transform} of $h\colon X\otimes Y\to F^*\bb{A}^1_{\ob{D},S}$. We have the following proposition.

\begin{proposition}\label{PropFourierMukai}
Keep the previous notation, let $X,Y\in \ob{CMon}(\n{C}^{E}_{/S})$ be commutative monoid objects, and  $h\colon X\otimes Y\to F^*\bb{A}^1_{\ob{D},S}$ a morphism of commutative monoids. Let $\n{FM}_h\colon [Y]_!\to [X]^*$ be the Fourier-Mukai transform  \eqref{eqFourierMukai} of commutative bialgebras in $\ob{K}_{\ob{D},S}$. Then the sheaf $\s{F}_{h}\in \ob{D}(Y\times_S X)$ associated to the underlying morphism of $\n{FM}_{h}$ in $\ob{K}_{\ob{D},S}$ is given by the natural sheaf associated to the composite $Y\times X\to Y\otimes X \to F^*\bb{A}^1_{\ob{D},S}$. 
\end{proposition}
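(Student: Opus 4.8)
The plan is to read this statement off from \Cref{ThmCartierDualitySymmetricmonoidal} (4), applied to the symmetric monoidal category $\n{C}=\cat{Pr}_{\ob{D},S,\kappa}$ with its $\bb{A}^1_{\ob{D},S}$, and then to unwind the adjunction $F_!\dashv F^*$ that was used to build $\n{FM}_h$. First I would match the data of that theorem: set $A=[Y]^*$, which is dualizable in $\ob{K}_{\ob{D},S}$ with dual $A^\vee=[Y]_!$, set $B=[X]^*$, and take $H=\Spec^S[Y]^*$, $G=\Spec^S[X]^*$ and $\Psi=F_!h\colon G\otimes H\to \bb{A}^1_{\ob{D},S}$. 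By \Cref{PropGL1restriction} the induced map $G\to\bb{D}(H)$ is exactly the map $\Spec^S[X]^*\to\Spec^S[Y]_!$ whose associated bialgebra morphism is $\n{FM}_h=\Psi^*$. Theorem (4) then asserts that $\n{FM}_h$ is the adjoint of the map $\theta\colon 1\to [Y]^*\otimes[X]^*$ under the duality equivalence $[Y]^*\otimes[X]^*=\iHom([Y]_!,[X]^*)$, where $\theta$ classifies the composite $G\times H\to G\otimes H\xrightarrow{\Psi}\bb{A}^1_{\ob{D},S}$.

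Next I would convert this abstract adjunction into the concrete sheaf $\s{F}_h$. Because $F$ preserves finite products we have $[Y]^*\otimes[X]^*=[Y\times_S X]^*$, with underlying object $Y\times_S X$, whereas by the self-duality of the kernel category (\cite[Proposition 4.1.4]{HeyerMannSix}) the object $\iHom([Y]_!,[X]^*)$ has underlying object $\iHom(Y,X)=Y^\vee\otimes X=Y\times_S X$. Applying the global-sections functor $\ob{Fun}_S(S,-)=\ob{D}(-)$ turns both sides into $\ob{D}(Y\times_S X)$, and the duality equivalence becomes the tautological identification between a point $S\to Y\times_S X$ and a morphism $Y\to X$ in $\ob{K}_{\ob{D},S}$. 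Thus the sheaf $\s{F}_h$ attached to the underlying morphism of $\n{FM}_h$ is identified with $\theta$, now seen as an object of $\ob{D}(Y\times_S X)$.

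Finally I would compute $\theta$ explicitly through $F_!\dashv F^*$. Since $F_!$ preserves cartesian products and is symmetric monoidal for the smashing tensor product, the composite $G\times H\to G\otimes H\xrightarrow{\Psi}\bb{A}^1_{\ob{D},S}$ is $F_!$ applied to $Y\times X\to Y\otimes X\xrightarrow{h}F^*\bb{A}^1_{\ob{D},S}$, post-composed with the counit $F_!F^*\to\id$. Transposing across the adjunction returns the composite $Y\times X\to Y\otimes X\to F^*\bb{A}^1_{\ob{D},S}$ itself, and evaluating with the Yoneda lemma --- using that $Y\times X$ is represented by $Y\times_S X$, that $F_!$ sends it to the presheaf represented by $F(Y\times_S X)=\Spec^S[Y\times_S X]^*$, and that $F^*\bb{A}^1_{\ob{D},S}=\bb{A}^1_{\ob{D},S}\circ F$ --- shows that both descriptions name the same element of $(F^*\bb{A}^1_{\ob{D},S})(Y\times_S X)=\ob{D}(Y\times_S X)$. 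This element is precisely the sheaf $\s{G}_h$ associated to the composite in the statement, so $\theta=\s{G}_h$ and hence $\s{F}_h=\theta=\s{G}_h$.

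The step I expect to require the most care is the middle one: checking that the duality equivalence $[Y]^*\otimes[X]^*=\iHom([Y]_!,[X]^*)$ really restricts, on global sections, to the identity on $\ob{D}(Y\times_S X)$, rather than to some nontrivial twist produced by the evaluation and coevaluation data of the self-duality (the diagonal $!$- and $*$-maps of the kernel category). Confirming that the dictionary ``morphism $Y\to X$ in $\ob{K}_{\ob{D},S}$'' $\longleftrightarrow$ ``point $S\to Y\times_S X$'' is the tautological one on $\ob{D}(Y\times_S X)$, together with tracking the harmless symmetry $X\times_S Y\cong Y\times_S X$, is where the bookkeeping lives; once that is in place the remaining identifications are purely formal.
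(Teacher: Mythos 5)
Your proposal is correct and follows essentially the same route as the paper: both reduce to \Cref{ThmCartierDualitySymmetricmonoidal} (4) and then identify the adjoint of $\theta$ with the kernel via the self-duality of $Y$ in $\ob{K}_{\ob{D},S}$. The one step you defer as ``bookkeeping'' --- that the duality equivalence is tautological on global sections --- is precisely the computation the paper carries out, by writing $\n{FM}_h$ as the composite $Y\xrightarrow{\id\otimes\s{F}_h'}Y\times_S Y\times_S X\xrightarrow{\ob{ev}_Y}X$ with $\ob{ev}_Y=f_!\Delta_Y^*$ and checking directly that this recovers the map associated to $\s{F}_h'$.
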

\begin{proof}
Let $f\colon Y\to S$ and $g\colon X\to S$ be the structural maps of $Y$ and $X$.  The evaluation map $\ob{ev}_Y\colon Y\times_S Y \to S$ in $\ob{K}_{\ob{D},S}$ is given by the composite $f_! \Delta^*_Y$ associated to the correspondence $Y\times_S Y \xleftarrow{\Delta} Y\xrightarrow{f} S $.

 Let $\s{F}_h'\in \ob{D}(Y\times_S X)$ be the sheaf associated to the map $Y\times X\to Y\otimes X\to  F^*\bb{A}^1_{\ob{D},S}$. By \Cref{ThmCartierDualitySymmetricmonoidal} (4)  the map $\n{FM}_{h}\colon Y\to X$ of \eqref{eqFourierMukai} is given as the composite 
\[
Y \xrightarrow{\ob{id}\otimes \s{F}_h'} Y\times_S Y\times_S X \xrightarrow{\ob{ev}_Y} S\times_S X = X.
\]
Since $\ob{ev}_Y= \Delta_{Y,!} f^*$, a simple computation shows that this composite is nothing but the map $Y\to X$ associated to $\s{F}_h'\in \ob{D}(Y\times_S X)=\ob{D}(X\times_S Y)= \ob{Fun}_S(Y,X)$, proving what we wanted. 
\end{proof}

\begin{remark}\label{RemarkCartierDualityCompatibility}
Let $X,Y\in \ob{CMon}(\n{C}^E_{/S})$ be commutative monoid objects and $h\colon X\otimes Y\to F^*\bb{A}^1_{\ob{D},S}$ a pairing giving rise to a Fourier-Mukai transform 
\begin{equation}\label{eqFourierMukai2}
\n{FM}_{h} \colon [Y]_!\to   [X]^*
\end{equation}
given by the kernel $\s{F}_h\in \ob{D}(X\times_S Y)$. Then the dual of $\n{FM}_h$ gives rise to another morphism of commutative bialgebras
\[
\n{FM}_h^{\vee}\colon [X]_!\to [Y]^*
\]
whose underlying morphism is given by the same kernel $\s{F}_h\in \ob{D}(Y\times_S X)$  but exchanging the $!$ and $*$-maps. In particular, passing to underlying categories, the Fourier-Mukai transform gives rise to a morphism of commutative algebras 
\[
\n{FM}_h\colon \ob{D}([Y]_!)\to \ob{D}([X]^*)
\]
(resp. $\n{FM}_h^{\vee}\colon \ob{D}([X]_!)\to \ob{D}([Y]^*)$), where $\ob{D}([Y]_!)$ is endowed with the convolution product and $\ob{D}([X]^*)$ with the usual tensor product. In other words, the Fourier-Mukai transform sends the convolution tensor product to the usual tensor product. Similarly, if $f\colon X\to S$ is the structural map and $e\colon S\to Y$ is the unit map, then the Fourier Mukai transform preserves units, i.e. it makes the following diagram commute
\[
\begin{tikzcd}
\ob{D}(S) \ar[d,"e_!"'] \ar[rd, "f^*"] & \\ 
 \ob{D}(Y) \ar[r, "\n{FM}_h"']  & \ob{D}(X).
\end{tikzcd}
\] 
\end{remark}

\begin{remark}\label{RemarkDescentBaseChange}
We keep the notation of \Cref{RemarkCartierDualityCompatibility}.  
Let $g\colon S'\to S$ be a map in $\n{C}$, then the pullback map $g^*_1\colon \ob{K}_{\ob{D},S}\to \ob{K}_{\ob{D},S'}$ is symmetric monoidal, and it sends the Fourier-Mukai transform \eqref{eqFourierMukai} to a Fourier-Mukai transform
\[
g^*_1\n{FM}_h\colon [Y\times_S S']_!\to [X\times_S S']^*.
\]
In particular, if $\n{FM}_h$ is an isomorphism then so is $g^*_1 \n{FM}_h$.

Conversely, if $h\colon S'\to S$ satisfies universal $*$-descent, then the functor $g^*_1\colon \ob{K}_{\ob{D},S}\to \ob{K}_{\ob{D},S'}$ is conservative by \cite[Proposition 4.3.1]{HeyerMannSix}. Thus, if $g^*\n{FM}_h$ is an equivalence then so is $\n{FM}_h$. 
\end{remark}

\begin{definition}\label{DefinitionCartierDuals}
Let $X,Y\in \ob{CMon}(\n{C}^E_{/S})$ be commutative monoid objects and $h\colon X\otimes Y\to F^*\bb{A}^1_{\ob{D},S}$. We say that $h$ induces a \textit{$1$-categorical Cartier duality} if the Fourier-Mukai transform of \eqref{eqFourierMukai} is an isomorphism. If that is the case, we say that $Y$ is a \textit{$1$-categorical Cartier dual} of $X$.
\end{definition}

\begin{remark}\label{RemarkCartierDualityGestalten}
In \Cref{DefinitionCartierDuals}, we intentionally do not say that $Y$ is \textit{the} Cartier dual of $X$ as this statement does not make sense in the present discussion of Cartier duality; Cartier duality ought to be (at least) an antiequivalence of dualizable commutative bialgebras, and neither $X$ or $Y$ are dualizable in $\n{C}$.  When passing to the kernel category, the objects $[X]^*$ and $[Y]^*$ acquire the structure of  dualizable commutative  bialgebras, and only there it makes sense to discuss the notion of Cartier duality  as in  \Cref{ThmCartierDualitySymmetricmonoidal}. Of course, unless one has strong Tannaka duality properties, it is too naive to expect that the object $X\in \n{C}^E_{/S}$ can be recovered from its incarnation $[X]^*$ in the category of kernels. 

A far more satisfying and general Cartier duality that solves the aforementioned problems is the one in Gestalten discovered by Scholze and Stefanich where one not only encodes the Cartier duality at a $1$-categorical level as in \Cref{DefinitionCartierDuals}  but at \textit{all} higher categorical levels, we refer to   \cite{GestaltenScholze} and to their future work for this theory. 
\end{remark}

\subsection{Descent and Cartier duality}
\label{ss:DescentCartierDuality}

We finish this section with a discussion of how the passage to sheaves for a suitable analogue of the $!$-topology can be used to simplify some computations in Cartier duality. We will apply this discussion to the presentable category of kernels of a six functor formalism satisfying the  categorical K\"unneth formula, and hence by \Cref{PropKunnethKernel} to a $2$-category of linear categories. The following ideas are entirely motivated by the theory of Gestalten of Scholze and Stefanich, in particular we will borrow some of their terminology.

\subsubsection{Descent in $\cat{Pr}_{\n{V}}$}\label{sss:DescentPrV}

Let $\n{V}$ be a  $\kappa$-presentable symmetric monoidal stable category and let $\cat{Pr}_{\n{V}}:=\ob{Mod}_{\n{V}}(\cat{Pr}^{\kappa})$ be the category of $\kappa$-presentable symmetric monoidal $\n{V}$-linear categories. Consider $\ob{CAlg}(\n{V})$ the category of commutative algebras in $\n{V}$ and let $0\cat{Aff}_{\n{V}}$ be its opposite category, similarly, we let $1\cat{Aff}_{\n{V}}$ be the opposite category of $\ob{CAlg}(\cat{Pr}_{\n{V}})$. One has a fully faithful embedding $\ob{CAlg}(\n{V})\to \ob{CAlg}(\cat{Pr}_{\n{V}})$ given by $A\mapsto \ob{Mod}_A(\n{V})$  \cite[Corollary 4.8.5.21]{HigherAlgebra}. The categories $0\cat{Aff}_{\n{V}}$ and  $1\cat{Aff}_{\n{V}}$ admit small limits, and the inclusion  $0\cat{Aff}_{\n{V}}\subset  1\cat{Aff}_{\n{V}}$ preserves them.

Given $\n{A}\in \cat{CAlg}(\cat{Pr}_{\n{V}})$ we let $\Spec^{\n{V}} \n{A}\in 1\cat{Aff}_{\n{V}}$ be its associated object in the opposite category. Similarly, if $A\in \cat{CAlg}(\n{V})$ we denote $\Spec^{\n{V}} A\in 0\cat{Aff}_{\n{V}}$ for its corresponding object. Notice that we have a natural identification $\Spec^{\n{V}} A=\Spec^{\n{V}} ( \ob{Mod}_A(\n{V}))$ via the fully faithful inclusion $0\cat{Aff}_{\n{V}}\hookrightarrow 1\cat{Aff}_{\n{V}}$. Conversely, given $X\in 1\cat{Aff}_{\n{V}}$ we shall write $\ob{D}(X)$ for its corresponding object in $\ob{CAlg}(\cat{Pr}_{\n{V}})$, and let $\s{O}(X)=\ob{End}^{\n{V}}_{\ob{D}(X)}(1)$ denote the commutative algebra in $\n{V}$ given by endomorphisms of the unit in $\ob{D}(X)$. Finally, given $X\in 1\cat{Aff}_{\n{V}}$ we denote $\cat{Pr}_{X}:= \Mod_{\ob{D}(X)}(\cat{Pr}_{\n{V}})=\Mod_{\ob{D}(X)}(\cat{Pr}^{\kappa}_{\n{V}})$. 

To define the analogue of the $!$-topology in $1\cat{Aff}_{\n{V}}$ one needs the following key definitions (see \cite[Lecture VI]{GestaltenScholze}):

\begin{definition}\label{DefEtaleMaplinear}
Consider a map $f\colon Y\to X$ in $1\cat{Aff}_{\n{V}}$ such that $\ob{D}(X)\to \ob{D}(Y)$ is a $\kappa$-presented morphism.

\begin{enumerate}

\item We say that $f$ is  $1$-suave if the pullback functor 
\[
f^*_{1}\colon \cat{Pr}_{X}\to \cat{Pr}_{Y}
\]
admits a $\cat{Pr}_{X}$-linear left adjoint $f_{1,\sharp}\colon \cat{Pr}_{Y}\to  \cat{Pr}_{X}$. 

\item We say that $f$ is $1$-\'etale if any diagonal $Y\to Y^{S^n/X}$ (with $S^n$ the $n$-th sphere) is $1$-suave.\footnote{By definition, $Y^{S^n/X}$  is the limit $\varprojlim_{S^n} Y$ of the constant $S^n$-diagram given by $Y$ in the slice category of $1\cat{Aff}_{\n{V},/X}$. }

\item We say that $f$ is $0$-suave if it is $1$-\'etale and the natural  map $f_{!}\colon f_{1,\sharp}\ob{D}(Y)\to \ob{D}(X)$  in $\cat{Pr}_{X}$ admits a right adjoint in $\cat{Pr}_X$.  If this holds for all diagonals of $f$, we say that it is $0$-\'etale.

\item We say that $f$ is $0$-prim if the pullback map $f^*\colon \ob{D}(X)\to \ob{D}(Y)$ admits a right adjoint $f_{*}$ in $\cat{Pr}_{X}$.  If this holds  for all diagonals of $f$ we say that it is $0$-proper. 
\end{enumerate}
\end{definition} 

\begin{remark}\label{Remark1Primmaps}
\begin{enumerate}

\item In \Cref{DefEtaleMaplinear} (3), the natural map $f_{!}\colon f_{1,\sharp} \ob{D}(Y)\to \ob{D}(X)$ arises as follows. Since $f$ is $1$-\'etale, the pullback $f_1^*\colon \cat{Pr}_X\to \cat{Pr}_Y$ has a linear left adjoint $f_{1,\natural}$. Thus, $f_{1,\natural}f^*_1$ is a comonad and $f_!$ is the   counit map $f_{1,\natural} \ob{D}(Y)=f_{1,\natural} f^*_1\ob{D}(X)\to \ob{D}(X)$. The $\ob{D}(X)$-dual of $f_!$ is nothing but the pullback map $f^*\colon \ob{D}(X)\to \ob{D}(Y)$. In particular, if $f$ is $0$-suave then $f^*$ also admits a linear left adjoint $f_{\sharp}$ which is nothing but the dual of the right adjoint $f^!$ of $f_!$.

\item The definition of $0$ and $1$-suave map of \Cref{DefEtaleMaplinear} is compatible with those of \cite[Definition 6.16]{GestaltenScholze}.  Indeed, using the language of \textit{loc. cit.},  we are working with $1$-affine Gestalten over $\n{V}$ which by \cite[Proposition 6.10 (i)]{GestaltenScholze}  are $1$-proper over $\n{V}$. By \cite[Proposition 6.21]{GestaltenScholze}  any $\kappa$-presented morphism in $1\ob{Aff}_{\n{V}}$ is $2$-\'etale. In particular, the pullback map $f_2^*\colon 2\cat{Pr}_X\to 2\cat{Pr}_Y$ of presentable $2$-categories satisfies ambidexterity so that  the functor $f_{2,*}\colon  2\cat{Pr}_Y\to 2\cat{Pr}_X$ is both a left and right adjoint of $f_2^*$, and $f_{2,*} 1_{2\cat{Pr}_Y}= \cat{Pr}_{Y}\in 2\cat{Pr}_X$ is dualizable. Thus, thanks to  the discussion after \cite[Definition 6.16]{GestaltenScholze},   the requirement of $f$ being $1$-suave is equivalent to $f_1^*\colon \cat{Pr}_X\to \cat{Pr}_Y$ admitting a linear left adjoint. The condition of being $0$-suave is then identical to that of \textit{loc. cit}.

\item Following  \Cref{DefEtaleMaplinear}, any $\kappa$-presented map $f\colon Y\to X$ in $1\cat{Aff}_{\n{V}}$ is $1$-prim. More precisely, the pullback $f_{1}^*\colon \cat{Pr}_{X}\to \cat{Pr}_{Y}$ has a linear right adjoint  $f_{1,*}$ given by the forgetful functor as $\cat{Pr}_{Y}=\ob{Mod}_{\ob{D}(Y)}(\cat{Pr}_{X})$. Since this holds for all diagonals of $f$, we say that $f$ is $1$-proper.  On the other hand, by \Cref{LemmaProperties1etaleMaps} down below any $\kappa$-presented map $f\colon Y\to X$ in $0\cat{Aff}_{\n{V}}$ is $1$-\'etale, and the pullback $f^*\colon \ob{D}(X)\to \ob{D}(Y)$ has a linear right adjoint $f_*$  as $\ob{D}(Y)=\ob{Mod}_{\s{O}(X)}\ob{D}(X)$ (this right adjoint preserves $\kappa$-compact objects so it is a right adjoint in $\cat{Pr}_X$). Hence, $f$ is $0$-prim, and since this holds after passing to diagonals it is $0$-proper. 

\item For more examples of properties of morphisms of Gestalten we refer to \cite[Lectures 6-9]{GestaltenScholze}. 
\end{enumerate}

\end{remark}

The following lemma gives some formal properties of the maps introduced in \Cref{DefEtaleMaplinear}.

\begin{lemma}\label{LemmaProperties1etaleMaps}
The following hold:

\begin{enumerate}

\item Let $f\colon Y\to X$ be a map in $0\cat{Aff}_{\n{V}}$ such that $\s{O}(X)\to \s{O}(Y)$ is $\kappa$-presented, then $f$ is $1$-\'etale.  Moreover, let $f^*_{1}\colon \cat{Pr}_{X}\to \cat{Pr}_{Y}$ be the pullback map, then there is a natural identification of functors $f_{1,\sharp}=f_{1,*}$ between the left and right adjoint of $f^*_1$. 

\item Let $(\n{C},E)$ be a small geometric set up with finite limits, and let $\ob{D}$ be a $\kappa$-presentable six functor formalism on $(\n{C},E)$ with values in $\cat{Pr}_{\n{V}}$. Let $X\in \n{C}$ and suppose that $\ob{D}_X\colon \n{C}^{E,\op}_{/X}\to \cat{Pr}_{\ob{D}(X)}$ is K\"unneth, i,e, preserves finite coproducts.  Then for all $!$-able map $Y\to X$, the map $\Spec^{\n{V}} \ob{D}(Y)\to \Spec^{\n{V}} \ob{D}(X)$ is $1$-suave in  $\cat{Pr}_{\n{V}}$. If in addition any diagonal of $Y\to X$ is K\"unneth, \[\Spec^{\n{V}} \ob{D}(Y)\to \Spec^{\n{V}} \ob{D}(X)\] is $1$-\'etale.

\item Let $f\colon Y\to X$ be a $1$-suave (resp. $1$-\'etale or   $0$-suave, prim, \'etale and proper maps) in $1\cat{Aff}_{\n{V}}$. Then the same holds for any base change $f'\colon Y'\to X'$ with $g\colon X\to X'$ a map in $1\cat{Aff}_{\n{V}}$.

\item Consider a diagram $Z\xrightarrow{g} Y \xrightarrow{f} X$ in $1\cat{Aff}_{\n{V}}$, if $f$ and $g$ are $1$-suave (resp. $0$-suave or $0$-prim) then so is $f\circ g$.

\item Consider a diagram $Z\xrightarrow{g} Y \xrightarrow{f} X$ in $1\cat{Aff}_{\n{V}}$.  Suppose that $f$ is $1$-\'etale (resp. $0$-proper or $0$-\'etale). Then $g$ is $1$-\'etale (resp. $0$-proper or $0$-\'etale) if and only if $f\circ g$ is $1$-\'etale (resp. $0$-proper or $0$-\'etale).

\end{enumerate}

\end{lemma}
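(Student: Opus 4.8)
The plan is to read all five statements as instances of the categorified six-functor yoga: parts (1)--(2) produce the required left adjoints from a dualizability statement about module categories, and parts (3)--(5) are then formal consequences of base change, composition, and cancellation for adjunctions. I first treat (1). A $\kappa$-presented map $f\colon Y\to X$ in $0\cat{Aff}_{\n{V}}$ is the same datum as a $\kappa$-presented map $A\to B$ in $\ob{CAlg}(\n{V})$ with $A=\s{O}(X)$ and $B=\s{O}(Y)$, and under the embedding $0\cat{Aff}_{\n{V}}\hookrightarrow 1\cat{Aff}_{\n{V}}$ one has $\ob{D}(X)=\ob{Mod}_A(\n{V})$ and $\ob{D}(Y)=\ob{Mod}_B(\n{V})$, so that $f_1^*\colon\cat{Pr}_X\to\cat{Pr}_Y$ is the extension of scalars $-\otimes_{\ob{Mod}_A(\n{V})}\ob{Mod}_B(\n{V})$. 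The key input is that $\ob{Mod}_B(\n{V})$ is self-dual as an object of $\cat{Pr}_X=\Mod_{\ob{Mod}_A(\n{V})}(\cat{Pr}^{\kappa})$, with duality data induced by the unit $A\to B\otimes_A B$ and the multiplication $B\otimes_A B\to B$ under the identification $\ob{Mod}_B\otimes_{\ob{Mod}_A}\ob{Mod}_B=\ob{Mod}_{B\otimes_A B}$; this is the $2$-affineness of module categories and uses no finiteness of $B$ over $A$, matching the $2$-\'etale statement recalled in \Cref{Remark1Primmaps}(2) via \cite[Proposition 6.21]{GestaltenScholze}. Self-duality supplies the $\cat{Pr}_X$-linear left adjoint $f_{1,\sharp}$ and, through the resulting projection formula, identifies it with the right adjoint $f_{1,*}$ (restriction of scalars), which is the asserted ambidexterity $f_{1,\sharp}=f_{1,*}$. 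Finally, each diagonal $Y\to Y^{S^n/X}$ corresponds to a map of iterated tensor powers of $B$ over $A$ and is again a $\kappa$-presented map in $0\cat{Aff}_{\n{V}}$, so the previous sentence applies to all of them and shows $f$ is $1$-\'etale.

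For (2), the Künneth hypothesis says exactly that $\ob{Fun}_X(X,-)\colon\ob{K}_{\ob{D},X}\to\cat{Pr}_{\ob{D}(X)}$ is symmetric monoidal, so by \Cref{PropKunnethKernel} it is fully faithful and transports the tautological self-duality of $Y$ in the kernel category (\cite[Proposition 4.1.4]{HeyerMannSix}), whose evaluation and coevaluation are built from the $*$-pullback $f^*$ and $!$-pushforward $f_!$ along the diagonal as in \Cref{RemarkFunctorsToCorrespondence}, to a self-duality of $\ob{D}(Y)$ as a dualizable object of $\cat{Pr}_{\ob{D}(X)}$. Dualizability of $\ob{D}(Y)$ over $\ob{D}(X)$ is precisely what produces the $\cat{Pr}_X$-linear left adjoint of $f_1^*=-\otimes_{\ob{D}(X)}\ob{D}(Y)$, whence $\Spec^{\n{V}}\ob{D}(Y)\to\Spec^{\n{V}}\ob{D}(X)$ is $1$-suave; running the same argument on the diagonal $Y\to Y\times_X Y$ and its higher analogues, under the additional Künneth hypothesis for the diagonals, gives $1$-suaveness of all diagonals, i.e.\ $1$-\'etaleness.

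Parts (3)--(5) are then formal. For base change (3) I would use that a pullback square in $1\cat{Aff}_{\n{V}}$ induces $\cat{Pr}_{Y'}=\cat{Pr}_Y\otimes_{\cat{Pr}_X}\cat{Pr}_{X'}$, so tensoring the adjunction $f_{1,\sharp}\dashv f_1^*$ with $\id_{\cat{Pr}_{X'}}$ over $\cat{Pr}_X$ yields the required $\cat{Pr}_{X'}$-linear left adjoint for $f'$; the $0$-suave, $0$-prim, $0$-\'etale and $0$-proper cases follow by base-changing the corresponding $0$-level adjoints, all computed by relative tensor products, so that adjointability is preserved (the six-functor yoga of \cite{HeyerMannSix} and \cite[Lectures 6--9]{GestaltenScholze}). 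For composition (4), if $f_{1,\sharp}\dashv f_1^*$ and $g_{1,\sharp}\dashv g_1^*$ are $\cat{Pr}$-linear then $(f\circ g)_1^*=g_1^* f_1^*$ has the composite left adjoint $f_{1,\sharp}\circ g_{1,\sharp}$, again linear, and the $0$-suave and $0$-prim cases are identical since composites of the relevant right adjoints remain right adjoints. For the cancellation in (5), the forward implication is (4); for the converse I would factor $g$ as the graph $\Gamma_g\colon Z\to Z\times_X Y$ followed by the projection $\pi\colon Z\times_X Y\to Y$. Here $\pi$ is the base change of $f\circ g$ along $f$, hence $1$-\'etale by (3), while $\Gamma_g$ is the base change of the diagonal $\Delta_f\colon Y\to Y\times_X Y$ along $g\times\id_Y$, and $\Delta_f$ is $1$-\'etale because $f$ is; thus $\Gamma_g$ is $1$-\'etale by (3), and $g=\pi\circ\Gamma_g$ is $1$-\'etale by (4). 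The $0$-proper and $0$-\'etale cases use the same factorization together with their own base-change and composition stability.

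I expect the genuine obstacle to be concentrated in (1)--(2): checking that the left adjoint obtained from self-duality is honestly $\cat{Pr}_X$-linear and, in the $0$-case, coincides with restriction of scalars, that is, establishing the categorified projection formula and ambidexterity with no finiteness hypothesis on $B$ over $A$. This is exactly where the $2$-affineness of module categories does the work; once it is in place, parts (3)--(5) reduce to bookkeeping with adjunctions, base change, and the graph factorization.
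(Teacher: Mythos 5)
Your proposal is correct and, for parts (2)--(5), follows the paper's proof essentially step for step: K\"unneth plus the self-duality of objects in the kernel category for (2), base change of the relative tensor product $\cat{Pr}_{Y'}=\cat{Pr}_Y\otimes_{\cat{Pr}_X}\cat{Pr}_{X'}$ for (3), composition of linear left adjoints for (4), and the graph/diagonal factorization $Z\to Z\times_X Y\to Y$ for the converse direction of (5). The one real divergence is part (1): rather than exhibiting the evaluation and coevaluation witnessing self-duality of $\ob{Mod}_B(\n{V})$ over $\ob{Mod}_A(\n{V})$ by hand as you do, the paper promotes $\ob{D}\colon 0\cat{Aff}_{\n{V}}^{\op}\to\cat{CAlg}(\cat{Pr}_{\n{V}})$ to a six functor formalism in which every $\kappa$-presented map is $!$-able with $f_!=f_*$ (via \cite[Proposition 3.3.3]{HeyerMannSix}), observes that this formalism is tautologically K\"unneth, and then quotes part (2); the ambidexterity $f_{1,\sharp}=f_{1,*}$ is deduced from Stefanich's categorification (\Cref{RemarkImprovementStefanich}) rather than from the duality data directly. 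Your route is more self-contained; the paper's makes (1) a literal instance of (2). Two steps are stated more tersely than they should be, though neither is a genuine gap: the forward direction of (5) is not literally (4) --- one still needs the cartesian square identifying $Z^{S^n/Y}$ as the base change of $Y\to Y^{S^n/X}$ along $Z^{S^n/X}\to Y^{S^n/X}$, so that each diagonal of $f\circ g$ is a composite of a $1$-suave map and a base change of one, exactly parallel to your graph argument for the converse; and in (4) for $0$-suave maps the functor $(f\circ g)_!$ is not the naive composite of $f_!$ and $g_!$ but of $f_!$ with $f_{1,\sharp}(g_!)$, whose right adjoint exists because the $2$-functor $f_{1,\sharp}$ preserves adjunctions.
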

\begin{proof}

\begin{enumerate}

\item Consider the functor $\ob{D}\colon 0\cat{Aff}_{\n{V}}^{\op} \to \cat{CAlg}(\cat{Pr}_{\n{V}})$ mapping $X$ to $\ob{D}(X)$. Note that for any $f\colon Y\to X$ in $0\cat{Aff}_{\n{V}}$ such that $\s{O}(X)\to \s{O}(Y)$ is $\kappa$-compactly presented,  the pullback map $f^*\colon \ob{D}(X)\to \ob{D}(Y)$ has a linear right adjoint $f_*$ in $\cat{Pr}^{\kappa}$, and this satisfies base change.  Letting $E=P$ be $\kappa$-presented maps,  and letting $I$ be equivalences, by \cite[Proposition 3.3.3]{HeyerMannSix} we can promote $\ob{D}$ to a six functor formalism on $0\cat{Aff}_{\n{V}}$ with all $\kappa$-presented maps being $!$-able. It is obvious that $\ob{D}$ is K\"unneth, hence part (1) follows from part (2).  The identification of $f_{1,\sharp}$ and $f_{1,*}$ follows from \Cref{RemarkImprovementStefanich}.

\item For (2), since $\ob{D}_X$  is K\"unneth, the functor 
\[
\cat{Fun}_X(X,-)\colon \ob{K}_{\ob{D},X}\to \cat{Pr}_{\ob{D}(X)}
\]
is symmetric monoidal by \Cref{PropKunnethKernel}. Since all the objects of $\ob{K}_{\ob{D},X}$ are self dual, the same holds for $\ob{D}(Y)\in \cat{Pr}_{\ob{D}(X)}$ and  $Y\to X$ a $!$-able map. Hence, the base change 
\[
f_1^*\colon \cat{Pr}_{\ob{D}(X)}\to \cat{Pr}_{\ob{D}(Y)}
\]
is naturally identified with the functor $\cat{Fun}_{\ob{D}(X)}(\ob{D}(Y),-)$, proving that it admits a linear left adjoint $f_{1,\sharp}$ as wanted. Note that by the ambidexterity of  \Cref{RemarkImprovementStefanich} one can even identify $f_{1,\natural}$ with $f_{1,*}$.   Now, if in addition all diagonal of $Y\to X$ is K\"unneth, then for $f\colon Y\to X$ and any $n\in \bb{N}$, one has  that 
\[
\Spec^{\n{V}} \ob{D}(Y^{S^n/X})= \Spec (\ob{D}(Y))^{S^n/\Spec^{\n{V}} \ob{D}(X)}
\]
where $S^n$ is the $n$-th sphere. By the previous discussion we know that $\Spec^{\n{V}} \ob{D}(Y)\to  \Spec^{\n{V}} \ob{D}(X^{S^n/X})$ is $1$-suave, this proves that $\Spec^{\n{V}} \ob{D}(Y)\to \Spec^{\n{V}} \ob{D}(X)$ is $1$-\'etale as wanted.

\item Suppose that $f\colon Y\to X$ is $1$-suave, and let $g\colon X'\to X$ with base change. Then one has that 
\[
\cat{Pr}_{Y'}=\cat{Pr}_{Y}\otimes_{\cat{Pr}_X} \cat{Pr}_{X'}
\]
in $2\cat{Pr}^{\kappa}$. Thus, if $f_1^*\colon \cat{Pr}_{X}\to \cat{Pr}_{Y}$ has a linear left adjoint then so does its base change to $\cat{Pr}_{X'}$, proving the stability of $1$-suave maps under base change. The same argument holds for $1$-\'etale maps since the base change commutes with diagonals, and similarly for $0$-proper and $0$-prim maps. For $0$-suave maps, the left adjoint $f_{1,\natural}\colon \cat{Pr}_Y\to \cat{Pr}_X$ base changes to the left adjoint $f'_{1,\natural}\colon \cat{Pr}_{Y'}\to \cat{Pr}_{X'}$. In particular, evaluating the counit at $1$, one has that the map $f'_!\colon f_{1,\natural}' \ob{D}(Y')\to \ob{D}(X')$ is the base change along $\ob{D}(X)\to \ob{D}(X')$ of the map $f_!\colon f_{1,\natural}\ob{D}(Y)\to \ob{D}(X)$, in particular $f'_!$ admits a right adjoint if $f_!$ does so.

\item To see that $1$-suave maps are stable under composition, consider the induced  pullback maps 
\begin{equation}\label{eqo9ojnkaosndasdlm}
\cat{Pr}_{X}\xrightarrow{f_1^*} \cat{Pr}_{Y} \xrightarrow{g_1^*} \cat{Pr}_Z.
\end{equation}
By hypothesis both $f_1^*$ and $g_1^*$ admit a linear left adjoint, and then so does the composite. By (5) down below,  we know that $1$-\'etale maps are stable under composition (this only uses the stability of $1$-suave maps under pullbacks and composition which have already been proved). The case of $0$-prim maps is proven in the same way. 

Suppose now that $f$ and $g$ are $1$-suave, then they are $1$-\'etale and then so is its composite. Passing to left adjoints in \eqref{eqo9ojnkaosndasdlm} we find that the map $(f\circ g)_!\colon (f\circ g)_{1,\sharp}\ob{D}(Z)\to \ob{D}(X)$ is the composite 
\[
(f\circ g)_{1,\sharp}\ob{D}(Z)\xrightarrow{f_{1,\sharp} (g_!)} f_{1,\sharp} \ob{D}(Y)\xrightarrow{f_{!}} \ob{D}(X).
\]
Thus, if both functors $g_!$ and $f_!$ admit right adjoints, then so do the functor $f_{1,\sharp}(g_!)$ (as $f_{1,\sharp}$ is a $2$-functor and therefore it preserves adjoints) and $f_! \circ f_{1,\sharp}(g_!) = (f\circ g)_!$.

\item We first see that if $g$ and $f$ are $1$-\'etale, then so is $f\circ g$. For that, let $n\in \bb{N}$, we have a cartesian square in $1\cat{Aff}_{\n{V}}$
\[
\begin{tikzcd}
Z^{S^n/Y} \ar[r] \ar[d] & Z^{S^n/X} \ar[d]\\ 
Y \ar[r] & Y^{S^n/X}.
\end{tikzcd}
\]
Since $f$ is $1$-\'etale, the map $Y\to Y^{S^n/X}$ is $1$-suave and then so is $Z^{S^n/Y}\to Z^{S^n/X}$ by base change and part (3). Since $g$ is $1$-\'etale the map $Z\to Z^{S^n/Y}$ is $1$-suave, proving that $Z\to  Z^{S^n/Y}\to  Z^{S^n/X}$ is $1$-suave by stability of $1$-suave maps under composition (4).  The same argument shows that $0$-proper and $0$-\'etale maps are stable under composition. 

Conversely, suppose that $f\circ g$ is $1$-\'etale, we want to show that $g$ is $1$-\'etale. First note that by definition $1$-\'etale maps are stable under taking diagonals. Note that the map $Y\times_X Z\to Y$ is $1$-\'etale being the base change of $Z\to X$ along $Y\to X$. We also have a cartesian square 
\[
\begin{tikzcd}
Z \ar[r] \ar[d] & Y\times_X Z \ar[d] \\ 
Y \ar[r] & Y\times_X Y
\end{tikzcd}
\]
making the map $Z\xrightarrow{(g,\id_Z)} Y\times_X Z$ a $1$-\'etale map as $Y\to Y\times_X Y$ is $1$-\'etale. It follows that the composite $Z\to Y\times_X Z\to Y$ is $1$-\'etale, proving what we wanted. The same argument holds for $0$-\'etale and proper maps as they are stable under taking diagonals. 

\end{enumerate}

\end{proof}

We can now define the analogue of the $!$-topology for the objects in $1\cat{Aff}_{\n{V}}$. Following Scholze and Stefanich, this is the natural Grothendieck topology of Gestalten \cite{GestaltenScholze} when restricted to $1$-affine Gestalten.  

\begin{definition}\label{DefinitionGestaltTopology}
A $\kappa$-small family of $1$-\'etale maps $\{f_i\colon Y_i\to X\}$ in $1\cat{Aff}_{\n{V}}$ is called a  \textit{$1$-\'etale cover} if  the natural map 
\[
\cat{Pr}_{X}\xrightarrow{\sim} \varprojlim_{([n],\iota_{\bullet})\in \Delta^{\op}_{I}}  \cat{Pr}_{Y_{([n],i_{\bullet})}}
\]
is an equivalence, where $ \Delta_{I}$ is as in \cite[Definition A.4.5]{HeyerMannSix}, and $\{Y_{([n],i_{\bullet})}\}_{\Delta_{I}^{\op}}$ is the \v{C}ech nerve of $\{Y_i\to X\}$. 
\end{definition}

\begin{lemma}\label{LemmaEquivalentConditionsCover}
Let $\{f_i\colon Y_i\to X\}_{i\in I}$ be a $\kappa$-small family of $1$-\'etale maps in $1\cat{Aff}_{\n{V}}$, then the following are equivalent:

\begin{enumerate}

\item  The natural map 
\[
\cat{Pr}_{X}\to   \varprojlim_{([n],\iota_{\bullet})\in \Delta^{\op}_{I}}  \cat{Pr}_{Y_{([n],i_{\bullet})}}
\]
is an equivalence. 

\item The natural map 
\[
\cat{Pr}_{X}\to   \varprojlim_{([n],\iota_{\bullet})\in \Delta^{\op}_{I}}  \cat{Pr}_{Y_{([n],i_{\bullet})}}
\]
if $2$-fully faithful.

\item Let $f_{i,1}^*\colon \cat{Pr}_{X}\to \cat{Pr}_{Y_i}$ be the pullback map and let $f_{i,1,\sharp}\colon \cat{Pr}_{Y_i}\to \cat{Pr}_{X}$ be its linear left adjoint. For $([n],i_{\bullet})\in \Delta_{I}$ let $f_{([n],i_{\bullet})}\colon Y_{([n],i_{\bullet})} \to X$ be the structural map.  Then the natural map 
\[
\varinjlim_{\Delta_I^{\op}} f^{\bullet}_{([n],i_{\bullet}),1,\sharp}\ob{D}(Y_{([n],i_{\bullet})})  \to \ob{D}(X) 
\]
is an equivalence in $\cat{Pr}_{\n{V}}$

\end{enumerate}
\end{lemma}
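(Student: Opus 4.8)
The plan is to realize all three conditions as statements about a single adjunction, and then play the two nontrivial implications off against a rigidity property and a conservativity computation.

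First I would build the comparison functor and its left adjoint. Write $\Phi\colon \cat{Pr}_{X} \to \varprojlim_{([n],i_\bullet)\in\Delta_I^{\op}} \cat{Pr}_{Y_{([n],i_\bullet)}}$ for the natural functor $M \mapsto (f_{([n],i_\bullet),1}^* M)_\bullet$. Because each $f_i$ is $1$-\'etale, every transition functor $f_1^*$ admits a $\cat{Pr}_{X}$-linear left adjoint $f_{1,\sharp}$, so each $f_1^*$ preserves limits and hence $\Phi$ preserves limits; the stability of $1$-suave maps under base change (\Cref{LemmaProperties1etaleMaps}(3)) supplies the Beck--Chevalley equivalences that assemble the termwise left adjoints into a single $\cat{Pr}_{X}$-linear left adjoint $\Psi \dashv \Phi$ given by $\Psi\big((N_\bullet)\big) = \varinjlim_{\Delta_I^{\op}} f_{([n],i_\bullet),1,\sharp}\, N_{([n],i_\bullet)}$. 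Using $\Psi \dashv \Phi$ enriched in $\cat{Pr}^{\kappa}$ one computes $\iHom(\Phi a, \Phi b) \simeq \iHom(\Psi\Phi a, b)$, so condition (2), that $\Phi$ is $2$-fully faithful, is equivalent to the counit $c\colon \Psi\Phi \to \id_{\cat{Pr}_{X}}$ being an equivalence. Since $\Phi(\ob{D}(X)) = (\ob{D}(Y_{([n],i_\bullet)}))_\bullet$ is the system of unit objects, evaluating $c$ at the unit $\ob{D}(X)\in\cat{Pr}_{X}$ returns exactly the map of condition (3); thus (3) asserts that $c$ is an equivalence at the unit object while (2) asserts it everywhere.

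The cheap implications are then immediate: (1)$\Rightarrow$(2), since an equivalence is $2$-fully faithful, and (2)$\Rightarrow$(3) by restricting $c$ to the unit. For (3)$\Rightarrow$(2) I would invoke rigidity: $\Psi\Phi$ is a $\cat{Pr}_{X}$-linear, colimit-preserving endofunctor of $\cat{Pr}_{X}=\Mod_{\ob{D}(X)}(\cat{Pr}^{\kappa})$, and such endofunctors are detected on the unit object, so the $\cat{Pr}_{X}$-linear transformation $c$ is an equivalence as soon as it is one at $\ob{D}(X)$. This already yields (2)$\Leftrightarrow$(3), and it remains only to upgrade (2) to (1).

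The crux is (2)$\Rightarrow$(1): promoting full faithfulness to an equivalence, i.e. showing the unit $\id \to \Phi\Psi$ is an equivalence. I would prove $\Psi$ is conservative and combine this with the triangle identities: if $c$ is an equivalence then for each $N$ the map $\Psi(\eta_N)$ is a section of the equivalence $c_{\Psi N}$, hence an equivalence, so conservativity of $\Psi$ forces $\eta_N$ to be an equivalence. For conservativity I would pull back along the $0$-th term $f_0 = \bigsqcup_i f_i$: by the Beck--Chevalley squares together with the extra degeneracy coming from the diagonal of $Y_0$, the simplicial object $f_0^*\, f_{([n],i_\bullet),1,\sharp}\, N_{([n],i_\bullet)}$ is split, giving $f_0^*\Psi(N)\simeq N_{([0],i_\bullet)}$; since the restriction $\iota_0^*$ to the $0$-th term of the \v{C}ech nerve is conservative, $\Psi(N)=0$ forces $N=0$. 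The main obstacle is precisely this conservativity computation --- identifying $f_0^*\Psi$ with the $0$-th-term functor via a split augmented simplicial object, and verifying that the Beck--Chevalley base-change equivalences of \Cref{LemmaProperties1etaleMaps} are coherently compatible across the whole \v{C}ech nerve; the rigidity input used in (3)$\Rightarrow$(2) is the second technical point one must secure rather than take for granted.
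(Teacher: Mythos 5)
Your proof is correct and follows the same strategy as the paper: build the adjunction $\Psi\dashv\Phi$ from the termwise linear left adjoints, identify (2) with the counit being an equivalence, and use $\cat{Pr}_X$-linearity to detect that equivalence on the unit object, which is exactly condition (3). The only structural difference is in (2)$\Rightarrow$(1): the paper verifies directly that the comparison $\Phi\Psi(M_\bullet)\to (M_\bullet)$ is an equivalence at every level $([m],j_\bullet)$ of the \v{C}ech nerve via the split (base-changed) simplicial colimit, whereas you run the same splitting computation only at level $0$, deduce that $\Psi$ is conservative (using conservativity of evaluation at $[0]$ on a limit over $\Delta_I^{\op}$), and then conclude by the triangle identity. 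Both rest on the identical key input — the extra degeneracy splitting the diagram $f_{([n+m+1],i_\bullet\star j_\bullet),1,\sharp}$ after base change along a term of the cover — so neither buys anything the other lacks; your version is marginally more formal, the paper's marginally more explicit. One caveat: your formulation of conservativity as ``$\Psi(N)=0$ forces $N=0$'' is not the right notion here, since $\cat{Pr}_{\n{V}}$ is not stable and reflecting zero objects does not imply reflecting equivalences; however, your actual computation $f_0^*\Psi(N)\simeq N_{([0],i_\bullet)}$, being natural in $N$, does yield conservativity in the correct sense (reflection of equivalences), so this is a slip of phrasing rather than a gap.
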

\begin{proof}
It is clear that (1) implies (2).   For (2) if and only if (3), let $G\colon \cat{Pr}_{X}\to   \varprojlim_{([n],\iota_{\bullet})\in \Delta^{\op}_{I}}  \cat{Pr}_{Y_i}$ be the natural map, since each composite $\cat{Pr}_{X}\to \cat{Pr}_{Y_{([n],i_{\bullet})}}$ has a linear left adjoint $f_{([n],i_{\bullet}),1,\sharp}$, then so does the functor $F$ (by \cite[Lemma D.4.7 (i)]{HeyerMannSix}) and it is  given by 
\[
F=\varinjlim_{\Delta^{\op}_I} f_{([n],i_{\bullet}),1,\sharp} \colon  \varprojlim_{([n],\iota_{\bullet})\in \Delta^{\op}_{I}}  \cat{Pr}_{Y_{([n],i_{\bullet})}}\to \cat{Pr}_X. 
\]
Thus,  $G$ is fully faithful if and only if $FG\to \id_{\cat{Pr}_{X}}$ is an equivalence. Since both $G$ and $F$ are $\cat{Pr}_X$-linear, this holds if and only if it does after evaluating at the unit, i.e. if and only if $FG(\ob{D}(X))\to \ob{D}(X)$ is an equivalence.

For (2) implies (1), let $(M_{([n],i_{\bullet})})_{\Delta_I}$ be a cocartesian section of $\varinjlim_{\Delta_I^{\op}} f^{\bullet}_{([n],i_{\bullet}),1,\sharp}\ob{D}(Y_{([n],i_{\bullet})}) $. We want to see that the natural map $GF((M_{([n],i_{\bullet})})_{\Delta_I})\to (M_{([n],i_{\bullet})})_{\Delta_I}$ is an equivalence. This boils down to proving that for all $([n],i_{\bullet})\in \Delta_{I}$ one has that 
\begin{equation}\label{eqo1mk01majd}
\big(\varinjlim_{\Delta_I^{\op}} f_{([n],i_{\bullet}),1,\sharp} M_{([n],i_{\bullet})} \big) \otimes_{\ob{D}(X)} \ob{D}(Y_{(m,j_{\bullet})}) \to M_{([m],j_{\bullet})}
\end{equation}
is an equivalence. But 
\[
\big(\varinjlim_{\Delta_I^{\op}} f_{([n],i_{\bullet}),1,\sharp} M_{([n],i_{\bullet})} \big) \otimes_{\ob{D}(X)} \ob{D}(Y_{(m,j_{\bullet})})= \varinjlim_{\Delta_I^{\op}}   f_{([n+m+1], i_{\bullet}\star j_{\bullet})} M_{([n+m+1], i_{\bullet}\star j_{\bullet})}
\]
(with $i_{\bullet}\star j_{\bullet}=(i_{0},\ldots, i_n,j_0,\ldots, j_m)$) since $M_{([n],i_{\bullet})}$ is cocartesian. This last diagram is split with colimit $M_{([m],j_{\bullet})}$, proving that \eqref{eqo1mk01majd} is an equivalence as wanted. 
\end{proof}

\begin{remark}\label{RemAnalyticRingsTop}
Let $\cat{AnRing}$ be the category of analytic rings. The functor of quasi-coherent sheaves
\[
\ob{D}\colon \cat{AnRing}\to \cat{Pr}_{\ob{D}(\Z^{\cond})}
\]
preserves finite colimits by \cite[Proposition 4.1.14]{SolidNotes}. Hence, if $f\colon A\to B$ is a $!$-able map of analytic rings, \Cref{LemmaProperties1etaleMaps} (2) implies that $\ob{D}(A)\to \ob{D}(B)$ is a $1$-\'etale map in $\cat{Pr}_{\ob{D}(\Z^{\cond})}$ (up to restricting to $\kappa$-presentable six functors). Furthermore, \Cref{LemmaEquivalentConditionsCover} together with \cite[Lemma 4.2.7]{HeyerMannSix} imply that $f$ is a $!$-cover of analytic rings if and only if $\ob{D}(A)\to \ob{D}(B)$ induces a $1$-\'etale cover in $1\cat{Aff}_{\ob{D}(\Z^{\cond})}$. 
\end{remark}

\begin{definition}\label{DefinitionEtaleTop}
We define the $1$-\'etale topology on $1\cat{Aff}_{\n{V}}$ to be the Grothendieck topology where the covering sieves  of an object $X\in 1\cat{Aff}_{\n{V}}$  are those that contain  a $1$-\'etale cover as in \Cref{DefinitionGestaltTopology}. 
\end{definition}

\begin{remark}\label{RemarkBasicProperties1etTopology}
The $1$-\'etale topology is a well defined Grothendieck topology thanks to the stability of $1$-\'etale maps under composition and base change of \Cref{LemmaProperties1etaleMaps}.
\end{remark}

\begin{lemma}\label{LemmaSubcanonicalDescent}
 Let $\{f_i\colon Y_i\to X\}_{I}$ be a $1$-\'etale cover in $1\cat{Aff}_{\n{V}}$.  Then the following holds:

\begin{enumerate}

\item  The cover $\{f_i\}$ is subcanonical. More precisely, the natural map $\ob{D}(X)\to \varprojlim_{\Delta_{I}} \ob{D}(Y_{([n],i_{\bullet})})$ with transition maps given by $*$-pullbacks is an equivalence of presentable categories. 
 
\item The natural map of $\kappa$-compact objects
\[
\ob{D}(X)_{\kappa}\xrightarrow{\sim }\varprojlim_{\Delta_{I}} \ob{D}(Y_{([n],i_{\bullet})})_{\kappa}
\]
is an equivalence. 

\item Let $\ob{D}(X)^{\ob{dual}}\subset \ob{D}(X)$ be the full subcategory of dualizable objects. Then the natural map 
\[
\ob{D}(X)^{\ob{dual}}\xrightarrow{\sim }\varprojlim_{\Delta_{I}} \ob{D}(Y_{([n],i_{\bullet})})^{\ob{dual}}
\]
is an equivalence. 
 
 \end{enumerate}
\end{lemma}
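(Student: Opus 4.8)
The plan is to deduce all three statements from the defining $2$-categorical descent equivalence of a $1$-\'etale cover by passing to the endomorphisms of the unit object, and then to bootstrap (2) and (3) from (1).

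For (1) I would start from the equivalence $\cat{Pr}_{X} \xrightarrow{\sim} \varprojlim_{\Delta_{I}} \cat{Pr}_{Y_{([n],i_\bullet)}}$ of \Cref{DefinitionGestaltTopology}, in which every transition functor is the symmetric monoidal pullback $f^*_{1} = (-)\otimes_{\ob{D}(X)}\ob{D}(Y_\bullet)$. Since these send units to units, the unit object of the limit is the compatible family of units $(\ob{D}(Y_{([n],i_\bullet)}))$. Using that mapping categories in a limit of $\cat{Pr}^{\kappa}$-enriched presentable categories are computed as the limits of the mapping categories, the endomorphism category of this unit is
\[
\ob{End}_{\varprojlim_{\Delta_{I}} \cat{Pr}_{Y_\bullet}}(1) = \varprojlim_{\Delta_{I}} \ob{End}_{\cat{Pr}_{Y_{([n],i_\bullet)}}}(\ob{D}(Y_{([n],i_\bullet)})) = \varprojlim_{\Delta_{I}} \ob{D}(Y_{([n],i_\bullet)}),
\]
with transition maps induced by the $f^*_{1}$ on units, i.e. the $*$-pullbacks $f^*$. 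On the other hand, transporting along the equivalence, this same endomorphism category is $\ob{End}_{\cat{Pr}_{X}}(1_{\cat{Pr}_{X}}) = \ob{End}_{\Mod_{\ob{D}(X)}(\cat{Pr}^{\kappa})}(\ob{D}(X)) = \ob{D}(X)$, since the $\ob{D}(X)$-linear colimit-preserving endofunctors of the free rank-one module are given by $\ob{D}(X)$ itself (\cite[Corollary 4.8.5.21]{HigherAlgebra}). Comparing the two computations yields (1). Equivalently one could dualize the colimit presentation of \Cref{LemmaEquivalentConditionsCover}(3), but the computation of endomorphisms of the unit is cleaner.

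For (2) I would first note that each $f^*$ is a morphism in $\cat{Pr}^{\kappa}$ and hence preserves $\kappa$-compact objects, so the cosimplicial diagram of (1) is a diagram in $\cat{Pr}^{\kappa}$ whose limit is computed there. Passing to $\kappa$-compact objects is the equivalence $(-)_{\kappa}\colon \cat{Pr}^{\kappa} \xrightarrow{\sim}\cat{Cat}^{\mathrm{rex}(\kappa)}$ onto the idempotent-complete categories admitting $\kappa$-small colimits, inverse to $\ob{Ind}_{\kappa}$; being an equivalence it preserves limits, and the forgetful functor $\cat{Cat}^{\mathrm{rex}(\kappa)}\hookrightarrow \cat{Cat}$ preserves limits because $\kappa$-small colimits in a limit are computed levelwise. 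Hence $\ob{D}(X)_{\kappa} = (\varprojlim_{\Delta_{I}}\ob{D}(Y_\bullet))_{\kappa} = \varprojlim_{\Delta_{I}}\ob{D}(Y_\bullet)_{\kappa}$, which is (2).

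For (3), since each $f^*$ is symmetric monoidal it preserves dualizable objects, so restricting (1) produces a limit diagram of the full subcategories $\ob{D}(Y_\bullet)^{\ob{dual}}$. It then suffices to observe that in a limit of symmetric monoidal categories with symmetric monoidal transition functors an object is dualizable if and only if each of its projections is: the forward implication holds because the projections are symmetric monoidal, and the converse because duals together with their evaluation and coevaluation data are essentially unique and hence assemble, across the compatible family, into a dual computed in the limit. Combining with (1) gives (3). The main obstacle is (1): one must correctly identify the unit of the $2$-categorical limit and compute its endomorphisms as the limit of the $\ob{D}(Y_\bullet)$ with $*$-pullback transitions; parts (2) and (3) are then formal consequences of the behaviour of $\kappa$-compact and dualizable objects under limits.
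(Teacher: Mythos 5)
Your proposal is correct and follows essentially the same route as the paper: all three parts are extracted from the defining $2$-categorical equivalence $\cat{Pr}_X\xrightarrow{\sim}\varprojlim_{\Delta_I}\cat{Pr}_{Y_{([n],i_\bullet)}}$, with (2) obtained from the fact that the $*$-pullbacks live in $\cat{Pr}^\kappa$ and passage to $\kappa$-compacts commutes with limits, and (3) by descending the (essentially unique) duality data exactly as in the paper. The only cosmetic difference is in (1), where you recover $\ob{D}(-)$ as the endomorphism category of the unit of the limit $2$-category rather than, as the paper does, via the unit of the adjunction between the comparison functor $G$ and its right adjoint $H$ (identifying $HG(\ob{D}(X))$ with $\varprojlim_{\Delta_I}\ob{D}(Y_{([n],i_\bullet)})$); both are formal manipulations of the same equivalence and yield the same identification of the transition maps with $*$-pullbacks.
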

\begin{proof}
By definition, we have an equivalence of presentable $2$-categories 
\[
G\colon \cat{Pr}_X\xrightarrow{\sim} \varprojlim_{([n],\iota_{\bullet})\in \Delta^{\op}_{I}}  \cat{Pr}_{Y_{([n],i_{\bullet})}}. 
\]
In particular, looking at the right adjoint $H$ of $G$, we have the equivalence $\ob{D}(X)\xrightarrow{\sim} HG (\ob{D}(X))$ which translates precisely to (1). For part (2), since $\cat{Pr}_{\n{V}}=\Mod_{\n{V}}(\cat{Pr}^{\kappa})$, pullback maps preserve $\kappa$-compact objects by definition, and therefore (1) restricts to an equivalence as in (2). Finally, for (3), we clearly have a fully faithful map 
\[
\ob{D}(X)^{\ob{dual}}\hookrightarrow \varprojlim_{\Delta_{I}} \ob{D}(Y_{([n],i_{\bullet})})^{\ob{dual}}.
\]
It is left to see that if $M\in \ob{D}(X)$ is such that its pullback to $\prod_{i} \ob{D}(Y_i)$ is dualizable, then $M$ is so. Let $f_{([n],i_{\bullet})}\colon Y_{([n],i_{\bullet})}\to X$ be the structural map of the \v{C}ech nerve of $\{Y_{i}\to X\}$, and let $M_{([n],i_{\bullet})}=f^*_{([n],i_{\bullet})} M$. Then the objects $M_{([n],i_{\bullet})}$ are dualizable, and their duals $M_{([n],i_{\bullet})}^{\vee}$ form a cocartesian section of $\varprojlim_{\Delta_{I}} \ob{D}(Y_{([n],i_{\bullet})})^{\ob{dual}}$, this cocartesian section descends to an object $N\in \ob{D}(M)$. Furthermore, the unit and counit witnessing the duality of $M_{([n],i_{\bullet})}^{\vee}$ and $M_{([n],i_{\bullet})}$ are cocartesian and descent in a unit and counit map $1\to N\otimes M \to 1$ in $\ob{D}(X)$.  It is clear that these yield the unit and counit of a duality by descent.   
\end{proof}

When restricted to morphisms in $0\cat{Aff}_{\n{V}}$, a $1$-\'etale cover is the same as a descendable cover, provided the unit in $\n{V}$ is compact. 

\begin{lemma}\label{LemmaComparing1etaleDescendable}
Let $\{f_i\colon Y_i\to X\}_{i\in I}$ be a family of maps in $0\cat{Aff}_{\n{V}}$ such that $\s{O}(X)\to \s{O}(Y_i)$ is $\kappa$-presented. Suppose that the unit in $\n{V}$ is compact.  Then $\{f_i\}$ is a $1$-\'etale cover if and only if there is a finite subfamily $Y_{i_1},\ldots, Y_{i_n}\to X$ that forms a descendable cover of $X$, that is, such that the map of algebras $\s{O}(X)\to \prod_{k=1}^n \s{O}(Y_{i_k})$  is descendable in $\n{V}$. 
\end{lemma}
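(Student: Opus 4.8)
The plan is to match the two notions through Mathew's characterization of descendability: for a map $A\to B$ in $\ob{CAlg}(\n{V})$, descendability is equivalent to the unit $\mathbf 1$ lying in the thick $\otimes$-ideal $\langle B\rangle$ generated by $B$ in $\ob{Mod}_A(\n{V})$, a membership preserved by every symmetric monoidal colimit-preserving functor. I will combine this with the reformulation of ``$1$-\'etale cover'' from \Cref{LemmaEquivalentConditionsCover}(3): $\{f_i\}$ is a cover precisely when $\varinjlim_{\Delta_I^{\op}} f_{([n],i_\bullet),1,\sharp}\ob{D}(Y_{([n],i_\bullet)})\to \ob{D}(X)$ is an equivalence in $\cat{Pr}_{\n{V}}$. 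Throughout I use that, since the unit of $\n{V}$ is compact, the free module $\s{O}(X)=\mathbf 1_{\ob{D}(X)}$ is a \emph{compact} object of $\ob{D}(X)=\ob{Mod}_{\s{O}(X)}(\n{V})$.

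For the implication from descendable to cover, assume a finite subfamily with $\s{O}(X)\to \prod_{k}\s{O}(Y_{i_k})=:B$ descendable. As finite products and coproducts agree in the additive setting, $B$ is the structure algebra of the coproduct $\coprod_k Y_{i_k}$, reducing to a single map. Descendability of $\s{O}(X)\to B$ in $\n{V}$ is the same as descendability of $\mathbf 1\to B$ in $\ob{D}(X)$; applying the symmetric monoidal functor $\ob{Mod}_{(-)}\colon \ob{CAlg}(\ob{D}(X))\to \ob{CAlg}(\cat{Pr}_X)$ (sending $\s{O}(X)\mapsto \ob{D}(X)$ and $B\mapsto \ob{D}(\coprod_k Y_{i_k})$) and invoking preservation of thick-$\otimes$-ideal membership, I obtain that $\ob{D}(X)\to \ob{D}(\coprod_k Y_{i_k})$ is descendable in $\cat{Pr}_X$. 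Descendability of an algebra map yields effective descent for its modules, so $\cat{Pr}_X\xrightarrow{\sim}\varprojlim_{\Delta}\cat{Pr}_{(\coprod Y_{i_k})^\bullet}$, exhibiting the finite subfamily, and hence the whole family (enlarging a $1$-\'etale cover by further $1$-\'etale maps preserves the generation criterion of \Cref{LemmaEquivalentConditionsCover}(3)), as a $1$-\'etale cover.

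For the converse, the heart of the matter, assume $\{f_i\}$ is a $1$-\'etale cover. By \Cref{LemmaProperties1etaleMaps}(1) each $f_{i,1,\sharp}=f_{i,1,*}$ is the forgetful functor, so the simplicial diagram of \Cref{LemmaEquivalentConditionsCover}(3) is $[n]\mapsto \ob{D}(Y_{([n],i_\bullet)})$ viewed as an $\ob{D}(X)$-module category by restriction of scalars along the structure algebra $\s{O}(Y_{([n],i_\bullet)})$. I want to descend the colimit equivalence $\varinjlim_{\Delta_I^{\op}}\ob{D}(Y_{([n])})\simeq \ob{D}(X)$ from $\cat{Pr}_X$ down to $\ob{D}(X)$, concluding that $\mathbf 1=\s{O}(X)$ is the realization $\varinjlim_{\Delta_I^{\op}}\s{O}(Y_{([n])})$ of the bar complex, hence that $\mathbf 1$ lies in the localizing $\otimes$-ideal $\mathrm{Loc}^{\otimes}(\{\s{O}(Y_i)\})$ of $\ob{D}(X)$. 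The module categories in play are induced from the structure algebras, and the passage between levels uses ambidexterity (\Cref{RemarkImprovementStefanich}): since each $f_i$ is simultaneously $1$-\'etale and $0$-prim (\Cref{Remark1Primmaps}), the global-sections functor $\Gamma\colon \cat{Pr}_X\to \ob{D}(X)$ right adjoint to $\lambda\colon M\mapsto \ob{Mod}_M(\ob{D}(X))$ commutes with the ambidextrous pushforwards $f_{\sharp}=f_{*}$ via the projection formula, so that applying $\Gamma$ to the colimit equivalence returns $\varinjlim_{\Delta_I^{\op}}\s{O}(Y_{([n])})\simeq \s{O}(X)$ in $\ob{D}(X)$. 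This interchange of categorical levels — that the $\cat{Pr}_X$-colimit computes, after global sections, the corresponding colimit in $\ob{D}(X)$ — is the main obstacle, and is exactly where the full $2$-categorical (not merely subcanonical, $1$-categorical) strength of the cover is indispensable.

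It then remains to invoke compactness. The object $\mathbf 1$ is compact in $\ob{D}(X)$, and it is equivalent to the $\Delta_I^{\op}$-indexed geometric realization of objects of $\mathrm{Loc}^{\otimes}(\{\s{O}(Y_i)\})$; a compact object so presented must be a retract of a finite stage of the colimit, and each finite stage involves only finitely many indices and lies in the thick $\otimes$-ideal generated by the corresponding finite set $\{\s{O}(Y_{i_1}),\dots,\s{O}(Y_{i_n})\}$. Thus $\mathbf 1\in \langle \s{O}(Y_{i_1}),\dots,\s{O}(Y_{i_n})\rangle=\langle \textstyle\prod_k\s{O}(Y_{i_k})\rangle$, which by Mathew's criterion says precisely that $\s{O}(X)\to \prod_k\s{O}(Y_{i_k})$ is descendable, completing the argument.
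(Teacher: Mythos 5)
Your forward implication (descendable finite subfamily $\Rightarrow$ $1$-\'etale cover) is correct and follows essentially the paper's route: reduce to the finite subfamily, transport descendability along the symmetric monoidal functor $\ob{Mod}_{(-)}$ up to module $2$-categories, and invoke Mathew-type descent; you are merely more explicit than the paper about why thick-ideal membership is preserved. The gap is in the converse. By \Cref{LemmaEquivalentConditionsCover}~(3) together with \Cref{LemmaProperties1etaleMaps}~(1), the cover condition says that $\varinjlim_{\Delta_I^{\op}}\ob{D}(Y_{([n],i_\bullet)})\to\ob{D}(X)$, with forgetful functors as transition maps, is an equivalence in $\cat{Pr}^{\kappa}$. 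Computing this colimit as the limit along the right adjoints of the forgetful functors (the coinduction functors $\iHom_{\s{O}(X)}(\s{O}(Y_{([n],i_\bullet)}),-)$) and evaluating the counit at the unit yields
\[
\s{O}(X)\;\simeq\;\varinjlim_{\Delta_I}\iHom_{\s{O}(X)}\bigl(\s{O}(Y_{([n],i_\bullet)}),\s{O}(X)\bigr),
\]
that is, the realization of the \emph{coinduced} (dual) objects --- not, as you assert, the bar complex $\varinjlim_{\Delta_I^{\op}}\s{O}(Y_{([n],i_\bullet)})$ of the algebras themselves. Your justification for the latter conflates categorical levels: the ambidexterity $f_{1,\sharp}=f_{1,*}$ of \Cref{LemmaProperties1etaleMaps}~(1) concerns the functors $\cat{Pr}_{Y}\to\cat{Pr}_{X}$, whereas at the level of $\ob{D}(Y)\to\ob{D}(X)$ the forgetful functor has base change as its \emph{left} adjoint and coinduction as its \emph{right} adjoint, and these are not identified. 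Moreover, your $\Gamma$ is right adjoint to $M\mapsto\ob{Mod}_M(\ob{D}(X))$ and hence only defined on commutative algebra objects of $\cat{Pr}_X$, while the colimit diagram has forgetful (non-unital, non-monoidal) transition functors and does not live in $\ob{CAlg}(\cat{Pr}_X)$; ``applying $\Gamma$ to the colimit'' does not typecheck. The identity $\s{O}(X)\simeq\varinjlim\s{O}(Y_{([n],i_\bullet)})$ is in fact a \emph{consequence} of descendability, i.e.\ of the conclusion you are trying to reach.

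The repair is small and brings you back to the paper's argument: work with the coinduced objects. Each $\iHom_{\s{O}(X)}(\s{O}(Y_{([n],i_\bullet)}),\s{O}(X))$ is a module over the algebra $\s{O}(Y_{([n],i_\bullet)})$, hence (via unit followed by action) a retract of $\s{O}(Y_{([n],i_\bullet)})\otimes_{\s{O}(X)}\iHom_{\s{O}(X)}(\s{O}(Y_{([n],i_\bullet)}),\s{O}(X))$, and therefore lies in the thick tensor ideal generated by $\prod_{i\in I'}\s{O}(Y_{i})$ for the relevant finite set of indices $I'$. Your compactness argument --- the unit is a retract of a finite skeleton of the realization, which involves only finitely many indices --- then applies verbatim to the corrected colimit and yields descendability.
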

\begin{proof}
By \cref{LemmaProperties1etaleMaps} (1), given $g\colon Z\to W$ a $\kappa$-presentable map in $0\cat{Aff}_{\n{V}}$, the left and right adjoints $g_{1,\sharp}$ and $g_{1,*}$ of $g_1^*\colon \cat{Pr}_{W}\to \cat{Pr}_Z$ are naturally identified. Therefore, by \Cref{LemmaEquivalentConditionsCover} (3), $\{f_i\}$ is a $1$-\'etale cover if and only if the natural map
\[
\varinjlim_{\Delta_{I}^{\op}} \ob{D}(Y_{([n],i_{\bullet})})\to \ob{D}(X)
\]
is an equivalence in $\cat{Pr}^{\kappa}$, where the transition maps are given by forgetful functors. Passing  to right adjoints, this is equivalent for the natural functor
\begin{equation}\label{eqpjapsf32pajhfnolqwrkq}
\ob{D}(X)\to {\varprojlim_{\Delta_I}}^!\ob{D}(Y_{([n],i_{\bullet})})
\end{equation}
to be an equivalence, where transition maps are upper $!$-functors (which by definition are nothing but the right adjoints to the forgetful functors, see the six functor formalism constructed in the proof of (1) of \Cref{LemmaProperties1etaleMaps}). Suppose that \eqref{eqpjapsf32pajhfnolqwrkq} holds, then $\s{O}(X)=\varinjlim_{\Delta_I} \iHom_{\s{O}(X)}(\s{O}(Y_{([n],i_{\bullet})}),\s{O}(X))$, and since the unit of $\n{V}$ is compact, then so is the unit of $\ob{D}(X)=\Mod_{\s{O}(X)}(\n{V})$, and there exists a finite subset $I'\subset I$ and an index $m$ such that $\s{O}(X)$ is a retract of $\varinjlim_{\Delta_{I,\leq m}} \iHom_{\s{O}(X)}(\s{O}(Y_{([n],i_{\bullet})}),\s{O}(X))$. In particular, $\s{O}(X)$ is in the thick tensor ideal in $\ob{D}(X)$ generated by the algebra $\prod_{i\in I'} \s{O}(Y_i)$, proving that $\{Y_{i'}\to X\}$ is a descendable cover. Conversely, if there is a finite subset $I'\subset I$ such that $\{Y_{i}\to X\}_{i\in I'}$ is a descendable cover of $X$, then by \cite[Corollary 3.42]{MathewDescent} the natural functor of \Cref{LemmaEquivalentConditionsCover} (2) is an equivalence for $I'$ instead of $I$, and then so is for $I$ being a subcover. 
\end{proof}

\begin{example}\label{ExampleDifferentMaps}
In this paragraph we give different examples and counter-examples of $1$-\'etale, $1$-suave, $0$-suave, $0$-prim, etc. maps in $1\cat{Aff}_{\n{V}}$.  These are equivalent to the analogue definitions of \cite{GestaltenScholze}  when restricted to $1$-affine Gestalten over $\cat{Gest}(\n{V})$. For sake of concreteness, we will take $\n{V}=\ob{D}(R)$ for $R$ some analytic ring, and work with categories that arise from analytic stacks over $R$ via the quasi-coherent six functor formalism, see \Cref{ss:AnStk}. 

\begin{enumerate}

\item  By \cite[Proposition 9.5]{GestaltenScholze} one can construct Gestalten from six functor formalisms; this procedure consists in taking higher and higher categories of kernels in an inductive way. In particular, we can apply this to the quasi-coherent six functor formalism of analytic stacks, and given $X$ an analytic stack we let $\ob{Gest}(X)$  denote its associated Gestalt.   By \cite[Proposition 9.5]{GestaltenScholze}, any $!$-able map $Y\to X$ of analytic stacks produces a $1$-\'etale and $1$-proper map of Gestalten $\ob{Gest}(Y)\to \ob{Gest}(X)$. Moreover, \cite[Corollary 9.8]{GestaltenScholze} says that if a map $Y\to X$ of analytic stacks is such that $Y=\varinjlim_i Y_i$ is a (countable) colimit of $!$-able analytic stacks over $X$, then $\ob{Gest}(Y)\to \ob{Gest}(X)$ is still $1$-\'etale.\footnote{  The countability assumption is only used due to the choice of $\aleph_1$-presentable categories in the formalism of \cite{GestaltenScholze}, working in Gestalten with larger cardinals allows us to take colimits by larger index categories. } 

In practice, many of the morphisms of interest in analytic geometry are colimits  of $!$-able maps of analytic stacks, for example, this is the case for algebraic geometry as discussed in  \cite[Lectures IV and V]{GestaltenScholze}, and also in the setup of Gelfand stacks of \cite{anschutz2025analytic}. Indeed,  in both of the previous situations the categories of stacks are constructed from $0$-affine objects which are in particular $0$-proper (by \cite[Proposition 6.13]{GestaltenScholze}) and therefore $1$-\'etale (by \cite[Proposition 6.21]{GestaltenScholze}).  

In conclusion, in practice most of all the relevant maps that occur in analytic and algebraic geometry are $1$-\'etale as Gestalten, and the corresponding maps of categories in $1\cat{Aff}_{\n{V}}$ are  $1$-\'etale as long as the corresponding analytic stacks are $1$-affine over $\AnSpec R$, that is, as long as $\ob{Gest}(X)=\ob{Gest}(\ob{D}(X))$.

\item Any morphism $f\colon Y\to X$ of  affinoid analytic stacks with induced analytic ring structure gives rise to a $0$-proper map $\ob{D}(X)\to \ob{D}(Y)$. Furthermore, a map $Y\to X$ of affinoid analytic stacks is $0$-proper if and only if it is $0$-prim if and only if it has the induced structure. This follows from the fact that $0$-primness implies that the forgetful functor $\ob{D}(Y)\to \ob{D}(X)$  is colimit preserving  $\ob{D}(X)$-linear by projective formula, and by the monadicity theorem that $\ob{D}(Y)=\ob{Mod}_{f_*1}(\ob{D}(X))$, this is precisely the definition of having the induced analytic ring structure, and these maps are $0$-proper by \Cref{Remark1Primmaps} (3).

\item  Consider the Gestalten associated to analytic stacks as in part (1).  Remark 9.6 of \cite{GestaltenScholze} says that 
 a map $f\colon Y\to X$ of analytic stacks is suave, prim, cohomologically \'etale and cohomologically proper if and only if the map of Gestalten $[f]\colon \ob{Gest}(Y)\to \ob{Gest}(X)$ is   $0$-suave, $0$-prim, $f$ is truncated and $[f]$ is $0$-\'etale, and $f$ is truncated and $[f]$ is $0$-proper respectively.  Thus, the same holds for the associated objects in $1\cat{Aff}_{\n{V}}$ as long as the Gestalten associated to the analytic stacks are $1$-affine over $\AnSpec R$.

\item  Following (3), any quasi-compact open immersion of schemes $U\subset X$ is $0$-affine, that is, this can be checked locally on $X$ so we can assume $X=\Spec A$ is affine. Then, $U$ is the complement of a vanishing locus of a finitely generated ideal $I=(f_0,\ldots, f_n)$, and $\ob{D}(U)$ is the spectrum of the idempotent $A$-algebra 
\[
A(U)=\varprojlim_{J\subset [n]} A_{f_{J}}
\]
where $J$ runs over finite subsets, and $A_{f_{J}}= \bigotimes_{j\in J} A_{f_j}$.  We deduce that quasi-compact open immersions are $0$-proper.  On the other hand, by (2) any affine map of schemes  is also $0$-proper. From this one can deduce that any qcqs maps of schemes $Y\to X$ is $0$-proper when considered as Gestalten, and hence in $1\cat{Aff}_{\ob{D}(\bb{Z})}$ as long as $X$ is qcqs by the $1$-affine result of \cite[Theorem 1.0.5]{stefanich2023tannakaduality1affineness}.

\item   We highlight that, contrary to what the intuition says, open immersions of schemes are not in general open immersions (i.e $0$-\'etale immersions) as Gestalten. For example, $f\colon \Spec \bb{Z}[T^{\pm}]\to \Spec \bb{Z}[T]$ is not an open immersion as $f^*$ does not preserve limits, and therefore $f$ cannot be $0$-suave. Indeed, by \cite[Lemma 7.6.6]{SolidNotes} a qcqs open immersion $U\subset X$ of schemes is $0$-suave if and only if $U$ is a clopen Zariski subspace of $X$. On the other hand, if one uses the solid incarnation of schemes into analytic stacks, see \cite[Corollary 5.4.17]{SolidNotes}, open immersions of schemes $U\to X$ give rise to $0$-\'etale maps of analytic stacks.

\item Let $R$ be a classical ring and  $f\colon X\to \Spec R$ a smooth proper variety. Then Serre duality for proper smooth schemes implies that $f$ is a $0$-suave map of Gestalten, and $1$-affiness of $X$ that the associated map in $1\cat{Aff}_{\ob{D}(R)}$ is also $0$-suave. Now, if $X$ is only a smooth morphism not necessarily proper, $\Delta_f\colon X\to X\times_{R} X$ is a local complete intersection, which by \cite[Lemma 6.3.5]{SolidNotes} is a $0$-suave map.  However, the second diagonal $X\to X^{S^2/R}$ is not longer $0$-suave but only $0$-proper. Indeed, let us analyse the case $X=\bb{A}^1_{R}$, then the second diagonal is just $\Spec R[\epsilon_1]$ where $\epsilon_1$ is a free generator in homological degree $1$ satisfying $\epsilon_1^2=0$, that is, 
\[
R[\epsilon_1]=R\otimes_{R[T]} R =\ob{Sym}^{\bullet}_{R} R[1]
\]
where $T\mapsto 0$ in both components. The map of rings $R[\epsilon_1]\to R$ is not suave since $R$ is not a perfect $R[\epsilon_1]$-module.

\item Another interesting source of examples of stacks with different suave and primness properties are classifying stacks. In \Cref{s:Examples} we will study  classifying stacks of multiplicative groups, discrete $\bb{Z}$-lattices, and different incarnations of vector bundles in algebraic and analytic geometry.

\end{enumerate}

\end{example}

\subsubsection{Cartier duality in $\cat{Pr}_{\n{V}}$}\label{sss:CartierDualityPrV}

We finish our discussion of Cartier duality with a general theorem that will be helpful in identifying Cartier duals of quasi-affinoid analytic  group stacks from a pairing under some suave or prim conditions. Consider the small full subcategory $1\cat{Aff}^{\kappa}_{\n{V}}\subset 1\cat{Aff}_{\n{V}}$ spanned by those algebras in $\cat{Pr}_{\n{V}}$ which are $\kappa$-compact,  equivalently, we let $1\cat{Aff}^{\kappa}_{\n{V}}$ be the opposite of the category of commutative algebras in the full subcategory $\cat{Pr}_{\n{V},\kappa}$ of $\kappa$-compact objects in $\n{V}$.

 Let  $\ob{Shv}(1\cat{Aff}_{\n{V}}^{\kappa})$ be the category of sheaves with respect to the $1$-\'etale topology of \Cref{DefinitionEtaleTop}, we let $\ob{Shv}(1\cat{Aff}_{\n{V}}^{\kappa},\Sp_{\geq 0})\subset \ob{Shv}(1\cat{Aff}_{\n{V}}^{\kappa},\cat{CMon})$ be the categories of connective spectra and commutative monoids on $\ob{Shv}(1\cat{Aff}_{\n{V}}^{\kappa})$. 

\begin{lemma}\label{LemmaSheafA1GL1}
Let $\bb{A}^1_{\n{V}}$ be the presheaf on $1\cat{Aff}_{\n{V}}^{\kappa}$ given by $X\mapsto \ob{D}(X)_{\kappa}$, and let ${\bf{GL}}_{1,\n{V}}\subset \bb{A}^1_{\n{V}}$ be the subsheaf of invertible objects, see \Cref{DefinitionAffineGl}. Then $\bb{A}^1_{\n{V}}$ and  ${\bf{GL}}_{1,\n{V}}$ are sheaves for the $1$-\'etale topology.  In particular, if $G\in \cat{PShv}(1\cat{Aff}_{\n{V}}^{\kappa},\cat{CMon})$ is a presheaf on a commutative monoid, then its Cartier dual 
\[
\bb{D}(G)=\iHom(G, \bb{A}^1_{\n{V}})
\]
is a $1$-\'etale sheaf.
\end{lemma}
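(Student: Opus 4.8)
The plan is to deduce the sheaf property of $\bb{A}^1_{\n{V}}$ and ${\bf{GL}}_{1,\n{V}}$ directly from the descent statement \Cref{LemmaSubcanonicalDescent}, and then to obtain the assertion about $\bb{D}(G)$ from the compatibility of the $1$-\'etale topology with the smashing tensor product. First I would treat $\bb{A}^1_{\n{V}}$. Fix a $1$-\'etale cover $\{f_i\colon Y_i\to X\}$ with \v{C}ech nerve $\{Y_{([n],i_\bullet)}\}$. By \Cref{LemmaSubcanonicalDescent}(2) the $*$-pullbacks induce an equivalence of symmetric monoidal categories $\ob{D}(X)_\kappa\xrightarrow{\sim}\varprojlim_{\Delta_I}\ob{D}(Y_{([n],i_\bullet)})_\kappa$. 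Applying the core functor $(-)^{\simeq}$, which is a right adjoint (hence preserves this limit) and is lax symmetric monoidal (hence preserves the commutative monoid structure coming from the tensor product), yields exactly the sheaf condition $\bb{A}^1_{\n{V}}(X)\xrightarrow{\sim}\varprojlim_{\Delta_I}\bb{A}^1_{\n{V}}(Y_{([n],i_\bullet)})$ in $\cat{CMon}$. Thus $\bb{A}^1_{\n{V}}$ is a $1$-\'etale sheaf.

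Next I would handle ${\bf{GL}}_{1,\n{V}}\subset\bb{A}^1_{\n{V}}$, the full subpresheaf selecting the invertible objects (which are automatically $\kappa$-compact). Since $\bb{A}^1_{\n{V}}$ is a sheaf, any descent datum of invertible objects glues to a unique section $L\in\bb{A}^1_{\n{V}}(X)$, so it suffices to show that invertibility is local, i.e. that $L$ is invertible whenever its restrictions $f_i^*L$ are. As the $f_i^*L$ are invertible they are in particular dualizable, so by \Cref{LemmaSubcanonicalDescent}(3) the object $L$ is dualizable; and $L$ is invertible precisely when the evaluation $L\otimes L^\vee\to 1$ is an equivalence. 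By the subcanonicity in \Cref{LemmaSubcanonicalDescent}(1) a morphism in $\ob{D}(X)$ is an equivalence if and only if its $*$-pullbacks along the cover are, and those pullbacks are the (invertible) evaluation maps of the $f_i^*L$. Hence $L$ is invertible, the glued section lies in ${\bf{GL}}_{1,\n{V}}(X)$, and ${\bf{GL}}_{1,\n{V}}$ is a sheaf.

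Finally, for the Cartier dual $\bb{D}(G)=\iHom(G,\bb{A}^1_{\n{V}})$ I would argue that internal Hom into a sheaf is again a sheaf. By the internal-Hom adjunction, $\iHom(G,\bb{A}^1_{\n{V}})$ is local as soon as $\bb{A}^1_{\n{V}}$ is local against every map of the form $u\otimes G$ with $u$ a local equivalence; tensoring the \v{C}ech local equivalence attached to a cover with $G$ and mapping into the sheaf $\bb{A}^1_{\n{V}}$ then gives the sheaf condition for $\bb{D}(G)$. I expect this last compatibility to be the main obstacle: one must know that the smashing tensor product is compatible with the $1$-\'etale topology, i.e. that local equivalences are stable under $-\otimes P$. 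This follows from the stability of $1$-\'etale covers under base change (\Cref{LemmaProperties1etaleMaps}(3)), which makes the topology multiplicative and hence the sheafification monoidal; granting this, the subcategory of sheaves is an exponential ideal, closed under $\iHom(G,-)$, and the conclusion is formal. By contrast, the first two paragraphs are immediate once \Cref{LemmaSubcanonicalDescent} is in hand.
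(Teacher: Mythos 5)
Your argument is correct and follows the paper's own (much terser) proof essentially verbatim: $\bb{A}^1_{\n{V}}$ from \Cref{LemmaSubcanonicalDescent}(2), ${\bf{GL}}_{1,\n{V}}$ from \Cref{LemmaSubcanonicalDescent}(3) together with the local nature of invertibility of a dualizable object (checked on the (co)evaluation map, which is detected after pullback by \Cref{LemmaSubcanonicalDescent}(1)), and the statement about $\bb{D}(G)$ by the formal exponential-ideal argument. Your elaboration of the last step, pinning down that the needed input is stability of local equivalences under the smashing tensor product, is a reasonable unpacking of what the paper dismisses as "formal."
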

\begin{proof}
The case of $\bb{A}^1_{\n{V}}$ follows  from \Cref{LemmaSubcanonicalDescent} (2). The case of invertible objects follows from \Cref{LemmaSubcanonicalDescent} (3) and the fact that a dualizable object $M$ is invertible if and only if the unit map $1\to M^{\vee}\otimes M$ is an equivalence (which can be checked locally  by descent).  

Finally, the claim about the Cartier duals is formal from the fact that $\bb{A}^1_{\n{V}}$ is a sheaf for the $1$-\'etale topology. 
\end{proof}

\begin{proposition}\label{TheoCartierDualityPrV}
Let $G\in \ob{Shv}(1\cat{Aff}^{\kappa}_{\n{V}},\Sp_{\geq 0})$ be a connective spectra object in $\ob{Shv}(1\cat{Aff}^{\kappa}_{\n{V}})$, and consider its Cartier dual 
\[
\bb{D}(G):= \iHom(G, {\bf{GL}_{1,\n{V}}})=\iHom(G, \bb{A}^1_{\n{V}}),
\]
in $\ob{Shv}(1\cat{Aff}^{\kappa}_{\n{V}},\Sp_{\geq 0})$.   Let $e\colon *\to  G$ be the unit map, and let $\Omega G= *\times_G *$ be the loops of $G$. The following holds:

\begin{enumerate}

\item  Suppose that $G$ is represented by an object in $1\cat{Aff}^{\kappa}_{\n{V}}$, and therefore it is also the case for the loops $\Omega G=*\times_G *$. Suppose that the categories $\ob{D}(G),\ob{D}(\Omega G)\in \cat{Pr}_{\n{V}}$ are dualizable, and that $*\to G$ is a $1$-\'etale cover.  Then $\bb{D}(G)$ and $\bb{D}(\Omega G)$ are also represented  in $1\cat{Aff}^{\kappa}_{\n{V}}$ and we have a fiber sequence in $\ob{Shv}(1\cat{Aff}^{\kappa}_{\n{V}},\Sp_{\geq 0})$
\[
\bb{D}(G)\to *\to \bb{D}(\Omega G).
\]

\item Keep the hypothesis of (1).  Suppose that there exists an object $H\in \ob{Shv}(1\cat{Aff}^{\kappa}_{\n{V}},\Sp_{\geq 0})$  represented in $1\cat{Aff}^{\kappa}_{\n{V}}$  such that $\ob{D}(H)\in \cat{Pr}_{\n{V}}$ is dualizable.  Suppose that the unit map $e\colon *\to H$  is a $1$-\'etale cover and that we have a fiber sequence 
\[
\bb{D}(G)\to * \to H.
\] Then one has a natural equivalence of sheaves of connective spectra 
\[
\bb{D}(H)=\Omega G,
\]
and hence, after taking duals again, a natural equivalence $\bb{D}(\Omega G)=H$. In particular, Cartier duality sends the fiber/cofiber sequence $\Omega G\to * \to G$ to the fiber cofiber sequence $\Omega H \to * \to H$. 
\end{enumerate}

\end{proposition}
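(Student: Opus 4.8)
The plan is to reduce both statements to Lurie's Cartier self-duality for dualizable bialgebras, \Cref{ThmCartierDualitySymmetricmonoidal}(2), combined with the fact that $\GL_{1,\n{V}}$ is a $1$-\'etale sheaf (\Cref{LemmaSheafA1GL1}); the one genuinely homotopy-theoretic input will be that the cover $*\to G$ presents $G$ as the realization of the bar construction of $\Omega G$, so that $\bb{D}$ turns this into the cobar construction computing $\Omega\bb{D}(\Omega G)$. For the representability claim in (1), I would argue that since $G$ and $\Omega G$ are represented in $\cat{Aff}^{\kappa}_{\n{V}}$ by the commutative and cocommutative bialgebras $\ob{D}(G)$ and $\ob{D}(\Omega G)$, which are dualizable by hypothesis, \Cref{ThmCartierDualitySymmetricmonoidal}(2) applies directly and shows that their Cartier duals are corepresented by the dual bialgebras, namely $\bb{D}(G)=\Spec^{\n{V}}\ob{D}(G)^{\vee}$ and $\bb{D}(\Omega G)=\Spec^{\n{V}}\ob{D}(\Omega G)^{\vee}$, both lying in $\cat{Aff}^{\kappa}_{\n{V}}$. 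This settles the representability assertion, and I will reuse it in (2).

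For the fiber sequence in (1), the key point is that $*\to G$ is a $1$-\'etale cover, whose \v{C}ech nerve is the bar construction $[n]\mapsto (\Omega G)^{\times n}$ and whose realization recovers $G$ as a sheaf of connective spectra. Since $\GL_{1,\n{V}}$ is a sheaf (\Cref{LemmaSheafA1GL1}) and $\bb{D}(-)=\iHom(-,\GL_{1,\n{V}})$ sends colimits to limits, I obtain
\[
\bb{D}(G)=\Tot_{[n]\in\Delta}\bb{D}\big((\Omega G)^{\times n}\big).
\]
As $\bb{D}$ exchanges finite products with products, the resulting cosimplicial object is the cobar construction $[n]\mapsto\bb{D}(\Omega G)^{\times n}$ of the grouplike object $\bb{D}(\Omega G)$; its totalization is $\Omega\bb{D}(\Omega G)=\fiber\!\big(*\to\bb{D}(\Omega G)\big)$, by the standard identity dual to ``the realization of the bar construction of a grouplike object $L$ is its delooping''. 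This produces precisely the fiber sequence $\bb{D}(G)\to *\to\bb{D}(\Omega G)$, with the fiber formed in $\Sp_{\geq 0}$.

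Part (2) is then formal. The hypothesised fiber sequence $\bb{D}(G)\to *\to H$ identifies $\bb{D}(G)=\Omega H$, and since $\bb{D}(G)=\Spec^{\n{V}}\ob{D}(G)^{\vee}$ has dualizable underlying category, the object $H$ satisfies all the hypotheses of (1): it is represented, both $\ob{D}(H)$ and $\ob{D}(\Omega H)=\ob{D}(G)^{\vee}$ are dualizable, and $*\to H$ is a cover. Applying (1) to $H$ gives $\bb{D}(H)=\Omega\bb{D}(\Omega H)$, and biduality — the involutivity $(\ob{D}(G)^{\vee})^{\vee}\simeq\ob{D}(G)$ built into \Cref{ThmCartierDualitySymmetricmonoidal}(2) — yields $\bb{D}(\Omega H)=\bb{D}(\bb{D}(G))=G$. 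Hence $\bb{D}(H)=\Omega G$, and dualizing once more $\bb{D}(\Omega G)=\bb{D}(\bb{D}(H))=H$. Feeding these identifications into $\bb{D}$ applied to the sequence $\Omega G\to *\to G$ produces $\Omega H\to *\to H$, as asserted.

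I expect the main obstacle to be the content of the second paragraph: rigorously descending the categorified $1$-\'etale cover $*\to G$ to an effective-epimorphism (bar) presentation at the level of $\Sp_{\geq 0}$-valued sheaves, and carefully identifying the cobar totalization with $\Omega\bb{D}(\Omega G)$ in the connective setting, where the fiber must be taken after connective truncation. The remaining steps are formal consequences of Lurie's duality and the limit-preservation of $\iHom(-,\GL_{1,\n{V}})$.
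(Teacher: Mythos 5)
Your proof is correct and follows essentially the same route as the paper: representability and biduality come from Lurie's duality for dualizable bialgebras (\Cref{ThmCartierDualitySymmetricmonoidal}(2)), and the fiber sequence comes from applying the limit-preserving functor $\bb{D}=\iHom(-,{\bf{GL}}_{1,\n{V}})$ to the cofiber sequence $\Omega G\to *\to G$ furnished by the $1$-\'etale cover hypothesis. Your explicit bar/cobar unwinding of that last step is just a spelled-out version of the paper's one-line observation that $\bb{D}$, being a right adjoint, turns this cofiber sequence into the fiber sequence $\bb{D}(G)\to *\to\bb{D}(\Omega G)$.
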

\begin{proof}
\begin{enumerate}

\item  Note that $\cat{Pr}_{\n{V},\kappa}$ contains all the dualizable objects since $\n{V}$ is $\kappa$-presentable.   Since the categories $\ob{D}(G)$ and $\ob{D}(\Omega G)\in \cat{Pr}_{\n{V}}$ are dualizable, the Cartier duals $\bb{D}(G)$ and $\bb{D}(\Omega G)$ are represented in $1\cat{Aff}_{\n{V}}^{\kappa}$ by \Cref{ThmCartierDualitySymmetricmonoidal} (2). 

Now, by assumption the map $*\to G$  is a $1$-\'etale cover. This produces a fiber/cofiber sequence of connective spectrum objects 
\[
\Omega G\to *\to G
\] 
in $\ob{Shv}(1\cat{Aff}^{\kappa}_{\n{V}},\Sp_{\geq 0})$, since $\bb{D}$ is a right adjoint, one obtains a fiber sequence  of connective spectra
\[
\bb{D}(G)\to *\to \bb{D}(\Omega G)
\]
proving part (1).

\item Suppose that there is $H\in \ob{Shv}(1\cat{Aff}^{\kappa}_{\n{V}},\Sp_{\geq 0})$ as in (2). Applying (1) to the fiber/cofiber sequence $\bb{D}(G)\to *\to H$, one has a fiber sequence of connective spectrum objects
\[
\bb{D}(H)\to *\to \bb{D}(\bb{D}(G)).
\]
Since Cartier duality is an anti-involution for objects corepresented by dualizable categories thanks to  \Cref{ThmCartierDualitySymmetricmonoidal} (2), we have that $\bb{D}(\bb{D}(G))=G$. We deduce that $\bb{D}(H)=\Omega G$, and by applying the anti-equivalence again that $\bb{D}(\Omega G)=H$. This proves (2). 
\end{enumerate}
\end{proof}

\begin{remark}\label{RemarkComparisonGestalten}
The theory of Scholze and Stefanich in Gestalten produces a perfect Cartier duality in an appropriate sense, the reader might ask about the relation with \Cref{TheoCartierDualityPrV}. We claim that the proposition is nothing but a very simple case of the full power of their formalism.  To explain this fact, we need to borrow  terminology from \cite{GestaltenScholze} that we shall use freely in this paragraph. Recall that in the formalism of Gestalten, one fixes once and for all a non-countable regular cardinal $\kappa$ and works with $\kappa$-presentable $n$-categories; we will ignore the choice of the cardinal. 

Let $S$ be a Gestalten, let $n\in \bb{N}$  and let    $\cat{Gest}_{/S}^{n-\ob{et}}$ and $\cat{Gest}_{/S}^{n-\ob{prop}}$ denote the categories of $n$-\'etale and $n$-proper Gestalten over $S$ respectively. Cartier duality for Gestalten (\cite[Theorem 10.8]{GestaltenScholze}) states that the internal Hom functor $\underline{\ob{Hom}}_{S}(-,\ob{GL}_1)$ induces an  anti-self equivalence of stable categories
\[
\ob{Sp}(\cat{Gest}_{/S}^{n-\ob{et}}) \cong \ob{Sp}(\cat{Gest}_{/S}^{n-\ob{prop}})^{\op}.
\]
In the situation of \Cref{TheoCartierDualityPrV} we are interested in the base Gestalt $S=\ob{Gest}(\n{V})$ defined by the symmetric monoidal category $\n{V}$, in that case there is a fully faithful embedding 
\[
1\cat{Aff}^{\kappa,\op}_{\n{V}} \hookrightarrow \cat{Gest}^{1-\ob{prop}}_{/S}
\]
with essential image given by $1$-affine maps $X\to S$, see \cite[Definition 6.5 and Proposition 6.13 (i)]{GestaltenScholze}.

Now, there are some properties of morphisms of Gestalten that are translated under Cartier duality. We are particularly interested in those of \cite[Proposition 12.4]{GestaltenScholze} that translate connectivity with affiness. More precisely, \textit{loc. cit.} states in particular that if $G\in \cat{Sp}_{\geq 0}(\cat{Gest}_{/S})$ is an $n$-suave connective spectra over $S$ such that $BG$ is $n+1$-affine, then the Cartier dual $G^*\in \ob{Sp}(\cat{Gest}_{/S})$ is $(-n)$-connective and determined by  the Gestalt of the commutative algebra in $\s{O}(S)_{n+1}$ given by the $!$-sheaves $\s{O}(G/S)^!_{n}$ endowed with the convolution structure arising from the group structure of $G$.  In other words, $G^*[n]$ is a connective spectrum object in $\cat{Gest}_{/S}$ which is $n$-affine and given by the relative  spectrum of $\s{O}(G/S)^!_{n}$ over $S$. 

In the framework of \Cref{TheoCartierDualityPrV}, we denote $S$ by the point $*$, and we assume that the unit $S\to G$ is a $1$-\'etale cover. This implies that $G=B\Omega G$, that is, the group $G$ is a $1$-connective spectrum in $\cat{Sp}_{\geq 0}(\cat{Gest}_{/S})$ so that $\Omega G$ is still connective. Moreover, since $S$ is $1$-\'etale over itself, the fact that $n$-\'etale  Gestalten is a topos (cf \cite[Theorem 7.1]{GestaltenScholze}) implies that $G$ and also $\Omega G$ are $1$-\'etale Gestalten over $S$. We then apply \cite[Proposition 12.4]{GestaltenScholze} to the Gestalten $G$, that is, $G$ is $1$-\'etale and being $1$-affine one has that $BG$ is $2$-affine by \cite[Proposition 8.1]{GestaltenScholze} (where we use that $1$-\'etale implies $2$-prim, and that the diagonal of $BG\to S$ is $1$-affine). Then, the Cartier dual $G^*$ is $(-1)$-connective, and the shift $G^*[1]$ is a connective spectrum object in $\cat{Gest}_{/S}$ which is Cartier dual to $\Omega G$, and given by the $1$-affine Gestalt over $S$ defined by the commutative algebra  $\s{O}(G/S)^!_1$ which in our framework is nothing but the dual of the underlying  category of $G$ in $\cat{Pr}_{\n{V}}^{\kappa}$ endowed with the convolution tensor  product with respect to the group action of $G$, namely, this follows from dualizability of  $\s{O}(G/S)_1$.

To finish the discussion of part (1) of \Cref{TheoCartierDualityPrV}, it is left to relate $\bb{D}(G)$ with $G^*$. We have that $\bb{D}(G)=G^*[1]$, that is, the sheaf $\GL_{1,\n{V}}$ is the $(-1)$-connective cover of the spectrum  object $\GL_1$ over $\n{V}$, that is, $\GL_{1,\n{V}}$ represents invertible objects in the underlying $1$-category of  Gestalten over $S=\ob{Gest}(\n{S})$. On the other hand, the object $\bb{D}(\Omega G)$ is in principle only the  $1$-affinization of the Gestalten $G^*[1][1]=G^*[2]$. Nonetheless, the fiber/cofiber sequence $\Omega G\to *\to G$ produces a fiber/cofiber sequence of Cartier duals 
\[
G^*\to * \to G^*[1]
\]
which after shifting by $1$, and taking $1$-affinizations get the (a priori only)  fiber sequence 
\begin{equation}\label{eqFiberCofiberCartierDualityGComparisonSS}
\bb{D}(G)\to * \to \bb{D}(\Omega G).
\end{equation}

Finally, the hypothesis of part (2)  of \Cref{TheoCartierDualityPrV} guarantee that the cofiber  of the map  $G^*[1]\to *$ is represented in $1$-affine Gestalten over $S$. Hence, we have that $G^*[2]=\bb{D}(\Omega G)$, and the fiber  sequence  \eqref{eqFiberCofiberCartierDualityGComparisonSS} of connective spectra in  $\cat{Gest}_{/S}$ is also a cofiber sequence. 
\end{remark}

Let us specialize \Cref{TheoCartierDualityPrV} to the case of quasi-affinoid commutative monoids in analytic stacks.  Consider the small subcategory $\n{C}\subset \cat{AnStk}$ of analytic stacks as in \Cref{ss:AnStk}, and  let $S=\AnSpec A\in \n{C}$ be an affinoid analytic stack. In the following we identify the objects $X\in \cat{Pr}_{\ob{D},S}$ with $\ob{D}(X)$ via the equivalence $\cat{Pr}_{\ob{D},S}=\cat{Pr}_{\ob{D}(S)}$ of \Cref{PropKunnethKernel}. In particular, if $X\in \n{C}^E_{/S}$,  the commutative algebra $[X]^*$  of \Cref{ConstructionCartierDualityKernel} is identified with the symmetric monoidal category $\ob{D}(X)$. Following the notation of \Cref{ss:CartierDualitySixFunctors},  we let $\Spec^S  [X]^*\in \cat{Aff}_{\ob{D}(S)}$ be the object corepresented by the algebra $[X]^*$. Note that $\Spec^S [S]^*$ is nothing but the final object in $\cat{Aff}_{\ob{D}(S)}$ that we are  denoting  by $*$. 

\begin{theorem}\label{CoroCartierDualityAnStkQAff}

Keep the previous notation. The following holds:

\begin{enumerate}

\item  Let $f\colon G\to \AnSpec A$ be a $!$-able quasi-affinoid grouplike commutative monoid. Suppose that $e\colon S\to BG$ satisfies universal $\ob{D}^!$-descent, so that $BG$ is $!$-able over $S$.   Then the induced map $e\colon * \to  \Spec^S [BG]^*$ in $\cat{Aff}_{\ob{D}(S)}$ is a $1$-\'etale $!$-cover with \v{C}ech nerve given by $\{ \Spec^S [G^{\bullet}]^*\}_{\Delta^{\op}}$ where $G^{\bullet}$ is the \v{C}ech nerve of $e\colon S\to BG$ in $\n{C}$.  

\item    Suppose that $f\colon G\to S$ is a suave quasi-affinoid grouplike commutative monoid. Then the Cartier dual $\bb{D}(\Spec^S ([BG]^*))=\Spec^S([BG]_!)$ is represented by an object in $0\cat{Aff}_{\ob{D}(S)}^{\kappa}$, i.e. it is $0$-affine and $\kappa$-presentable. The underlying commutative algebra of $[BG]_!$ is given by the endormorphisms of the unit $e_! 1\in \ob{D}(BG)$, that is by $\Hom_{BG}(e_! 1, e_! 1)\in \ob{D}(A)$. By suaveness of $e$ and proper base change, the underlying module of this algebra is equivalent to $f_!f^! 1\in \ob{D}(A)$.

\item Keep the assumptions of (2).  Let $H$ be an affinoid  grouplike commutative monoid over $S$ with induced analytic ring structure. Let $\psi\colon H\times BG\to {\bf{GL}_{1,\n{C}}}$ be a pairing of commutative monoids, where ${\bf{GL}_{1,\n{C}}}$ is the sheaf on $\n{C}$ representing invertible elements in the underlying category of quasi-coherent sheaves. Then the underlying map 
\[
\Spec^S ([H]^*)\to \Spec^S( [BG]_!)
\]
induced by the Fourier-Mukai transform \eqref{eqFourierMukai}  arises from a unique map of commutative algebras in $\n{V}$
\begin{equation}\label{eqpkfopefmoaerka344}
f_!f^! 1\to \s{O}(H).
\end{equation}
Furthermore, the map \eqref{eqpkfopefmoaerka344}  as $A$-modules corresponds to the element in
\begin{equation}\label{eqpapsfnoaspamwoirq}
\iHom_A(f_!f^! 1, \s{O}(H)) =\Gamma(H\times G)
\end{equation}
given by the preimage of the universal invertible element in $\Omega {\bf{GL}_{1,\n{C}}}$ via the pairing  $H\times G\to \Omega {\bf{GL}_{1,\n{C}}}$.

\item Keep the assumptions in (2),  let $H$ be an affinoid grouplike commutative monoid over $S$ with induced analytic ring structure. Suppose we are given with a pairing $\psi\colon H\otimes BG\to {\bf{GL}_{1,\n{C}}}$ such that the Cartier duality map 
\[
\Spec^S([H]^*)\xrightarrow{\sim} \Spec^S([BG]_!)
\] 
is an equivalence. Suppose that $h\colon S\to BH$ is descendable. Then the pairing $BH\otimes G=H\otimes BG\to \bf{GL}_{1,\n{C}}$ induces an isomorphism 
\[
\Spec^S([G]^*)\xrightarrow{\sim} \Spec^S([BH]_!).
\]
In other words, $\psi$ induces a $1$-categorical Cartier duality as in \Cref{DefinitionCartierDuals} between $G$ and $BH$, and $BG$ and $H$ respectively.
\end{enumerate}

\end{theorem}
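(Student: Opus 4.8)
The plan is to prove the four assertions in order, feeding each into the next: the abstract duality of \Cref{ThmCartierDualitySymmetricmonoidal} and the descent package of \Cref{TheoCartierDualityPrV} do the conceptual work, while suaveness and K\"unneth properties of the six functor formalism supply the geometric input. Throughout I use the identification $\cat{Pr}_{\ob{D},S}\simeq\cat{Pr}_{\ob{D}(S)}$ already fixed in the preamble. For (1), I would first note that $e\colon S\to BG$ is a $G$-torsor, so its \v{C}ech nerve is the bar construction $G^\bullet$; since $G$ is quasi-affinoid and finite products over $S$ of quasi-affine stacks are quasi-affine, $e$ exhibits $S$ as a quasi-affine $!$-cover of $BG$ with quasi-affine \v{C}ech nerve. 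By \Cref{PropositionKunnethQuasiAffine} this makes $\ob{D}$ K\"unneth over $BG$ and makes every diagonal of $e$ K\"unneth, so \Cref{LemmaProperties1etaleMaps} (2) shows that the induced map $e\colon *\to\Spec^S[BG]^*$ in $\cat{Aff}_{\ob{D}(S)}$ is $1$-\'etale. K\"unneth over $BG$, applied to $S\times_{BG}S=G$ and its iterates, identifies the \v{C}ech nerve of this map with $\{\Spec^S[G^\bullet]^*\}_{\Delta^{\op}}$. Finally, the hypothesis that $e$ satisfies universal $\ob{D}^!$-descent, combined with the $2$-categorical ambidexterity of \Cref{RemarkImprovementStefanich} and the criterion of \Cref{LemmaEquivalentConditionsCover}, upgrades this descent to the categorical descent equivalence that defines a $1$-\'etale $!$-cover.

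For (2), the description $\bb{D}(\Spec^S[BG]^*)=\Spec^S[BG]_!$ and its representability at the $1$-affine level are immediate from \Cref{PropGL1restriction} and \Cref{ThmCartierDualitySymmetricmonoidal} (2) (duals of $\kappa$-compact dualizable bialgebras are again such); the real content is the $0$-affine refinement. Here I would use that $e\colon S\to BG$ is suave, being a $G$-torsor over the suave $f$ and suaveness being stable under base change and descent, so that $e_!\dashv e^!$ with $e^!\simeq e^*\otimes\omega_e$ colimit-preserving and, as $e$ is a cover, conservative. Because $e$ is the unit of the group $BG$ one has $m_{BG}\circ(e\times e)=e$, whence $e_!$ is symmetric monoidal for the convolution product, its image $e_!1$ is the convolution unit, and suave base change along $S\times_{BG}S=G$ together with the projection formula gives $e^!e_!V\simeq V\otimes f_!f^!1$; in particular the endomorphism algebra of $e_!1$ is $f_!f^!1$. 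Conservativity and colimit-preservation of $e^!=\Hom_{\ob{D}(S)}(e_!1,-)$ exhibit $e_!1$ as a compact $\ob{D}(S)$-linear generator, and Barr--Beck--Lurie then yields a monoidal equivalence $(\ob{D}(BG),\star)\simeq\ob{Mod}_{f_!f^!1}(\ob{D}(S))$, which is exactly $0$-affineness with structure algebra $f_!f^!1$.

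For (3), I would feed the pairing $\psi$ through the Fourier--Mukai formalism of \Cref{PropGL1restriction,PropFourierMukai} to obtain a map $\Spec^S[H]^*\to\Spec^S[BG]_!$; as both sides are now $0$-affine — the source being $\ob{Mod}_{\s{O}(H)}(\ob{D}(S))$ since $H$ is affinoid, and the target $\ob{Mod}_{f_!f^!1}(\ob{D}(S))$ by (2) — this map is the same datum as a morphism of commutative algebras \eqref{eqpkfopefmoaerka344}. I would then identify its underlying $A$-module by the computation $\iHom_A(f_!f^!1,\s{O}(H))\simeq f_*f^*\s{O}(H)\simeq\Gamma(H\times_S G)$, using $f_!\dashv f^!$, invertibility of $\omega_f$ and base change, matching \eqref{eqpapsfnoaspamwoirq}; tracing the kernel of \Cref{PropFourierMukai} through the loop identification $G=\Omega BG$ turns the $BG$-valued $\psi$ into the pairing $H\times G\to\Omega{\bf{GL}_{1,\n{C}}}$ in the statement and shows the relevant element is the preimage of the universal invertible one.

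For (4), I would apply \Cref{TheoCartierDualityPrV} (2) with its ``$G$'' equal to $\Spec^S[BG]^*$ and its ``$H$'' equal to $\Spec^S[BH]^*$. Part (1) supplies the $1$-\'etale cover $*\to\Spec^S[BG]^*$ with $\Omega(\Spec^S[BG]^*)=\Spec^S[G]^*$, and the assumed equivalence $\Spec^S[H]^*\simeq\bb{D}(\Spec^S[BG]^*)$ identifies the Cartier dual; the descendability of $h\colon S\to BH$, via the same ambidexterity argument as in (1), makes $*\to\Spec^S[BH]^*$ a $1$-\'etale cover with $\Omega(\Spec^S[BH]^*)=\Spec^S[H]^*$. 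The symmetry ``$BH\otimes G=H\otimes BG$'' of the pairing is precisely what identifies the fiber sequence $\bb{D}(\Spec^S[BG]^*)\to *\to\Spec^S[BH]^*$ compatibly with its maps, so that \Cref{TheoCartierDualityPrV} (2) returns $\bb{D}(\Spec^S[BH]^*)=\Omega(\Spec^S[BG]^*)=\Spec^S[G]^*$, i.e. $\Spec^S[G]^*\simeq\Spec^S[BH]_!$. I expect the principal obstacle to lie in (2): upgrading the monadicity of $e^!$ to a genuinely \emph{monoidal} equivalence with modules over $f_!f^!1$ needs the full force of suaveness, both for the colimit-preservation of $e^!$ and for the projection formula computing the monad; a secondary subtlety in (4) is ensuring that the pairing symmetry matches the \emph{maps}, not merely the objects, in the fiber sequence of \Cref{TheoCartierDualityPrV} (2).
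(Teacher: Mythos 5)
Your proposal is correct and follows essentially the same route as the paper's proof: part (1) via \Cref{PropositionKunnethQuasiAffine}, \Cref{LemmaProperties1etaleMaps} (2) and \Cref{LemmaEquivalentConditionsCover}; part (2) via suaveness of $e$ and monadicity along $e_!\dashv e^!$ with the monad identified as $f_!f^!1$ by base change on $G=S\times_{BG}S$; part (3) via $0$-affineness of both sides and delooping the pairing to $H\times G\to\Omega{\bf{GL}}_{1,\n{C}}$; part (4) via \Cref{TheoCartierDualityPrV} (2). The extra details you supply in (2) (monoidality of $e_!$ for convolution, compact generation by $e_!1$) are consistent elaborations of the paper's appeal to the monadicity theorem.
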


\begin{remark}\label{RemarkIdentificationsTheorems}

\begin{enumerate}

\item In \Cref{CoroCartierDualityAnStkQAff}  the identification  \eqref{eqpapsfnoaspamwoirq} holds by proper base change and projection formula along the cartesian square
\[
\begin{tikzcd}
H\times G \ar[r,"\pr_G"] \ar[d,"\pr_H"']&  G \ar[d,"f"] \\ 
H \ar[r,"g"] & S
\end{tikzcd}
\]
together with the fact that $f\colon G\to S$ is suave and $g\colon H\to *$ is prim, which produces an equivalence of functors $f_!f^!=f_{\sharp}f^*$, with $f_{\sharp}$ the left adjoint of $f^*$. Indeed, we have that 
\[
\begin{gathered}
\Hom_{S}(f_!f^! 1_G, g_* 1_H)  \cong \Hom_{S}(f_{\sharp} 1_G, g_* 1_H) =  \\ \Hom_{G}(1_G, f^* g_* 1_H)= f_* f^* g_* 1_H=f_*\pr_{G,*}\pr_H^* 1_H =  \Gamma(H\times G). 
\end{gathered}
\]

\item  Let $\bf{GL}_{1,\n{C}}$ be as in \cref{CoroCartierDualityAnStkQAff} (3). Then $\Omega \bf{GL}_{1,\n{C}}$ is the sheaf on  $\n{C}$ sending an analytic ring $A$ to the commutative monoid of units  in $A^{\triangleright}(*)$, equivalently, to the commutative monoid of automorphisms of $1\in \ob{D}(A)$.

\end{enumerate}

\end{remark}

\begin{proof}[Proof of \Cref{CoroCartierDualityAnStkQAff}]
\begin{enumerate}

\item By \Cref{PropositionKunnethQuasiAffine} the map $*\to BG$ is K\"unneth. In particular, the \v{C}ech nerve of $\Spec^{S} [S]^*\to \Spec^S [BG]^*$ is given by $\Spec^S [G^{\bullet}]^*$, equivalently, we have that $\ob{D}(*)\otimes_{\ob{D}(BG)}\ob{D}(*)=\ob{D}(G)$.  Furthermore, the diagonal map $BG\to BG\times BG$ can be also written as $G\backslash G/ G\to B(G\times G)$, and so it is represented in quasi-affinoid analytic stacks. It follows by an inductive argument that the $n$-th diagonal of $BG$ is given by $BG\to G\backslash G^{S^{n-1}}/G$ (and $S^n$ is the $n$-th sphere). In particular, $G^{S^{n-1}}\to G\backslash G^{S^{n-1}}/G$ is a $!$-cover representable in quasi-affinoid analytic stacks, and by \Cref{PropositionKunnethQuasiAffine} the map $BG\to G\backslash  G^{S^{n-1}}/G$ is K\"unneth.  It follows  by \Cref{LemmaProperties1etaleMaps} (2) that  the map $\Spec^S [S]^*\to \Spec^S [G]^*$ is a $1$-\'etale map. Since $S\to BG$ is a $!$-cover, we have a natural equivalence 
\[
\ob{D}(BG)={\varinjlim_{\Delta^{\op}}}_! \ob{D}(G^{\bullet})
\]
where the transition maps  are given by lower $!$-maps. It follows from \Cref{LemmaEquivalentConditionsCover} (3) that $\Spec^S [S]^*\to \Spec^S [G]^*$ is a $1$-\'etale cover in $1\cat{Aff}_{\ob{D}(S)}$.

\item Suppose that $f\colon G\to S$ is suave and let $e\colon S\to BG$. The Cartier dual $\bb{D}([BG]^*)=[BG]_!$ has unit $e_{!}\colon \ob{D}(A)\to \ob{D}(BG)$. Thus, to see that $[BG]_!$ is $0$-affine it suffices to show that $\ob{D}(BG)$ is isomorphic to the module category of endormophisms of $e_! 1$ which are given by $\Hom_{BG}(e_!1,e_!1)=e^!e_!1\in \ob{D}(A)$.  Since $e\colon S\to BG$ is suave as $G$ is so, the functor $e^!$ is $\ob{D}(A)$-linear and by the monadicity theorem we have an equivalence of categories 
\[
\ob{D}(BG)\cong  \Mod_{e^!e_! 1}(\ob{D}(A)).
\]
Since the six functor formalism is $\kappa$-presentable, and $e^!$ is the right adjoint of $e_!$ in the kernel category, one  has that $e^!e_! 1\in \ob{D}(A)_{\kappa}$ is $\kappa$-compact.  The final statement relating $e^!e_! 1$ with $f_!f^! 1\in \ob{D}(A)$  follows from suaveness of $f$, and proper base change on the cartesian square
\[
\begin{tikzcd}
G \ar[r] \ar[d] & S \ar[d] \\
S \ar[r] & BG.
\end{tikzcd}
\]

\item Let $f\colon G\to S$ be as in (2), and let $H$ be an affinoid grouplike commutative monoid over $S$ with the induced analytic ring structure. Let $\psi\colon H\otimes BG\to {\bf{GL}}_{1,\n{C}}$ be a pairing of commutative monoids. By Cartier duality and the $0$-affiness of (2) we have an induced map 
\begin{equation}\label{eqpa39hnkawjeaer}
\Spec^S ([H]^*)\to \Spec^S ([BG]_!)
\end{equation}
in $0\cat{Aff}_{\n{V}}$. The underlying algebra of $[H]^*$ is given by $\s{O}(H)$, and the underlying module of the algebra of $[BG]_!$ is given by $e^!e_! 1= f_!f^! 1$. The map \eqref{eqpa39hnkawjeaer} gives rise to a morphism of $A$-modules
\[
f_!f^! 1\to \s{O}(H)
\] 
that we want to determine.

Since $*\to \Spec^S [BG]^*$ is a $1$-\'etale cover in $1\cat{Aff}_{\ob{D}(S)}$, we have that 
\[
\Spec^S [BG]^* = B (\Spec^S [G]^*)
\]
as connective spectral sheaves on $1\cat{Aff}^{\lambda}_{\ob{D}(S)}$.  Hence, we have that 
\[
\Spec^S [BG]_!=\bb{D}(\Spec^S [BG]^*) = \iHom(\Spec^S [BG]^*, {\bf{GL}}_{1,\ob{D}(S)})= \iHom(\Spec^S [G]^*, \Omega^1 {\bf{GL}_{1,\ob{D}(S)}}). 
\]

Now, the pairing $\psi$ gives rise to a map 
\[
\Spec^S [H]^*\to \iHom(\Spec^S [G]^*, \Omega^1 {\bf{GL}_{1,\ob{D}(S)}})
\]
which by construction is induced by the  adjoint to the loops of the map $\psi$, that is the map $H\otimes G\to \Omega {\bf{GL}_{1,\n{C}}}$.  Since $G$ is suave, part (2) implies that $\iHom(\Spec^S [G]^*, \Omega^1 {\bf{GL}_{1,\ob{D}(S)}})$ is corepresented by the algebra $f_!f^! 1\in \ob{D}(A)$. Let $\s{O}$ be the sheaf on $1\cat{Aff}_{\ob{D}(A)}$ sending $Y$ to $\s{O}(Y)$. Then we have a natural inclusion  $\Omega^1 {\bf{GL}_{1,\ob{D}(S)}}\to \s{O}$ with essential image given by the units of the sheaf $\s{O}$.  To determine the underlying map of modules $f_!f^!1\to \s{O}(H)$ it suffices to describe the composition 
\begin{equation}\label{eqmaosi8h3jasd} 
\Spec^S [H]^*\to  \iHom(\Spec^S [G]^*, \Omega^1 {\bf{GL}_{1,\ob{D}(A)}})\to \underline{\Map}(\Spec^S [G]^*, \s{O})
\end{equation}
seen as sheaves on $0$-affine objects $0\cat{Aff}_{\ob{D}(A)}$. Indeed, if $X$ is a $0$-affine object with algebra $\s{O}(X)\in \ob{D}(A)$, then 
\[
\underline{\Map}(\Spec^S [G]^*, \s{O})(X)=  \Gamma(G, f^*\s{O}(X))=\tau_{\geq 0}\Hom_{A}(f_!f^! 1, \s{O}(X))=\Map_{\cat{CAlg}(\ob{D}(A))}(\Sym_{A} f_!f^!1, \s{O}(X)),
\]
where in the second equivalence we have used the identification of \Cref{RemarkIdentificationsTheorems} (2). In other words,  $\underline{\Map}(\Spec^S [G]^*, \s{O})$ is corepresented by the algebra $\Sym_{A} f_!f^! 1$.  Thus, the composite \eqref{eqmaosi8h3jasd} corresponds to the map of $A$-modules  $f_!f^! 1\to \s{O}(H)$  associated to the global section $T\in \Gamma(H\times G)$ that comes from the universal unit of $\Omega \GL_{1,\n{C}}$ via the pairing $H\times G\to \Omega \GL_{1,\n{C}}$,  proving what we wanted.

\item Finally  we prove (4).  Since $G$ is suave, the map $ S\to BG$ is a $!$-cover of analytic stacks. Similarly, since $S\to BH$ is prim and descendable, it is a $!$-cover of analytic stacks. Part (1) implies that the associated maps in $\cat{Aff}_{\ob{D}(1)}$ are $1$-\'etale covers.  Thus, the $1$-categorical Cartier duality between $H$ and $BG$ and  \Cref{TheoCartierDualityPrV} (2) yields the $1$-categorical Cartier duality between $BH$ and $G$ as wanted. 
\end{enumerate}
\end{proof}

\section{Examples}\label{s:Examples}

In this section we produce different examples of Cartier duality in algebraic/analytic stacks\footnote{Some of the Cartier duality results of this paper are already mentioned in \cite{camargo2024analytic}. The proofs in \textit{loc. cit.} have some missing details that we correct here.}. The strategy is always the same: one starts with a concrete easy-to-prove Cartier duality between two concrete quasi-affinoid  abelian group stacks where \Cref{CoroCartierDualityAnStkQAff} can be directly applied. Then, using descent techniques in the category of kernels, one deduces a Cartier duality for more stacky objects.

We let $\n{C}\subset \cat{AnStk}$ denote the small full subcategory of analytic stacks of \Cref{ss:AnStk}; it consists of a sufficiently large full subcategory of analytic stacks stable under finite limits and countable colimits, and containing all the  analytic spectra of analytic rings that we are interested in.  We recall the following definition from  the previous section. 
 
\begin{definition}\label{DefinitionPicFunctor}
We let ${\bf{GL}_{1,\n{C}}}\colon \n{C}\to \ob{CMon}$ be the functor sending an analytic stack $X$ to its commutative monoid of invertible objects ${\bf{GL}_{1,\n{C}}}(X):= \ob{D}(X)^{\times}$. 
\end{definition}

\begin{remark}\label{RemarkPic}
Since invertible objects satisfy $*$-descent, ${\bf{GL}_{1,\n{C}}}$ is a sheaf for the Grothendieck topology of analytic stacks. Let $S\in \n{C}$ and consider the functor 
\[
F\colon= \Spec^S([-]^*)\colon \n{C}^{E}_{/S}\to \cat{Aff}^{\kappa}_{\ob{D},S}
\]
of \Cref{ss:CartierDualitySixFunctors}. By definition we have that ${\bf{GL}_{1,\n{C}}}|_{\n{C}^E_{/S}}=F^* {\bf{GL}}_{1,\ob{D},S}$.  
\end{remark}

By the assumptions on our category $\n{C}$ of analytic stacks,  we have that $\bb{G}_m\in \n{C}$, and  since $\n{C}$ is stable under finite limits and countable colimits, we have   $B\bb{G}_m:=\ob{AnSpec}(\Z^{\ob{cond}})/\bb{G}_m \in \n{C}$. In the examples of this paper, Cartier duality will arise from a pairing with values in $B\bb{G}_m$, this is justified thanks to the following lemma. 

\begin{lemma}\label{LemmaGmModuli}
The analytic stack $B\bb{G}_m\in \cat{AnStk}$ represents the functor sending an analytic ring $A$ to the full subanima of objects  ${\bf{GL}_{1,\n{C}}}(A)\subset \ob{D}(A)$ consisting on those invertible sheaves $\n{L}\in \ob{D}(A)$ for which there exists a $!$-cover of analytic rings $A\to B$,  such that the pullback $\n{L}_B:=B\otimes_{A} \n{L}$ is a  line bundle isomorphic to $B$. In particular, $B\bb{G}_m$ defines a commutative monoid in $\n{C}$  and there is a natural map of commutative monoids 
\[
B\bb{G}_m\to {\bf{GL}_{1,\n{C}}}. 
\] 
\end{lemma}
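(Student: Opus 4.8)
The plan is to exhibit the displayed functor as the sheafification of the presheaf $A\mapsto B(\bb{G}_m(A))=B(A^\times)$ and to identify this sheafification with $B\bb{G}_m$. Write $\n{P}\colon \cat{AnRing}\to \ob{CMon}$ for the functor sending $A$ to the full subanima of ${\bf{GL}_{1,\n{C}}}(A)\subset \ob{D}(A)$ consisting of those invertible $\n{L}$ that become isomorphic to the unit $\O_B$ after pullback along some $!$-cover $A\to B$. Invertibility descends by \Cref{LemmaSubcanonicalDescent} (3), and the property of being $!$-locally trivial is manifestly local, so $\n{P}$ is a subsheaf of ${\bf{GL}_{1,\n{C}}}$ for the topology of analytic stacks; the inclusion $\n{P}\hookrightarrow {\bf{GL}_{1,\n{C}}}$ will supply the asserted map of commutative monoids once $B\bb{G}_m\cong \n{P}$ is proved.

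To build the comparison map, recall that $B\bb{G}_m=\AnSpec(\Z^{\cond})/\bb{G}_m$ is the colimit of the bar construction $\bb{G}_m^{\bullet}$, so that for an affinoid $\AnSpec A$ the anima $\Map(\AnSpec A,B\bb{G}_m)$ is the anima of $\bb{G}_m$-torsors. The associated-line-bundle construction $T\mapsto \n{L}_T=T\times^{\bb{G}_m}\bb{A}^1$ sends a $\bb{G}_m$-torsor to an invertible sheaf which, being trivial over any cover trivializing $T$, lands in $\n{P}(A)$; this is natural in $A$ and yields $B\bb{G}_m\to \n{P}$. Inversely, a locally trivial $\n{L}$ has its frame torsor $\underline{\ob{Isom}}(\O,\n{L})$, which recovers $T$.

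I would prove this map is an equivalence by descent along basepoints. The tautological map $e\colon *=\AnSpec(\Z^{\cond})\to B\bb{G}_m$ is an effective epimorphism of sheaves, being the universal $\bb{G}_m$-torsor, and the map $*\to \n{P}$ classifying the trivial invertible sheaf $\O$ is an effective epimorphism exactly by the defining local triviality in $\n{P}$. Since both basepoints are covers, it suffices to compare their loops: from the bar presentation $*\times_{B\bb{G}_m}*=\bb{G}_m$, while $*\times_{\n{P}}*$ is the automorphism sheaf $A\mapsto \ob{Aut}_{\ob{D}(A)}(\O_A)$. The induced map is the canonical identification $\bb{G}_m(A)=\ob{Aut}_{\ob{D}(A)}(\O_A)$, valid because $\End_{\ob{D}(A)}(\O_A)=A^{\triangleright}(*)$ has unit group $\bb{G}_m(A)$ (compare \Cref{RemarkIdentificationsTheorems} (2)). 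Hence $B\bb{G}_m\xrightarrow{\sim}\n{P}$. The commutative monoid structure on $B\bb{G}_m$ arises by delooping the abelian group $\bb{G}_m$ and matches the tensor product of invertible sheaves under $B\bb{G}_m\cong \n{P}$, so $B\bb{G}_m=\n{P}\hookrightarrow {\bf{GL}_{1,\n{C}}}$ is a map of commutative monoids.

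The main obstacle is bookkeeping about the topology: one must confirm that the topology for which $B\bb{G}_m$ is the sheafification of $A\mapsto B(A^\times)$ coincides with the one for which $\n{P}$ is a sheaf and for which ``$!$-locally trivial'' is the operative notion, so that \Cref{LemmaSubcanonicalDescent} and the effective-epimorphism claim for $e$ apply as stated. A secondary subtlety is the identification $\ob{Aut}_{\ob{D}(A)}(\O_A)=\bb{G}_m(A)$ in the animated setting, where one should separate the static units of $A$ from $\pi_0$ of the endomorphism spectrum of the unit; these agree for the static analytic rings relevant here, but I would record the compatibility explicitly rather than assume it.
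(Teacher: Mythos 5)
Your proof is correct and follows essentially the same route as the paper: the paper defines $\ob{Pic}$ as the $!$-sheafification of the presheaf of trivial line bundles, observes that the basepoint $\AnSpec(\Z^{\cond})\to\ob{Pic}$ is an epimorphism by construction, and identifies its automorphism group with the units of $\ob{End}_A(A^{\triangleright})=A^{\triangleright}(*)$, i.e.\ with $\bb{G}_m$, which is exactly your loop-space comparison. Your version merely spells out the associated-line-bundle/frame-torsor inverse and the topology bookkeeping that the paper leaves implicit.
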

\begin{proof}
Consider the stack $\ob{Pic}$ of line bundles on analytic rings, that is, the stack given by the $!$-sheafification of the functor sending an analytic ring $A$ to the commutative monoid of trivial line bundles on $\ob{D}(A)$. The trivial line bundle gives rise to a map $e\colon \ob{AnSpec}(\Z^{\ob{cond}})\to \ob{Pic}$ which is surjective by definition. The  group of isomorphisms  of the map $e$ is precisely the group sending an analytic ring $A$ to the units in $\ob{End}_A(A^{\triangleright})=A^{\triangleright}(*)$, that is, the group $\bb{G}_m$. This gives rise to the presentation $B\bb{G}_m=\ob{Pic}$. By construction, $\ob{Pic}\subset {\bf{GL}_{1,\n{C}}}$ as commutative monoid, giving rise to the desired map $B\bb{G}_m\to {\bf{GL}_{1,\n{C}}}$ as wanted. 
\end{proof}

\begin{remark}\label{RemarkAnimatedGroupStructure}
Since $\bb{G}_m$ is naturally a $\Z$-module, $B\bb{G}_m$ also has an additional structure of $\Z$-module refining its commutative monoid structure. Therefore, if $G$ and $H$ are $\Z$-module objects in $\n{C}$,  in order to produce a bilinear pairing $G\times H\to {\bf{GL}_{1,\n{C}}}$ of commutative group objects, it suffices to produce a $\Z$-bilinear pairing $H\times G\to B\bb{G}_m$ and compose with the natural map of commutative group objects $B\bb{G}_m\to {\bf{GL}_{1,\n{C}}}$. 
\end{remark}

\subsection{Cartier duality for tori}\label{ss:CartierTori}

We start with the most basic Cartier duality between $\bb{Z}$ and $\bb{G}_m$. Let us first discuss the suave side of the duality, that is, $\bb{Z}_{\Betti}$.   For any discrete set $X$, $X_{\ob{Betti}}$ is cohomologically \'etale (\cite[Definition 4.6.1]{HeyerMannSix}) over $*=\AnSpec(\Z^{\ob{cond}})$ being a  disjoint union of points. This makes $B\bb{Z}_{\ob{Betti}}$ the quotient of $*$ by an \'etale group in $\n{C}$, and so it is a $!$-able stack that is cohomologically \'etale (\cite[Lemma 4.6.3 (ii)]{HeyerMannSix}).

\begin{lemma}\label{LemmaHopfAlgebraZBetti}
Let $g\colon \Z_{\Betti}\to \AnSpec \Z^{\cond}$ be the structural map. Then there is an equivalence $g_{!} 1=g_{\sharp} 1 = \bigoplus_{n\in \N} \Z$ of condensed abelian groups. 
\end{lemma}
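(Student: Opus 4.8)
The plan is to exploit that $\Z_{\Betti}$ is literally a disjoint union of copies of the base point and that $g$ is cohomologically \'etale, so that $g_!$ computes the \emph{coproduct} of the unit over the (countable) index set.

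First I would record the equality $g_! 1 = g_\sharp 1$. By the discussion preceding the lemma, $g$ is cohomologically \'etale, which means it is suave with trivial dualizing object $\omega_g \simeq 1$, so that the canonical comparison $g^* \to g^!$ is an equivalence. Since $g_!$ is left adjoint to $g^!$, while $g_\sharp$ is by definition the left adjoint of $g^*$, the equivalence $g^! \simeq g^*$ immediately yields $g_! \simeq g_\sharp$, and in particular $g_! 1 = g_\sharp 1$.

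Next I would compute this common value. As $\Z$ is a discrete set, its Betti stack decomposes as $\Z_{\Betti} \simeq \bigsqcup_{n \in \Z} *$ with $* = \AnSpec \Z^{\cond}$, and the inclusions $\iota_n \colon * \hookrightarrow \Z_{\Betti}$ of the individual points are open-closed immersions. Disjointness of the six functor formalism gives $\ob{D}(\Z_{\Betti}) \simeq \prod_{n \in \Z} \ob{D}(*)$, under which the unit corresponds to the constant family $(1_*)_n$; since coproducts in a product of categories are computed componentwise, this family is the coproduct $1_{\Z_{\Betti}} \simeq \bigoplus_{n \in \Z} \iota_{n,!} 1_*$. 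As $g_!$ preserves all colimits and $g \circ \iota_n = \id_*$, I would then compute
\[
g_! 1_{\Z_{\Betti}} \simeq \bigoplus_{n \in \Z} g_! \iota_{n,!} 1_* \simeq \bigoplus_{n \in \Z} (g \iota_n)_! 1_* \simeq \bigoplus_{n \in \Z} 1_* \simeq \bigoplus_{n \in \Z} \Z ,
\]
and reindexing the countable set $\Z$ by $\N$ gives $\bigoplus_{n \in \N} \Z$, as desired. Alternatively, one can avoid the infinite decomposition and argue by Yoneda: using $g^! \simeq g^*$, for any $M \in \ob{D}(*)$ one has $\Map(g_! 1, M) \simeq \Gamma(\Z_{\Betti}, g^* M) \simeq \prod_{n} \Map(1_*, M) \simeq \Map(\bigoplus_{n} 1_*, M)$, whence $g_! 1 \simeq \bigoplus_{n} \Z$.

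The step I expect to be the main obstacle is the justification that the unit is the infinite coproduct $\bigoplus_n \iota_{n,!} 1_*$ and that $g_!$ commutes with it. This rests on two facts worth spelling out: that the six functor formalism sends disjoint unions to products of categories (so that coproducts are computed componentwise and the displayed decomposition of the unit is valid for the infinite index set), and that $g_!$, being a left adjoint, preserves all colimits. The Yoneda variant sidesteps the first point by using only that global sections over a disjoint union form a product, and may be the cleaner route to write out in full.
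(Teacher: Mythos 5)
Your proposal is correct and takes essentially the same route as the paper, whose entire proof is the one-line remark that the claim ``follows by writing $\Z_{\Betti}$ as a union of points''; you have simply filled in the details of that decomposition argument (the identification $g_!\simeq g_\sharp$ via cohomological \'etaleness, the splitting of the unit as $\bigoplus_n \iota_{n,!}1_*$ under $\ob{D}(\Z_{\Betti})\simeq\prod_n\ob{D}(*)$, and colimit-preservation of $g_!$).
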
 
\begin{proof}
This follows by writing $\Z_{\Betti}$ as a  union of  points. 
\end{proof}  

Next, we discuss the prim side of the Cartier duality, that is $\bb{G}_m$. We need the following lemma:

\begin{lemma}\label{LemmaBGmShierk}
The map $e\colon \AnSpec(\Z^{\cond})\to B\bb{G}_m$ is prim and descendable. In particular $B\bb{G}_m$ is $!$-able  over $\Z^{\ob{cond}}$. 
\end{lemma}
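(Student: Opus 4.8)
The plan is to establish the three assertions---prim, descendable, and (as a consequence) $!$-able---in turn, using that $\bb{G}_m=\AnSpec\Z[T^{\pm 1}]_{\sol}$ is corepresented over $\AnSpec(\Z^{\cond})$ by the Laurent algebra $R=\Z[T^{\pm 1}]=\bigoplus_{n\in\Z}\Z\cdot T^n$, which is free, hence flat, over $\Z_{\sol}$, and whose augmentation $R\to\Z$, $T\mapsto 1$, has finite tor-amplitude since $T-1$ is a non zero-divisor. In particular $\bb{G}_m$ satisfies the hypotheses of \Cref{LemmaRepTheoryFlat}. For primness I would argue exactly as in the proof of that lemma: the self fiber product $\AnSpec(\Z^{\cond})\times_{B\bb{G}_m}\AnSpec(\Z^{\cond})$ is $\bb{G}_m$, so after base change along the cover $e$ the map becomes the structural morphism $\bb{G}_m\to\AnSpec(\Z^{\cond})$, which is an affinoid analytic stack with the induced analytic ring structure and is therefore prim; as $e$ is locally of this form, $e$ is prim.

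For descendability I would invoke \Cref{LemmaRepTheoryFlat} (1) to identify $\ob{D}(B\bb{G}_m)$ with the category of $\Z$-graded objects of $\ob{D}(\Z^{\cond})$ (equivalently, $R$-comodules), under which the weight-$n$ line bundle $\O(n)$ is $\Z$ placed in degree $n$ and $\O(0)=1_{B\bb{G}_m}$ is the unit. With $e$ now known to be prim, proper base change gives $e^{*}e_{*}1=\O(\bb{G}_m)$, and in the graded picture the algebra $e_{*}1\in\ob{CAlg}(\ob{D}(B\bb{G}_m))$ is the regular representation $\O(\bb{G}_m)=R=\bigoplus_{n\in\Z}\O(n)$. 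Since $\O(0)$ is a direct summand of this sum, the unit $1_{B\bb{G}_m}$ is a retract of $e_{*}1$ and hence lies in the thick tensor-ideal it generates; this is exactly descendability in the sense of Mathew.

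For the final clause, a descendable prim cover satisfies universal $\ob{D}^{!}$-descent, so $e$ does; its \v{C}ech nerve is the bar construction $\bb{G}_m^{\times\bullet}$, whose terms are affinoid and therefore $!$-able over $\AnSpec(\Z^{\cond})$. The descent construction of the $!$-functor formalism---precisely the mechanism underlying \Cref{CoroCartierDualityAnStkQAff} (1)---then yields that $B\bb{G}_m$ is $!$-able over $\Z^{\ob{cond}}$. I expect the only delicate points to be the descent of primness along the non-representable cover $e$, which is reduced to the single base change $\bb{G}_m\to\AnSpec(\Z^{\cond})$ exactly as in \Cref{LemmaRepTheoryFlat}, and the verification that $e_{*}1$ is the honest direct sum $\bigoplus_{n}\O(n)$ with $\O(0)$ split off rather than a completed product; both are secured by the flatness and finite tor-amplitude recorded at the outset, through \Cref{LemmaRepTheoryFlat}.
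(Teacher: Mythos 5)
Your proof is correct and follows essentially the same route as the paper: primness because $e$ is representable in affinoid analytic stacks with the induced structure, and descendability because, via \Cref{LemmaRepTheoryFlat}, $e_*1$ is the regular representation $\Z[T^{\pm 1}]=\bigoplus_{n}\O(n)$ of which the trivial representation $\O(0)$ is a direct summand. The extra verifications you flag (finite tor-amplitude of the augmentation, the sum being honest rather than completed) are fine but not needed beyond what \Cref{LemmaRepTheoryFlat}(1) already provides.
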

\begin{proof}
The map $e$ is prim being representable in affinoid analytic stacks with the induced structure. Since $\Z[T^{\pm 1} ]$ is a flat condensed abelian group,  \Cref{LemmaRepTheoryFlat} implies that $\ob{D}(B\bb{G}_m)^{\heartsuit}$ is the abelian category of $\Z[T^{\pm 1}]$-comodules in condensed abelian groups. Then, $e_* 1$ lies in $\ob{D}(B\bb{G}_m)^{\heartsuit}$ and is isomorphic to the regular representation $\Z[T^{\pm 1}]$. The descendability follows from the fact that the trivial representation of $\bb{G}_m$ is a direct summand of the regular representation. 
\end{proof}

We let $\Psi\colon \Z_{\Betti}\times \bb{G}_m\to \bb{G}_m$ be the natural multiplication map arising form the $\Z$-module structure of $\bb{G}_m$. We have the following Cartier duality:

\begin{proposition}\label{PropCartierZGm}
The pairing  $\Psi\colon \Z_{\Betti}\times \bb{G}_m\to \bb{G}_m$ gives rise to $1$-categorical Cartier dualities 
\[
[B\bb{G}_m]_{!}\cong [\Z_{\Betti}]^* \mbox{ and } [\bb{G}_{m}]_!\cong [B\bb{Z}_{\Betti}]^*
\]
in $\ob{K}_{\ob{D}, \AnSpec \Z^{\cond}}$ as in \Cref{DefinitionCartierDuals}. 
\end{proposition}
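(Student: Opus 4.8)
The plan is to apply \Cref{CoroCartierDualityAnStkQAff} with $G := \Z_{\Betti}$ in the suave role and $H := \bb{G}_m$ in the prim role over $S := \AnSpec \Z^{\cond}$. First I would record the hypotheses: $\Z_{\Betti}$ is a suave (indeed cohomologically \'etale, as noted above) grouplike commutative monoid, and its finite powers are disjoint unions of affinoid points, so the K\"unneth requirements of \Cref{CoroCartierDualityAnStkQAff} hold; meanwhile $\bb{G}_m$ is affinoid with its induced analytic ring structure and $S\to B\bb{G}_m$ is prim and descendable by \Cref{LemmaBGmShierk}. Next I would upgrade $\Phi$ into the pairing demanded by \Cref{CoroCartierDualityAnStkQAff} (3). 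Since $B\bb{G}_m$ carries a $\Z$-module structure refining its monoid structure (\Cref{RemarkAnimatedGroupStructure}), delooping $\Phi$ in the $\Z_{\Betti}$-variable produces a $\Z$-bilinear pairing $\bb{G}_m\times B\Z_{\Betti}\to B\bb{G}_m$ — concretely, sending $x\in\bb{G}_m$ to the line bundle on $B\Z_{\Betti}$ with monodromy $x$ — which I compose with $B\bb{G}_m\to{\bf{GL}_{1,\n{C}}}$ of \Cref{LemmaGmModuli} to obtain a pairing $\psi\colon \bb{G}_m\times B\Z_{\Betti}\to{\bf{GL}_{1,\n{C}}}$ of the form $H\times BG\to{\bf{GL}_{1,\n{C}}}$, whose loop $\Omega\psi$ recovers $\Phi$.

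With $\psi$ in hand I would compute both sides of the Fourier-Mukai transform. By \Cref{CoroCartierDualityAnStkQAff} (2) the Cartier dual $\Spec^S([B\Z_{\Betti}]_!)$ is $0$-affine with underlying algebra $g_!g^!1$ for $g\colon \Z_{\Betti}\to S$; as $g$ is \'etale we have $g^!1=1$, so by \Cref{LemmaHopfAlgebraZBetti} this algebra is $g_!1=\bigoplus_{n\in\Z}\Z$, which under the convolution product coming from the group law of $\Z$ is the group algebra $\Z[\Z]$. On the other side $\s{O}(\bb{G}_m)=\Z[T^{\pm1}]$. By \Cref{CoroCartierDualityAnStkQAff} (3) the map $\Spec^S([\bb{G}_m]^*)\to\Spec^S([B\Z_{\Betti}]_!)$ is classified by the algebra map $g_!g^!1\to\s{O}(\bb{G}_m)$ attached to the global section of $\Gamma(\bb{G}_m\times\Z_{\Betti})$ obtained from $\Omega\psi=\Phi$, which on the $n$-th copy of $\bb{G}_m$ is the function $x\mapsto x^n=T^n$. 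Hence this map sends the standard generator $e_n$ to $T^n$ and is the canonical isomorphism $\Z[\Z]\xrightarrow{\sim}\Z[T^{\pm1}]$, so the Fourier-Mukai transform is an equivalence. This gives a $1$-categorical Cartier duality between $\bb{G}_m$ and $B\Z_{\Betti}$, i.e.\ $[B\Z_{\Betti}]_!\cong[\bb{G}_m]^*$, and dually (passing to the dual in the kernel category) $[\bb{G}_m]_!\cong[B\Z_{\Betti}]^*$.

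Finally I would invoke \Cref{CoroCartierDualityAnStkQAff} (4): the equivalence just produced is exactly its hypothesis, and combined with the descendability of $S\to B\bb{G}_m$ it promotes $\psi$ to a $1$-categorical Cartier duality between $\Z_{\Betti}$ and $B\bb{G}_m$ as well, yielding $\Spec^S([\Z_{\Betti}]^*)\xrightarrow{\sim}\Spec^S([B\bb{G}_m]_!)$, that is $[B\bb{G}_m]_!\cong[\Z_{\Betti}]^*$, in the sense of \Cref{DefinitionCartierDuals}. The main obstacle is the middle step: one must arrange the delooping so that $\Omega\psi$ genuinely recovers $\Phi$, and then verify that the resulting map $g_!g^!1\to\s{O}(\bb{G}_m)$ is an isomorphism not merely of underlying $\Z$-modules but of \emph{algebras}, i.e.\ that it intertwines the convolution product on $g_!1=\Z[\Z]$ with the multiplication on $\Z[T^{\pm1}]$ — this is precisely where the identification $e_n\mapsto T^n$ and the matching of the two monoid structures must be checked carefully.
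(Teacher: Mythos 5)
Your proposal is correct and follows essentially the same route as the paper: apply \Cref{CoroCartierDualityAnStkQAff} with $\Z_{\Betti}$ suave and $S\to B\bb{G}_m$ descendable, and reduce everything to checking that the section $(T^n)_{n\in\Z}\in\Gamma(\Z_{\Betti}\times\bb{G}_m)=\prod_{\Z}\Z[T^{\pm1}]$ induces the isomorphism $f_{\sharp}1=\bigoplus_{n\in\Z}\Z\to\Z[T^{\pm1}]$, $e_n\mapsto T^n$. Your final worry is slightly misplaced: \Cref{CoroCartierDualityAnStkQAff} (3) already produces the Fourier--Mukai transform as a map of commutative (bi)algebras, so one only needs to verify it is an isomorphism of underlying modules, which is the computation above.
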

\begin{proof}
Thanks to \Cref{CoroCartierDualityAnStkQAff}, since $f\colon\bb{Z}_{\Betti}\to \AnSpec \Z^{\cond}$ is suave and $\AnSpec \Z^{\cond}\to B\bb{G}_m$ is descendable, it suffices to show that the induced global sections of $\Psi$  gives rise to an isomorphism
\[
f_{\sharp} 1 \to \Z[T^{\pm 1}].
\]
We have that 
\[
\Gamma(\Z_{\Betti}\times \bb{G}_m)= \prod_{\Z} \Z[T^{\pm 1}]
\]
and the global section given by $\Psi$ is the tuple $(T^{n})_{n\in N}$. This produces the map 
\[
f_{\sharp} 1 = \bigoplus_{n\in \Z}  \Z\to  \Z[T^{\pm 1}]
\]
sending the $n$-th term basis the dirct sum  to $T^n$, proving that it is an isomorphism as wanted. 
\end{proof}

It is now easy to improve \Cref{PropCartierZGm} to a  stacky statement, this will imply in particular a Cartier duality for arbitrary tori.  In the following, we let $\ob{GL}_n$ be the general linear group of degree $n$ (note that $\ob{GL}_1=\bb{G}_m$ which  is different from what we are denoting $\GL_{1,\n{C}}$). 

\begin{theorem}\label{TheoCartoerDualityTori}
Let $\cat{Latt}$ be the algebraic stack of finite free $\Z_{\Betti}$-lattices, that is, the algebraic stack whose values in an analytic ring $A$ consists on $\Z_{\Betti}$-modules $M$ over $\AnSpec(A)$ that, locally in the $!$-topology, are isomorphic to $\Z^n_{\Betti}$ for some $n\in \N$. The following hold: 
\begin{enumerate}

\item Consider the map $f\colon \bigsqcup_{n\in \N} B\ob{GL}_n(\Z)_{\ob{Betti}} \to \ob{Latt}$ where the lattices in the left terms are induced by the standard represetations.  Then $f$ is an equivalence of analytic stacks. 

\item Let $V$ be the universal $\Z_{\Betti}$-lattice over $\ob{Latt}$ and let $V^{*}$ be its $\Z_{\Betti}$-linear dual. We let $\bb{T}_{V^{*}}:= \bb{G}_m\otimes_{\Z_{\ob{Betti}}} V^{\bullet}$. Then the $\Z$-linear   pairing $\Psi\colon V\times \bb{T}_{V^*}\to \bb{G}_m$ gives rise to a $1$-categorical Cartier duality in the kernel category $\ob{K}_{\ob{D}, \ob{Latt}}$ 
\begin{equation}\label{eqjoqnwfpqwkepqe}
[B\bb{T}_{V^*}]_!\cong [V]^* \mbox{ and } [\bb{T}_{V^*}]_!\cong [BV]^*. 
\end{equation}

\end{enumerate}
\end{theorem}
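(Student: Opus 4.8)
The plan is to prove (1) by a direct stack-theoretic classification and then deduce (2) from the basic duality \Cref{PropCartierZGm} by $!$-descent along the presentation furnished by (1).

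For part (1) I would first compute the automorphism sheaf of the trivial lattice. Over any $\AnSpec A$, a $\Z_{\Betti}$-linear endomorphism of $\Z^n_{\Betti}$ is an $n\times n$ matrix with entries in $\ob{End}_{\Z_{\Betti}}(\Z_{\Betti})$; since $\Z_{\Betti}$ is the Betti sheaf of the discrete ring $\Z$, this endomorphism sheaf is the Betti sheaf of $\Z$, so $\ob{Aut}_{\Z_{\Betti}}(\Z^n_{\Betti})=\ob{GL}_n(\Z)_{\ob{Betti}}$. By definition of $\ob{Latt}$ every object is $!$-locally isomorphic to some $\Z^n_{\Betti}$, so $f$ is essentially surjective, and the locally constant rank $n$ produces the decomposition indexed by $\N$. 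Full faithfulness is the assertion that the fiber of $f$ over the standard lattice of rank $n$ is $B\ob{Aut}_{\Z_{\Betti}}(\Z^n_{\Betti})=B\ob{GL}_n(\Z)_{\ob{Betti}}$, which follows from the automorphism computation and descent. Hence $f$ is an equivalence.

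For part (2) I would apply \Cref{RemarkDescentBaseChange}. Consider the cover $g\colon \bigsqcup_n \AnSpec \Z^{\cond}\to \bigsqcup_n B\ob{GL}_n(\Z)_{\ob{Betti}}=\ob{Latt}$ given componentwise by the quotient maps $\AnSpec\Z^{\cond}\to B\ob{GL}_n(\Z)_{\ob{Betti}}$. As $\ob{GL}_n(\Z)_{\ob{Betti}}$ is cohomologically \'etale, each quotient map is a $!$-cover satisfying universal $*$-descent, hence so does $g$, and therefore $g^*_1\colon \ob{K}_{\ob{D},\ob{Latt}}\to \ob{K}_{\ob{D},\bigsqcup_n \AnSpec\Z^{\cond}}$ is conservative. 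By \Cref{RemarkDescentBaseChange} this pullback carries the Fourier--Mukai transform $\n{FM}_{\Psi}\colon [B\bb{T}_{V^*}]_!\to [V]^*$ and its dual $\n{FM}_{\Psi}^{\vee}\colon [\bb{T}_{V^*}]_!\to [BV]^*$ to the Fourier--Mukai transforms of the pulled-back pairing, so it suffices to check those are equivalences after pullback. On the $n$-th component the universal lattice trivializes to $\Z^n_{\Betti}$, the torus $\bb{T}_{V^*}=\bb{G}_m\otimes_{\Z_{\Betti}}V^*$ becomes $\bb{G}_m^n$, and $\Psi$ becomes the standard pairing $\Z^n_{\Betti}\times\bb{G}_m^n\to\bb{G}_m$, $((a_i),(t_i))\mapsto \prod_i t_i^{a_i}$.

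It then remains to see that this standard pairing induces a $1$-categorical Cartier duality, for which I would invoke multiplicativity of Cartier duality over finite products of grouplike objects. Since $\Z^n_{\Betti}=(\Z_{\Betti})^{\times n}$ and $\bb{G}_m^n=(\bb{G}_m)^{\times n}$ are biproducts, the functor $\Spec^S([-]^*)$ (which preserves finite products) and $\bb{D}=\iHom(-,\bb{A}^1)$ (which turns the biproduct into a product) both split these off factor by factor, exhibiting the standard pairing as the $n$-fold product of the basic pairing $\Phi\colon\Z_{\Betti}\times\bb{G}_m\to\bb{G}_m$. By \Cref{PropCartierZGm} the latter yields $[B\bb{G}_m]_!\cong[\Z_{\Betti}]^*$ and $[\bb{G}_m]_!\cong[B\Z_{\Betti}]^*$, so the $n$-fold products give $[B\bb{G}_m^n]_!\cong[\Z^n_{\Betti}]^*$ and $[\bb{G}_m^n]_!\cong[B\Z^n_{\Betti}]^*$. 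Thus the pulled-back transforms are equivalences, and by conservativity of $g^*_1$ so are the original transforms over $\ob{Latt}$, establishing \eqref{eqjoqnwfpqwkepqe}. The main obstacle I expect lies in part (1)---pinning down the automorphism sheaf $\ob{GL}_n(\Z)_{\ob{Betti}}$ and the $!$-local triviality---together with the bookkeeping needed to identify the pullback of $\Psi$ with the standard product pairing, so that the multiplicativity argument and \Cref{PropCartierZGm} apply verbatim.
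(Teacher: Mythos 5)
Your proposal is correct and follows essentially the same route as the paper: part (1) via the automorphism computation $\ob{Aut}_{\Z_{\Betti}}(\Z^n_{\Betti})=\ob{GL}_n(\Z)_{\Betti}$ together with the $!$-local triviality built into the definition of $\cat{Latt}$, and part (2) by using \Cref{RemarkDescentBaseChange} to reduce to the trivialized lattice, splitting the standard pairing as an $n$-fold product of the rank-one pairing, and invoking \Cref{PropCartierZGm}. The only difference is that you spell out a few steps (conservativity of $g^*_1$, the endomorphism-sheaf computation) that the paper leaves implicit.
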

\begin{proof}
Part (1) follows essentially by definition of $\cat{Latt}$. Indeed, given $A$ an analytic ring, a lattice $M\to \AnSpec A$ is, locally in the $!$-topology, isomorphic as $\Z_{\Betti}$-module to $\Z^n_{\Betti}$ for some $n\in \N$. This implies that the map $\bigsqcup_{n\in \N} *\to \cat{Latt}$ where the $n$-th map corresponds to the constant lattice $\Z^n_{\Betti}$ is an epimorphism of analytic stacks. Now, the automorphisms of $*\xrightarrow{\Z^n_{\Betti}} \cat{Latt}$ is nothing but $\ob{Aut}_{\Z_{\Betti}}(\Z^n_{\Betti})=\GL_n(\Z)_{\Betti}$, proving that $f$ in (1) is an equivalence. 

For (2), to show that the pairings produce a Cartier duality, by \Cref{RemarkDescentBaseChange} it suffices to prove this locally in the $!$-topology of $\cat{Latt}$. Thus, by (1) we can assume without loss of generality that $V\cong \Z^n$ as a $\Z$-lattice over $S=\AnSpec(\Z^{\cond})$.  Then the pairing $\Psi$ is a  direct products of the component-wise pairings,  this reduces  the problem   to  case of $V=\Z$ and hence to \Cref{PropCartierZGm}. 
\end{proof}

\subsection{Cartier duality for vector bundles}\label{ss:CartierDualityVectorBundles}

In this section we prove  Cartier duality for different incarnations of vector bundles. The strategy is the same as in \Cref{TheoCartoerDualityTori} and the only difference consists in the analogue of \Cref{PropCartierZGm}. Thus, to avoid repetition in the argument, we will state a general theorem involving all the cases of interest in the paper, and only prove the key Cartier duality in the basic cases following \Cref{CoroCartierDualityAnStkQAff}.

\subsubsection{Algebraic vector bundles}\label{ExAlgebraicVectorBundles}
The first and most fundamental example for us is the case of algebraic vector bundles. This discussion can take place in the six functor formalism of quasi-coherent sheaves on algebraic stacks. However, in order to keep a similar framework for all the cases of the paper, we will state it  in the six functor formalism of analytic stacks. 

 Let us first discuss the case of the trivial vector bundle. Consider the algebraic  affine  line  $\bb{G}_{a}=\AnSpec \bb{Z}[T]^{\cond}$ as a ring stack in analytic stacks. Let  $\Z[T]^{\ob{DP}}=\bigoplus_n \Z \frac{T^n}{n!}$ be the divided power envelope of $\Z[T]$ at $T=0$, and let $\bb{G}_{a}^{\sharp}=\AnSpec(\Z[T]^{\ob{DP},\ob{cond}})$. 
 
\begin{lemma}\label{LemmaModuleGasharp}
The analytic stack $\bb{G}_{a}^{\sharp}$ has a natural structure of $\bb{G}_a$-module. 
\end{lemma}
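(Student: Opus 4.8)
The plan is to reduce the statement to a purely algebraic assertion about the divided power algebra $R := \Z[T]^{\ob{DP}} = \bigoplus_n \Z\,\gamma_n$, where $\gamma_n := T^n/n!$, and then transport it through $\AnSpec$. Since $R$ is free, hence flat, over $\Z$, and since $\AnSpec$ sends coproducts of analytic rings to products of analytic stacks, it suffices to make $R$ into a commutative Hopf algebra carrying a compatible coaction of the bialgebra $\O(\bb{G}_a) = \Z[U]$, where $U$ is the coordinate on the acting copy of $\bb{G}_a$ and $\Z[U]$ is a bialgebra via $U \mapsto U\otimes U$ (the comultiplication dual to multiplication of scalars). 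Dualizing, the comultiplication on $R$ yields the additive group law on $\bb{G}_a^\sharp$, while the coaction yields the scalar action $\bb{G}_a\times\bb{G}_a^\sharp \to \bb{G}_a^\sharp$; together these are precisely a $\bb{G}_a$-module structure. Throughout it is convenient to describe points: a point of $\bb{G}_a^\sharp(A)$ is a divided power sequence $(x_n)_{n\ge 0}$ with $x_0 = 1$ and $x_i x_j = \binom{i+j}{i}x_{i+j}$.

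First I would define the additive structure. The comultiplication $\Delta(\gamma_n) = \sum_{i+j=n}\gamma_i\otimes\gamma_j$ and counit $\gamma_n \mapsto \delta_{n,0}$ make $R$ a commutative, cocommutative Hopf algebra; that $\Delta$ is defined integrally is the identity $(T\otimes 1 + 1\otimes T)^n/n! = \sum_{i+j=n}\gamma_i\otimes\gamma_j$, whose coefficients are integers, and coassociativity is the standard divided-power bookkeeping. On points, the induced group law is the convolution $\big((x_n)+(y_n)\big)_n = \sum_{i+j=n}x_i y_j$. This exhibits $\bb{G}_a^\sharp$ as a commutative group object, and the map $\bb{G}_a^\sharp \to \bb{G}_a$ induced by $\Z[T]\hookrightarrow R$ is a homomorphism.

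Next I would define the action via the coaction $\rho\colon R \to \Z[U]\otimes R$, $\gamma_n \mapsto U^n\otimes\gamma_n$, which is manifestly integral and well-defined; on points it computes scalar multiplication as $s\cdot(x_n) = (s^n x_n)$. The module axioms then reduce to elementary checks: the unit axiom is $\rho$ postcomposed with $U\mapsto 1$; associativity $(ss')\cdot x = s\cdot(s'\cdot x)$ is the coassociativity $(\id\otimes\rho)\circ\rho = (m^*\otimes\id)\circ\rho$, which holds because $(U\otimes U)^n = U^n\otimes U^n$; and additivity in the module variable, $s\cdot(x+y) = s\cdot x + s\cdot y$, is the identity $s^n\sum_{i+j=n}x_i y_j = \sum_{i+j=n}s^i x_i\, s^j y_j$, i.e. the statement that $\rho$ is a morphism of coalgebras (scalar multiplication is a group homomorphism).

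The one step that genuinely uses the divided powers, and which I expect to be the crux, is additivity in the \emph{scalar} variable $(s+s')\cdot x = s\cdot x + s'\cdot x$. Here the left-hand side has $n$-th term $(s+s')^n x_n = \sum_{i+j=n}\binom{n}{i}s^i s'^{j} x_n$, while the right-hand side (the convolution coming from $\Delta$) has $n$-th term $\sum_{i+j=n}s^i x_i\, s'^{j} x_j$; these agree precisely because $x_i x_j = \binom{i+j}{i}x_{i+j}$. Thus the failure of $s\mapsto s^n$ to be additive on $\bb{G}_a$ is exactly compensated by the divided-power relations, which is the whole reason for passing from $\bb{G}_a$ to $\bb{G}_a^\sharp$. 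Finally, all the structure maps above are defined over $\Z$ and are functorial; they descend to the condensed/analytic setting and assemble into the asserted module structure. Naturality is transparent, since every structure map is induced from the canonical grading $\deg\gamma_n = n$ on $R$ (the coaction $\rho$ encoding exactly the corresponding $\bb{A}^1$-weight action), which is compatible with both multiplication and comultiplication.
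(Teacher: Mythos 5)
Your proof is correct and follows essentially the same route as the paper: both reduce the statement to equipping $\Z[T]^{\ob{DP}}$ with the comultiplication $T\mapsto X+Y$ and the coaction $T\mapsto UT$ (your $\Delta(\gamma_n)=\sum_{i+j=n}\gamma_i\otimes\gamma_j$ and $\rho(\gamma_n)=U^n\otimes\gamma_n$ are exactly these maps in the divided-power basis), and the module axioms you verify on points --- in particular the scalar-additivity check using $\gamma_i\gamma_j=\binom{i+j}{i}\gamma_{i+j}$ --- are precisely the "routine verifications" the paper leaves to the reader. The only difference is that the paper justifies the passage from analytic stacks to plain Hopf-algebra/comodule data more formally, via a finite-product-preserving left Kan extension from static flat condensed rings, where you simply invoke flatness and functoriality.
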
 
\begin{proof}
This follows formally from the theory of animated PD pairs of \cite{MaoCrystalline}. Let us give a more  elementary proof. Since the analytic stacks $\bb{G}_{a}^{\sharp}$ and $\bb{G}_a$ are represented by static analytic rings that are flat over $\Z^{\ob{cond}}$, it suffices to endow $\bb{G}_a^{\sharp}$ with a   $\bb{G}_a$-module when restricted to static analytic rings. Indeed, if $\n{D}_0\subset \cat{AnStk}^{\ob{aff}}$ is the full subcategory of condensed rings which are static and flat over $\Z^{\cond}$, the left Kan extension 
\[
F_!\colon \ob{PShv}(\n{D}_0)\to \cat{AnStk}
\]
of the inclusion $F\colon \n{D}_0\to \cat{AnStk}$ preserves colimits and finite products, and in particular it is symmetric monoidal for the Cartesian symmetric monoidal structure. Thus, after passing to spectral objects, the functor $F_!$ is symmetric monoidal and it preserves algebras and modules over algebras.

Now, as both $\bb{G}_a$ and $\bb{G}_a^{\sharp}$ are $0$-truncated in $\n{D}_0$, to endow $\bb{G}_a^{\sharp}$ with a $\bb{G}_a$-module  structure, it suffices to do it at abelian level. That is, we need to construct the addition map $\ob{Ad}\colon \bb{G}_a^{\sharp}\times \bb{G}_a^{\sharp}\to \bb{G}_a^{\sharp}$ and the multiplication map $m\colon \bb{G}_a\times \bb{G}_a^{\sharp}\to \bb{G}_a^{\sharp}$, and prove the standard axioms for a module structure in abelian groups. The maps $\ob{Ad}$ and $m$ correspond to the following maps of algebras 
\[
\ob{Ad}\colon  \bb{Z}[T]^{\ob{DP}}\to \bb{Z}[Y,X]^{\ob{DP}}, \;\;\; T\mapsto Y+X
\]
and 
\[
m\colon \bb{Z}[T]^{\ob{DP}} \to \Z[Y]\otimes_{\Z} \Z[X]^{\ob{DP}}= \Z[X]^{\ob{DP}}[Y], \;\;\; T\mapsto YX.
\]
The verification that they endow $\bb{G}_a^{\sharp}$ with a $\bb{G}_a$-module structure are routine verifications. That is, in terms of polynomial laws, the coassociativity of the coalgebra structure boils down to $f(X+(Y+Z))=f((Y+X)+Z)$, the  cocommutativity to $f(X+Y)=f(Y+X)$, the counit to  $f(X+0)=f(X)$ and the antipode to $f(X-X)=f(0)$. Similarly, the $\bb{G}_a$-module  structure  boils down to the identities $f(X(Y+Z))=f(XY+XZ)$ and $f(X(YZ))=f((XY)Z)$.  
\end{proof}
 
Next, we construct the pairing that  witnesses the Cartier duality for algebraic vector bundles. We let $\widehat{\bb{G}}_a=\varinjlim_{n} \AnSpec \Z[T]^{\cond}/T^n$ be the formal completion at $0$ of $\bb{G}_a$, it is the $\bb{G}_a$-ideal corepresenting the nilpotent elements of the underlying discrete ring  $A^{\triangleright}(*)$ of an analytic ring $A$. 

\begin{construction}\label{ConstructionExponential}
We define the exponential map 
\[
\ob{exp}(xy)\colon \widehat{\bb{G}}_a\times \bb{G}_a^{\sharp}\to \bb{G}_m 
\]
to be the map of analytic stacks induced by the power series map
\[
\bb{Z}[T^{\pm 1}]\to \Z[Y]^{\ob{DP}}[[X]], \;\;\; T\mapsto \exp(YX)=\sum_{n\in N} \frac{Y^nX^n}{n!}
\]
where the right-hand-side ring is endowed with the $X$-adic topology, and the left term with the trivial topology.  
\end{construction}

\begin{lemma}\label{LemmaBilinearExponential}
The exponential map \Cref{ConstructionExponential}  is a $\Z$-bilinear pairing of analytic stacks. 
\end{lemma}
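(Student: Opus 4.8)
The plan is to reduce the statement to two elementary identities about the divided-power structure and then verify them by a direct power-series computation. First I would note that, since $\Z$ is the initial commutative ring, a $\Z$-module structure on a commutative group object is unique and coincides with its abelian group structure; hence a pairing is $\Z$-bilinear precisely when it is biadditive, i.e. a homomorphism in each variable separately. So it suffices to prove that $\exp$ is additive in the $\widehat{\bb{G}}_a$-factor and additive in the $\bb{G}_a^{\sharp}$-factor. As $\widehat{\bb{G}}_a$, $\bb{G}_a^{\sharp}$ and $\bb{G}_m$ are all corepresented by static analytic rings flat over $\Z^{\cond}$, I would invoke the same left-Kan-extension argument used in \Cref{LemmaModuleGasharp}: the functor $F_!$ out of flat static condensed rings is symmetric monoidal for the Cartesian structure, so both additivity identities may be checked after passing to corepresenting rings, where they become equalities of ring homomorphisms. (Implicitly this also records that $\exp(YX)$ has constant term $1$ in the $X$-adic ring $\Z[Y]^{\ob{DP}}[[X]]$, hence is a unit with inverse $\exp(-YX)$, which is what makes \Cref{ConstructionExponential} land in $\bb{G}_m$ rather than merely in $\bb{G}_a$.)

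Write $\gamma_n(Y)=\frac{Y^n}{n!}$ for the divided powers, so that $\exp(XY)=\sum_{n\ge 0} X^n \gamma_n(Y)$. Additivity in the $\widehat{\bb{G}}_a$-factor is the equality of the two ring maps $\Z[T^{\pm 1}]\to \Z[Y]^{\ob{DP}}[[X_1,X_2]]$ sending $T$ to $\exp((X_1+X_2)Y)$ and to $\exp(X_1Y)\exp(X_2Y)$ respectively. Using the divided-power product rule $\gamma_i(Y)\gamma_j(Y)=\binom{i+j}{i}\gamma_{i+j}(Y)$ I would compute
\[
\exp(X_1 Y)\exp(X_2 Y)=\sum_{i,j} X_1^i X_2^j \binom{i+j}{i}\gamma_{i+j}(Y)=\sum_n \gamma_n(Y)\sum_{i+j=n}\binom{n}{i}X_1^iX_2^j=\sum_n (X_1+X_2)^n\gamma_n(Y),
\]
which is exactly $\exp((X_1+X_2)Y)$; this is the identity $\exp(a)\exp(b)=\exp(a+b)$ made integral by the divided powers.

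Additivity in the $\bb{G}_a^{\sharp}$-factor is checked against the group law $\ob{Ad}$ of \Cref{LemmaModuleGasharp}, which on divided powers is the comultiplication $\gamma_n(Y)\mapsto \gamma_n(Y_1+Y_2)=\sum_{i+j=n}\gamma_i(Y_1)\gamma_j(Y_2)$. Here I would compute directly
\[
\exp(X Y_1)\exp(X Y_2)=\sum_{i,j} X^{i+j}\gamma_i(Y_1)\gamma_j(Y_2)=\sum_n X^n\sum_{i+j=n}\gamma_i(Y_1)\gamma_j(Y_2)=\sum_n X^n\gamma_n(Y_1+Y_2),
\]
which equals $\exp(X(Y_1+Y_2))$. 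Combining the two computations yields biadditivity, hence $\Z$-bilinearity.

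The computations themselves are routine, so the only real care is organizational: matching each group law to the correct tensor factor (the ordinary additive comultiplication $X\mapsto X_1+X_2$ on the formal variable versus the divided-power comultiplication $\ob{Ad}$ on $Y$), and running both identities with the integral divided powers $\gamma_n$ rather than with literal fractions $\frac{Y^n}{n!}$ so that no denominators intrude — this integrality is precisely the role played by $\bb{G}_a^{\sharp}$, and it is the point one must not skip. Rather than a genuine obstacle, I expect the only subtle step to be justifying that the reduction through \Cref{LemmaModuleGasharp} legitimately converts biadditivity of the map of analytic stacks into these ring-level equalities.
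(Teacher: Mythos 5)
Your proposal is correct and follows essentially the same route as the paper: reduce to static analytic rings via the left-Kan-extension argument of \Cref{LemmaModuleGasharp}, so that $\Z$-bilinearity becomes the commutativity of finitely many diagrams of ring maps, and then verify the two identities $\exp((X_1+X_2)Y)=\exp(X_1Y)\exp(X_2Y)$ in $\Z[Y]^{\ob{DP}}[[X_1,X_2]]$ and $\exp(X(Y_1+Y_2))=\exp(XY_1)\exp(XY_2)$ in $\Z[Y_1,Y_2]^{\ob{DP}}[[X]]$. The only difference is that you carry out the divided-power computations (via $\gamma_i\gamma_j=\binom{i+j}{i}\gamma_{i+j}$ and $\gamma_n(Y_1+Y_2)=\sum_{i+j=n}\gamma_i(Y_1)\gamma_j(Y_2)$) that the paper simply cites as the standard exponential identity.
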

\begin{proof}
By applying the same argument as in \Cref{LemmaModuleGasharp}, it suffices to show that the exponential is $\Z$-bilinear as a pairing when restricted to static analytic rings. In this case, the lemma reduces to proving that the finitely many diagrams witnessing the $\Z$-bilinearity are commutative, which  then reduces to the standard identity of the exponential power series, that is,  $\exp((X_1+X_2)Y)= \exp(X_1Y) \exp(X_2Y)$ in the ring $\Z[Y]^{\ob{DP}}[[X_1,X_2]]$  and $\exp(X(Y_1+Y_2))=\exp(XY_1)\exp(XY_2)$ in $\Z[Y_1,Y_2]^{\ob{DP}}[[X]]$.   
\end{proof}

\begin{remark}\label{RemarkStackLinear}
Notice that the exponential map $\ob{exp}(XY)\colon  \widehat{\bb{G}}_a\times \bb{G}_a^{\sharp}\to \bb{G}_m$ is compatible with the action of $\bb{G}_a$ on both terms. Then it actually factors as the composite 
\[
\widehat{\bb{G}}_a\times \bb{G}_a^{\sharp}\to \widehat{\bb{G}}_a\otimes_{\bb{G}_a} \bb{G}_a^{\sharp} \to \bb{G}_m
\]
where the second map is a $\Z$-linear map that we can safely denote $\exp$. 
\end{remark}

The analytic stack $\widehat{\bb{G}}_a$ is suave over $\AnSpec \Z^{\cond}$ by \cite[Lemma 6.4.15]{SolidNotes}, it is also quasi-affinoid being an open immersion (in the sense of analytic stacks) of the algebraic affine line $\bb{G}_a$. We need to establish the $!$-ability of the stack $B\bb{G}_a^{\sharp}$.

\begin{lemma}\label{LemmaShierkBGasharp}
The map $s\colon \AnSpec \Z^{\Cond}\to B\bb{G}_a^{\sharp}$ is descendable, in particular $B\bb{G}_{a}^{\sharp}$ is $!$-able over $\AnSpec \Z^{\cond}$. 
\end{lemma}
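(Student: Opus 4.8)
The plan is to follow the template of \Cref{LemmaBGmShierk} as far as possible and then depart from it exactly where $\bb{G}_a^{\sharp}$ fails to be linearly reductive. First I would observe that $s$ is prim: the group $\bb{G}_a^{\sharp}=\AnSpec(\Z[T]^{\ob{DP},\cond})$ is representable by a static analytic ring that is flat over $\Z^{\cond}$ (the divided power algebra $\Z[T]^{\ob{DP}}=\bigoplus_n \Z\frac{T^n}{n!}$ is free as a condensed abelian group), and the counit $\Z[T]^{\ob{DP}}\to \Z$, $T\mapsto 0$, is a split surjection of free $\Z$-modules, hence of finite tor amplitude. Thus \Cref{LemmaRepTheoryFlat} applies and identifies $\ob{D}(B\bb{G}_a^{\sharp})^{\heartsuit}$ with the category of $R$-comodules for $R=\Z[T]^{\ob{DP}}$, with $s_*1$ the regular comodule $R$. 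The essential new point is that, in contrast to the $\bb{G}_m$-case, the trivial comodule $\Z$ is \emph{not} a direct summand of $R$, so descendability cannot be read off from reductivity and must be extracted from a cohomological finiteness statement.

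The key step I would carry out next is the computation of the endomorphisms of the unit $\End_{\ob{D}(B\bb{G}_a^{\sharp})}(1)=R\Gamma(B\bb{G}_a^{\sharp},\O)=\mathrm{Cotor}^{*}_{R}(\Z,\Z)$. Here the divided power structure is crucial: $R=\Z[T]^{\ob{DP}}$ is the $\Z$-linear dual of the power series Hopf algebra $\Z[[S]]$, the divided powers $\gamma_n(T)=T^n/n!$ being dual to the monomials $S^n$ (this is precisely the integral duality underlying the exponential pairing of \Cref{ConstructionExponential}). Since each truncation $R_{\le n}=\bigoplus_{k\le n}\Z\gamma_k(T)$ is a finite free subcoalgebra and $R=\varinjlim_n R_{\le n}$, the cobar complex of $R$ is the filtered colimit of the finite duals, so that $\mathrm{Cotor}^{*}_{R}(\Z,\Z)=\mathrm{Ext}^{*}_{\Z[[S]]}(\Z,\Z)$. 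The latter is computed by the Koszul resolution $\Z[[S]]\xrightarrow{\;\cdot S\;}\Z[[S]]$, $S$ being a nonzerodivisor, giving
\[
R\Gamma(B\bb{G}_a^{\sharp},\O)\simeq \Z\oplus \Z[-1],
\]
an exterior algebra on a single class in cohomological degree $1$, in particular a perfect $\Z$-complex. I would emphasize that this boundedness genuinely uses the PD structure: the analogous $\mathrm{Cotor}$ for the ordinary coordinate ring $\Z[T]$ of $\bb{G}_a$ is unbounded after reduction mod $p$.

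Finally I would deduce descendability from this finiteness via Mathew's nilpotence criterion \cite{MathewDescent}: descendability of $s$ is equivalent to $\otimes$-nilpotence of the fiber $J=\fiber(1\to s_*1)$, and the exterior (square-zero) shape of the cobar cohomology forces the map $J^{\otimes 2}\to 1$ to vanish, yielding descendability index $2$. Finite descendability then gives $!$-ability of $B\bb{G}_a^{\sharp}$ over $\AnSpec\Z^{\cond}$. The main obstacle I anticipate is step three made rigorous: translating the boundedness of $\mathrm{Cotor}^{*}_{R}(\Z,\Z)$ into honest tensor-nilpotence of $J$ inside $\ob{D}(B\bb{G}_a^{\sharp})$ in the condensed/solid setting, and justifying the identification $\mathrm{Cotor}_R=\mathrm{Ext}_{\Z[[S]]}$ integrally and degreewise without invoking the Cartier duality that this lemma is meant to help establish (thereby avoiding circularity). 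A secondary care point is checking that the finite filtration $R_{\le n}$ interacts correctly with the solid tensor product so that the colimit computation of the cobar cohomology is valid.
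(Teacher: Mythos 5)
Your steps (1) and (2) match the paper's setup and are correct: the reduction to $\Z[T]^{\ob{DP}}$-comodules via \Cref{LemmaRepTheoryFlat} is exactly how the paper begins, and the computation $\End_{\ob{D}(B\bb{G}_a^{\sharp})}(1)\simeq \Z\oplus\Z[-1]$ is right. The genuine gap is step (3), and it is the step you flag yourself. Mathew's criterion asks for the vanishing of a specific class in $\pi_0\Hom_{\ob{D}(B\bb{G}_a^{\sharp})}(J^{\otimes 2},1)$, and this group is controlled by the cohomology of the nontrivial comodule $J^{\otimes 2}$, not by the cohomology of the unit. The ``exterior, square-zero shape'' of $\mathrm{Cotor}^*_R(\Z,\Z)$ would yield the claim if one had a K\"unneth isomorphism $R\Gamma(J^{\otimes n})\simeq R\Gamma(J)^{\otimes n}$, or if one could run a vanishing-line argument for the descent tower in the comodule category; neither is supplied, and setting either up is at least as much work as the lemma itself. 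So as written the deduction ``bounded $\mathrm{Cotor}$ of the unit $\Rightarrow$ $J^{\otimes 2}\to 1$ is null'' is an assertion, not a proof.

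The fix is already contained in your own computation, and it is the paper's entire proof: do not pass to cohomology. Under the graded duality $\gamma_n(T)\leftrightarrow S^n$ your Koszul complex $\Z[[S]]\xrightarrow{\cdot S}\Z[[S]]$ dualizes to the divided-power Poincar\'e complex
\[
0\to \Z\to \Z[T]^{\ob{DP}}\xrightarrow{\partial_T}\Z[T]^{\ob{DP}}\to 0,
\]
and the key point (which must be checked, but is a direct computation) is that this is an exact sequence of $\Z[T]^{\ob{DP}}$-\emph{comodules}, i.e.\ a two-term resolution of the unit by copies of the regular comodule $s_*1$ inside $\ob{D}(B\bb{G}_a^{\sharp})^{\heartsuit}$. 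This exhibits $1$ as the fiber of a map $s_*1\to s_*1$, hence places $1$ in the thick subcategory (a fortiori the thick tensor ideal) generated by $s_*1$, giving descendability of bounded index with no nilpotence argument, no $\mathrm{Cotor}$-to-$\Ext$ comparison in the condensed setting, and no risk of circularity with Cartier duality. In fact it also shows $J\simeq s_*1[-1]$, from which your claimed index $2$ follows by adjunction; but that identification is exactly the Poincar\'e lemma, so it cannot be bypassed. In short: you computed the cohomology of the resolution instead of using the resolution itself.
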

\begin{proof}
The algebra $\s{O}(\bb{G}_a^{\sharp})=\Z[T]^{\ob{DP}}$ is flat over $\Z$, thus by  \Cref{LemmaRepTheoryFlat}  we know that $\ob{D}(B\bb{G}_a^{\sharp})$ has a natural $t$-structure whose heart is the category of $\Z[T]^{\ob{DP}}$-comodules. The comodule $s_*1\in \ob{D}( B\bb{G}_a^{\sharp})$ is the regular comodule $\Z[T]^{\ob{DP}}$ induced by comultiplication. By the Poincar\'e lemma we have a short exact sequence 
\[
0\to \Z\to \Z[T]^{\ob{DP}}\xrightarrow{\partial_{T}} \Z[T]^{\ob{DP}}\to 0
\]
where $\partial_T$ is the derivation with respect to the variable $T$.  It is a classical computation that this is an exact  sequence of $\Z[T]^{\ob{DP}}$-comodules, namely, this is equivalent to the identity $(\partial_T f)(X+Y)=\partial_{Y}(f(X+Y))$. This proves that the unit of $\ob{D}(B\bb{G}_a^{\sharp})$ is in the thick tensor ideal of $s_* 1$ and therefore that $s$ is descendable. 
\end{proof}

We deduce the Cartier duality for the divided-power affine line:

\begin{proposition}\label{PropBasicAlgVectCD}
The pairing  $\Psi\colon \widehat{\bb{G}}_{a}\times \bb{G}_a^{\sharp}\to \bb{G}_m$  of \Cref{ConstructionExponential} gives rise to $1$-categorical Cartier dualities in $\ob{K}_{\ob{D}, \AnSpec \Z^{\cond}}$
\[
[B\bb{G}_a^{\sharp}]_!\cong [\widehat{\bb{G}}_a]^* \mbox{ and } [\bb{G}_a^{\sharp}]_!\cong [B\widehat{\bb{G}}_a]^*
\]
as in \Cref{DefinitionCartierDuals}. 
\end{proposition}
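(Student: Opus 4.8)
The plan is to apply \Cref{CoroCartierDualityAnStkQAff} (4) with $G=\widehat{\bb{G}}_a$ and $H=\bb{G}_a^{\sharp}$ over $S=\AnSpec\Z^{\cond}$, feeding in the exponential of \Cref{ConstructionExponential}. All structural hypotheses are in place: $f\colon\widehat{\bb{G}}_a\to S$ is a suave quasi-affine grouplike commutative monoid, $H=\bb{G}_a^{\sharp}$ is affinoid with $\s{O}(H)=\Z[X]^{\ob{DP}}$, and the unit $s\colon S\to B\bb{G}_a^{\sharp}=BH$ is descendable by \Cref{LemmaShierkBGasharp}. By \Cref{LemmaBilinearExponential} the exponential is $\Z$-bilinear, so I can suspend it in the $\widehat{\bb{G}}_a$-variable to obtain a pairing of commutative monoids $\psi\colon\bb{G}_a^{\sharp}\times B\widehat{\bb{G}}_a\to B\bb{G}_m\to{\bf{GL}_{1,\n{C}}}$ via \Cref{LemmaGmModuli} and \Cref{RemarkAnimatedGroupStructure}. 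Once I check that $\psi$ induces the single Cartier-duality equivalence required as input to part (4), that statement will upgrade it simultaneously to the two claimed $1$-categorical dualities: the duality between $G$ and $BH$ gives $[B\bb{G}_a^{\sharp}]_!\cong[\widehat{\bb{G}}_a]^*$, and the one between $BG$ and $H$ gives $[\bb{G}_a^{\sharp}]_!\cong[B\widehat{\bb{G}}_a]^*$.

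Thus the entire content reduces to verifying that the Cartier-duality map $\Spec^S([\bb{G}_a^{\sharp}]^*)\to\Spec^S([B\widehat{\bb{G}}_a]_!)$ is an equivalence. By \Cref{CoroCartierDualityAnStkQAff} (2)--(3) this is exactly the assertion that the attached map of commutative algebras $f_!f^!1\to\s{O}(H)=\Z[X]^{\ob{DP}}$ is an isomorphism. Since $f$ is suave we have $f_!f^!1=f_\sharp 1$ by \Cref{RemarkIdentificationsTheorems} (1), and part (3) identifies this map, under $\iHom_\Z(f_\sharp 1,\Z[X]^{\ob{DP}})=\Gamma(\bb{G}_a^{\sharp}\times\widehat{\bb{G}}_a)$, with the global section cut out by $\psi$, namely $\exp(XY)=\sum_{n\geq 0}\tfrac{X^n}{n!}Y^n\in\Z[X]^{\ob{DP}}[[Y]]$. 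This is the precise analogue of the section $(T^n)_n$ appearing in the proof of \Cref{PropCartierZGm}.

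The main obstacle, and the only genuinely computational step, is then to identify $f_\sharp 1$ together with this map. I would argue that by suaveness of $\widehat{\bb{G}}_a$ the object $f_\sharp 1$ is the continuous $\Z$-linear dual of $\s{O}(\widehat{\bb{G}}_a)=\Z[[Y]]=\varprojlim_n\Z[Y]/Y^n$; dualizing this pro-system yields the free $\Z$-module $f_\sharp 1=\bigoplus_{n\geq 0}\Z\cdot\gamma_n$ with $\gamma_n$ dual to $Y^n$. Under this description $\iHom_\Z(f_\sharp 1,\Z[X]^{\ob{DP}})=\prod_n\Z[X]^{\ob{DP}}\cdot Y^n=\Z[X]^{\ob{DP}}[[Y]]$ sends $\phi\mapsto\sum_n\phi(\gamma_n)Y^n$, so the section $\exp(XY)$ corresponds to $\gamma_n\mapsto\tfrac{X^n}{n!}=X^{[n]}$; as $\{X^{[n]}\}_{n\geq 0}$ is a $\Z$-basis of $\Z[X]^{\ob{DP}}$, this is an isomorphism, exhibiting the perfect pairing $\Z[[Y]]\times\Z[X]^{\ob{DP}}\to\Z$ that realizes $\Z[X]^{\ob{DP}}$ as the distribution (hyper)algebra dual to the formal group $\widehat{\bb{G}}_a$. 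The delicate part I expect to spend most effort on is the clean justification, inside the solid six-functor formalism, that the suave homology $f_\sharp 1$ really is this dual of $\Z[[Y]]$ --- in particular controlling any dualizing twist $\omega_f$ coming from suaveness and handling the ind-presentation $\widehat{\bb{G}}_a=\varinjlim_n\AnSpec\Z[Y]/Y^n$ --- after which the conclusion is a direct comparison of power-series coefficients exactly as in \Cref{PropCartierZGm}.
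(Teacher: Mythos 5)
Your proposal is correct and follows essentially the same route as the paper: apply \Cref{CoroCartierDualityAnStkQAff} using suaveness of $\widehat{\bb{G}}_a$ and descendability of $\AnSpec\Z^{\cond}\to B\bb{G}_a^{\sharp}$, identify $f_{\sharp}1$ with the continuous dual $\bigoplus_{n}\Z\, X^{n,\vee}$ of $\Z[[X]]$, and check that the exponential kernel sends $X^{n,\vee}\mapsto T^n/n!$, hence is an isomorphism onto $\Z[T]^{\ob{DP}}$. The paper's proof is exactly this computation, treating the identification of $f_{\sharp}1$ as a classical fact rather than dwelling on the dualizing twist you flag.
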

\begin{proof}
Let us denote $S=\AnSpec \Z^{\cond}$. We apply \Cref{CoroCartierDualityAnStkQAff} knowing that $S\to B\widehat{\bb{G}}_a$ and $S\to B\bb{G}_a^{\sharp}$ are $!$-covers. Let $g\colon \widehat{\bb{G}}_a\to S$ be the structural map, and let $g_!g^!1=g_{\natural} 1\in \ob{D}(\Z^{\cond})$ be its homology.  A classical computation shows that $g_{\natural} 1$ is the \textit{continuous dual} of $g_* 1=\Z[[X]]$ for the $X$-adic topology, that is $g_{\sharp} 1 \cong \bigoplus_{n\in \N} \Z X^{n,\vee}$. It suffices to see that the morphism
\begin{equation}\label{eqomaspfonaonpawrf}
\bigoplus_{n\in \N} \Z X^{n,\vee}\to \Z[T]^{\ob{DP}}
\end{equation}
arising from the pairing $\Psi$  is an isomorphism. The pairing $\Psi$ is given by the exponential power series
\[
\exp(XT)\in \Z[T]^{\ob{DP}}[[X]]=\Hom_{\Z}(\bigoplus_{n\in \N} \Z X^{n,\vee}, \Z[T]^{\ob{DP}}),
\]
and it is a standard computation that this map sends $X^{n,\vee}$ to $\frac{T^n}{n!}$, proving that \eqref{eqomaspfonaonpawrf} is an isomorphism as wanted.
\end{proof}

Note that $\bb{G}_a^{\sharp}\times_{\AnSpec \Z^{\cond}} \AnSpec \Q^{\cond}= \bb{G}_{a,\Q}$ is the affine line. One has the following specialization of \Cref{PropBasicAlgVectCD} by taking base change to characteristic zero. 
 
 \begin{corollary}\label{CorBasicAlgChar0}
Let $\bb{G}_{a,\Q}$ be the base change of $\bb{G}_{a}$ to $\Q$. Then the pairing  $\exp(xy)\colon \bb{G}_{a,\Q}\times \widehat{\bb{G}}_{a,\Q}\to \bb{G}_{m,\Q}$ of \Cref{ConstructionExponential} factors as the composite of the multiplication map $\bb{G}_{a,\Q}\times \widehat{\bb{G}}_{a,\Q}\to \widehat{\bb{G}}_a$ and the exponential map $\exp\colon \widehat{\bb{G}}_{a,\Q}\to \bb{G}_{m,\Q}$. Furthermore, the Cartier duality of \Cref{PropBasicAlgVectCD} induces a $1$-categorical Cartier duality of analytic stacks over $\Q^{\cond}$ 
\[
[B\bb{G}_{a,\Q}]_!\cong [\widehat{\bb{G}}_{a,\Q}]^* \mbox{ and } [\bb{G}_{a,\Q}]_!\cong [B\widehat{\bb{G}}_{a,\Q}]^*.
\]
 \end{corollary}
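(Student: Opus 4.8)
The plan is to obtain this corollary purely by base change from \Cref{PropBasicAlgVectCD} along the map $g\colon \AnSpec \Q^{\cond}\to \AnSpec \Z^{\cond}$ in $\n{C}$. The single place where the characteristic-zero hypothesis enters is the identification of the base change of $\bb{G}_a^{\sharp}$. First I would record that $\Z[T]^{\ob{DP}}\otimes_{\Z}\Q=\Q[T]$, since each generator $\tfrac{T^n}{n!}$ becomes a $\Q$-multiple of $T^n$ once $n!$ is inverted; hence $\bb{G}_a^{\sharp}\times_{\AnSpec \Z^{\cond}}\AnSpec \Q^{\cond}=\bb{G}_{a,\Q}$, as already noted before the statement. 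Likewise, the formal completion and the formation of classifying stacks commute with base change, so $\widehat{\bb{G}}_a$ pulls back to $\widehat{\bb{G}}_{a,\Q}$, the stack $B\bb{G}_a^{\sharp}$ pulls back to $B\bb{G}_{a,\Q}$, and $B\widehat{\bb{G}}_a$ pulls back to $B\widehat{\bb{G}}_{a,\Q}$.

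For the factorization claim I would specialize \Cref{RemarkStackLinear} to $\Q$. There the exponential $\exp(xy)$ was shown to factor through the $\bb{G}_a$-module structure, i.e. through $\widehat{\bb{G}}_a\otimes_{\bb{G}_a}\bb{G}_a^{\sharp}$. Over $\Q$ the second factor is the honest affine line $\bb{G}_{a,\Q}$, so this tensor becomes the multiplication map $\bb{G}_{a,\Q}\times \widehat{\bb{G}}_{a,\Q}\to \widehat{\bb{G}}_{a,\Q}$ (the product of an arbitrary section with a topologically nilpotent one is again topologically nilpotent, hence lands in the formal completion). The residual $\Z$-linear map is precisely the usual exponential $\exp\colon \widehat{\bb{G}}_{a,\Q}\to \bb{G}_{m,\Q}$, $Z\mapsto \sum_{n}\tfrac{Z^n}{n!}$, which converges since $Z$ is topologically nilpotent and the $n!$ are invertible. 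This yields the desired factorization $\exp(xy)=\exp\circ m$.

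Finally, for the Cartier duality I would apply $g_1^*$ to the two equivalences of \Cref{PropBasicAlgVectCD}. By \Cref{RemarkDescentBaseChange} the pullback $g_1^*\colon \ob{K}_{\ob{D},\AnSpec \Z^{\cond}}\to \ob{K}_{\ob{D},\AnSpec \Q^{\cond}}$ is symmetric monoidal and carries the Fourier--Mukai transform of the pairing $\Psi$ to the Fourier--Mukai transform of its base change, preserving the property of being an isomorphism. Since the base-changed pairing is exactly the exponential pairing over $\Q$ identified above, and the source and target objects pull back as in the first step, the equivalences $[B\bb{G}_a^{\sharp}]_!\cong [\widehat{\bb{G}}_a]^*$ and $[\bb{G}_a^{\sharp}]_!\cong [B\widehat{\bb{G}}_a]^*$ specialize to $[B\bb{G}_{a,\Q}]_!\cong [\widehat{\bb{G}}_{a,\Q}]^*$ and $[\bb{G}_{a,\Q}]_!\cong [B\widehat{\bb{G}}_{a,\Q}]^*$ respectively.

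The main thing to get right is the first step: one must be careful that it is genuinely the \emph{divided-power} line $\bb{G}_a^{\sharp}$, and not $\bb{G}_a$, whose base change to $\Q$ is the affine line, and that the Cartier dual on the prim side, $\widehat{\bb{G}}_a$, is insensitive to this and base-changes to the ordinary formal disk. Everything else is a formal consequence of the symmetric monoidality of $g_1^*$ and the stability of $1$-categorical Cartier dualities under base change from \Cref{RemarkDescentBaseChange}; no computation beyond \Cref{PropBasicAlgVectCD} is required.
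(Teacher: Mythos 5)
Your proposal is correct and follows exactly the route the paper intends: the paper offers no separate proof beyond the preceding remark that $\bb{G}_a^{\sharp}\times_{\AnSpec \Z^{\cond}} \AnSpec \Q^{\cond}=\bb{G}_{a,\Q}$ and the assertion that the corollary is the specialization of \Cref{PropBasicAlgVectCD} under base change, which is precisely what you carry out (identification of the base-changed objects, factorization via \Cref{RemarkStackLinear}, and transport of the Fourier--Mukai isomorphism along the symmetric monoidal pullback $g_1^*$ using \Cref{RemarkDescentBaseChange}). The only nitpick is that sections of $\widehat{\bb{G}}_a$ are \emph{nilpotent} rather than merely topologically nilpotent, per the paper's definition of the formal completion, but this does not affect the argument.
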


\subsubsection{Solid vector bundles}
\label{ExSolidVectorBundle}

Next, we prove a Cartier duality for a solid incarnation of vector bundles. This incarnation of Cartier duality is related to work in progress with Aoki and Zavyalov \cite{AokiRodZavSoliddR} where we develop the theory of (ultra)solid de Rham stacks.   We shall work over the base $S=\AnSpec \Z_{\sol}$ of solid integers. Let $\bb{G}_{a,\sol}=\AnSpec \Z[T]_{\sol}$ be the solid affine line. As it is shown in \Cref{ExAlgebraicVectorBundles}, the Cartier dual of $\bb{G}_{a,\sol}$ should be some incarnation of the divided-power envelope of $\bb{G}_a$ at zero. Let $\Z[[T]]^{\ob{DP}}:=\prod_{n\in \N} \Z \frac{T^n}{n!}$ be the  completion of $\Z[T]^{\ob{DP}}$ with respect to its divided-power filtration, seen as a solid ring endowed with the product topology. We let $\bb{G}_{a,\sol}^{\sharp}:=\AnSpec ((\Z[[T]]^{\ob{DP}},\Z_{\sol}))$ be the analytic spectrum of $\Z[[T]]^{\ob{DP}}$ endowed with the induced solid analytic ring structure.

We first construct the pairing.

\begin{lemma}\label{LemmaModuleGasharpSolid}
The analytic stack $\bb{G}_{a,\sol}^{\sharp}$ has a natural structure of $\bb{G}_{a,\sol}$-module. 
\end{lemma} 
\begin{proof}
This follows formally from the theory of complete solid PD pairs of \cite{AokiRodZavSoliddR}. As the reference is not yet available we give an elementary proof. Since finite products of objects $\bb{G}_{a,\sol}$ and $\widehat{\bb{G}}_{a,\sol}^{\sharp}$ are static, by the same left-Kan-extension argument of \Cref{LemmaModuleGasharp} (applied to the full subcategory of analytic stacks generated by finite products of $\bb{G}_{a,\sol}$ and $\widehat{\bb{G}}_{a,\sol}^{\sharp}$) it suffices to produce the structure of a $\bb{G}_{a,\sol}$-module on $\widehat{\bb{G}}^{\sharp}_{a,\sol}$ as presheaves on sets. For that, it suffices to construct an addition map $\ob{Ad}\colon\widehat{\bb{G}}_{a,\sol}^{\sharp}\times \widehat{\bb{G}}_{a,\sol}^{\sharp}\to \widehat{\bb{G}}_{a,\sol}^{\sharp}$ and a multiplication map $m\colon \bb{G}_{a,\sol}\times \widehat{\bb{G}}_{a,\sol}^{\sharp}\to \widehat{\bb{G}}_{a,\sol}^{\sharp}$ satisfying the obvious module axioms. Let us directly write the maps, the verification of the module axioms are straightforward computations and proven by checking the same identities as in the proof of \Cref{LemmaModuleGasharp}. The addition map $\ob{Ad}$ corresponds to the morphism of solid algebras 
\[
\bb{Z}[[Y]]^{\ob{DP}} \to \bb{Z}[[Y_1]]^{\ob{DP}}\otimes_{\Z_{\sol}} \bb{Z}[[Y_2]]^{\ob{DP}}=\bb{Z}[[Y_1,Y_2]]^{\ob{DP}},\;\;\;  Y\mapsto Y_1+Y_2,
\]
note that this map is well defined since the solid tensor product satisfies $\prod_{I} \Z \otimes_{\Z_{\sol}} \prod_{J} \Z = \prod_{I\times J }\Z$ for countable sets $I$ and $J$. The multiplication map  corresponds to the morphism of algebras
\[
\bb{Z}[[Y]]^{\ob{DP}}\to \bb{Z}[X]_{\sol} \otimes_{\Z_{\sol}} \bb{Z}[[Y]]^{\ob{DP}} = \Z[X][[Y]]^{\ob{DP}},\;\;\; Y\mapsto XY,
\]
this map is well defined since we have the solid tensor product $\Z[X]_{\sol}\otimes_{\Z_{\sol}} \prod_{I }\Z_{\sol} = \prod_{I} \Z[X]$ for a countable set $I$. 
\end{proof}

Having constructed the $\bb{G}_{a,\sol}$-module structure on $\widehat{\bb{G}}_{a,\sol}^{\sharp}$, we can construct the exponential map that gives rise the Cartier duality.

\begin{construction}\label{ConstructionPairingGaSol}
Consider the exponential map $\exp\colon \widehat{\bb{G}}_{a,\sol}^{\sharp}\to \bb{G}_{m}$ induced by the morphism of $\bb{Z}_{\sol}$-algebras  
\[
\bb{Z}[T^{\pm 1}]\to \bb{Z}[[Y]]^{\ob{DP}},\;\;\; T\mapsto \exp(Y)=\sum_{n} \frac{Y^n}{n!}
\]
where $\bb{Z}[T^{\pm 1}]$ is considered with the induced analytic ring structure. By the usual additive  law of the exponential as power series $\exp(Y_1+Y_2)=\exp(Y_1)\exp(Y_2)$ in the ring $\bb{Z}[[Y_1,Y_2]]^{\ob{DP}}$, the map $\exp$ above is a morphism of $\Z$-modules in analytic stacks. We define the $\Z$-linear paring 
\[
\Psi\colon \bb{G}_{a,\sol} \times \widehat{\bb{G}}_{a,\sol}^{\sharp}\to \widehat{\bb{G}}^{\sharp}_{a,\sol} \to \bb{G}_m
\] 
to be the map $(X,Y)\mapsto \exp(YX)$.  
\end{construction}

The analytic stack $\bb{G}_{a,\sol}$ is quasi-affinoid being an open localization of $\bb{G}_{a}$, it is also suave over $S$ by \cite[Proposition 7.1.11]{SolidNotes}. Next we verify the $!$-ability of $\widehat{\bb{G}}^{\sharp}_{a,\sol}$. 

\begin{lemma}\label{LemmaDescendableGasolidSharp}
The map $e\colon \AnSpec \Z_{\sol}\to B\widehat{\bb{G}}_{a,\sol}^{\sharp}$ is descendable. In particular, $ B\widehat{\bb{G}}_{a,\sol}^{\sharp}$ is $!$-able.
\end{lemma}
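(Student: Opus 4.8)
The plan is to follow the proof of the algebraic case \Cref{LemmaShierkBGasharp} almost verbatim, replacing the divided power polynomial algebra by its solid completion and the Poincar\'e lemma by its completed analogue. First I would record that the structure algebra $R:=\s{O}(\widehat{\bb{G}}_{a,\sol}^{\sharp})=\Z[[T]]^{\ob{DP}}=\prod_{n\in\N}\Z\,\tfrac{T^{n}}{n!}$ is flat over $\Z_{\sol}$: as a solid $\Z_{\sol}$-module it is a countable product of copies of $\Z_{\sol}$, and such products are flat in solid modules. Then \Cref{LemmaRepTheoryFlat} applies (its comodule description rests on the flatness of $R$ alone), endowing $\ob{D}(B\widehat{\bb{G}}_{a,\sol}^{\sharp})$ with a $t$-structure whose heart is the category of $R$-comodules in solid abelian groups, and under which the pushforward $e_{*}1$ of the unit corresponds to the regular comodule $R$.

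Next I would exhibit the completed Poincar\'e resolution. The derivation $\partial_{T}$ acts on $R=\prod_{n}\Z\,\tfrac{T^{n}}{n!}$ as the left shift $\tfrac{T^{n}}{n!}\mapsto\tfrac{T^{n-1}}{(n-1)!}$; it is a solid morphism whose kernel is the line of constants $\Z_{\sol}$ and which is surjective onto the product. This produces a short exact sequence of solid abelian groups
\[
0\to\Z_{\sol}\to R\xrightarrow{\ \partial_{T}\ }R\to 0,
\]
and, since $\partial_{T}$ is the left-invariant differential of the group $\widehat{\bb{G}}_{a,\sol}^{\sharp}$, all three maps are $R$-comodule homomorphisms, exactly as in the algebraic case. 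Reading this sequence in $\ob{D}(B\widehat{\bb{G}}_{a,\sol}^{\sharp})$ realizes the unit $\Z_{\sol}$ as the fiber of the endomorphism $\partial_{T}$ of $e_{*}1=R$, hence as a finite complex built from copies of $e_{*}1$. Thus the unit lies in the thick tensor ideal generated by $e_{*}1$, which is precisely the descendability of $e$; the $!$-ability of $B\widehat{\bb{G}}_{a,\sol}^{\sharp}$ then follows exactly as in \Cref{LemmaShierkBGasharp}.

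The hard part will be the solid bookkeeping rather than any new conceptual input. One must confirm that $\prod_{\N}\Z$ is genuinely flat over $\Z_{\sol}$, so that the completed tensor products entering the coaction behave as in the discrete case, and one must check that the shift sequence above remains exact and comodule-equivariant after completion; the delicate point is that the coaction is defined through the identities $\prod_{I}\Z\otimes_{\Z_{\sol}}\prod_{J}\Z=\prod_{I\times J}\Z$ for countable $I,J$ (as already used in \Cref{LemmaModuleGasharp}), so one has to ensure that the left-invariance of $\partial_{T}$ is compatible with these completed tensor products. I do not expect the finite-tor-amplitude clause of \Cref{LemmaRepTheoryFlat} to play any role here, since only the heart identification — which uses flatness alone — feeds into the descendability argument.
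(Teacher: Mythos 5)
Your proposal is correct and follows essentially the same route as the paper, which simply transports the argument of \Cref{LemmaShierkBGasharp} by observing that $\prod_{\N}\Z$ is flat in light solid abelian groups and that the Poincar\'e sequence persists for $\Z[[T]]^{\ob{DP}}$. The extra bookkeeping you flag (flatness of the countable product, exactness and comodule-equivariance of the shift sequence after completion) is exactly the content the paper leaves implicit, and your remark that only the flatness/heart part of \Cref{LemmaRepTheoryFlat} is needed is accurate.
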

\begin{proof}
This follows by the  argument of \Cref{LemmaShierkBGasharp}, noticing  that $\prod_{\N} \Z$ is flat in light solid abelian groups by \cite[Corollary 3.4.3]{SolidNotes},  and that the Poincar\'e lemma still holds with $\Z[[T]]^{\ob{DP}}$. 
\end{proof}

\begin{proposition}\label{PropBasicSolidVectCD}
The pairing $\Psi\colon \bb{G}_{a,\sol}\times \widehat{\bb{G}}^{\sharp}_{a,\sol} $ of \Cref{ConstructionPairingGaSol} gives rise to $1$-categorical Cartier dualities in $\ob{K}_{\ob{D},\AnSpec \Z_{\sol}}$
\[
[B\widehat{\bb{G}}_{a,\sol}^{\sharp}]_!\cong [\bb{G}_{a,\sol}]^* \mbox{ and } [\widehat{\bb{G}}_{a,\sol}^{\sharp}]_!\cong [B\bb{G}_{a,\sol}]^*.
\]
\end{proposition}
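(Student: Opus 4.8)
The plan is to mirror the proof of \Cref{PropBasicAlgVectCD}, with the r\^oles of the completed and non-completed objects interchanged, and to apply \Cref{CoroCartierDualityAnStkQAff} with $G=\bb{G}_{a,\sol}$ and $H=\widehat{\bb{G}}_{a,\sol}^{\sharp}$. Write $S=\AnSpec \Z_{\sol}$. The group $G=\bb{G}_{a,\sol}$ is a suave quasi-affine grouplike commutative monoid over $S$ (the affine line being an open localization of $\bb{G}_a$, and suave by \cite[Proposition 7.1.11]{SolidNotes}), while $H=\widehat{\bb{G}}_{a,\sol}^{\sharp}$ is affinoid with its induced analytic ring structure. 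By suaveness the map $S\to B\bb{G}_{a,\sol}$ is a $!$-cover, and by \Cref{LemmaDescendableGasolidSharp} the map $S\to B\widehat{\bb{G}}_{a,\sol}^{\sharp}$ is a $!$-cover as well, so both $BG$ and $BH$ are $!$-able over $S$ and the hypotheses of \Cref{CoroCartierDualityAnStkQAff} are in force.

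The heart of the argument is the computation of the Cartier dual of $[B\bb{G}_{a,\sol}]^{*}$ via \Cref{CoroCartierDualityAnStkQAff} (2)--(3). Let $f\colon \bb{G}_{a,\sol}\to S$ be the structural map. Since $f$ is suave, \Cref{RemarkIdentificationsTheorems} (1) gives $f_!f^!1=f_{\sharp}f^*1=f_{\sharp}1$, and a direct computation identifies $f_{\sharp}1$ with the solid (continuous) dual of $\s{O}(\bb{G}_{a,\sol})=\Z[X]_{\sol}=\bigoplus_{n\in\N}\Z X^{n}$, namely $f_{\sharp}1\cong \prod_{n\in\N}\Z\, X^{n,\vee}$. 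On the other side, $\s{O}(H)=\Z[[Y]]^{\ob{DP}}=\prod_{n\in\N}\Z\,\tfrac{Y^{n}}{n!}$. The global section of the pairing is the exponential $\exp(XY)=\sum_{n}\tfrac{X^{n}Y^{n}}{n!}$, which lies in $\Gamma(\widehat{\bb{G}}_{a,\sol}^{\sharp}\times\bb{G}_{a,\sol})=\Z[X]_{\sol}\otimes_{\Z_{\sol}}\Z[[Y]]^{\ob{DP}}=\prod_{n}\Z[X]\,\tfrac{Y^{n}}{n!}$; here I use the solid flatness identity $\Z[X]_{\sol}\otimes_{\Z_{\sol}}\prod_{I}\Z=\prod_{I}\Z[X]$ of \cite[Corollary 3.4.3]{SolidNotes} to see that $\exp(XY)$ is a well-defined element, with $n$-th component $X^{n}$. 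Under the identification $\iHom_{\Z_{\sol}}(f_{\sharp}1,\s{O}(H))=\Gamma(H\times G)$ of \Cref{RemarkIdentificationsTheorems} (1), this section corresponds to the map $f_!f^!1\to \s{O}(H)$ sending $X^{n,\vee}\mapsto \tfrac{Y^{n}}{n!}$, which is manifestly an isomorphism of products. By \Cref{CoroCartierDualityAnStkQAff} (3) this yields the Cartier duality $[B\bb{G}_{a,\sol}]_!\cong[\widehat{\bb{G}}_{a,\sol}^{\sharp}]^{*}$, whose dual, via \Cref{RemarkCartierDualityCompatibility}, is the asserted equivalence $[\widehat{\bb{G}}_{a,\sol}^{\sharp}]_!\cong[B\bb{G}_{a,\sol}]^{*}$.

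Finally, to obtain the remaining equivalence $[B\widehat{\bb{G}}_{a,\sol}^{\sharp}]_!\cong[\bb{G}_{a,\sol}]^{*}$, I invoke \Cref{CoroCartierDualityAnStkQAff} (4): the $1$-categorical Cartier duality between $H=\widehat{\bb{G}}_{a,\sol}^{\sharp}$ and $BG=B\bb{G}_{a,\sol}$ just established, together with the descendability of $S\to B\widehat{\bb{G}}_{a,\sol}^{\sharp}$ from \Cref{LemmaDescendableGasolidSharp}, upgrades $\Psi$ to a $1$-categorical Cartier duality between $G=\bb{G}_{a,\sol}$ and $BH=B\widehat{\bb{G}}_{a,\sol}^{\sharp}$, which is exactly the claimed isomorphism. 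I expect the main obstacle to be the second step, specifically the verification that the solid homology $f_{\sharp}1$ of the solid affine line is the product $\prod_{n}\Z$ dual to $\bigoplus_{n}\Z$, and that the exponential section is well-defined in the completed solid tensor product; everything else is a faithful transcription of the algebraic case, with direct sums and products systematically exchanged.
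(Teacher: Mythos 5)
Your proposal is correct and follows essentially the same route as the paper: apply \Cref{CoroCartierDualityAnStkQAff} with $G=\bb{G}_{a,\sol}$ suave quasi-affine and $H=\widehat{\bb{G}}_{a,\sol}^{\sharp}$ affinoid descendable, and check that the exponential section $\exp(XY)\in\Z[X][[Y]]^{\ob{DP}}$ induces the isomorphism $f_{\sharp}1\cong\prod_{n}\Z X^{n,\vee}\xrightarrow{\sim}\Z[[Y]]^{\ob{DP}}$ sending $X^{n,\vee}\mapsto Y^{n}/n!$. The paper's proof is just a terser version of the same computation, leaving the invocation of parts (3) and (4) of the corollary implicit.
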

\begin{proof}
Let $g\colon \bb{G}_{a,\sol}\to \AnSpec \Z_{\sol}$ be the structural map. The proposition follows from  \Cref{CoroCartierDualityAnStkQAff} after we have shown that the map $g_{\sharp}1\to \s{O}(\widehat{\bb{G}}_{a,\sol}^{\sharp})= \Z[[T]]^{\ob{DP}}$ induced by $\Psi$ is an isomorphism. But $g_{\sharp} 1$ is the solid dual of $\Z[X]$, namely, $\prod_{\N} \Z X^{n,\vee}$, and the map $\prod_{\N} \Z X^{n,\vee}\to \Z[[T]]^{\ob{DP}}$ is induced by the exponential map $\exp(TX)\in \Z[X][[T]]^{\ob{DP}}=\Gamma(\bb{G}_{a,\sol}\times \widehat{\bb{G}}_{a,\sol}^{\sharp})$ and therefore an isomorphism. 
\end{proof}

\begin{remark}\label{RemarkCartierDualityOtherVectorBundles}
The reader might wonder why  there is  an asymmetry in the statements of \Cref{PropBasicAlgVectCD,PropBasicSolidVectCD}.  Indeed, in principle one might wonder what  the Cartier duals of the stacks $\bb{G}_{a,\bb{Z}}=\Spec \bb{Z}[T]$ and $\widehat{\bb{G}}_{a,\sol}=\AnSpec (\bb{Z}[[T]],\bb{Z})_{\sol}$ are.  A technical problem that prevent us to use  \Cref{CoroCartierDualityAnStkQAff} is that the maps from the point to the classifying stacks of $\bb{G}_a$ is not descendable. Indeed, this fails in characteristic $p$ as $\bb{G}_a$-group cohomology fails to have finite cohomological dimension. Following conversations with Scholze and Stefanich, the description of the Cartier dual of $\bb{G}_{a,\bb{Z}}$   in Gestalten is more subtle: one can use \cite[Proposition 12.4]{GestaltenScholze} to formally deduce that the Cartier dual $\bb{G}_a^*$ of $\bb{G}_a$ is $(-1)$-connective as a Gestalt,  \Cref{PropBasicAlgVectCD} tells us that when base changed to $\bb{Q}$ this spectrum object in Gestalten is actually $0$-connective and equal to $\widehat{\bb{G}}_{a,\bb{Q}}$. However, in characteristic $p$, the lack of $!$-ability of  $B\bb{G}_{a,\bb{F}_p}$ should be reflected in the fact that $\bb{G}_a^{*}$ \textbf{is not} $0$-connective, and therefore it does not arise from a classical object.  A similar problem should occur with $\widehat{\bb{G}}_{a,\sol}$. 
\end{remark}

\subsubsection{Disc bundles}\label{ExDiscVectorBundle}

Next, we prove Cartier duality for unit discs, this requires to work over a base ring $R$ with a pseudo-uniformizer $\pi$. We will choose as our base the ring $R=\Z((\pi))$ with the induced solid analytic ring structure.  We will see classical sheafy Tate-Huber adic spaces as analytic stacks as in  \cite{Andreychev}, namely,  \cite[Theorem 4.1]{Andreychev} and \cite[Proposition 2.3.2]{camargo2024analytic} imply that rational covers of the adic spectrum of sheafy analytic Huber rings give rise to open covers of analytic stacks, and we can glue affinoid Huber analytic adic spaces into analytic stacks via the topology of the underlying adic space.

 The ring $R$ is a Tate Banach ring, and the base change $\bb{G}_{a,\sol,R}:=\bb{G}_{a,\sol}\times_{\AnSpec (\Z_{\sol})} \AnSpec (R)$ is affinoid and corepresented by the solid $R$-algebra given by the Huber pair
\[
R\langle T \rangle_{\sol}:=(\Z[\pi,T]^{\wedge}_{\pi} [\frac{1}{\pi}], \Z[\pi,T]^{\wedge}_{\pi} )_{\sol},
\]
where for a ring $A$, an element $f\in A$, and an $A$-module $M$, we write $M^{\wedge}_f$ for the (derived) $f$-adic completion. In particular, this base change is nothing but the closed affinoid unit disc over $R$ seen as an adic space, that we also denote by $\bb{D}_{R}$. 

\begin{definition}\label{DefinitionDiscDPversions}
The analytic affine line $\bb{G}_{a,R}^{\an}$ over $R$ is the analytic stack given as the union of open discs
\[
\bb{G}_{R}^{\an}=\bigcup_{n\in \N} \AnSpec  R\langle \pi^nT \rangle_{\sol}.
\]
Equivalently, it is the analytic ring stack over $\AnSpec R$ given by 
\[
\bb{G}_{R}^{\an}= \bb{G}_{a,\sol,R}[\frac{1}{\pi}]. 
\]
\end{definition}

The analytic stack $\bb{G}_R^{\an}$ admits a  \textit{norm map} $|-|_{\pi}\colon \bb{G}_{R}^{\an}\to [0,\infty)$ defined by the collection of idempotent algebras of  \cite[Definition 2.2.7]{anschutz2025analytic} (the reference only deals with the case of $\Q_p$ but the exact same line of arguments work over the base $R$). We normalize the  norm map  such that $|\pi|=1/2$.

Let $r\in (0,\infty)$, the \textit{closed overconvergent  disc of radius $r$} denoted by $\bb{D}^{\leq r}_{R}$ is defined as the preimage of $[0,r]$. Similarly, the \textit{open  disc of radius $r$} denoted by $\bb{D}^{<r}_{R}$ is defined as the preimage of $[0,r)$.  By \cite[Lemma 2.2.11]{anschutz2025analytic}, the closed overconvergent unit disc $\bb{D}^{\leq 1}_{R}$ has a natural structure of a ring stack, and the discs $\bb{D}^{\leq r}_{R}$ and $\bb{D}^{<r}_{R}$ have natural structures of $\bb{D}^{\leq 1}_R$-modules.  In order to have a more concrete description of the algebras defining these spaces we need to introduce the following sequential spaces:

\begin{definition}\label{DefBanachSpacesSequences}
Consider the Banach norm $|-|_{\pi}$ on $R=\Z((\pi))$ so that $R^{\leq 1}=\Z[[\pi]]$, $\pi$ is multiplicative for the norm, and $|\pi|=1/2$. Let $r\in (0,\infty)$ be a positive real number, we define the $R$-Banach space of sequences of $r$-exponential decay  $\ell_{R}^{\N}(r)$ to be the subspace of $\prod_{\N} R$ of sequences $(a_{n})_{n\in N}$ such that $|a_{n}|_{\pi}r^n\to 0$ as $n\to \infty$. We endow $\ell_{R}^{\N}(r)$ with the Banach norm given by $|(a_n)|=\sup_{n\in N} (|a_n|_{\pi} r^n)$. 
\end{definition}

\begin{remark}\label{RemarkBanachSolid}
\begin{enumerate}
\item A Banach $R$-module $M$ is solid, namely, $M$ admits a $\pi$-complete $\Z[[\pi]]$-submodule $M^0$ such that $M=M^{0}[\frac{1}{\pi}]$, and $M^0=\varprojlim_k M^0/\pi^k$ with $M^0/\pi^k$ a discrete $\Z[[\pi]]$-module. The claim follows from the fact that discrete modules are solid, and that solid modules are stable under limits and colimits. 

\item By \cite[Proposition 2.12.10]{MannSix} the solid tensor product of connective $\pi$-complete $\Z[[\pi]]$-modules is $\pi$-complete. It follows that the solid tensor product over $R$ of $\ell^{\N}_R(r)$ and $\ell^{\N}_R(r')$ is the classical $\pi$-complete tensor product $\ell^{\N\times \N}_R(r,r')$ of sequences $(a_{n,m})_{n\in n,m}$ such that $|a_{n,m}|_{\pi} r^{n}r^{'m}\to 0$ as $(n,m)\to \infty$. 

\end{enumerate}

\end{remark}

\begin{lemma}\label{LemmaFlat}
The Banach space $\ell^{\N}_{R}(r)$ is a $R$-flat light solid module. 
\end{lemma}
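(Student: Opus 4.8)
The plan is to reduce $R$-flatness of $\ell^{\N}_{R}(r)$ to the flatness of an explicit $\pi$-adically complete integral model over $\Z[[\pi]]$, and then to invert $\pi$. First I would use \Cref{RemarkBanachSolid}~(1): since $\ell^{\N}_{R}(r)$ is an $R$-Banach module it is light solid, and its unit ball $M^{0}:=\{(a_{n}):|a_{n}|_{\pi}r^{n}\leq 1\ \forall n\}$ is a $\pi$-complete $\Z[[\pi]]=R^{\leq 1}$-submodule with $\ell^{\N}_{R}(r)=M^{0}[\tfrac{1}{\pi}]$. Because inverting $\pi$ preserves flatness and $R=\Z[[\pi]][\tfrac{1}{\pi}]$, it suffices to prove that $M^{0}$ is flat as a solid $\Z[[\pi]]$-module; then $\ell^{\N}_{R}(r)=M^{0}\otimes_{\Z[[\pi]]}^{\sol}R$ is $R$-flat.

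Next I would identify $M^{0}$ explicitly. Write $\rho=\log_{2}r$ and let $v$ denote the $\pi$-adic valuation on $R$, so that $|a_{n}|_{\pi}r^{n}=2^{\,n\rho-v(a_{n})}$. The unit ball is then the set of sequences with $v(a_{n})\geq n\rho$ and $v(a_{n})-n\rho\to\infty$, and the coordinatewise rescaling $a_{n}\mapsto b_{n}:=\pi^{-\lceil n\rho\rceil}a_{n}$ is an isomorphism of solid $\Z[[\pi]]$-modules from $M^{0}$ onto $\{(b_{n}):b_{n}\in\Z[[\pi]],\ b_{n}\to 0\text{ $\pi$-adically}\}$. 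The latter is precisely the $\pi$-adic completion $\big(\bigoplus_{\N}\Z[[\pi]]\big)^{\wedge}_{\pi}$ of a free $\Z[[\pi]]$-module; in particular $M^{0}=\varprojlim_{k}\bigoplus_{\N}\Z[[\pi]]/\pi^{k}$, and each $M^{0}/\pi^{k}=\bigoplus_{\N}\Z[[\pi]]/\pi^{k}$ is free, hence flat, over $\Z[[\pi]]/\pi^{k}$.

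Finally I would deduce flatness of $M^{0}$. The module $M^{0}$ is $\pi$-torsion free (being a submodule of $\prod_{\N}\Z[[\pi]]$) and $\pi$-complete, and $M^{0}/\pi=\bigoplus_{\N}\Z$ is $\Z=\Z[[\pi]]/\pi$-flat; together with the $\Z$-flatness of $\Z[[\pi]]=\prod_{n}\Z\pi^{n}$ from \cite[Corollary 3.4.3]{SolidNotes} and the stability of $\pi$-completeness under solid tensor products from \cite[Proposition 2.12.10]{MannSix}, a derived Nakayama argument in the $\pi$-complete solid setting shows that $M^{0}\otimes^{L,\sol}_{\Z[[\pi]]}N$ is concentrated in degree $0$ for every solid $N$, i.e. $M^{0}$ is solidly flat over $\Z[[\pi]]$. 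Inverting $\pi$ then yields the lemma. The main obstacle is exactly this last step: upgrading the transparent mod-$\pi^{k}$ (i.e. $\pi$-complete) flatness of $M^{0}$ to honest solid flatness against arbitrary $\Z[[\pi]]$-modules, where the $\pi$-completeness of $M^{0}$ and the solid Nakayama lemma are essential.
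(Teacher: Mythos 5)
Your reduction to the integral model and the identification of the unit ball of $\ell^{\N}_R(r)$ with the $\pi$-adic completion $(\bigoplus_{\N}\Z[[\pi]])^{\wedge}_{\pi}$ are fine (though note that the $M^0$ you first write down, with only the boundedness condition $|a_n|_\pi r^n\le 1$, has $M^0[\frac{1}{\pi}]$ equal to the bounded-sequence space rather than $\ell^{\N}_R(r)$; you need the unit ball \emph{inside} $\ell^{\N}_R(r)$, i.e.\ with the decay condition, which is what you use afterwards). The genuine gap is the last step. Derived Nakayama only detects vanishing of derived $\pi$-complete complexes, and $M^0\otimes^{L,\sol}_{\Z[[\pi]]}N$ is not derived $\pi$-complete for a general solid $N$: for any $N$ on which $\pi$ acts invertibly it is a module over $R$, and deciding whether it sits in degree $0$ is exactly the flatness of $M^0[\frac{1}{\pi}]=\ell^{\N}_R(r)$ over $R$ that you set out to prove, so the argument is circular there. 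What your completeness argument actually delivers is ``$\pi$-complete flatness'' ($M^0\otimes^{L}\Z[[\pi]]/\pi$ discrete and flat over $\Z$), which is strictly weaker than solid flatness against arbitrary $N$; you acknowledge this obstacle yourself but do not overcome it.

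The paper closes this gap by avoiding completeness altogether: it writes $\ell^{\N}_R(r)=\varinjlim_{f\in\s{S}}\prod_{\N}\Z[[\pi]]\pi^{f(n)}$, where $\s{S}$ is the poset of functions $f\colon\N\to\Z$ with $f(n)+n\log_{1/2}(r)\to\infty$. Each term is isomorphic to $\prod_{\N}\Z\otimes_{\Z_{\sol}}\Z[[\pi]]$, hence flat over $\Z[[\pi]]$ by \cite[Corollary 3.4.3]{SolidNotes}, and a filtered colimit of flat modules is flat. The same presentation applies verbatim to your $(\bigoplus_{\N}\Z[[\pi]])^{\wedge}_{\pi}$ (a null sequence is one bounded below by some $f$ with $f(n)\to\infty$), so if you want to salvage your route, replace the derived Nakayama step by this colimit-of-products presentation; at that point, however, the detour through the unit ball is unnecessary.
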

\begin{proof}
We have a different presentation as solid $R$-module
\[
\ell^{\N}_R(r)=\varinjlim_{f\in \s{S}} \prod_{\N} \Z[[\pi]] \pi^{f(n)}
\]
where $\s{S}$ is the poset of functions $f\colon \N\to \Z$ such that $f(n)+n \log_{1/2}(r)\to \infty$. Since $\prod_{\N} \Z[[\pi]] \pi^{f(n)}\cong \prod_{\N} \Z \otimes_{Z_{\sol}} \Z[[\pi]]$, it is a flat $\Z[[\pi]]$-module by \cite[Corollary 3.4.3]{SolidNotes}, and hence so is $\ell^{\N}_R(r)$ as $R$-module. 
\end{proof}

By construction, the algebra $R\langle T\rangle_{\leq r}=\s{O}(\bb{D}^{\leq r}_{R})$ is the subalgebra of $R[[T]]=\prod_{n\in \N} RT^n$ given by the colimit of Banach spaces
\[
R\langle T\rangle_{\leq r}=\varinjlim_{r'>r} \ell_R^{\N}(r')=\{\sum_{n\in \N} a_n T^n: \;\; |a_n|r^{'n}\to 0 \mbox{  for some } r'>r \}
\] 
with ordered basis $\{T^n\}_{n\in \N}$.

 We can define divided power variants of these algebras

\begin{definition}\label{DefinitionDividedPowerVariant}
Let $r\in (0,\infty)$, we define the subspace $R\langle T \rangle_{\leq r}^{\ob{DP}}$ of $R[[T]]^{\ob{DP}}=\prod_{\N} R\frac{T^n}{n!}$  to be the colimit of Banach spaces
\[
R\langle T \rangle_{\leq r}^{\ob{DP}}= \varinjlim_{r'>r} \ell^{\N}_{R}(r') =\{\sum_{n\in \N} a_n \frac{T^n}{n!}: \;\; |a_n|r^{'n}\to 0 \mbox{  for some } r'>r \}
\]
with ordered basis $(\frac{T^n}{n!})_{n\in \N}$. Given $(r_1,\ldots, r_n)$ we denote 
\[
R\langle T_1,\ldots, T_n \rangle_{\leq (r_1,\ldots, r_n)}^{\ob{DP}}:= R\langle T_1\rangle_{\leq r_1}^{\ob{DP}}\otimes_R \cdots \otimes_R R\langle T_n\rangle_{\leq r_n}^{\ob{DP}}.
\]
If $r_1=r_2=\cdots=r_n$ we simply write $R\langle T_1,\ldots, T_n \rangle_{\leq r}^{\ob{DP}}=R\langle T_1,\ldots, T_n \rangle_{\leq (r,\ldots, r)}^{\ob{DP}}$. 
\end{definition}

We have the following key lemma:

\begin{lemma}\label{LemmaPresentabilityDisc}
The following holds:
\begin{enumerate}

\item $R\langle T \rangle_{\leq r}^{\ob{DP}}$ is a subalgebra of $R[[T]]^{\ob{DP}}$. We define the affinoid analytic stack $\bb{D}^{\sharp,\leq r}_{R}:=\AnSpec R\langle T \rangle_{\leq r}^{\ob{DP}}$.

\item The map $R[[T]]^{\ob{DP}}\to R[[X,Y]]^{\ob{DP}}$ sending $T\mapsto X+Y$ restricts to a map $R\langle T\rangle_{\leq r}^{\ob{DP}} \to R\langle X,Y \rangle_{\leq r}^{\ob{DP}}$ making $R\langle T\rangle_{\leq r}^{\ob{DP}} $ a cocommutative Hopf algebra over $R$, in particular  $\bb{D}^{\sharp,\leq r}_{R}$ has a natural  $\Z$-module structure. 

\item The map $m\colon R[[T]]^{\ob{DP}}\to R[[X]][[T]]^{\ob{DP}}$ sending $T\mapsto TX$ restricts to a map of $R$-subalgebras 
\[
m\colon R\langle T \rangle_{\leq rs}^{\ob{DP}} \to R\langle T \rangle_{\leq r}^{\ob{DP}}  \otimes_R  R\langle  X\rangle_{\leq s}.
\]
In particular, taking $s=1$ the analytic stack  $\bb{D}^{\sharp, \leq r}_{R}$ admits a multiplication map $m\colon \bb{D}_R^{\leq 1}\times \bb{D}_{R}^{\sharp,\leq r}\to \bb{D}_R^{\sharp,\leq r}$ which endows $\bb{D}_R^{\sharp,\leq r}$ with a $\bb{D}_R^{\leq 1}$-module structure.

\item The short exact sequence given by the Poincar\'e lemma
\[
0\to R\to R[[T]]^{\ob{DP}}\xrightarrow{\partial_T} R[[T]]^{\ob{DP}}\to 0
\]
restricts to a short exact sequence
\begin{equation}\label{eqlomaosmoabnaawe}
0\to R \to R\langle T \rangle_{\leq r}^{\ob{DP}}\xrightarrow{\partial_T}  R\langle T \rangle_{\leq r}^{\ob{DP}}\to 0.
\end{equation}
Furthermore, this short exact sequence is as $R\langle T \rangle_{\leq r}^{\ob{DP}}$-comodules. 

\end{enumerate}

\end{lemma}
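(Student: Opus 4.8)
The plan is to treat all four parts uniformly through the colimit-of-Banach-spaces presentation $R\langle T\rangle_{\leq r}^{\ob{DP}}=\varinjlim_{r'>r}\ell_R^{\N}(r')$ of \Cref{DefinitionDividedPowerVariant}, and to check in each case that a structure map already defined on the ambient divided-power series ring $R[[T]]^{\ob{DP}}$ carries coefficient sequences of $r'$-exponential decay into sequences of (possibly slightly enlarged) exponential decay. Everything then reduces to elementary non-archimedean estimates, the one arithmetic input being that every integer has $\pi$-adic norm $\leq 1$ in $R=\Z((\pi))$, whence $|\binom{m+n}{m}|_\pi\leq 1$. For \textbf{(1)}, the product in $R[[T]]^{\ob{DP}}$ is the twisted convolution $(\sum a_n\tfrac{T^n}{n!})(\sum b_m\tfrac{T^m}{m!})=\sum_k c_k\tfrac{T^k}{k!}$ with $c_k=\sum_{n+m=k}\binom{k}{n}a_nb_m$, so the bound on binomials gives
\[
|c_k|_\pi r^{'k}\le \max_{n+m=k}\big(|a_n|_\pi r^{'n}\big)\big(|b_m|_\pi r^{'m}\big)\le |a|_{r'}\,|b|_{r'}.
\]
Thus each $\ell_R^{\N}(r')$ is a Banach $R$-subalgebra of $R[[T]]^{\ob{DP}}$, and hence so is the filtered colimit $R\langle T\rangle_{\leq r}^{\ob{DP}}$; this defines the affinoid $\bb{D}^{\sharp,\leq r}_{R}$.

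For \textbf{(2)}, I would note that the comultiplication $\tfrac{T^n}{n!}\mapsto\tfrac{(X+Y)^n}{n!}=\sum_{i+j=n}\tfrac{X^i}{i!}\tfrac{Y^j}{j!}$ sends $\sum_n a_n\tfrac{T^n}{n!}$ to the series whose coefficient in front of $\tfrac{X^i}{i!}\tfrac{Y^j}{j!}$ is $a_{i+j}$; since this coefficient depends only on $i+j$ and $r^{'i}r^{'j}=r^{'(i+j)}$, the two-variable decay condition of \Cref{RemarkBanachSolid}(2) is exactly the one-variable decay of $(a_n)$, so the map lands in $R\langle X,Y\rangle_{\leq r}^{\ob{DP}}$. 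The counit $\sum a_n\tfrac{T^n}{n!}\mapsto a_0$ and the antipode $T\mapsto -T$ (norm-preserving) restrict trivially, and the bialgebra, Hopf and cocommutativity axioms already hold in $R[[T]]^{\ob{DP}}$ and descend along the injective inclusion, giving the $\Z$-module structure on $\bb{D}^{\sharp,\leq r}_{R}$. For \textbf{(3)}, the map $T\mapsto TX$ is diagonal, $\tfrac{T^n}{n!}\mapsto X^n\tfrac{T^n}{n!}$, so it sends $\sum a_n\tfrac{T^n}{n!}$ to $\sum a_n\,\tfrac{T^n}{n!}\otimes X^n$. Given $\rho>rs$ with $|a_n|_\pi\rho^{n}\to 0$, I would factor $\rho=r's'$ with $r'>r$, $s'>s$ (take $r'=r\mu^{1/2}$, $s'=s\mu^{1/2}$ for $\mu=\rho/(rs)$); then $|a_n|_\pi r^{'n}s^{'n}=|a_n|_\pi\rho^{n}\to 0$, so under the identification of \Cref{RemarkBanachSolid}(2) the image lies in $R\langle T\rangle_{\leq r}^{\ob{DP}}\otimes_R R\langle X\rangle_{\leq s}$ (the disc variable $X$ carrying no divided powers). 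Specializing $s=1$ and checking the module axioms in the ambient ring yields the $\bb{D}_R^{\leq 1}$-module structure.

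Finally, for \textbf{(4)}, on coefficients $\partial_T$ is the shift $(a_n)_n\mapsto(a_{n+1})_n$, and $|a_{n+1}|_\pi r^{'n}=(r')^{-1}|a_{n+1}|_\pi r^{'(n+1)}\to 0$, so $\partial_T$ preserves each $\ell_R^{\N}(r')$; its kernel is the constants $R$, and the upward shift $s((b_m)_m)=(0,b_0,b_1,\dots)$ is a bounded $R$-linear section ($|s(b)|_{r'}=r'|b|_{r'}$) with $\partial_T\circ s=\id$, so the restricted sequence is strict exact (indeed split) in solid $R$-modules. For the comodule assertion I would use that $R\langle T\rangle_{\leq r}^{\ob{DP}}$ is $R$-flat, being a filtered colimit of the flat $\ell_R^{\N}(r')$ of \Cref{LemmaFlat}, so by \Cref{LemmaRepTheoryFlat} exactness of comodules is tested on underlying modules; it then suffices to see that $\partial_T$ and the unit $R\hookrightarrow R\langle T\rangle_{\leq r}^{\ob{DP}}$ are morphisms for the regular coaction $\Delta$. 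The unit is clear since $\Delta(1)=1\otimes 1$, while for $\partial_T$ the point is the chain rule: writing $\Delta f=f(X+Y)$,
\[
\Delta(\partial_T f)=(\partial_T f)(X+Y)=\partial_X\big(f(X+Y)\big)=(\partial_T\otimes\id)\Delta(f),
\]
which is exactly the compatibility $\Delta\circ\partial_T=(\partial_T\otimes\id)\circ\Delta$. Parts (1)--(3) are pure norm bookkeeping; the step I expect to require the most care is this last sentence of \textbf{(4)}, namely fixing the comodule conventions so that the derivation acts on the correct tensor factor and the chain rule produces the comodule compatibility, together with the (mild) verification via \Cref{RemarkBanachSolid}(2) that the completed tensor products in (2) and (3) are the intended ones.
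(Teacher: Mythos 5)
Your proof is correct and follows essentially the same route as the paper: direct power-series manipulation combined with the bound $|\binom{n}{k}|_\pi\leq 1$ and the ultrametric inequality for (1)--(3), the shift description of $\partial_T$ for the exactness in (4), and the chain rule $\Delta(\partial_T f)=(\partial_T\otimes\id)\Delta f$ for the comodule compatibility. You merely spell out a few of the "routine verifications" the paper leaves implicit (the factorization $\rho=r's'$ in (3), the bounded section in (4)), which is fine.
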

\begin{proof}
\begin{enumerate}

\item It suffices to show that $R\langle  T \rangle^{\ob{DP}}_{\leq r}$ is stable under multiplication, this follows from a straightforward computation of power series and the ultrametric inequality: let $f(T)=\sum_{n} a_n\frac{T^n}{n!}$ and $g(T)=\sum_{n} b_n\frac{T^n}{n!}$ be elements in $R\langle T \rangle^{\ob{DP}}_{\leq r}$, then 
\[
f(T)g(T)=\sum_{n\in \N} \big( \sum_{k=0}^n  \binom{n}{k} a_{k} b_{n-k} \big) \frac{T^n}{n!}=\sum_{n\in \N} c_n \frac{T^n}{n!} 
\]
and $|c_n|_{\pi}=| \sum_{k=0}^n  \binom{n}{k} a_{k} b_{n-k} \big) |\leq  \sup_{k=0,\ldots, n}(|a_k|_{\pi} |b_{n-k}|_{\pi})$. Hence, if $r'>r$ is such that $|a_n|_{\pi} r^{'n}\to 0$ and $|b_n|_{\pi} r^{'n}\to 0$  as $n\to \infty$, the same holds for $c_n$. 

\item The factorization of $R\langle T \rangle_{\leq r}^{\ob{DP}}\to R\langle X,Y \rangle_{\leq r}^{\ob{DP}}$  under the map $T\mapsto X+Y$  follows from a straightforward computation of power series: let $f(T)=\sum_{n} a_n \frac{T^n}{n!}$, then 
\[
f(X+T)=\sum_{n} a_n\frac{(X+Y)^n}{n!}=\sum_{n} \sum_{k=0}^n a_n \frac{X^k}{k!} \frac{Y^{n-k}}{k!}=\sum_{k,l\in \N} a_{k+l} \frac{X^k}{k!}\frac{Y^l}{l!}. 
\]
Thus, if there is $r'>r$ with $|a_n|r^{',n}\to 0$ as $n\to \infty$, then $|a_{k+l}| r^{' k+l}\to 0$ as $k+l\to \infty$. 

The Hopf algebra structure on $R\langle T \rangle^{\ob{DP}}_{\leq r}$ is then immediate from standard computations, namely, in terms of power series this amounts to proving that $f(X+(Y+Z))= f((X+Y)+Z)$ for coassociativity, that $f(X+Y)=f(Y+X)$ for cocommutativity, that $f(X+0)= f(X)$ for the counit, and $f(X-X)=f(0)$ for the antipode map.

\item The factorization of $R\langle T \rangle_{\leq rs}^{\ob{DP}}\to R\langle T \rangle_{\leq r}^{\ob{DP}}\otimes_{R} R\langle  X \rangle_{\leq s}$ under the map $T\mapsto TX$ follows from the following trivial identity for a power series ring $f(T)=\sum_{n} a_n \frac{T^n}{n!}$
\[
f(XT)=\sum_{n\in \bb{N}} a_n \frac{(XT)^n}{n!}=\sum_{n} a_n \frac{T^n}{n!} X^n
\]
so that if $|a_n|(r's')^n=|a_n|{r'}^n {s'}^n\to 0$ as $n\to \infty$ for some $r'>r$ and $s'>s$, then $\sum_{n\in N} a_n \frac{(XT)^n}{m!}\in  R\langle T \rangle_{\leq r}^{\ob{DP}}  \otimes_R  R\langle  X\rangle_{\leq s}$. In particular, if $s=1$, the routine verifications involving the maps $T\mapsto X+Y$ and $T\mapsto TX$ imply that $\bb{D}_{R}^{\sharp,\leq r}$ is endowed with a $\bb{D}^{\leq 1}_R$-module structure.  Indeed, in terms of power series this amounts to proving that $f(X(Y_1+Y_2))= f(XY_1+XY_2)$ for the linearity of the coaction, and $f(X_1(X_2Y))= f((X_1X_2) Y)$ for the associativity of the action.

\item By definition, the map $\partial_T$ sends $\frac{T^n}{n!}$ to $\frac{T^{n-1}}{(n-1)!}$. Thus, at the level of sequential spaces, it acts as a shift functor killing the $0$-th entry. The exactness of \eqref{eqlomaosmoabnaawe} follows from the definition.  The fact that  \eqref{eqlomaosmoabnaawe} is a short exact sequence of $R\langle T \rangle_{\leq r}^{\ob{DP}}$-comodules is a standard power series verification, more precisely,  it is equivalent to the identity 
\[
(\partial_{T}f)(X+Y)=\partial_Y (f(X+Y)).
\]
\end{enumerate}
\end{proof}

The analytic stack $\bb{G}_{R}^{\an}$   is an open substack of the algebraic affine line $\bb{G}_{a,R}=\AnSpec( R[T]_{\Z_{\sol}/})$, hence it is quasi-affinoid. It is also suave over $R$  being a  union of affinoid open  discs $\pi^{-n}\bb{G}_{a,\sol, R}$ along open immersions. It follows that the open discs $\bb{D}^{< r}_{R}$ are quasi-affinoid and suave over $R$. It is left to establish the descendability of $B\bb{D}^{\ob{DP},\leq r}_{R}$. 

\begin{lemma}\label{LemmaModuleStructureDisc}
The map $e\colon \AnSpec R\to B(\bb{D}^{\ob{DP},\leq r}_{R})$ is descendable, in particular $B\bb{D}^{\ob{DP},\leq r}_{R}$ is $!$-able over $R$.
\end{lemma}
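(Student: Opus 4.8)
The plan is to run verbatim the argument already used for \Cref{LemmaShierkBGasharp} and \Cref{LemmaDescendableGasolidSharp}, now over the base $R=\Z((\pi))_{\sol}$ and for the overconvergent divided-power disc. First I would record that the coordinate algebra $R\langle T\rangle_{\leq r}^{\ob{DP}}=\s{O}(\bb{D}^{\ob{DP},\leq r}_R)$ is flat over $R$: it is the filtered colimit $\varinjlim_{r'>r}\ell^{\N}_R(r')$ of the Banach spaces of \Cref{DefBanachSpacesSequences}, each of which is $R$-flat by \Cref{LemmaFlat}, and filtered colimits of flat modules are flat. I would also note that the augmentation (counit) map $R\langle T\rangle_{\leq r}^{\ob{DP}}\to R$ evaluating $T$ at $0$ is a split surjection of $R$-modules, the splitting being the unit; in particular it has finite tor amplitude. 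These are exactly the hypotheses of \Cref{LemmaRepTheoryFlat}.

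Applying \Cref{LemmaRepTheoryFlat} to the group stack $\bb{D}^{\ob{DP},\leq r}_R$ over $\AnSpec R$, I obtain a natural $t$-structure on $\ob{D}(B\bb{D}^{\ob{DP},\leq r}_R)$ whose heart is the abelian category of $R\langle T\rangle_{\leq r}^{\ob{DP}}$-comodules, with $e^*$ being $t$-exact. Under this identification the pushforward $e_*1$ lands in the heart and is the regular comodule $R\langle T\rangle_{\leq r}^{\ob{DP}}$ (with its own comultiplication), exactly as in \Cref{LemmaShierkBGasharp}.

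The decisive input is then \Cref{LemmaPresentabilityDisc}(4): the Poincar\'e-lemma short exact sequence
\[
0\to R\to R\langle T\rangle_{\leq r}^{\ob{DP}}\xrightarrow{\partial_T}R\langle T\rangle_{\leq r}^{\ob{DP}}\to 0
\]
is a short exact sequence of $R\langle T\rangle_{\leq r}^{\ob{DP}}$-comodules. It exhibits the unit object $R$ (the trivial comodule) as a two-term complex built from two copies of $e_*1$, hence places the unit of $\ob{D}(B\bb{D}^{\ob{DP},\leq r}_R)$ in the thick tensor ideal generated by $e_*1$. By the characterization of descendability (\cite{MathewDescent}) this is precisely descendability of $e$, and the $!$-ability of $B\bb{D}^{\ob{DP},\leq r}_R$ over $R$ follows. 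I do not expect any serious obstacle here: the only things to check beyond the cited lemmas are the flatness and tor-amplitude hypotheses feeding \Cref{LemmaRepTheoryFlat}, and these reduce to \Cref{LemmaFlat} together with the split augmentation noted above; the genuinely nontrivial content — that the Poincar\'e sequence survives the overconvergent truncation and is comodular — is already supplied by \Cref{LemmaPresentabilityDisc}(4).
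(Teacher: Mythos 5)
Your argument is exactly the paper's proof: flatness of $\s{O}(\bb{D}^{\ob{DP},\leq r}_R)$ via \Cref{LemmaFlat}, the $t$-structure and identification of $e_*1$ as the regular comodule via \Cref{LemmaRepTheoryFlat}, and descendability from the comodular Poincar\'e sequence of \Cref{LemmaPresentabilityDisc}(4). You additionally verify the finite-tor-amplitude hypothesis on the counit (via the unit splitting), which the paper leaves implicit; this is a welcome but minor addition.
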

\begin{proof}
By \Cref{LemmaFlat} we know that $\s{O}(\bb{D}^{\ob{DP},\leq r}_{R})$ is a flat solid $R$-module. Hence, by \Cref{LemmaRepTheoryFlat} the category $B(\bb{D}^{\ob{DP},\leq r}_{R})$ has a $t$-structure whose heart are $\s{O}(R)$-comodules on solid $R$-modules.  The object $e_* 1$ corresponds to the comodule  $\s{O}(R)$ induced by comultiplication. The short exact sequence of \Cref{LemmaPresentabilityDisc} (4) implies that $e_* 1$ is descendable in $\ob{D}(B(\bb{D}^{\ob{DP},\leq r}_{R}))$ proving what we wanted. 
\end{proof}

\begin{construction}\label{ConstructionPairingDiscCase}
Let $r\in (0,\infty)$ and $r'>r$, we define the exponential map 
\begin{equation}\label{eqpmapsnoawnoawafdaf}
\exp(YX)\colon \bb{D}^{\sharp, \leq r}_R\times \bb{D}^{\leq 1/{r'}}_R\to \bb{G}_m
\end{equation}
to be the map induced by the power series 
\[
\exp(YX)=\sum_{n\in \N} \frac{Y^n}{n!} X^n
\]
where $Y$ and $X$ are the coordinates of $\bb{D}^{\sharp, \leq r}_R $ and $ \bb{D}^{\leq 1/{r'}}_R$    respectively. Note that by the condition $r'>r$ one has that $r(1/r')< r/r=1$ and so the exponential $\exp(YX)$ is a well defined function. Furthermore, the  additive property of the exponential makes the pairing \eqref{eqpmapsnoawnoawafdaf} $\Z$-bilinear.  Taking colimits on $r'$ as $r'>r$ we obtain a $\Z$-bilinear pairing 
\[
\Psi\colon \bb{D}^{\sharp, \leq r}_R\times \bb{D}^{< 1/{r}}_R\to \bb{G}_m
\]
between $\bb{D}^{\sharp, \leq r}_R$ and the open disc $ \bb{D}^{< 1/{r}}_R$.  Following \Cref{RemarkStackLinear}, the pairing $\Psi$ is compatible with the $\bb{D}^{\leq 1}_{R}$-action in both terms, and   it factors as the composite 
\[
\Psi\colon \bb{D}^{\sharp, \leq r}_R\times \bb{D}^{< 1/{r}}_R\to     \bb{D}^{\sharp, \leq r}_R\otimes_{\bb{D}^{\leq 1}_{R}} \bb{D}^{< 1/{r}}_R \to \bb{G}_m
\]
where the map $\bb{D}^{\sharp, \leq r}_R\otimes_{\bb{D}^{\leq 1}_{R}} \bb{D}^{< 1/{r}}_R\to \bb{G}_m$ is a $\Z$-linear map that we can safely denote by $\exp$.  Writing $\bb{D}^{<1/r}_{R}$ as a filtered colimit of the invertible  (for the $!$-topology) $\bb{D}_{R}^{\leq 1}$-modules $\bb{D}^{\leq r'}_{R}$ (with $r'<1/r$), one can show that $\bb{D}^{\sharp, \leq r}_R\otimes_{\bb{D}^{\leq 1}_{R}} \bb{D}^{< 1/r}_R=\bb{D}^{\sharp, \leq 1}_R\otimes_{\bb{D}^{\leq 1}_{R}} \bb{D}^{< 1}_R$ is independent of $r$, though we will not need this fact. 
\end{construction}

 \begin{proposition}\label{PropBasicDiscVectCD}
 Let $r\in (0,\infty)$. The pairing $\exp(YX)\colon\bb{D}^{\sharp, \leq r}_{R}\times  \bb{D}^{<1/r}_{R} \to \bb{G}_m$ of \Cref{ConstructionPairingDiscCase}  gives rise to $1$-categorical Cartier dualities in the category of kernels $\ob{K}_{\ob{D},\AnSpec R}$
 \[
  [B\bb{D}^{\sharp, \leq r}_{R}]_!\cong [\bb{D}^{<1/r}_R]^*   \mbox{ and }  [\bb{D}^{\sharp,\leq r}_R]_!\cong [B\bb{D}^{<1/r}_{R}]^* .
 \]
 \end{proposition}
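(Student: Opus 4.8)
The plan is to apply \Cref{CoroCartierDualityAnStkQAff} exactly as in the proofs of \Cref{PropBasicAlgVectCD} and \Cref{PropBasicSolidVectCD}, since all the structural hypotheses have already been verified. Indeed, the open disc $\bb{D}^{<1/r}_R$ is quasi-affine and suave over $S=\AnSpec R$ (as an increasing union of the closed discs $\bb{D}^{\leq r'}_R$ along open immersions), and $\bb{D}^{\sharp,\leq r}_R$ is affinoid with descendable quotient map $e\colon \AnSpec R\to B\bb{D}^{\sharp,\leq r}_R$ by \Cref{LemmaModuleStructureDisc}, so $B\bb{D}^{\sharp,\leq r}_R$ is $!$-able. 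The $\Z$-bilinear pairing $\Psi$ of \Cref{ConstructionPairingDiscCase} is exactly the input needed for \Cref{CoroCartierDualityAnStkQAff} (3)--(4) (composing with $\bb{G}_m\to {\bf{GL}}_{1,\n{C}}$ as in \Cref{RemarkAnimatedGroupStructure}). Thus it remains only to check the single concrete computation: that the induced map of global sections is an isomorphism.

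Concretely, writing $g\colon \bb{D}^{<1/r}_R\to S$ for the structural map, the suaveness of $g$ identifies $g_! g^! 1 = g_\sharp 1$ with the (solid) $R$-linear dual of $\s{O}(\bb{D}^{<1/r}_R)$. Since $\bb{D}^{<1/r}_R = \bigcup_{r'<1/r} \bb{D}^{\leq r'}_R$, its ring of functions is the inverse limit $\varprojlim_{r'<1/r} R\langle X\rangle_{\leq r'}$, and dualizing the Banach presentation $R\langle X\rangle_{\leq r'} = \varinjlim_{r''>r'}\ell^{\N}_R(r'')$ of \Cref{DefBanachSpacesSequences} yields $g_\sharp 1 \cong \varinjlim_{r'<1/r}\prod_{n\in\N} R\, X^{n,\vee}$ with the appropriate growth conditions, which is precisely the space underlying $R\langle T\rangle^{\ob{DP}}_{\leq r} = \s{O}(\bb{D}^{\sharp,\leq r}_R)$ under the identification $X^{n,\vee}\mapsto \tfrac{T^n}{n!}$. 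By \Cref{CoroCartierDualityAnStkQAff} (3) the map $g_\sharp 1 \to \s{O}(\bb{D}^{\sharp,\leq r}_R)$ is the one associated to the global section given by the exponential kernel $\exp(YX)=\sum_n \tfrac{Y^n}{n!}X^n \in \Gamma(\bb{D}^{\sharp,\leq r}_R\times \bb{D}^{<1/r}_R)$, and reading off coefficients shows it sends $X^{n,\vee}\mapsto \tfrac{T^n}{n!}$, hence is an isomorphism.

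The only genuinely new point compared to the earlier propositions is bookkeeping the radii: one must verify that the exponential kernel indeed lands in $\Gamma(\bb{D}^{\sharp,\leq r}_R\times \bb{D}^{<1/r}_R)$ and that, after dualizing, the resulting pairing matches the nuclear/solid duality between the open disc and the overconvergent divided-power disc of reciprocal radius. This is controlled by the computation already recorded in \Cref{ConstructionPairingDiscCase} (the inequality $r(1/r')<1$ for $r'>r$) together with the flatness of $\ell^{\N}_R(r)$ from \Cref{LemmaFlat}, which guarantees that the relevant solid tensor products and duals behave as expected (so that $g_\sharp 1$ really is the termwise dual with no derived correction). I expect this radii-matching — confirming that the continuous dual of the open disc of radius $1/r$ is the overconvergent DP-disc $R\langle T\rangle^{\ob{DP}}_{\leq r}$, with the exponential implementing the pairing — to be the main (though still routine) obstacle; once it is in place the statement follows formally from \Cref{CoroCartierDualityAnStkQAff} as above.

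Finally, as in \Cref{PropBasicSolidVectCD}, the two displayed dualities are the two halves of the conclusion of \Cref{CoroCartierDualityAnStkQAff}: part (3) gives $[B\bb{D}^{\sharp,\leq r}_R]_! \cong [\bb{D}^{<1/r}_R]^*$ directly, and part (4) (using the descendability of $e\colon \AnSpec R\to B\bb{D}^{\sharp,\leq r}_R$ from \Cref{LemmaModuleStructureDisc}, which makes $S\to B\bb{D}^{\sharp,\leq r}_R$ a $!$-cover) upgrades this to the dual statement $[\bb{D}^{\sharp,\leq r}_R]_!\cong [B\bb{D}^{<1/r}_R]^*$, completing the proof.
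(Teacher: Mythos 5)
Your proposal is correct and follows essentially the same route as the paper's own proof: reduce via \Cref{CoroCartierDualityAnStkQAff} to the single computation that the exponential kernel identifies $g_{\natural}1$ (the continuous $R$-dual of the Fr\'echet algebra $\s{O}(\bb{D}^{<1/r}_R)$, computed as a colimit of duals of the Banach spaces $\ell^{\N}_R$) with $R\langle T\rangle^{\ob{DP}}_{\leq r}$ by sending $X^{n,\vee}\mapsto \tfrac{T^n}{n!}$. The extra care you take with the radii bookkeeping and with spelling out which parts of \Cref{CoroCartierDualityAnStkQAff} yield each of the two displayed dualities is consistent with, and slightly more explicit than, what the paper records.
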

\begin{proof}
Let $g\colon \bb{D}^{<1/r}_{R}\to \AnSpec R$. By \Cref{CoroCartierDualityAnStkQAff} this reduces to proving that the pairing of \Cref{ConstructionPairingDiscCase} given by the exponential $\exp(YX)\in \Gamma(\bb{D}_R^{,\sharp,\leq r}\times \bb{D}^{<1/r}_{R})$ induces an equivalence of $R$-modules
\[
f_{\natural} 1 \to R\langle Y \rangle_{\leq r}^{\ob{DP}}.
\]
The $R$-module $f_{\natural} 1$ is the \textit{naive} continuous $R$-dual of the Fr\'echet space $\s{O}(\bb{D}^{< 1/r})$, since 
\[
\s{O}(\bb{D}^{< 1/r})= \{\sum_{n\in \N} a_n X^n : \;\; |a_n|r^{',-n}\to 0 \mbox{ for all } r'>r \}.
\]
one has that $f_{\natural} 1$ is the colimit of Banach spaces given by 
\[
f_{\natural} 1=\{\sum_{n\in \N} a_n X^{n,\vee}: |a_n|r^{'n}\to 0 \;\; \mbox{ for some  } r'>r\}
\]
where $X^{n,\vee}$ is the dual of $X^n$ with respect to the basis $\{X^n\}_{n\in \N}$. Now, the exponential map $\exp(YX)$ induces the map $f_{\natural}1\to R\langle  Y \rangle^{\ob{DP}}_{\leq r}$  sending $X^{n,\vee}$ to $\frac{Y^n}{n!}$. It follows from the power series estimates that this map is an isomorphism, proving what we wanted. 
\end{proof}

%

\subsubsection{Analytic vector bundles}\label{ExAnalyticVectorBundle}

We continue with the Cartier duality of analytic vector bundles, we keep working over the solid ring $R=\Z((\pi))$ with induced structure from $\Z_{\sol}$. In one hand we have the analytic ring stack $\bb{G}_{a,R}^{\an}$ from \Cref{DefinitionDiscDPversions}, its Cartier dual will be the space of germs of divided power series at $0$.

\begin{definition}\label{DefinitionGadaggersharp}
We define the affinoid analytic stacks  $\bb{G}_{a,R}^{\dagger}=\varprojlim_{r\to 0} \bb{D}_R^{\leq r}$ and $\bb{G}_{a,R}^{\sharp, \dagger}=\varprojlim_{r\to 0} \bb{D}^{\sharp, \leq r}_{R}$.
\end{definition}

\begin{remark}\label{RemarkGadagger}
The analytic  stack $\bb{G}_{a,R}^{\dagger}$ is nothing but the preimage of $0$ of the norm map $|-|_{\pi}\colon \bb{G}_{a,R}^{\an}\to [0,\infty)$. In particular, $\bb{G}_{a,R}^{\dagger}$ is an ideal of $\bb{G}_{a,R}^{\an}$. 
\end{remark}

\begin{lemma}\label{LemmaBasicPropertiesGadaggerSharp}
The analytic stack $\bb{G}_{a,R}^{\sharp, \dagger}$ has a natural $\bb{G}_{a,R}^{\an}$-module structure. 
\end{lemma}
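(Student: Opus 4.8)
The plan is to produce the two pieces of a module structure --- an abelian group law on $\bb{G}_{a,R}^{\sharp,\dagger}$ and a scalar action of the ring stack $\bb{G}_{a,R}^{\an}$ --- by assembling the radius-graded maps of \Cref{LemmaPresentabilityDisc} through the limit $\bb{G}_{a,R}^{\sharp,\dagger}=\varprojlim_{r\to 0}\bb{D}^{\sharp,\leq r}_{R}$ of \Cref{DefinitionGadaggersharp} and the union $\bb{G}_{a,R}^{\an}=\bigcup_{s}\bb{D}^{\leq s}_{R}$ of \Cref{DefinitionDiscDPversions}. The additive part is immediate: the comultiplication $T\mapsto X+Y$ of \Cref{LemmaPresentabilityDisc}~(2) defines an addition $\bb{D}^{\sharp,\leq r}_{R}\times\bb{D}^{\sharp,\leq r}_{R}\to\bb{D}^{\sharp,\leq r}_{R}$ that preserves the radius $r$, hence is compatible with the transition maps of the limit and passes to an abelian group (indeed cocommutative Hopf) structure on $\bb{G}_{a,R}^{\sharp,\dagger}$. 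At the level of algebras this is just the filtered colimit over $r\to 0$ of the coproducts, and the solid tensor product $\otimes_R$ preserves filtered colimits in each variable, so no convergence issue arises.

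The scalar action is the point where the radius bookkeeping matters. I would work at the level of algebras, where the action becomes a coaction $\s{O}(\bb{G}_{a,R}^{\sharp,\dagger})\to \s{O}(\bb{G}_{a,R}^{\an})\otimes_R\s{O}(\bb{G}_{a,R}^{\sharp,\dagger})$. Here $\s{O}(\bb{G}_{a,R}^{\sharp,\dagger})=\varinjlim_{\rho\to 0}R\langle T\rangle^{\ob{DP}}_{\leq\rho}$ is the ring of divided-power germs at $0$, while $\s{O}(\bb{G}_{a,R}^{\an})=\varprojlim_{s}R\langle X\rangle_{\leq s}$ is the ring of entire functions. The multiplicativity map $T\mapsto TX$ of \Cref{LemmaPresentabilityDisc}~(3) reads $R\langle T\rangle^{\ob{DP}}_{\leq rs}\to R\langle T\rangle^{\ob{DP}}_{\leq r}\otimes_R R\langle X\rangle_{\leq s}$, i.e. geometrically $\bb{D}^{\leq s}_{R}\times\bb{D}^{\sharp,\leq r}_{R}\to\bb{D}^{\sharp,\leq rs}_{R}$: a scalar of radius $\leq s$ carries a germ of radius $r$ to a germ of radius $rs$. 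Fixing a germ radius $\rho$ and writing $r=\rho/s$, this gives for each $s$ a map $R\langle T\rangle^{\ob{DP}}_{\leq\rho}\to \s{O}(\bb{G}_{a,R}^{\sharp,\dagger})\otimes_R R\langle X\rangle_{\leq s}$, compatible as $s$ varies by the functoriality of \Cref{LemmaPresentabilityDisc}~(3) in the radii. Since $\rho/s\to 0$ for fixed $\rho$, the output is genuinely a germ, which is exactly the reason the action stays inside $\bb{G}_{a,R}^{\sharp,\dagger}$ as the scalar radius $s$ grows without bound.

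It remains to pass these maps through $\varprojlim_{s}$ on the scalar factor and $\varinjlim_{\rho\to 0}$ on the germ factor, and to check the module axioms. The main technical obstacle is the interchange $\s{O}(\bb{G}_{a,R}^{\sharp,\dagger})\otimes_R\varprojlim_{s}R\langle X\rangle_{\leq s}=\varprojlim_{s}\big(\s{O}(\bb{G}_{a,R}^{\sharp,\dagger})\otimes_R R\langle X\rangle_{\leq s}\big)$, which identifies the target with $\s{O}(\bb{G}_{a,R}^{\an})\otimes_R\s{O}(\bb{G}_{a,R}^{\sharp,\dagger})$; I would deduce it from the flatness of each $R\langle T\rangle^{\ob{DP}}_{\leq r}$ (\Cref{LemmaFlat}) together with the compatibility of the solid tensor product with countable products of the Banach spaces $\ell^{\N}_{R}$ recorded in \Cref{RemarkBanachSolid}. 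Once the coaction is defined, the remaining coassociativity, counit, and distributivity identities reduce to the corresponding identities between the power-series maps $T\mapsto X+Y$ and $T\mapsto TX$ at each finite radius, which are the routine computations already underlying \Cref{LemmaPresentabilityDisc}; as in the proof of \Cref{LemmaModuleGasharp} one may restrict to static flat analytic rings and verify them at the level of presheaves of abelian groups, where they become identities of formal divided-power series.
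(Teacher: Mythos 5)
Your proof is correct, but it takes a genuinely different route from the paper's. You assemble the $\bb{G}_{a,R}^{\an}$-action directly from the full radius-graded multiplication of \Cref{LemmaPresentabilityDisc}~(3): for every scalar radius $s$ you use $m\colon R\langle T\rangle^{\ob{DP}}_{\leq rs}\to R\langle T\rangle^{\ob{DP}}_{\leq r}\otimes_R R\langle X\rangle_{\leq s}$ and observe that, with the output radius $\rho=rs$ fixed, the required input radius $\rho/s$ still tends to $0$, so each $\bb{D}^{\leq s}_R$ acts on the germ and the actions glue over $\bb{G}_{a,R}^{\an}=\bigcup_s\bb{D}^{\leq s}_R$. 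The paper instead uses only the $s=1$ case of \Cref{LemmaPresentabilityDisc}~(3) to put a $\bb{D}^{\leq 1}_R$-module structure on $\bb{G}_{a,R}^{\sharp,\dagger}$, and then upgrades it to a $\bb{G}_{a,R}^{\an}$-module purely formally via the ring-stack presentation $\bb{G}_{a,R}^{\an}=\bb{D}^{\leq 1}_R[\tfrac{1}{\pi}]$, the only thing to check being that multiplication by $\pi$ (which sends $\sum a_n\frac{T^n}{n!}$ to $\sum a_n\pi^n\frac{T^n}{n!}$) is invertible on the germ --- which is an explicit one-line power-series computation using $|\pi|=1/2$. Your approach buys a more uniform picture of how all scalar radii act; the paper's buys a shorter argument that isolates the single nontrivial verification (invertibility of $\pi$) and avoids tracking two radii at once.

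One caution: the step you single out as the "main technical obstacle" --- commuting $\otimes_R$ past $\varprojlim_s R\langle X\rangle_{\leq s}$ so as to land in $\s{O}(\bb{G}_{a,R}^{\an})\otimes_R\s{O}(\bb{G}_{a,R}^{\sharp,\dagger})$ --- is not something you should rely on, since solid tensor products do not commute with limits in general and flatness alone does not give it. Fortunately it is also unnecessary: $\bb{G}_{a,R}^{\an}$ is quasi-affine, not affinoid, so the action map should be built as a map of analytic stacks $\bb{G}_{a,R}^{\an}\times\bb{G}_{a,R}^{\sharp,\dagger}=\varinjlim_s\bigl(\bb{D}^{\leq s}_R\times\bb{G}_{a,R}^{\sharp,\dagger}\bigr)\to\bb{G}_{a,R}^{\sharp,\dagger}$, which by the universal property of the colimit is exactly the compatible family of affinoid-level maps you already constructed; no global coaction on $\s{O}(\bb{G}_{a,R}^{\an})\otimes_R\s{O}(\bb{G}_{a,R}^{\sharp,\dagger})$ is needed. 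With that reformulation the rest of your argument (radius bookkeeping, reduction of the module axioms to power-series identities as in \Cref{LemmaPresentabilityDisc}) goes through.
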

\begin{proof}
By \Cref{LemmaPresentabilityDisc} (3) we know that $\bb{D}^{\sharp, \leq r}_{R}$ has a natural $\bb{D}^{\leq 1}_{R}$-module structure. It is clear from the construction that the module structure is compatible with the divided power discs of different radius $r$, and therefore $\bb{G}_{a,R}^{\sharp, \dagger}$ also has a natural $\bb{D}^{\leq 1}_{R}$-module structure. To see that it has a $\bb{G}_{a,R}^{\an}$-module structure, it suffices to show that multiplication by $\pi$ on  $\bb{G}^{\sharp,\dagger}_{R}$ is invertible, namely, we have as ring stacks $\bb{G}_{a,R}^{\an}=\bb{D}^{\leq 1}_{R}[\frac{1}{\pi}]$. This follows from the fact that multiplication by $\pi$ induces a map 
\[
[\pi]\cdot \bb{D}^{\sharp, \leq r}_R\to  \bb{D}^{\sharp, \leq r+1/2}_R
\]
thanks to \Cref{LemmaPresentabilityDisc}.  By looking at power series this map sends 
\[
\sum_{n\in \N} a_{n} \frac{T^n}{n!} \mapsto \sum_{n\in \N} a_n \pi^n \frac{T^n}{n!} 
\]
which has an obvious inverse sending the power series $\sum_{n\in N} b_{n}\frac{T^n}{n!}$ to $\sum_{n\in N} \pi^{-n} b_{n}\frac{T^n}{n!}$ (and where the convergence conditions match since $|\pi|=1/2$). This proves that multiplication by $\pi$ is an isomorphism on $\bb{G}_{a,R}^{\sharp,\dagger}$, proving what we wanted. 
\end{proof}

\begin{lemma}\label{LemaDescetGadagger}
The map $e\colon \AnSpec R\to B\bb{G}_{a,R}^{\sharp,\dagger}$ is prim and  descendable. In particular, $B\bb{G}_{a,R}^{\sharp,\dagger}$ is a $!$-able analytic stack.
\end{lemma}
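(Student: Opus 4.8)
The plan is to follow verbatim the strategy of \Cref{LemmaShierkBGasharp} and \Cref{LemmaModuleStructureDisc}, now applied to the germ Hopf algebra, taking advantage of the fact that everything in sight is a filtered colimit of the corresponding objects for the closed divided-power discs $\bb{D}^{\sharp,\leq r}_R$. For primness, I would observe that $\bb{G}_{a,R}^{\sharp,\dagger}=\varprojlim_{r\to 0}\bb{D}^{\sharp,\leq r}_R$ is affinoid, being corepresented by the $R$-algebra $\s{O}(\bb{G}_{a,R}^{\sharp,\dagger})=\varinjlim_{r\to 0}R\langle T\rangle^{\ob{DP}}_{\leq r}$ of germs of divided-power series at $0$, endowed with the induced solid analytic ring structure. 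Exactly as in \Cref{LemmaBGmShierk}, the map $e$ is then prim because it is representable in affinoid analytic stacks with the induced structure.

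For descendability I would first record that $\s{O}(\bb{G}_{a,R}^{\sharp,\dagger})$ is a flat solid $R$-module: it is a filtered colimit of the modules $R\langle T\rangle^{\ob{DP}}_{\leq r}$, each $R$-flat by \Cref{LemmaFlat}, and filtered colimits preserve flatness. Moreover the counit is split by the unit, so it is of finite tor amplitude. Hence \Cref{LemmaRepTheoryFlat} applies and endows $\ob{D}(B\bb{G}_{a,R}^{\sharp,\dagger})$ with a $t$-structure whose heart is the category of $\s{O}(\bb{G}_{a,R}^{\sharp,\dagger})$-comodules, with $e_*1$ identified with the regular comodule.

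It then remains to exhibit the unit in the thick tensor ideal generated by $e_*1$. For this I would take the filtered colimit over $r\to 0$ of the Poincar\'e short exact sequences of \Cref{LemmaPresentabilityDisc} (4). Since the derivation $\partial_T$ is compatible with the restriction maps between discs of different radius, these assemble into a filtered system of short exact sequences of comodules; as filtered colimits are exact, the colimit is a short exact sequence
\[
0\to R\to \s{O}(\bb{G}_{a,R}^{\sharp,\dagger})\xrightarrow{\partial_T}\s{O}(\bb{G}_{a,R}^{\sharp,\dagger})\to 0
\]
of $\s{O}(\bb{G}_{a,R}^{\sharp,\dagger})$-comodules. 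This places the unit in the thick tensor ideal of $e_*1$, giving descendability, and $!$-ability of $B\bb{G}_{a,R}^{\sharp,\dagger}$ over $R$ follows as in the previous lemmas.

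The only point requiring care — and the main potential obstacle — is to verify that passing to the limit in $r$ genuinely preserves all the structures used: that the germ algebra is the filtered colimit of the $R\langle T\rangle^{\ob{DP}}_{\leq r}$ as a flat Hopf algebra over $R$, that \Cref{LemmaRepTheoryFlat} is applicable to it (flatness together with finite tor amplitude of the counit), and that the Poincar\'e sequences are compatible across radii so that their colimit remains a short exact sequence of comodules over the limiting Hopf algebra. All of these reduce to exactness and flatness-preservation of filtered colimits together with the explicit shift description of $\partial_T$ in \Cref{LemmaPresentabilityDisc}, so I expect no genuine difficulty beyond this bookkeeping.
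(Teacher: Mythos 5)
Your proposal is correct and follows essentially the same route as the paper, which simply invokes ``the same argument of \Cref{LemmaModuleStructureDisc}'': primness via affinoid representability with induced structure, flatness of the Hopf algebra to apply \Cref{LemmaRepTheoryFlat}, and the Poincar\'e short exact sequence of comodules to get descendability. The extra bookkeeping you supply (that the germ algebra is the filtered colimit $\varinjlim_{r\to 0}R\langle T\rangle^{\ob{DP}}_{\leq r}$, that flatness and the exactness of the Poincar\'e sequences pass to this colimit) is exactly what the paper leaves implicit, so there is no gap.
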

\begin{proof}
This follows from the same argument of \Cref{LemmaModuleStructureDisc}. 
\end{proof}

\begin{construction}\label{ConstructionPairingAnVB}
We define the exponential map $\exp\colon \bb{G}_{a,R}^{\sharp, \dagger}\to \bb{G}_m$ to be induced by the exponential power series $\exp(T)$. The additive property of the exponential implies that $\exp$ is a morphism of $\Z$-modules. We define the pariring 
\[
\Psi\colon \bb{G}_{a,R}^{\sharp,\dagger}\times \bb{G}_{a,R}^{\an}\to \bb{G}_m
\]
to be the composite of the multiplication map $\bb{G}_{a,R}^{\sharp,\dagger}\times \bb{G}_{a,R}^{\an}\to \bb{G}_{a,R}^{\sharp,\dagger}$ and the exponential $\exp$.
\end{construction}

 \begin{proposition}\label{PropBasicAnVectCD}
 The pairing $\Psi\colon \bb{G}_{a,R}^{\sharp,\dagger}\times \bb{G}_{a,R}^{\an}\to \bb{G}_m$ give rise to $1$-categorical Cartier dualities in the kernel category $\ob{K}_{\ob{D},\AnSpec R}$
 \[
  [B\bb{G}_{a,R}^{\sharp, \dagger}]_!\cong [\bb{G}_{a,R}^{\an}]^* \mbox{ and } [\bb{G}_{a,R}^{\sharp, \dagger}]_!\cong  [B\bb{G}_{a,R}^{\an}]^*.
 \]
 \end{proposition}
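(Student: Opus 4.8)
The plan is to reduce the statement to the duality machine of \Cref{CoroCartierDualityAnStkQAff}, exactly as in the disc case of \Cref{PropBasicDiscVectCD}, and then to verify that a single map of $R$-modules induced by the exponential is an isomorphism. Set $S=\AnSpec R$, and take $G=\bb{G}_{a,R}^{\an}$ and $H=\bb{G}_{a,R}^{\sharp,\dagger}$. The geometric hypotheses are already in place: $G$ is quasi-affine and suave over $S$ (it is an open substack of the algebraic affine line, written as an increasing union of closed discs along open immersions), while $H$ is affinoid with its $G$-module structure from \Cref{LemmaBasicPropertiesGadaggerSharp}, and the unit $e\colon S\to BH$ is prim and descendable by \Cref{LemaDescetGadagger}. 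Composing the $\Z$-bilinear pairing $\Psi$ of \Cref{ConstructionPairingAnVB} with the map $B\bb{G}_m\to \GL_{1,\n{C}}$ produces the pairing $H\times BG\to \GL_{1,\n{C}}$ required by \Cref{CoroCartierDualityAnStkQAff}(3).

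With $g\colon G\to S$ the structural map, parts (2) and (3) of \Cref{CoroCartierDualityAnStkQAff} identify the underlying algebra of $[BG]_!$ with $g_!g^!1=g_\sharp 1$ and reduce the basic duality $[B\bb{G}_{a,R}^{\an}]_!\cong [\bb{G}_{a,R}^{\sharp,\dagger}]^*$ to showing that the map $g_\sharp 1\to \s{O}(H)$ determined by the global section $\exp(YX)\in \Gamma(H\times G)$ (via \Cref{RemarkIdentificationsTheorems}(1)) is an isomorphism of $R$-modules. To compute $g_\sharp 1$ I would write $\bb{G}_{a,R}^{\an}$ as the increasing union of the open discs $\bb{D}^{<1/r}_R$ for $r\to 0$, so that $g_\sharp 1=\varinjlim_{r\to 0}(g_r)_\sharp 1$ with $g_r\colon \bb{D}^{<1/r}_R\to S$; by the disc computation in the proof of \Cref{PropBasicDiscVectCD} this is $\varinjlim_{r\to 0}\{\sum_n a_n T^{n,\vee}: |a_n|r'^n\to 0 \text{ for some } r'>r\}$, which is exactly $\s{O}(\bb{G}_{a,R}^{\sharp,\dagger})=\varinjlim_{r\to 0}R\langle T\rangle^{\ob{DP}}_{\leq r}$ under the identification $T^{n,\vee}\leftrightarrow T^n/n!$. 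Conceptually, $g_\sharp 1$ is the continuous $R$-dual of the Fréchet algebra $\s{O}(\bb{G}_{a,R}^{\an})$ of entire functions. Since $\Psi$ is induced by $\exp(YX)=\sum_n \frac{Y^n}{n!}X^n$, the resulting map sends $T^{n,\vee}\mapsto \frac{Y^n}{n!}$ at each finite level, hence is the desired isomorphism in the colimit.

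Having established $[B\bb{G}_{a,R}^{\an}]_!\cong [\bb{G}_{a,R}^{\sharp,\dagger}]^*$, the second displayed duality $[\bb{G}_{a,R}^{\sharp,\dagger}]_!\cong [B\bb{G}_{a,R}^{\an}]^*$ follows by passing to the dual Fourier-Mukai transform in the kernel category (\Cref{RemarkCartierDualityCompatibility}), and the first displayed duality $[B\bb{G}_{a,R}^{\sharp,\dagger}]_!\cong [\bb{G}_{a,R}^{\an}]^*$ follows from \Cref{CoroCartierDualityAnStkQAff}(4), whose hypotheses (the basic duality just proven, together with the descendability of $e\colon S\to BH$ from \Cref{LemaDescetGadagger}) are then all verified. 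The main obstacle I anticipate is the functional-analytic bookkeeping in the module computation: one must correctly compute the solid homology $g_\sharp 1$ of the ind-affinoid $\bb{G}_{a,R}^{\an}$ as the continuous dual of the Fréchet space of entire functions, and match the resulting germ-of-exponential-decay condition on coefficients with the divided-power germ condition defining $\s{O}(\bb{G}_{a,R}^{\sharp,\dagger})$, checking that the two colimit presentations agree on the nose under the exponential. Everything else is a formal application of the duality formalism already set up in \Cref{CoroCartierDualityAnStkQAff}.
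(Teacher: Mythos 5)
Your proposal is correct and matches the paper's approach: the paper's own proof of this proposition is literally the one line ``this follows by the same argument of \Cref{PropBasicDiscVectCD}'', and your write-up is a faithful, more detailed execution of exactly that reduction — apply \Cref{CoroCartierDualityAnStkQAff} using suaveness/quasi-affineness of $\bb{G}_{a,R}^{\an}$ and descendability from \Cref{LemaDescetGadagger}, then check the exponential induces an isomorphism $g_{\sharp}1\xrightarrow{\sim}\s{O}(\bb{G}_{a,R}^{\sharp,\dagger})$ by passing to the colimit of the disc-level computations.
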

 \begin{proof}
 This follows by the same argument of \Cref{PropBasicDiscVectCD}. 
 \end{proof}

\begin{remark}\label{RemarkBaseChangeQp}
Let us specialize the Cartier dualities for open discs and analytic vector bundles to $\Q_p$ with norm $|p|=p^{-1}$. In this case, we have the following estimate of the $p$-adic valuation $v_p(n!)=\frac{n-s_p(n)}{p-1}$ where $s_p(n)$ is the sum of the digits of $n$ in the base-$p$ expansion (and in particular of logaritmic growth).  Hence, we have isomorphism of analytic stacks over $\Q_{p,\sol}:=(\Q_p,\Z)_{\sol}$
\[
\bb{D}^{\sharp,\leq r}_{\Q_p}= \bb{D}^{\leq rp^{-1/(p-1)}}_{\Q_p} \mbox{ and } \bb{G}_{a,\Q_p}^{\sharp,\dagger}=\bb{G}_{a,\Q_p}^{\dagger}. 
\]
In particular, \Cref{PropBasicDiscVectCD} and \Cref{PropBasicAnVectCD} specialize to Cartier dualities between $\bb{D}^{<1}_{\Q_p}$ and $\bb{D}^{\leq p^{-1/(p-1)}}_{\Q_p}$, and between $\bb{G}_{a,\Q_p}^{\an}$ and  $\bb{G}_{a,\Q_p}^{\dagger}$ as in \cite{camargo2024analytic}. 
\end{remark}

\subsubsection{Locally analytic $\bb{Z}_p$-lattices}\label{ExZpLAvectorbundle}

We finish with the Cartier duality of locally analytic $\bb{Z}_p$-lattices. For this discussion, we shall work over the analytic ring $\Q_{p,\sol}$ of $p$-adic rational numbers with the induced analytic ring structure from $\Z_{\sol}$. 

\begin{definition}\label{DefZpla}
We let $\bb{Z}_p^{la}$ be the affinoid analytic stack over $\AnSpec \Q_{p,\sol}$  given by the spectrum of the locally analytic functions $C^{la}(\Z_p,\bb{Q}_p)$ of $\Z_p$ with values in $\bb{Q}_p$.
\end{definition}

For the theory of solid locally analytic representations  we refer to \cite{RRLocallyAnalytic,RJRCSolidLocAn2}. The structure of abelian $p$-adic Lie group of $\bb{Z}_p$ endows $\bb{Z}_p^{la}$ with a $\Z$-module structure as analytic stack. Furthermore, $\Z_p^{la}$ is naturally endowed with the structure of a ring stack over $\Q_{p,\sol}$. 

\begin{lemma}\label{LemmaZpladescent}
The map $e\colon \AnSpec \Q_{p,\sol}\to B\Z_p^{la}$ is descendable, in particular $B\Z_p^{la}$ is a $!$-able analytic stack. 
\end{lemma}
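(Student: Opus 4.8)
The plan is to follow the template of \Cref{LemmaShierkBGasharp} and \Cref{LemmaModuleStructureDisc} almost verbatim, replacing the Poincar\'e lemma for divided powers by its locally analytic incarnation coming from the Mahler--Amice basis. First I would observe that $e$ is prim: its base change along itself is $\Z_p^{la}$, corepresented by the $\Q_{p,\sol}$-algebra $C^{la}(\Z_p,\Q_p)$ with induced analytic ring structure, so $e$ is representable in affinoid analytic stacks. Next, writing $C^{la}(\Z_p,\Q_p)$ as an increasing union of Banach sequence spaces (locally analytic functions of successively larger radii of convergence), I would check as in \Cref{LemmaFlat} that it is a flat solid $\Q_p$-module, being built out of copies of $\prod_{\N}\Q_p$, which are flat by \cite[Corollary 3.4.3]{SolidNotes}. \Cref{LemmaRepTheoryFlat} then applies and endows $\ob{D}(B\Z_p^{la})$ with a $t$-structure whose heart is the category of $C^{la}(\Z_p,\Q_p)$-comodules (equivalently, solid locally analytic representations of $\Z_p$ in the sense of \cite{RRLocallyAnalytic,RJRCSolidLocAn2}), under which $e_*1$ is the regular comodule.

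The heart of the argument is the finite-difference analogue of the Poincar\'e lemma. By Mahler's theorem together with Amice's description of locally analytic functions, the binomials $\binom{x}{n}$ form a topological basis of $C^{la}(\Z_p,\Q_p)$, identifying it with $\varinjlim_{r>1}\{(a_n)_n : |a_n|\,r^n\to 0\}$ via $f=\sum_n a_n\binom{x}{n}$. I would then introduce the finite difference operator $\nabla f(x)=f(x+1)-f(x)$ and record the identity $\nabla\binom{x}{n}=\binom{x}{n-1}$ (Pascal's rule), so that $\nabla$ acts as the shift $(a_n)_n\mapsto (a_{n+1})_n$ on Mahler coefficients. This shift is surjective with kernel $\Q_p\cdot\binom{x}{0}=\Q_p$, yielding a strict short exact sequence of solid $\Q_p$-modules
\[
0\to \Q_p\to C^{la}(\Z_p,\Q_p)\xrightarrow{\ \nabla\ } C^{la}(\Z_p,\Q_p)\to 0,
\]
exactly parallel to the sequence $0\to\Z\to\Z[T]^{\ob{DP}}\xrightarrow{\partial_T}\Z[T]^{\ob{DP}}\to 0$ of \Cref{LemmaShierkBGasharp}, with $\binom{x}{n}$ playing the role of $T^n/n!$. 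A short computation with the comultiplication $\Delta f(x,y)=f(x+y)$ shows $\Delta\circ\nabla=(\id\otimes\nabla)\circ\Delta$ (equivalently, $\nabla$ is convolution by $\delta_1-\delta_0$, hence right-invariant), so the displayed sequence is in fact a sequence of $C^{la}(\Z_p,\Q_p)$-comodules.

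From here the conclusion is formal: the sequence exhibits the unit $\Q_p\in\ob{D}(B\Z_p^{la})$ as the fiber of an endomorphism of the regular comodule $e_*1$, so $\Q_p$ lies in the thick tensor ideal generated by $e_*1$, whence $e$ is descendable; the $!$-ability of $B\Z_p^{la}$ then follows as in \Cref{LemmaBGmShierk}. I expect the only genuine obstacle to be the verification that the finite-difference sequence is \emph{strict} exact in solid $\Q_p$-modules (and not merely algebraically), together with the flatness and finite-tor-amplitude hypotheses of \Cref{LemmaRepTheoryFlat}; both reduce to the explicit Mahler--Amice coefficient description, where the shift admits a continuous splitting and the relevant sequence spaces are manifestly flat, so these should go through just as in the disc case of \Cref{LemmaModuleStructureDisc}.
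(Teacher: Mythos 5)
Your proof is correct, but it takes a genuinely different route from the one written out in the paper. The paper's self-contained argument never touches the comodule category: it uses the presentation of $\bb{Z}_p^{la}$ as the pullback of $\bb{Z}_{p,\Betti}$ along $\bb{G}_{a,\Q_p}^{\an}\to\bb{G}_{a,\Q_p}^{\an,\dR}$, which yields a fiber sequence $\bb{G}_{a,\Q_p}^{\dagger}\to\bb{Z}_p^{la}\to\bb{Z}_{p,\Betti}$ and hence a cartesian square exhibiting $B\bb{G}_{a,\Q_p}^{\dagger}\to B\bb{Z}_p^{la}$ as the base change of the descendable map $\AnSpec\Q_p\to B\bb{Z}_{\Betti}$; descendability of $e$ then follows by composing with the already-established descendability of $\AnSpec\Q_{p,\sol}\to B\bb{G}_{a,\Q_p}^{\dagger}$ (\Cref{LemaDescetGadagger}, \Cref{RemarkBaseChangeQp}). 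What that buys is a two-line reduction to previously proved cases, with no new analysis. Your argument instead runs the $B\bb{G}_a^{\sharp}$/disc template directly: flatness of $C^{la}(\Z_p,\Q_p)$ via the Banach sequence spaces, \Cref{LemmaRepTheoryFlat}, and the Mahler--Amice finite-difference sequence $0\to\Q_p\to C^{la}(\Z_p,\Q_p)\xrightarrow{\nabla}C^{la}(\Z_p,\Q_p)\to 0$ of comodules in place of the Poincar\'e lemma. This is essentially an explicit, coefficient-level version of the route the paper only gestures at in its first sentence (the identification of $\ob{D}(B\bb{Z}_p^{la})$ with solid locally analytic representations plus the dual of Lazard's resolution, which for $\Z_p$ is exactly convolution by $\delta_1-\delta_0$). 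Your verifications are sound: $\nabla$ is the shift on Mahler coefficients, preserves each $\ell^{\N}_{\Q_p}(r)$ with a bounded section (so the sequence is strict exact radius by radius), and intertwines the comultiplication $f(x)\mapsto f(x+y)$; the only detail you leave implicit beyond what the paper itself leaves implicit in \Cref{LemmaShierkBGasharp} and \Cref{LemmaModuleStructureDisc} is the finite-tor-amplitude hypothesis on the counit, which you correctly flag and which is at the same level of routine verification there. The cost of your approach is the reliance on Amice's theorem; the benefit is that it is self-contained and does not presuppose the de Rham stack description of $\bb{Z}_p^{la}$ or the descendability of $\AnSpec\Q_p\to B\bb{Z}_{\Betti}$.
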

\begin{proof}
This follows from the identification of $\ob{D}(B\bb{Z}_p^{la})$ with the category of solid locally analytic representations of $\Z_p^{la}$ \cite[Theorem 4.3.3]{RJRCSolidLocAn2}, and  (the dual of) Lazard's resolution \cite[Proposition 2.2.1 (3)]{RJRCSolidLocAn2}. Let us give a self contained proof.  By definition $\Z_p^{la}$ is the pullback 
\begin{equation}\label{eqlmaoniafasd}
\begin{tikzcd}
\bb{Z}_p^{la} \ar[r]\ar[d] & \bb{Z}_{p,\Betti} \ar[d] \\ 
\bb{G}_{a,\Q_p}^{\an}  \ar[r]& \bb{G}_{a,\Q_p}^{\an,\dR},
\end{tikzcd}
\end{equation}
that is, the locally analytic functions of $\Z_p$ are the germs of functions on $\bb{G}_a^{\an}$ on the closed subset $\bb{Z}_p\subset  \bb{G}_a(\Q_p)$. Here $\bb{G}_{a,\Q_p}^{\an,\dR}$ is the analytic de Rham stack as in \cite{camargo2024analytic} or \cite{anschutz2025analytic}. Since the lower horizontal map of \eqref{eqlmaoniafasd} is an epimorphism  with fiber $\bb{G}_{a,\Q_p}^{\dagger}$ (by definition of the de Rham stack), the upper  horizontal map is an epimorphism with fiber $\bb{G}_{a,\Q_p}^{\dagger}$. We deduce a pullback square
\[
\begin{tikzcd}
B\bb{G}_{a,\Q_p}^{\dagger} \ar[r] \ar[d] & \AnSpec \Q_p \ar[d] \\ 
B\bb{Z}_p^{la} \ar[r] & B\bb{Z}_{\Betti}
\end{tikzcd}
\]
The right vertical map is descendable by \cite[Proposition 5.2.5]{HeyerMannSix}, and the map $\AnSpec \Q_{p,\sol}\to B\bb{G}_a^{\dagger}$ is descendable by \Cref{LemaDescetGadagger} and \Cref{RemarkBaseChangeQp}. One deduces that $\AnSpec \Q_{p,\sol}\to B\bb{Z}_p^{la}$ is descendable as wanted. 
\end{proof}

Let $\s{O}(1+\bb{D}^{<1}_{\Q_p})$ be the space of functions on the open unit disc centered at $1$ seen as a subgroup of $\bb{G}_{m,\Q_p}^{\an}$. It is well known that the naive Cartier dual of $C^{la}(\Z_p^{la}, \Q_p)$, that is, its $\Q_p$-linear dual, is isomorphic to $\s{O}(1+\bb{D}^{<1}_{\Q_p})$ via the Amice transform  \cite{MR188199}, with multiplication and convolution being exchanged. This can be promoted to a $\bb{Z}_p^{la}$-module structure on $1+\bb{D}^{<1}_{\Q_p}$ as follows:

\begin{lemma}\label{LemmaModuleStructureOpenGRoup}
The multiplicative group $1+\bb{D}^{<1}_{\Q_p}$ has a natural structure of $\Z_p^{la}$-module with multiplication map
\[
\Z_p^{la}\times (1+\bb{D}^{<1}_{\Q_p})\to 1+\bb{D}^{<1}_{\Q_p}
\]
given by sending $(a,1+X)$ to the Amice transform $(1+X)^a$.  
\end{lemma}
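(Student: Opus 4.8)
The plan is to construct the action map directly at the level of corepresenting algebras and to verify the module axioms as identities of power series, following the pattern of \Cref{LemmaModuleGasharp} and \Cref{LemmaBasicPropertiesGadaggerSharp}. Write $A = C^{la}(\Z_p,\Q_p)$, so that $\Z_p^{la} = \AnSpec A$, and $B = \s{O}(1+\bb{D}^{<1}_{\Q_p})$ with coordinate $T$ (the group element $1+X$ corresponding to $X=T$ in the open unit disc). Since $A$ and $B$ are static flat solid $\Q_p$-algebras and finite products of $\Z_p^{la}$ and $1+\bb{D}^{<1}_{\Q_p}$ remain static (by \Cref{LemmaFlat} and \Cref{RemarkBanachSolid}), the same left-Kan-extension argument used in \Cref{LemmaModuleGasharp} reduces the problem to producing the addition map (the already-available group comultiplication $1+T\mapsto (1+T)\otimes(1+T)$) together with the scalar-action map
\[
m^*\colon B\to A\otimes_{\Q_{p,\sol}} B,\qquad 1+T\longmapsto \Theta:=\sum_{n\geq 0}\binom{a}{n}\otimes T^n,
\]
and checking the module axioms as diagrams of such algebra maps. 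Here $a\in A$ is the universal coordinate (the tautological locally analytic function $\Z_p\to\Q_p$), so $\binom{a}{n}$ is the $n$-th Mahler basis function and $\Theta$ evaluates on points to $(1+X)^a$; thus $m^*$ is precisely the Amice transform. Conceptually, $\Theta$ is the universal locally analytic character, realizing $1+\bb{D}^{<1}_{\Q_p}\cong\underline{\Hom}(\Z_p^{la},\bb{G}_m)$, and the module structure is the tautological one coming from the commutative ring-stack $\Z_p^{la}$ acting on itself by multiplication; I will nonetheless verify everything by hand.

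The first technical point, which is the main obstacle, is to show that $\Theta$ is a well-defined element of $A\otimes_{\Q_{p,\sol}}B$ corepresenting a map into $1+\bb{D}^{<1}_{\Q_p}$. By Amice's theorem I write $A=\varinjlim_{r>1}A_r$, where $A_r$ is the Banach space in which $\binom{a}{n}$ has norm $r^n$, and $B=\varprojlim_{\rho<1}B_\rho$, where $\|T^n\|_{B_\rho}=\rho^n$. In $A_r\,\widehat{\otimes}\,B_\rho$ the term $\binom{a}{n}\otimes T^n$ has norm $(r\rho)^n$, which tends to $0$ exactly when $r\rho<1$; since for each $\rho<1$ one can choose $r>1$ with $r\rho<1$, the series $\Theta$ converges in $A_r\,\widehat{\otimes}\,B_\rho$ compatibly in $\rho$, and using the explicit computation of solid tensor products of such sequence spaces (\Cref{RemarkBanachSolid}) together with flatness of $A$ (\Cref{LemmaFlat}) this exhibits $\Theta$ as an element of $A\otimes_{\Q_{p,\sol}}B$. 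The same estimate gives $\|\Theta-1\|_{A_r\widehat\otimes B_\rho}=r\rho<1$, so $\Theta$ is a unit (its inverse is $\sum_n\binom{-a}{n}\otimes T^n$, convergent by the identical estimate) with $\Theta-1$ valued in the open unit disc. Hence $\Theta$ corepresents a morphism $\Z_p^{la}\times(1+\bb{D}^{<1}_{\Q_p})\to 1+\bb{D}^{<1}_{\Q_p}$ and $m^*$ is a continuous $\Q_p$-algebra homomorphism.

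It then remains to verify the module axioms, which become classical functional equations of the binomial series holding in the relevant static flat algebras. Compatibility with addition on $\Z_p$ is $(1+T)^{a+b}=(1+T)^a(1+T)^b$, i.e. the Vandermonde identity $\binom{a+b}{n}=\sum_{k}\binom{a}{k}\binom{b}{n-k}$; compatibility with multiplication is $(1+T)^{ab}=\bigl((1+T)^b\bigr)^a$; the unit axiom is $(1+T)^1=1+T$; and distributivity over the group law is $\bigl((1+T)(1+T')\bigr)^a=(1+T)^a(1+T')^a$. Each amounts to commutativity of a diagram of the maps $m^*$, the multiplication dual $\mu^*$, the addition dual $\alpha^*$, and the comultiplication of $B$, and reduces to the corresponding power-series identity in the appropriate product algebra; by flatness these identities, checked on the static underlying rings, promote to identities of maps of analytic stacks. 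Assembling the addition and scalar maps with these compatibilities endows $1+\bb{D}^{<1}_{\Q_p}$ with the asserted $\Z_p^{la}$-module structure, with scalar action $(a,1+X)\mapsto(1+X)^a$ as claimed. The entire difficulty is concentrated in the convergence and representability bookkeeping of the second paragraph; once $\Theta$ is known to live in the correct solid tensor product, the axioms are formal consequences of the elementary binomial identities.
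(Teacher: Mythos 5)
Your proof is correct, but it takes the route the paper deliberately avoids: the author remarks that the lemma ``can be proven via a direct but tedious computation with power series'' and instead gives a soft argument, deducing the $\underline{\Z}_p$-module structure on $1+\bb{D}^{<1,\diamond}_{\Q_p}$ from the $\underline{\Q}_p$-vector space structure of the Banach--Colmez space $\n{BC}(\s{O}(1))$ (its universal cover under $x\mapsto x^p$), then upgrading $\underline{\Z}_p$ to $\Z_p^{la}$ by exhibiting both $\Z_p^{la}$ and $1+\bb{D}^{<1}_{\Q_p}$ as pullbacks over the analytic de Rham stack of $\bb{G}_{a,\Q_p}^{\an}$ via the \'etale logarithm. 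You instead carry out the direct computation: you write down the universal character $\Theta=\sum_n\binom{a}{n}\otimes T^n$, control its convergence using Amice's description $C^{la}(\Z_p,\Q_p)=\varinjlim_{r>1}A_r$ against $\s{O}(1+\bb{D}^{<1})=\varprojlim_{\rho<1}B_\rho$ (the estimate $(r\rho)^n$ with $r\rho<1$ is exactly right, and also shows $\Theta-1$ lands in the open unit disc), and reduce the module axioms to the Vandermonde and composition identities for binomial series, checked on static flat algebras by density of $\N$ in $\Z_p$ and promoted by the same left-Kan-extension device as in the earlier lemmas. Your approach is self-contained and elementary -- it needs no diamonds, arc-stacks, or de Rham stacks -- and it has the added virtue of producing the action in exactly the explicit Amice-transform form $(a,1+X)\mapsto(1+X)^a$ that is used immediately afterwards in the proof of the Cartier duality for $\Z_p^{la}$; the paper's argument is shorter and estimate-free but leans on substantially heavier machinery. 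The one place where you should be a little more careful is the passage from compatible elements of $A\otimes B_\rho$ to a map of stacks: since $1+\bb{D}^{<1}_{\Q_p}$ is not affinoid, the clean statement is that on each affinoid piece $\Z_p^{la}\times(1+\bb{D}^{\leq\rho}_{\Q_p})$ of the source one produces $\Theta$ in $A\otimes B_{\rho'}$ for some $\rho<\rho'<1$ with $\Theta-1$ of norm $r\rho'<1$, hence a map to $1+\bb{D}^{<1}_{\Q_p}$, compatibly in $\rho$; this sidesteps any commutation of the solid tensor product with the Fr\'echet limit, which is not covered by \Cref{RemarkBanachSolid} as stated.
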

\begin{proof}
This can be proven via a direct but tedious computation with power series. We give a soft argument via  Banach-Colmez spaces and the analytic de Rham stack of \cite{anschutz2025analytic}. Let $\n{BC}(\s{O})$ and $\n{BC}(\s{O}(1))$ denote the Banach-Colmez spaces of  $\s{O}$ and $\s{O}(1)$ seen as qfd arc-stacks over $\bb{F}_p$, see \cite[Definition 4.1.11]{anschutz2025analytic}. Consider their base change to $\n{M}_{\ob{arc}}(\Q_p)$, one has that 
\[
\n{BC}(\s{O})_{\n{M}_{\ob{arc}}(\Q_p)}= \underline{\Q}_p\times \n{M}_{\ob{arc}}(\Q_p) 
\] 
with $\underline{S}$ the arc-stack associated to a locally profinite set $S$, and that
\begin{equation}\label{eqBCSpacesIdentification}
\n{BC}(\s{O}(1))_{\n{M}_{\ob{arc}}(\Q_p)} =\varprojlim_{x\mapsto x^p} (1+\bb{D}^{<1,\diamond}_{\Q_p})
\end{equation}
is the perfection of the multiplicative open unit disc  around $1$. The space $\n{BC}(\s{O}(1))_{\n{M}_{\ob{arc}}(\Q_p)}$ has a natural module structure over $\underline{\Q}_{p,\n{M}_{\ob{arc}}(\Q_p)}=\n{BC}(\s{O})_{\n{M}_{\ob{arc}}(\Q_p)}$.  This endows $(1+\bb{D}^{<1,\diamond}_{\Q_p})$ with a natural $\underline{\Z}_{p,\n{M}_{\ob{arc}}(\Q_p)}$-module structure that we will show agrees with the one stated in the lemma.

 By \cite[Proposition II.22]{FarguesScholze} the identification \eqref{eqBCSpacesIdentification} sends a perfectoid ring $R$ over $\bb{F}_p$ to the group homomorphism  
\begin{equation}\label{eqAmmiceBCSpaces}
1+R^{<1}\xrightarrow{\sim}  \n{BC}(\s{O}(1))(R), \;\; 1+r \mapsto \sum_{i\in \bb{Z}} p^i[(1+r)^{p^{-i}}]
\end{equation}
where we see $ \n{BC}(\s{O}(1))(R) = \Gamma(\n{Y}_{R,(0,\infty)},\s{O})^{\varphi=p}$.   From this formula, we know that  the action of $\bb{Z}$ on $1+R^{<1}$ is by $n\cdot (1+r)\mapsto (1+r)^{n}$. By density, the map  \eqref{eqAmmiceBCSpaces} extends to a map of $\underline{\bb{Z}}_p(R)$-modules where the right term has the natural multiplication action, and the left term the action $a\cdot  (1+r)=(1+r)^a$.  Taking $1+r =((1+X)^{1/p^n})\in \varprojlim_{x\mapsto x^p} (1+\bb{D}^{<1}_{\Q_p})^{\diamond}$ as the coordinate, we deduce that the multiplication map $\underline{\bb{Z}}_p\times (1+\bb{D}_{\bb{Q}_p}^{<1})^{\diamond}\to (1+\bb{D}_{\bb{Q}_p}^{<1})^{\diamond}$ is given precisely by the Amice transform $(a,1+X)\mapsto (1+X)^{a}$.

 On the other hand, the  logarithm map $\log\colon 1+\bb{D}^{<1,\diamond}_{\Q_p}\to \bb{G}_{a,\Q_p}^{\an,\lozenge}$ is compatible with respect to the module structure of the morphism of rings $\underline{\Z}_{p,\n{M}_{\ob{arc}}(\Q_p)}\subset \bb{G}_{a,\Q_p}^{\an,\lozenge}$.  Passing to de Rham stacks, we see that the pullback (recall that the $\log$  map is \'etale)
\[
\begin{tikzcd}
1+\bb{D}^{<1}_{\Q_p} \ar[r] \ar[d]& 1+\bb{D}^{<1,\dR}_{\Q_p}   \ar[d] \\
\bb{G}_{a,\Q_p}^{\an} \ar[r] &  \bb{G}_{a,\Q_p}^{\an,\dR}
\end{tikzcd}
\]
is a module over the pullback of rings 
\[
\begin{tikzcd}
\Z_p^{la} \ar[r] \ar[d]&  \Z_{p,\Betti}   \ar[d] \\
\bb{G}_{a,\Q_p}^{\an} \ar[r] &  \bb{G}_{a,\Q_p}^{\an,\dR}
\end{tikzcd}
\]
proving what we wanted. 
\end{proof}

 \begin{proposition}\label{PropBasicZpVectCD}
 The pairing $\Psi\colon \bb{Z}_p^{la}\times (1+\bb{D}^{<1}_{\Q_p})\to \bb{G}_m$ given by the composite of the  Amice transform of \Cref{LemmaModuleStructureOpenGRoup} and the inclusion  $1+\bb{D}^{<1}_{\Q_p} \subset \bb{G}_m$ gives rise to $1$-categorical Cartier dualities in the kernel category $\ob{K}_{\ob{D},\AnSpec \Q_{p,\sol}}$
 \[
 [B\Z_p^{la}]_!\cong [1+\bb{D}_{\Q_p}^{< 1}]^* \mbox{ and } [\Z_p^{la}]_!\cong [B(1+\bb{D}_{\Q_p}^{< 1})]^* .
 \]
 \end{proposition}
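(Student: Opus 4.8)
The plan is to apply \Cref{CoroCartierDualityAnStkQAff} with the suave quasi-affine group $G = 1+\bb{D}^{<1}_{\Q_p}$ and the affinoid group $H = \bb{Z}_p^{la}$ over $S = \AnSpec \Q_{p,\sol}$. First I would verify the hypotheses. The group $G$ is a multiplicative translate inside $\bb{G}_m$ of the open unit disc $\bb{D}^{<1}_{\Q_p}$, which is quasi-affine and suave over $S$ (cf. the discussion preceding \Cref{PropBasicDiscVectCD}); hence $G$ is a suave quasi-affine grouplike commutative monoid, and by suaveness $e\colon S \to BG$ is a $!$-cover. The stack $H$ is affinoid by \Cref{DefZpla}, carries the group structure of $(\bb{Z}_p,+)$, and $S \to BH = B\bb{Z}_p^{la}$ is descendable by \Cref{LemmaZpladescent}.

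By \Cref{CoroCartierDualityAnStkQAff} (2) and (3), the Cartier dual $\Spec^S([BG]_!)$ is $0$-affine with underlying algebra $f_{\natural} 1 = f_! f^! 1$, where $f\colon G \to S$ is the structural map, and the Fourier--Mukai transform of $\Psi$ reduces the duality to proving that the induced map of $\Q_p$-modules $f_{\natural} 1 \to \s{O}(H) = C^{la}(\bb{Z}_p,\Q_p)$ is an equivalence. As in \Cref{PropBasicDiscVectCD}, after identifying $G \cong \bb{D}^{<1}_{\Q_p}$ by translating the group coordinate to the origin and writing $X$ for the disc coordinate, the module $f_{\natural} 1$ is the naive continuous $\Q_p$-dual of the Fr\'echet algebra $\s{O}(1+\bb{D}^{<1}_{\Q_p})$, which in the monomial basis $\{X^n\}$ reads $\{\sum_n a_n X^{n,\vee} : |a_n| r^n \to 0 \text{ for some } r > 1\}$.

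By \Cref{CoroCartierDualityAnStkQAff} (3) this map is cut out by the global section of $\Gamma(H \times G)$ determined by $\Psi$, namely the Amice kernel $(1+X)^a = \sum_n \binom{a}{n} X^n$; hence it sends $X^{n,\vee} \mapsto \binom{x}{n} \in C^{la}(\bb{Z}_p,\Q_p)$ and is exactly the Amice transform. Its bijectivity is the classical theorem of Amice \cite{MR188199}, equivalently Mahler's criterion that $\sum_n a_n \binom{x}{n}$ is locally analytic precisely when $|a_n| r^n \to 0$ for some $r > 1$. This yields the equivalence $\Spec^S([\bb{Z}_p^{la}]^*) \cong \Spec^S([B(1+\bb{D}^{<1}_{\Q_p})]_!)$, that is the duality $[\bb{Z}_p^{la}]_! \cong [B(1+\bb{D}^{<1}_{\Q_p})]^*$. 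The descendability of $S \to B\bb{Z}_p^{la}$ then lets me invoke \Cref{CoroCartierDualityAnStkQAff} (4) to bootstrap to the companion statement $[B\bb{Z}_p^{la}]_! \cong [1+\bb{D}^{<1}_{\Q_p}]^*$.

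The main obstacle will be making rigorous the identification of the six-functor dual $f_{\natural} 1$ with the naive topological dual of $\s{O}(1+\bb{D}^{<1}_{\Q_p})$ and matching the pairing-induced map with the Amice transform on the nose. This entails careful bookkeeping of the solid and Fr\'echet topologies and of the reflexivity implicitly used in passing between $C^{la}(\bb{Z}_p,\Q_p)$ and its bidual, precisely as in the disc case of \Cref{PropBasicDiscVectCD} but with the monomial basis replaced by the Mahler basis under Amice.
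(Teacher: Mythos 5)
Your proposal is correct and follows essentially the same route as the paper: apply \Cref{CoroCartierDualityAnStkQAff} with $G=1+\bb{D}^{<1}_{\Q_p}$ suave quasi-affine and $H=\bb{Z}_p^{la}$ affinoid (descendability of $S\to B\bb{Z}_p^{la}$ coming from \Cref{LemmaZpladescent}), and reduce everything to the classical Amice/Mahler duality between $C^{la}(\bb{Z}_p,\Q_p)$ and $\s{O}(1+\bb{D}^{<1}_{\Q_p})$. The paper handles the bidual/reflexivity bookkeeping you flag at the end by citing \cite[Theorem 3.40]{RRLocallyAnalytic}; otherwise the arguments coincide.
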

 \begin{proof}
 Let $g\colon 1+\bb{D}_{\Q_p}^{< 1}\to \AnSpec \Q_{p,\sol}$ We apply \Cref{CoroCartierDualityAnStkQAff}, where the only think to check is that the Amice transform  of \Cref{LemmaModuleStructureOpenGRoup} induces an equivalence  $g_{\natural} 1\xrightarrow{\sim} C^{la}(\Z_p,\Q_p)$, or dually via \cite[Theorem 3.40]{RRLocallyAnalytic}, that it induces an equivalence 
 \[
 \iHom_{\Q_p}(C^{la}(\Z_p,\Q_p), \Q_p) \cong \s{O}(1+\bb{D}^{<1}_{\Q_p})
 \]
 which is classical, see \cite[Theorem II.2.2]{MR2642404}. 
 \end{proof}

\subsubsection{Cartier duality for vector bundles over stacks} \label{ss:CartierVBStacks}

Next, we state our stacky Cartier duality theorem for all the previous incarnations of vector bundles. 

\begin{definition}\label{DefVect}
We let $\mathbf{Vect}$ be the algebraic stack (for the Zariski topology) sending a discrete animated ring $A$ to the anima of vector bundles on $A$. We will see $\mathbf{Vect}$ as an analytic stack via the functor \cite[Corollary 6.4.5]{SolidNotes}.
\end{definition}

\begin{lemma}\label{LemmaVectStructure}
Let $\GL_{n}$  be the general linear algebraic group over $\Z$ of rank $n$, and let $\ob{St}_n$ be the standard representation of $\GL_{n}$ seen as a vector bundle over $B \GL_{n}$. Then the natural map 
\[
\bigsqcup_{n} B\GL_n \to \mathbf{Vect}
\]
is an equivalence of algebraic stacks. 
\end{lemma}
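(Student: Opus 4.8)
The plan is to mirror the proof of \Cref{TheoCartoerDualityTori}~(1) for lattices: the statement reduces to the Zariski-local triviality of vector bundles together with the identification of the automorphisms of the trivial bundle.

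First I would decompose both sides according to the rank. The rank of a vector bundle is a locally constant function on the source, so as a Zariski stack $\mathbf{Vect}$ splits as a coproduct $\mathbf{Vect}=\bigsqcup_{n\in \N} \mathbf{Vect}_n$ of its (open and closed) rank-$n$ substacks, and the natural map carries the component $B\ob{GL}_n$ into $\mathbf{Vect}_n$ since the standard representation $\ob{St}_n$ has rank $n$. It therefore suffices to prove that for each $n$ the induced map $B\ob{GL}_n\to \mathbf{Vect}_n$ is an equivalence.

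Next I would exhibit a common atlas. The trivial bundle $\ob{St}_n=\s{O}^n$ defines a map $s\colon \Spec \Z\to \mathbf{Vect}_n$, which factors as the composite of the tautological atlas $\Spec \Z\to B\ob{GL}_n$ with the map $B\ob{GL}_n\to \mathbf{Vect}_n$ under consideration. Since any rank-$n$ vector bundle over a ring $A$ is, Zariski-locally on $\Spec A$, free of rank $n$ and hence isomorphic to $\s{O}^n$, the map $s$ is an epimorphism of stacks for the Zariski topology. Computing the \v{C}ech nerve of $s$, the fiber product $\Spec \Z\times_{\mathbf{Vect}_n}\Spec \Z$ is the sheaf $\underline{\ob{Isom}}(\s{O}^n,\s{O}^n)$ of automorphisms of the trivial rank-$n$ bundle, which is exactly $\ob{GL}_n$; more generally the \v{C}ech nerve is the simplicial stack $\ob{GL}_n^{\times \bullet}$ whose faces and degeneracies are those of the bar construction of the group $\ob{GL}_n$. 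As $\mathbf{Vect}_n$ is a stack, it is the geometric realization of this \v{C}ech nerve, which is by definition $B\ob{GL}_n$; tracking the identifications shows that the induced map is the resulting equivalence.

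The only point that requires care — and the main obstacle — is verifying that this identification of the \v{C}ech nerve with the bar construction is compatible with the full simplicial structure, i.e.\ that the groupoid of descent data for $s$ recovers the defining $\ob{GL}_n$-action on a point rather than some twist of it. Concretely, one must match the composition of gluing isomorphisms of trivial bundles with the multiplication of the group scheme $\underline{\ob{Isom}}(\s{O}^n,\s{O}^n)=\ob{GL}_n$. Once the two augmented simplicial objects are identified, the equivalence follows formally from descent, exactly as in the lattice case of \Cref{TheoCartoerDualityTori}.
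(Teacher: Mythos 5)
Your proposal is correct and is essentially the paper's own argument: the paper likewise deduces surjectivity from Zariski-local triviality of vector bundles and the identification of the map on automorphisms from $\ob{Aut}(\s{O}^n)=\ob{GL}_n$, exactly as in the lattice case of \Cref{TheoCartoerDualityTori}. The compatibility of the \v{C}ech nerve with the bar construction that you flag as the main obstacle is a routine verification that the paper does not spell out either.
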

\begin{proof}

The rank of a vector bundle on a scheme $X$ is a locally constant function for the Zariski topology. This produces a natural disjoint union decomposition 
\[
\mathbf{Vect}=\bigsqcup_{n} \mathbf{Vect}^{\ob{rk}=n}.
\]
For each $n$, let $\psi\colon \Spec \bb{Z}\to \mathbf{Vect}^{\ob{rk}=n}$ denote the trivial rank $n$-vector bundle $\bb{A}^n_{\bb{Z}}$ over $\bb{Z}$. We claim that $\psi$ is surjective, and that the group of automorphisms is precisely $\GL_n$, this would yield the isomorphism $B\GL_n\xrightarrow{\sim}  \mathbf{Vect}^{\ob{rk}=n}$, where the universal rank $n$-vector bundle over $B\GL_n$ is precisely the quotient $\GL_n\backslash \bb{A}^n_{\bb{Z}}$ with respect to the standard left action of $\GL_n$ on $\bb{A}^n_{\bb{Z}}$.\footnote{We note that the choice of the left standard representation is just a convention.} 

 The surjectivity follows from the fact that any vector bundle $V$ over an animated ring $A$ is isomorphic to a trivial vector bundle $A^n$ (for $n$ the locally constant rank of $V$) locally in the Zariski topology. The injectivity follows from the fact that the automorphisms of the rank $n$  trivial vector  bundle is precisely $\GL_n$ by definition. 
\end{proof}


For stating the  main stacky Cartier duality theorem of this paper it is convenient to briefly  discuss what the \textit{transmutation} along a ring stack is.

\begin{construction}[{\cite[Remark 2.3.8]{BhattGauges}}]\label{ConstructionTransmutation}
Let $S\in \cat{AnStk}$ be a based analytic stack. A ring stack  $\n{R}/S$  is a functor 
\[
\n{R}\in \ob{Fun}_{\Sigma}(\cat{Poly}^{\ob{ft},\op}, \cat{AnStk}_{/S})
\]
preserving finite   products, where $\cat{Poly}^{\ob{fy},\op}$ is the category of polynomial algebras over $\bb{Z}$ in finitely many variables.  Equivalently, $\n{R}$ is a sheaf $\n{R}\colon \cat{AnStk}_{/S}\to \cat{Ring}$ where $\cat{Ring}$ is the category of animated rings.  Indeed, given a presentable category $\n{C}$, the category of   $\n{C}$-valued sheaves on $\cat{AnStk}_{/S}$  is precisely  the category $\ob{Fun}^R(\cat{AnStk}_{/S}^{\op}, \n{C})$ of limit preserving functors  from $\cat{AnStk}_{/S}^{\op}\to \n{C}$. Now, since $\cat{Ring}$ is the animation of $\cat{Poly}^{\ob{ft}}$, we have a natural equivalence of categories 
\[
\cat{Ring}=\ob{Fun}_{\Sigma}(\cat{Poly}^{\ob{ft},\op} , \cat{Ani}).
\]
Thus, an easy adjunction shows that 
\[
\begin{aligned}
\ob{Fun}^R(\cat{AnStk}_{/S}^{\op}, \cat{Ring} ) & = \ob{Fun}^R(\cat{AnStk}_{/S}^{\op}, \ob{Fun}_{\Sigma}(\cat{Poly}^{\ob{ft},\op} , \cat{Ani})) \\ 
&  = \ob{Fun}_{\Sigma}(\cat{Poly}^{\ob{ft},\op} ,   \ob{Fun}^R(\cat{AnStk}_{/S}^{\op}, \cat{Ani})) \\ 
& =  \ob{Fun}_{\Sigma}(\cat{Poly}^{\ob{ft},\op} ,  \cat{AnStk}_{/S})
\end{aligned}
\]
where in the last equivalence we used that $\cat{Ani}$-valued sheaves on a topos $\n{X}$ is $\n{X}$ itself.

Let  $\n{R}/S$ be a ring stack over  $S$,  let $\cat{Aff}=\cat{Ring}^{\op}$ denote the category of  affine schemes and $\n{P}(\cat{Aff})=\ob{Fun}(\cat{Ring}, \cat{Ani})$ its category of presheaves. We can produce a natural   \textit{transmutation functor}
\begin{equation}\label{eqTransmutationFunctor}
(-)^{\n{R}}\colon \n{P}(\cat{Aff})\to \cat{AnStk}_{/S}  
\end{equation}
sending a prestack $F$ to the sheafification for the topology of analytic stacks to the presheaf sending a map of analytic stacks  $X\to S$ to $F(\n{R}(X))$. Since limits and colimits in presheaves are computed pointwise, and sheafification commutes with finite limits and colimits, the functor $(-)^{\n{R}}$ also commutes with finite limits and colimits.

 Given a Grothendieck topology $\n{T}$ on $\cat{Aff}$, we say that \textit{$\n{R}$ satisfies $\n{T}$-descent} if the functor \eqref{eqTransmutationFunctor}, which is a pullback map of topoi,  factors through the category of $\n{T}$-sheaves 
 \begin{equation}\label{eqooo3jlnkenkankasse4rrrtoposiiiii}
 \n{P}_{\n{T}}(\cat{Aff})\to \cat{AnStk}_{/S}. 
 \end{equation}
 If $\n{R}$ satisfies $\n{T}$-descent, the  functor \eqref{eqooo3jlnkenkankasse4rrrtoposiiiii} is also left exact and colimit preserving by definition. 

\end{construction}

\begin{example}\label{ExampleRingStackZariskiDescent}
The following are examples of ring stacks satisfying (at least!) Zariski descent:

\begin{enumerate}
\item  The algebraic affine line $\bb{G}_{a}\to \AnSpec \Z^{\cond}$.

\item  The solid affine line $\bb{G}_{a,\sol}\to \AnSpec \Z_{\sol}$.

 \item The  overconvergent closed disc  $\bb{D}^{\leq 1}_{R}\to \AnSpec R$ of norm $\leq 1$, with $R=\Z((\pi))$  endowed with the induced solid structure.
 
 \item The analytic affine line $\bb{G}_{a,R}^{\an}\to \AnSpec R$ with $R$ as in (3).
 
 \item The locally analytic $p$-adic integers $\bb{Z}_p^{la}\to \AnSpec \Q_{p,\sol}$.

\end{enumerate}

\end{example}

\begin{theorem}\label{TheoCartierDualityVectorBundles}
Let $\n{R}/S$ be one of the ring stacks of \Cref{ExampleRingStackZariskiDescent}. Let $V^{\n{R}}$ be the transmutation of the universal vector bundle over $\mathbf{Vect}^{\n{R}}$ and let $V^{\vee,\n{R}}$ be the transmutation of its dual.

\begin{enumerate}

\item Let $\n{R}/S=\bb{G}_{a}/\AnSpec \Z^{\cond}$ and let us denote $V\otimes_{\bb{G}_a} \bb{G}_a^{\sharp}=V^{\sharp}$ and $V\otimes_{\bb{G}_a} \widehat{\bb{G}}_a=\widehat{V}$. The pairing 
\[
V^{\sharp}\otimes_{\bb{G}_{a}} \widehat{V}^{\vee} \xrightarrow{\langle -,- \rangle} \bb{G}_{a}^{\sharp}\otimes_{\bb{G}_a} \widehat{\bb{G}}_a\xrightarrow{\exp} \bb{G}_m
\]
of \Cref{ConstructionExponential} and  \Cref{RemarkStackLinear} gives rise to $1$-categorical Cartier dualities in the kernel category $\ob{K}_{\ob{D}, \mathbf{Vect}}$
\[
[BV^{\sharp}]_!\cong [\widehat{V}^{\vee}]^* \mbox{ and } [V^{\sharp}]_!\cong [B\widehat{V}^{\vee}]^*
\]
in the sense of  \Cref{DefinitionCartierDuals}.

\item Let $\n{R}/S=\bb{D}^{\leq 1}_{R}/\AnSpec R$ be as in \Cref{ExampleRingStackZariskiDescent} (3). Given an algebraic stack $X$ over $\Z$ we let $X^{\leq 1}$ denote the transmutation along $\bb{D}_R^{\leq 1}$.  For $r\in (0,\infty)$ let us denote $V^{\sharp,\leq r}:=V^{\leq 1}\otimes_{\bb{D}^{\leq 1}_R} \bb{D}^{\sharp, \leq r}_{R}$ and $V^{<r}:=V^{\leq 1}\otimes_{\bb{D}^{\leq 1}_R} \bb{D}^{<r}_{R}$ as $\bb{D}^{\leq 1}_R$-modules over $\mathbf{Vect}^{\leq 1}$. The pairing 
\[
V^{\sharp,\leq r}\otimes_{\bb{D}_{a,R}^{\leq 1}} V^{\vee,<1/r} \xrightarrow{\langle -,- \rangle} \bb{D}_{a,R}^{\sharp,\leq r}\otimes_{\bb{D}_{R}^{\leq 1/r}} \bb{D}_a^{<1/r} \xrightarrow{\exp} \bb{G}_m
\]
of \Cref{ConstructionPairingDiscCase} gives rise to $1$-categorical Cartier dualities in the kernel category $\ob{K}_{\ob{D},\cat{Vect}^{\leq 1}}$
\[
[BV^{\sharp,\leq r}]_!\cong [V^{\vee,<1/r}]^* \mbox{ and } [V^{\sharp,\leq r}]_!\cong [BV^{\vee,<1/r}]^*.
\]

\item Let $\n{R}/S$ be any of the rings \Cref{ExampleRingStackZariskiDescent}  (2), (4) and (5).  Following \Cref{ExSolidVectorBundle,ExAnalyticVectorBundle,ExZpLAvectorbundle},  the ring stack $\n{R}$ admits a Cartier dual $\bb{D}(\n{R})/S$ having the structure of an $\n{R}$-module and endowed with an exponential map $\exp\colon \bb{D}(\n{R})\to \bb{G}_m$, such that the Cartier duality pairing between $\n{R}$ and $\bb{D}(\n{R})$ is the composite of the multiplication and the exponential
\[
\n{R}\times \bb{D}(\n{R})\to \bb{D}(\n{R})\to \bb{G}_m.
\]
Explicitly, we have  $\bb{D}(\bb{G}_{a,\sol})=\widehat{\bb{G}}_{a,\sol}^{\sharp}$, $\bb{D}(\bb{G}_{a,R}^{\an})=\bb{G}_{a}^{\sharp,\dagger}$ and $\bb{D}(\bb{Z}_p^{la})=1+\bb{D}_{\Q_p}^{<1}$. Then the pairing 
\[
V^{\n{R}}\otimes_{\n{R}} (V^{\vee,\n{R}}\otimes_{\n{R}}  \bb{D}(R))\xrightarrow{\langle -,- \rangle} \bb{D}(R)\xrightarrow{\exp} \bb{G}_m
\]
gives rise to $1$-categorical Cartier dualities in the category of kernels $\ob{K}_{\ob{D},\mathbf{Vect}^{\n{R}}}$
\[
[B(V^{\vee,\n{R}}\otimes_{\n{R}} \bb{D}(R))]_!\cong [V^{\n{R}}]^* \mbox{ and } [V^{\vee,\n{R}}\otimes_{\n{R}} \bb{D}(R)]_!\cong [BV^{\n{R}}]^*.
\] 

\end{enumerate}

\end{theorem}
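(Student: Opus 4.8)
The plan is to run, in all three cases simultaneously, the same descent argument already used for \Cref{TheoCartoerDualityTori}: reduce the stacky duality to the trivial rank-one duality over the base $S$, where it is exactly one of the basic propositions \Cref{PropBasicAlgVectCD,PropBasicDiscVectCD,PropBasicSolidVectCD,PropBasicAnVectCD,PropBasicZpVectCD}. The only case-dependent ingredient is the rank-one statement; everything else is formal and uniform across the five geometric settings.

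First I would use \Cref{LemmaVectStructure}, which gives $\mathbf{Vect}\cong \bigsqcup_{n\in \N} B\ob{GL}_n$ and hence, after transmuting along $\n{R}$, a decomposition $\mathbf{Vect}^{\n{R}}\cong \bigsqcup_{n\in \N} B\ob{GL}_n^{\n{R}}$. Because each $\ob{GL}_n^{\n{R}}$ is suave over $S$, the tautological trivialising map
\[
\pi\colon \bigsqcup_{n\in \N} \AnSpec S\longrightarrow \mathbf{Vect}^{\n{R}}
\]
is a $!$-cover, so it satisfies universal $*$-descent and the pullback $\pi^*_1$ on kernel categories is conservative. By \Cref{RemarkDescentBaseChange} it therefore suffices to prove that the Fourier-Mukai transform attached to the exponential pairing becomes an equivalence after applying $\pi^*_1$; and since $\pi^*_1$ is symmetric monoidal and carries Fourier-Mukai transforms to Fourier-Mukai transforms, it sends our pairing to the pullback pairing over each component $\AnSpec S$.

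Over the $n$-th component the universal bundle trivialises, $V^{\n{R}}\cong \n{R}^{n}$, and all of the auxiliary modifications (the $\sharp$-, $\dagger$- and $<r$-neighbourhoods, together with the transmutation and the formation of duals) commute with finite direct sums of free $\n{R}$-modules. Consequently the evaluation pairing becomes $\sum_{i=1}^{n} x_i y_i$ post-composed with $\exp$, i.e. the orthogonal sum of $n$ copies of the rank-one pairing. Since the $*$- and $!$-realizations are symmetric monoidal and are exchanged by Cartier duality (\Cref{ConstructionCartierDualityKernel,PropGL1restriction}), the Fourier-Mukai transform of this product is the tensor product of the $n$ individual rank-one Fourier-Mukai transforms; hence it is an equivalence as soon as the rank-one transform is. That rank-one equivalence is \Cref{PropBasicAlgVectCD} in case (1), \Cref{PropBasicDiscVectCD} in case (2), and \Cref{PropBasicSolidVectCD,PropBasicAnVectCD,PropBasicZpVectCD} in the three instances of case (3); each basic proposition already supplies both displayed dualities at the rank-one level (both being produced together by \Cref{CoroCartierDualityAnStkQAff} (4)). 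Invoking conservativity of $\pi^*_1$ once more descends both equivalences back to $\ob{K}_{\ob{D},\mathbf{Vect}^{\n{R}}}$, which is the assertion.

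The main obstacle I anticipate is the very first step, namely confirming that $\pi$ is a bona fide $!$-cover with universal $*$-descent, and that the relevant classifying stacks $BV^{\sharp}$, $B\widehat{V}^{\vee}$, $BV^{\sharp,\leq r}$ and $B(V^{\vee,\n{R}}\otimes_{\n{R}}\bb{D}(\n{R}))$ remain $!$-able over $\mathbf{Vect}^{\n{R}}$. Suaveness of $\ob{GL}_n^{\n{R}}$ handles the cover, while the required $!$-ability reduces, by descent along $\pi$, to the descendability of $\AnSpec S\to B(-)$ on each trivial fibre, which is exactly \Cref{LemmaShierkBGasharp,LemmaModuleStructureDisc,LemmaDescendableGasolidSharp,LemaDescetGadagger,LemmaZpladescent}; one just has to check that descendability is stable under the base change furnished by $\pi$, after which the product decomposition above closes the argument.
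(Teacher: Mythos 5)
Your proposal is correct and follows essentially the same route as the paper: the paper's proof likewise reduces, via $\ob{D}^*$-descent along the trivializing cover of $\mathbf{Vect}^{\n{R}}$ (using \Cref{LemmaVectStructure} and \Cref{RemarkDescentBaseChange}), to the trivial rank-one case, which is then exactly \Cref{PropBasicAlgVectCD,PropBasicSolidVectCD,PropBasicDiscVectCD,PropBasicAnVectCD,PropBasicZpVectCD}. The extra care you take about the external-product decomposition of the Fourier--Mukai kernel and the $!$-ability of the classifying stacks is only making explicit what the paper leaves implicit.
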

\begin{proof}
Following the same  argument of \Cref{TheoCartoerDualityTori} of  $\ob{D}^*$-descent on the basis $\mathbf{Vect}^{\n{R}}$,  it suffices to show the simplest case of Cartier duality when $V$ is a trivial vector bundle of rank $1$ in any of the cases of   \Cref{ExSolidVectorBundle,ExDiscVectorBundle,ExAnalyticVectorBundle,ExZpLAvectorbundle}, this reduces to  \Cref{PropBasicAlgVectCD,PropBasicSolidVectCD,PropBasicDiscVectCD,PropBasicAnVectCD,PropBasicZpVectCD} respectively. 
\end{proof}

\begin{remark}\label{RemarkCartierDualityObservationsProblems}
We continue the discussion of \Cref{RemarkCartierDualityOtherVectorBundles}. The reason why \Cref{TheoCartierDualityVectorBundles} is divided in three different points is the fact that the Cartier dual of $\bb{G}_a$  as a Gestalten  is not classical unless one works  in characteristic zero, in which case $\bb{G}_{a,\bb{Q}}=\bb{G}_{a,\bb{Q}}^{\sharp}$ and the Cartier dual is computed in  (1) of \textit{loc. cit.}  I would expect that the similar problem occurs for the stack $\bb{D}^{\leq 1}_{R}$ over an arbitrary characteristic, and that only after a suitable localization of $R$  in characteristic zero the Cartier dual of   $\bb{D}^{\leq 1}_{R}$ can be computed as an analytic stack; this is the case for the specialization to $\bb{Q}_p$, in which case we have an equivalence $\bb{D}^{\sharp,\leq 1}_{\bb{Q}_p}= \bb{D}^{\leq p^{1/(p-1)} }_{\bb{Q}_p}$ by \Cref{RemarkBaseChangeQp}. 
\end{remark}

\subsection{Cartier duality for gerbes}\label{ss:CDGerbesVB}

In this section we discuss a general Cartier duality for gerbes on analytic stacks. We start with some general discussion on commutative  gerbes. Let $\n{X}$ be an $\infty$-topos and let $\ob{Shv}(\n{X}, \ob{Sp}_{\geq 0})$ be the category of sheaves of connective spectra on $\n{X}$ or, equivalently, grouplike commutative monoids on $\n{X}$.  We let $*_{\n{X}}$ denote the final object of $\n{X}$.  We see $\ob{Shv}(\n{X}, \ob{Sp}_{\geq 0})$ as a symmetric monoidal category via the smashing tensor product.   Let $R\in \ob{CAlg}(\ob{Shv}(\n{X}, \ob{Sp}_{\geq 0}))$ and let $\ob{Mod}_{R,\geq 0}(\n{X})$ be the category of $R$-modules on $\ob{Shv}(\n{X}, \ob{Sp}_{\geq 0})$.

 Let $M,N,\omega \in \ob{Mod}_{R,\geq 0}(\n{X})$  and suppose we are given with an $R$-linear pairing  $\Psi\colon M\otimes_R N\to \omega$.  Consider the slice topos $\n{Y}:= \n{X}_{/B^2M}$, and for $X\in \n{X}$ let $X_{\n{Y}}=X\times B^2M$ be the pullback to $\n{Y}$. The object $R_{\n{Y}}$ has a natural structure of commutative ring on the topos $\n{Y}$, the objects  $M_{\n{Y}}, N_{\n{Y}},\omega_{\n{Y}}$ are naturally $R_{\n{Y}}$-modules, and we have an $R_{\n{Y}}$-linear pairing $\Psi \colon M_{\n{Y}}\otimes_{R_{\n{Y}}} N_{\n{Y}}\to \omega_{\n{Y}}$.  We have a natural section $*_{\n{Y}}\to B^2 M_{\n{Y}}$ given by the diagonal map of $B^2M$, this maps extends to a morphism  $\phi\colon R_{\n{Y}}\to B^2M_{\n{Y}}$ of $R_{\n{Y}}$-modules. Tensoring $\phi$ with  $N_{\n{Y}}$ and  composing with $\Psi$ we get a morphism $\psi\colon N_{\n{Y}}\to B^2 \omega_{\n{Y}}$ of $R_{Y}$-modules fitting in a commutative diagram  
 \begin{equation}\label{eqpmaosnoapsdmasf}
 \begin{tikzcd}
 B^2M_{\n{Y}}\otimes_{R_{\n{Y}}}  N_{\n{Y}}  \ar[r,"\Psi"] & B^2\omega_{\n{Y}} \\ 
 N_{\n{Y}}\ar[u,"{\phi\otimes \id_N}"] \ar[ru,"\psi"'] & 
 \end{tikzcd}
 \end{equation}

\begin{construction}\label{DefinitionNotationGerbes}
Keep the previous notation. We construct the $R_{\n{Y}}$-modules $\s{N}_{\psi}$ and $\s{M}$ as  the pullbacks 
\[
\begin{tikzcd}
\s{N}_{\psi} \ar[r] \ar[d]  & *_{\n{Y}} \ar[d] & &  \s{M} \ar[r]\ar[d] &  *_{\n{Y}}  \ar[d ]\\ 
 N_{\n{Y}} \ar[r,"\psi"] & B^2\omega_{\n{Y}} & & R_{\n{Y}} \ar[r,"\phi"] & B^2M_{\n{Y}}
\end{tikzcd}
\]
(from the construction it is clear that $\s{N}_{\Psi}$ does depend on the pairing $\Psi$ while $\s{M}$ only depends on the $R$-module $M$).  
\end{construction} 

\begin{lemma}\label{LemmaPairingKey}
Keep the previous notation. There is a natural pairing of $R_{\n{Y}}$-modules $\s{N}_{\psi}\otimes_{R_{\n{Y}}} \s{M}\to B\omega_{\n{Y}}$ fitting in a  commutative diagram   of $R_{\n{Y}}$-modules
\[
\begin{tikzcd}
B\omega_{\n{Y}}\otimes_{R_{\n{Y}}} \s{M} \ar[d] \ar[r] & B\omega_{Y}\otimes_{R_{\n{Y}}} R_{\n{Y}} \ar[d,"\id"] \\
\s{N}_{\psi}\otimes_{R_{\n{Y}}} \s{M} \ar[r] & B\omega_{\n{Y}} \\
\s{N}_{\psi} \otimes_{R_{\n{Y}}} BM_{\n{Y}} \ar[u]  \ar[r]  &  N\otimes_{R_{\n{Y}}} BM_{\n{Y}}  \ar[u,"\Psi"']
\end{tikzcd}
\]
where the left vertical maps and  upper and lower  horizontal maps arise from the fiber squares of  \Cref{DefinitionNotationGerbes}.
\end{lemma}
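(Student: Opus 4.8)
The plan is to carry out the whole argument in the stable category $\ob{Mod}_{R_{\n{Y}}}$ of sheaves of $R_{\n{Y}}$-module spectra on $\n{Y}$, where fibres and cofibres agree, where $-\otimes_{R_{\n{Y}}}-$ is exact in each variable and commutes with the shifts $B^k$, and where all the objects in sight ($\s{N}_\psi$, $\s{M}$, $B\omega_{\n{Y}}$, $BM_{\n{Y}}$ and their relative tensors) are connective, so that any identity proved there restricts to $\ob{Mod}_{R_{\n{Y}},\geq 0}$. I record notation: let $p_{\s{N}}\colon\s{N}_\psi\to N_{\n{Y}}$ and $i_{\s{N}}\colon B\omega_{\n{Y}}\to\s{N}_\psi$ be the maps in the fibre sequence $B\omega_{\n{Y}}\xrightarrow{i_{\s{N}}}\s{N}_\psi\xrightarrow{p_{\s{N}}}N_{\n{Y}}\xrightarrow{\psi}B^2\omega_{\n{Y}}$ coming from $\s{N}_\psi=\fiber(\psi)$, and similarly $p_{\s{M}}\colon\s{M}\to R_{\n{Y}}$, $i_{\s{M}}\colon BM_{\n{Y}}\to\s{M}$ for $BM_{\n{Y}}\xrightarrow{i_{\s{M}}}\s{M}\xrightarrow{p_{\s{M}}}R_{\n{Y}}\xrightarrow{\phi}B^2M_{\n{Y}}$ from \Cref{DefinitionNotationGerbes}. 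The single relation driving everything is the factorisation $\psi\simeq B^2\Psi\circ(\phi\otimes\id_{N_{\n{Y}}})$ recorded in diagram \eqref{eqpmaosnoapsdmasf}.

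To construct the pairing, first I would tensor the fibre sequence $\s{M}\to R_{\n{Y}}\xrightarrow{\phi}B^2M_{\n{Y}}$ with $\s{N}_\psi$, identifying $\s{N}_\psi\otimes_{R_{\n{Y}}}\s{M}\simeq\fiber(\id\otimes\phi)$ with projection $\id\otimes p_{\s{M}}$. Then set $\beta:=B^2\Psi\circ(p_{\s{N}}\otimes\id)\colon\s{N}_\psi\otimes_{R_{\n{Y}}}B^2M_{\n{Y}}\to B^2\omega_{\n{Y}}$. By the interchange law and the factorisation of $\psi$, the composite $\beta\circ(\id\otimes\phi)$ is canonically identified with $\psi\circ p_{\s{N}}$, which is null via the tautological null-homotopy attached to $\s{N}_\psi=\fiber(\psi)$. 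Hence the square with top edge $\id\otimes\phi$, right edge $\beta$, bottom edge the basepoint $*_{\n{Y}}\to B^2\omega_{\n{Y}}$ and left edge the terminal map commutes, and taking horizontal fibres — $\fiber(\id\otimes\phi)=\s{N}_\psi\otimes_{R_{\n{Y}}}\s{M}$ on top, $\fiber(*_{\n{Y}}\to B^2\omega_{\n{Y}})=\Omega B^2\omega_{\n{Y}}=B\omega_{\n{Y}}$ on the bottom — produces the pairing $\mu\colon\s{N}_\psi\otimes_{R_{\n{Y}}}\s{M}\to B\omega_{\n{Y}}$. Every map involved is $R_{\n{Y}}$-linear, hence so is $\mu$.

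The lower square of the lemma I would get for free by rotation. Rotating the map of fibre sequences defining $\mu$ one step to the left exhibits $\mu\circ(\id\otimes i_{\s{M}})$ as the map induced on the loops of the third terms, namely $\Omega\beta\colon\s{N}_\psi\otimes_{R_{\n{Y}}}BM_{\n{Y}}\to B\omega_{\n{Y}}$; here one uses that the connecting map of the rotated top sequence is exactly $\id\otimes i_{\s{M}}$ and that of the bottom is $\id_{B\omega_{\n{Y}}}$. Since $\beta=B^2\Psi\circ(p_{\s{N}}\otimes\id)$ and $\Omega$ commutes with $\otimes_{R_{\n{Y}}}$ and with $B^2\Psi$, one computes $\Omega\beta\simeq B\Psi\circ(p_{\s{N}}\otimes\id_{BM_{\n{Y}}})$, which is precisely $\Psi\circ(p_{\s{N}}\otimes\id)$. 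This is the bottom square.

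Finally, the upper square expresses the restriction of $\mu$ along $i_{\s{N}}\otimes\id_{\s{M}}$. The construction of $\mu$ is natural in the first variable: for any $R_{\n{Y}}$-linear $g\colon W\to\s{N}_\psi$ it yields $\mu_W\colon W\otimes_{R_{\n{Y}}}\s{M}\to B\omega_{\n{Y}}$ with $\mu\circ(g\otimes\id)\simeq\mu_W$, obtained by replacing $p_{\s{N}}$ with $p_{\s{N}}\circ g$ and transporting the null-homotopy. Taking $g=i_{\s{N}}$ and using $p_{\s{N}}\circ i_{\s{N}}\simeq 0$ reduces the claim to the identification $\mu_{B\omega_{\n{Y}}}\simeq\id\otimes p_{\s{M}}$. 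The hard part will be exactly this identification: now $\beta_{B\omega_{\n{Y}}}=B^2\Psi\circ((p_{\s{N}}i_{\s{N}})\otimes\id)$ is itself null, so two a priori different null-homotopies of $\beta_{B\omega_{\n{Y}}}\circ(\id\otimes\phi)$ are in play — the one transported from $\psi\circ p_{\s{N}}\simeq 0$, which governs $\mu$, and the one coming from $p_{\s{N}}i_{\s{N}}\simeq 0$ — and $\mu_{B\omega_{\n{Y}}}$ is measured precisely by their difference, regarded as a self-homotopy, i.e. a map into $\Omega B^2\omega_{\n{Y}}=B\omega_{\n{Y}}$. I would show this difference is the tautological loop, so that $\mu_{B\omega_{\n{Y}}}\simeq\id_{B\omega_{\n{Y}}}\otimes p_{\s{M}}$, by unwinding the definition of the connecting map $i_{\s{N}}$ of $\s{N}_\psi=\fiber(\psi)$ — the one place where genuine coherence of homotopies, rather than a formal rotation, is needed. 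With both squares commuting, naturality of all the constructions in the $\infty$-topos $\n{X}$ yields the asserted naturality of the pairing.
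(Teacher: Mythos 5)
Your proposal is correct and follows essentially the same route as the paper: both construct the pairing as the map induced on fibres by the factorisation $\psi\simeq B^2\Psi\circ(\phi\otimes \id_{N_{\n{Y}}})$, the paper phrasing this via $\iHom_{R_{\n{Y}}}(-,B^2\omega_{\n{Y}})$ applied to the cofibre sequence $N_{\n{Y}}\to B^2\omega_{\n{Y}}\to B\s{N}_{\psi}$ and mapping the fibre sequence $\s{M}\to R_{\n{Y}}\to B^2M_{\n{Y}}$ into it, while you tensor that same fibre sequence with $\s{N}_{\psi}$ and map into $B\omega_{\n{Y}}\to *\to B^2\omega_{\n{Y}}$ --- two adjoint packagings of one construction. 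The single step you flag but defer (identifying the difference of the two null-homotopies with the tautological loop, to establish the upper square) is a standard fibre-sequence compatibility, and your treatment of the two squares is if anything more explicit than the paper's, which leaves their commutativity implicit in the morphism of fibre sequences.
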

\begin{proof}
The map $N_{\n{Y}}\to B^2\omega_{\n{Y}}$ is an epimorphism as already $*_{\n{Y}}\to B^2\omega_{\n{Y}}$ is so. Then,  we have  a  fiber/cofiber sequence $N_{\n{Y}}\xrightarrow{\psi} B^2\omega_{\n{Y}}\to B\s{N}_{\psi}$ of $R_{\n{Y}}$-modules. Taking  $\iHom_{R_{\n{Y}}}(-,B^2\omega_{\n{Y}})$  in connective $R_{\n{Y}}$-modules we obtain a fiber sequence 
\[
\iHom_{\n{R}_{\n{Y}}}(B\s{N}_{\psi}, B^2\omega_{\n{Y}})  \to \iHom_{\n{R}_{\n{Y}}}(B^2\omega_{\n{Y}}, B^2\omega_{\n{Y}})  \to \iHom_{\n{R}_{\n{Y}}}(N_{\n{Y}}, B^2\omega_{\n{Y}}). 
\]
By \eqref{eqpmaosnoapsdmasf} we have a commutative diagram  (depicted by solid arrows) that extends to a morphism of fiber sequences (depicted by dashed arrows)
\[
\begin{tikzcd}
\s{M} \ar[d,dashed] \ar[r,dashed] & \iHom_{\n{R}_{\n{Y}}}(B\s{N}_{\psi}, B^2\omega_{\n{Y}})  \ar[d,dashed] \\
R_{\n{Y}} \ar[r] \ar[d,"\phi"] & \iHom_{\n{R}_{\n{Y}}}(B^2\omega_{\n{Y}}, B^2\omega_{\n{Y}}) \ar[d] \\ 
B^2 M_{\n{Y}} \ar[r] & \iHom_{\n{R}_{\n{Y}}}(N_{\n{Y}}, B^2\omega_{\n{Y}})
\end{tikzcd}
\]
where the lower horizontal map is induced by the pairing $\Psi$, and the middle map is induced by the identity of $B^2\omega_{\n{Y}}$.  This produces a  pairing $\s{M}\otimes_{R_{\n{Y}}} B\s{N}_{\psi}\to B^2\omega_{\n{Y}}$ whose loops is the desired pairing of the statement. 
\end{proof}

\begin{proposition}\label{PropDualityForGerbes}
Keep the convention of analytic stacks of \Cref{ss:AnStk}.  Let $S\in \n{C}\subset \cat{AnStk}$ be an analytic stack, let $\n{R}/S$ be a $!$-able ring stack over $S$, let $M,N/S$ be $!$-able $\n{R}$-modules on $\cat{AnStk}_{/S}$.  Suppose that there is a $!$-able $\n{R}$-module $\bb{D}(\n{R})$ and a morphism of $\Z$-linear modules $\exp\colon \bb{D}(\n{R})\to \bb{G}_m$ such that the induced pairing  $\exp(XY)\colon \n{R}\times \bb{D}(\n{R})\to \bb{D}(\n{R})\to \bb{G}_m$  gives rise to  a $1$-categorical Cartier duality in the kernel category $\ob{K}_{\ob{D},S}$
\[
[\n{R}]^*\cong [B(\bb{D}(\n{R}))]_!.
\] 
Let $\Psi \colon M\otimes_{\n{R}}N\to \bb{D}(\n{R})$ be an $\n{R}$-linear map   whose composite with $\exp$ gives rise to a  $1$-categorical Cartier duality 
\[
[BM]^*\cong [N]_!.
\]

Let $X=B^2M$ be the analytic stack over $S$ corepresenting gerbes banded by $M$ and  let   $\s{N}_{\psi}$  and $\s{M}$ be the extensions 
\begin{equation}\label{eqpo9jo3nkqlweqwdm}
B \bb{D}(\n{R})_X \to  \s{N}_{\psi}\to N_{X} \mbox{ and } BM_X\to \s{M} \to \n{R}_{X}
\end{equation}
of \Cref{DefinitionNotationGerbes} as $\n{R}_X$-linear stacks over $X$.   Then the pairing  $\s{N}_{\psi}\otimes_{\n{R}_{X}} \s{M}\to  B \bb{D}(\n{R})_X$ of \Cref{LemmaPairingKey} gives rise to a $1$-categorical Cartier duality in the kernel category $\ob{K}_{\ob{D},X}$
\begin{equation}\label{eqpmoasbfoa3ihnwd}
[\s{N}_{\psi}]^* \cong [\s{M}]_!
\end{equation}
as in  \Cref{DefinitionCartierDuals}.
\end{proposition}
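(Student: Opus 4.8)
The plan is to reduce the gerbe duality \eqref{eqpmoasbfoa3ihnwd} to the already-established vector bundle duality $[\s{N}_\psi]^*\cong[\s{M}]_!$ over the base $S$ by descent along the $!$-cover $S\to X=B^2M$. The key structural input is \Cref{LemmaPairingKey}, which produces the pairing $\s{N}_\psi\otimes_{\n{R}_X}\s{M}\to B\bb{D}(\n{R})_X$ together with the compatibility diagram relating it to the given pairings $\Psi\colon M\otimes_{\n{R}}N\to\bb{D}(\n{R})$ and $\exp$. So the Fourier--Mukai transform $\n{FM}\colon[\s{M}]_!\to[\s{N}_\psi]^*$ in $\ob{K}_{\ob{D},X}$ is defined, and the task is to prove it is an equivalence.

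First I would invoke \Cref{RemarkDescentBaseChange}: since $e\colon S\to X=B^2M$ satisfies universal $*$-descent (it is a $!$-cover, $M$ being a $!$-able $\n{R}$-module so that $B^2M$ is $!$-able over $S$), the pullback functor $e^*_1\colon\ob{K}_{\ob{D},X}\to\ob{K}_{\ob{D},S}$ is symmetric monoidal and conservative. Therefore $\n{FM}$ is an equivalence if and only if its pullback $e^*_1\n{FM}$ is an equivalence in $\ob{K}_{\ob{D},S}$. Here the point of working in the slice topos $\n{Y}=\n{X}_{/B^2M}$ in \Cref{DefinitionNotationGerbes} pays off: over $S$ the gerbe trivializes, the fiber sequences in \eqref{eqpo9jo3nkqlweqwdm} split, and one computes $e^*\s{N}_\psi$ and $e^*\s{M}$ as extensions of $e^*N_X=N$ by $B\bb{D}(\n{R})$ and of $\n{R}$ by $BM$ respectively. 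The hypotheses guarantee the two input dualities $[\n{R}]^*\cong[B\bb{D}(\n{R})]_!$ and $[BM]^*\cong[N]_!$, which I would combine to identify $e^*_1\n{FM}$.

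The main step is then to show that the pulled-back Fourier--Mukai kernel is precisely the one assembled from these two known dualities. Concretely, \Cref{LemmaPairingKey} exhibits the pairing on $\s{N}_\psi\otimes_{\n{R}_X}\s{M}$ via a commutative square whose two ``boundary'' contributions are (i) the pairing $B\bb{D}(\n{R})\otimes\s{M}\to B\bb{D}(\n{R})$ factoring through $\s{M}\to\n{R}_X$ and the duality $[\n{R}]^*\cong[B\bb{D}(\n{R})]_!$, and (ii) the pairing $\s{N}_\psi\otimes BM\to B\bb{D}(\n{R})$ factoring through $\s{N}_\psi\to N_X$ and the duality $[BM]^*\cong[N]_!$. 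After pullback to $S$, both inputs are equivalences by assumption, so the associated graded of the Fourier--Mukai map with respect to the filtration coming from \eqref{eqpo9jo3nkqlweqwdm} is an equivalence, and by the two-out-of-three property (applied to the fiber sequences, which pullback and $\n{FM}$ both respect since they are exact functors on the relevant stable categories) the map $e^*_1\n{FM}$ itself is an equivalence.

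The hard part will be checking that the filtrations on $\s{N}_\psi$ and $\s{M}$ are genuinely respected by the Fourier--Mukai transform in a way compatible with the two input dualities---that is, verifying that the comparison square of \Cref{LemmaPairingKey} becomes, after pullback, a map of fiber sequences identifying the associated graded pieces with the known dualities rather than merely term-by-term equivalences. This requires tracking the $\n{R}_X$-linearity and the $B\bb{G}_m$-valued pairing carefully through the adjunctions defining $\n{FM}$ in \eqref{eqFourierMukai}; I would handle it by applying \Cref{PropFourierMukai} to identify the pullback kernel with the honest sheaf associated to the composite pairing, and then reading off its filtered structure from the pullback of the square in \Cref{LemmaPairingKey}. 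Once the graded-equivalence is established, conservativity of $e^*_1$ closes the argument.
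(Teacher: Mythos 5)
Your overall strategy is the same as the paper's: reduce along the epimorphism $S\to X=B^2M$ using \Cref{RemarkDescentBaseChange}, note that over $S$ the two extensions \eqref{eqpo9jo3nkqlweqwdm} split, and then feed in the two hypothesized dualities $[\n{R}]^*\cong[B\bb{D}(\n{R})]_!$ and $[BM]^*\cong[N]_!$ via the compatibility diagram of \Cref{LemmaPairingKey}. That reduction is exactly how the paper argues, and it is correct.

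Where you diverge is the final assembly, and there your phrasing does not quite hold up. You propose to conclude by a filtration/associated-graded argument and ``two-out-of-three applied to the fiber sequences, which pullback and $\n{FM}$ both respect since they are exact functors on the relevant stable categories.'' But the sequences in \eqref{eqpo9jo3nkqlweqwdm} are extensions of commutative group stacks, not fiber sequences in a stable category in which the Fourier--Mukai transform lives: $\n{FM}$ is a single morphism in the $2$-category of kernels (an object of $\ob{D}(\s{N}_{\psi}\times_X\s{M})$), and an extension $B\bb{D}(\n{R})\to\s{N}_{\psi}\to N$ does not induce a fiber sequence of the corresponding Hopf algebras $[\,\cdot\,]^*$ or of their $\n{FM}$ maps to which exactness or two-out-of-three could be applied. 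The correct (and simpler) conclusion, which is what the paper does, is that after pullback to $S$ the split extensions become \emph{products} $e^*\s{N}_{\psi}\cong B\bb{D}(\n{R})\times_S N$ and $e^*\s{M}\cong BM\times_S\n{R}$, the pairing of \Cref{LemmaPairingKey} becomes the direct sum of the two given pairings, and hence (by symmetric monoidality of $[-]^*$, $[-]_!$ and of the Fourier--Mukai construction, using the K\"unneth property) $e_1^*\n{FM}$ is the external tensor product of the two Fourier--Mukai equivalences supplied by the hypotheses. No filtration or d\'evissage is needed once the splitting is in hand; I would replace that step accordingly, after which your argument coincides with the paper's.
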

\begin{proof}
By \Cref{RemarkDescentBaseChange} we can prove the $1$-categorical Cartier duality locally on the base $X$. Since $X=B^2M$, the map  $S\to X$ is an epimorphism, and it suffices to show that \eqref{eqpmoasbfoa3ihnwd} is an equivalence after pulling back to $S$. In that case, we can assume that the extensions \eqref{eqpo9jo3nkqlweqwdm} are split, and by \Cref{LemmaPairingKey} that the pairing $\s{N}_{\psi}\otimes_{\n{R}_{X}} \s{M}\to  B \bb{D}(\n{R})_X$ is the direct sum of the Cartier duality pairings of $\n{R}$ and $\bb{D}(\n{R})$, and of $M$ and $N$ respectively. The proposition follows since these last pairings give rise to $1$-categorical Cartier dualities by assumption. 
\end{proof}

\begin{example}\label{ExampleCartierDualityGerbes}
We can apply \Cref{PropDualityForGerbes} to the stack $S=\mathbf{Vect}^{\n{R}}$ given  as the transmutation   of the stack of vector bundles, where $\n{R}$ is a ring stack as  in  \Cref{ExampleRingStackZariskiDescent}. We can then take $N$ and $M$ to be the modifications of the universal vector bundle over $S$ appearing in the Cartier duality of  \Cref{TheoCartierDualityVectorBundles}. 
\end{example}

\subsection{Application: Cartier duality for the Hodge-Tate stack and the Simpson gerbe}\label{ss:CartierDualityHTSImp}

In  joint work in progress with Ansch\"utz, Le Bras and Scholze on the analytic prismatization, we introduce the Hodge-Tate stack for a smooth rigid variety $X$ over a complete non-archimedean algebraically closed field $C$ over $\Q_p$. In the dual side, Bhatt and Zhang have constructed the Simpson gerbe $\s{S}_X\to T^{*,\an}_{X}(-1)$, that is, a $B\bb{G}_m$-torsor over the analytic cotangent bundle of $X$ (tensored with the inverse of the Tate twist). In this section we explain how to obtain a Cartier duality between both constructions as a consequence of the Cartier duality for gerbes of \Cref{ss:CDGerbesVB}.

First, let us give a quick ad-hoc definition of the Hodge-Tate stack that is good enough for smooth rigid spaces. Let $\nu\colon X_{v}\to X_{\et}$ be the projection from the $v$ to the \'etale site of $X$.  By \cite[Proposition 2.23]{ScholzePerfectoidSurvey} one has a natural equivalence $R^1\nu_{*} \widehat{\s{O}}_X = \Omega^1_X(-1)$, where  $(-1)$ refers to the inverse of the Tate twist. In particular, we have a natural section $\eta_{\ob{HT}}\in H^1_v(X, T_X(1)\otimes_{\s{O}_X} \widehat{\s{O}}_X)$. On the other hand, by \cite[Remark 4.6.6 and Proposition 4.6.7]{anschutz2025analytic} we have a natural equivalence
\[
R\Gamma_{v}(X, \widehat{\s{O}}_X)=R\Gamma(X, \bb{G}_a^{\an,\dR})
\]
where the right term is the cohomology of the sheaf $\bb{G}_a^{\an,\dR}$ for the $!$-topology (see \Cref{RemarkCaseHonestRigidSpaces}). From the fiber sequence $\bb{G}_{a}^{\dagger}\to \bb{G}_a^{\an}\to \bb{G}_a^{\an,\dR}$ we see that the class $\eta_{\ob{HT}}$ produces a class (that we denote in the same way) in $H^2(X, T^{\an}_X(1)\otimes_{\bb{G}_a^{\an}}\bb{G}_a^{\dagger})$.

\begin{definition}\label{DefHodgeTateStack}
Denote $ T^{\dagger}_X(1):=T^{\an}_X(1)\otimes_{\bb{G}_a^{\an}}\bb{G}_a^{\dagger}$. The \textit{analytic Hodge-Tate stack of $X$} is the pullback in Gelfand stacks (cf.  \cite{anschutz2025analytic})
\[
\begin{tikzcd}
X^{\HT} \ar[r] \ar[d] & X\ar[d,"e"] \\ 
X \ar[r,"\eta_{\HT}"] & B^2 T^{\dagger}_{X}(1)
\end{tikzcd}
\]
where $e$ is the natural projection map, and $\eta_{\HT}$ is the map induced by the class in $H^2(X, T^{\dagger}_X(1))$  with same name.
\end{definition}

\begin{remark}\label{RemarkCaseHonestRigidSpaces}
The formalism of \cite{anschutz2025analytic} only uses induced analytic ring structures from $\bb{Q}_{p,\sol}$ so  a priori it can be directly applied only to partially proper smooth rigid spaces or to smooth dagger spaces. However, we can go around that problem as follows: let $X$ be a smooth rigid space and let $\overline{X}$ be Huber's compactification of $X$. The space $\overline{X}$ is a partially proper adic space and therefore defines a (qfd) Gelfand stack, indeed, for this it suffices to consider the Gelfand stack $\overline{X}$ obtained by descent on affinoids, and sending $\Spa(A,A^+)\mapsto \ob{GSpec} A$.  Considering   all objects as analytic stacks, we have a natural map  $X\to  \overline{X}$. 

 Then, the  $\bb{G}_a^{\dR}$-valued cohomology for $\overline{X}$ in the $!$-topology is exactly the same as $\widehat{\s{O}}$-cohomology of the $v$-stack $\overline{X}^{\diamond}$. Since $X$ and $\overline{X}$ have the same $\widehat{\s{O}}$-cohomology, namely $\widehat{\s{O}}$ is an overconvergent sheaf, we can construct a Hodge--Tate stack $\overline{X}^{\HT}$ for $\overline{X}$ in the exact same way as in \Cref{DefHodgeTateStack} and with the same properties, i.e., the map $\overline{X}^{\HT}\to \overline{X}$ is still a gerbe banded by $T_{X}^{\dagger}(1)$. Taking pullbacks along $X\to \overline{X}$, we obtain the Hodge--Tate stack for the original rigid space $X$ without additional assumptions. 
\end{remark}

Following the same definition of the Hodge-Tate stack, we give an ad-hoc definition of the Simpson gerbe that will fit in our  setting of Cartier duality. Tensoring the map $\eta_{\HT}\colon X\to B^2 T^{\dagger}_X(1)$ with the analytic cotangent bundle $T^{*,\an}_{X}(-1)$ produces a map $T^{*,\an}_{X}(-1)\to B^2 T^{\dagger}_X(1)\times_X T^{*,\an}_X(-1)$, we compose with the natural pairing 
\[
B^2T^{\dagger}_X(1)\times_X  T^{*,\an}_X(-1)\xrightarrow{\langle -,-\rangle} B^2\bb{G}_{a,X}^{\dagger} \xrightarrow{\exp} B^2\bb{G}_{m,X}
\]
where the classifying stacks are taken relative to $X$, and $\bb{G}_{m,X}$ is an algebraic multiplicative group relative to $X$. We denote $\ob{Simp}_X\colon T^{*,\an}_X(-1)\to B^2 \bb{G}_{m,X}$ the resulting map.

\begin{definition}\label{DefinitionSimpsonGerbe}
The Simpson gerbe $\s{S}_X$ of $X$ is the pullback square 
\[
\begin{tikzcd}
\s{S}_X  \ar[r] \ar[d] & X \ar[d,"e"] \\
T^*_{X}(-1) \ar[r,"\ob{Simp}_X"] & B^2\bb{G}_{m,X}
\end{tikzcd}
\]
where $e\colon X\to B^2\bb{G}_{m,X}$ is the natural quotient map. 
\end{definition}

\begin{remark}
Bhatt and Zhang's definition of the Simpson's gerbe is different from  \Cref{DefinitionSimpsonGerbe}. In the following paragraph we discuss their definition for partially proper smooth rigid spaces, see \cite[Remarks 12.4.7 and 12.4.8]{BhattpAdicHodge2026}: let $X_!$ denote the site given by affinoid   nilperfectoid spaces over $X$ endowed with the $!$-topology, and $X_{\ob{arc}}$ the site of affinoid  perfectoid spaces over $X$ endowed with the arc-topology, see \cite{anschutz2025analytic}. We have geometric morphisms of topoi $\widetilde{X}_{\ob{arc}}\xrightarrow{\eta} \widetilde{X}_{!}\xrightarrow{\rho} \widetilde{X}_{\et}$ where the map $\rho$ arises from the fact that any \'etale cover is a $!$-cover, and the map $\eta$  is given by the de Rham stack \cite[Remark 4.0.1]{anschutz2025analytic}. By \cite[Proposition 4.5.7]{anschutz2025analytic} the functor $\eta$ is fully faithful, and the sheaf $\widehat{\s{O}}_X^{\times}$ on $\widetilde{X}_{\ob{arc}}$ is nothing but $\eta^* \bb{G}_m^{\an,\dR}$. Thus, we have an equivalence of \'etale sheaves of $\Z$-modules
\[
R(\rho\circ \eta)_* \widehat{\s{O}}^{\times}_X = R\rho_* \bb{G}_m^{\an,\dR}. 
\]
On the other hand, we have a fiber sequence of sheaves on $X_!$ 
\begin{equation}\label{eqShortExactGmdR}
\bb{G}_{a}^{\dagger}\xrightarrow{\exp} \bb{G}_m^{\an}\to \bb{G}_{m}^{\an,\dR}
\end{equation}
which produces a fiber sequence on \'etale sheaves
\[
R\rho_* \bb{G}_{a}^{\dagger} \to R\rho_* \bb{G}_{m}^{\an} \to R\rho_* \bb{G}_{m}^{\an,\dR}.
\]
Similarly, the fiber sequence $\bb{G}_a^{\dagger}\to \bb{G}_a^{\an}\to \bb{G}_{a}^{\dR}$ produces a fiber sequence of \'etale sheaves
\[
R\rho_*\bb{G}_a^{\dagger} \to R\rho_*\bb{G}_a^{\an}\to R\rho_*\bb{G}_a^{\an,\dR}.
\]
Since the structural sheaf of an analytic ring satisfies $!$-descent, we have that  $ R\rho_*\bb{G}_a^{\an}= \bb{G}_{a,\et}$. Since $X$ is a smooth rigid space, we also have that $R^0\rho_* \bb{G}_a^{\dagger}=0$ and $R^0\rho_* \bb{G}_a^{\an,\dR}=\bb{G}_{a,\et}$. Thus, we have an equivalence of sheaves on $X_{\et}$
\[
\tau_{\leq -1} R\rho_* \bb{G}_a^{\an,\dR}\xrightarrow{\sim}  R \rho_* \bb{G}_{a}^{\dagger}[1]. 
\]
In particular, $R^{i+1}\rho_* \bb{G}_a^{\dagger}= R^{i}\rho_* \bb{G}_a^{\an,\dR}=\Omega^i_X(-i)$ for $i\geq 1$.  Hence, the fiber sequence \eqref{eqShortExactGmdR} produces a connecting morphism
\[
\Omega_X^1(-1)=R^{2}\rho_* \bb{G}_a^{\dagger}\to R^2  \rho_* \bb{G}_m^{\an}
\]
in $X_{\et}$, which gives rise to a map of analytic stacks $T^{*,\an}_X(-1)\to B^2\bb{G}_{m,X}^{\an}$, i.e. a $\bb{G}_m^{\an}$-gerbe over $T^{*,\an}_X(-1)$.  A bookkeeping of the construction  shows that this gerbe is precisely that of \Cref{DefinitionSimpsonGerbe} after taking the pushout along the natural morphism $\mathbb{G}_m^{\an}\to \mathbb{G}_m$ from the analytic to the algebraic multiplicative group. 
\end{remark}

\begin{remark}\label{RemarkAnGmGerbe}
By construction the Simpson gerbe of Bhatt and Zhang is a $\mathbb{G}_{m}^{\an}$-gerbe, that is, a gerbe  banded by the \textit{analytic multiplicative group}. The gerbe of  \Cref{DefinitionSimpsonGerbe} is banded by the \textit{algebraic multiplicative group $\mathbb{G}_m$}. Both gerbes are refined by an $\mathbb{G}_m^{\dagger}\cong \mathbb{G}_a^{\dagger}$-gerbe over $T^*_X(-1)$ by applying  \Cref{DefinitionNotationGerbes} to the Hodge-Tate stack where $T_X^{\dagger}(1)$ is considered as $\mathbb{G}_a^{\an}$-module. 
\end{remark}

As  special case of \Cref{PropDualityForGerbes}, we deduce the Cartier duality between the Hodge-Tate stack and the Simpson gerbe. For that, let us write $S=B^2T^{\dagger}_{X}(1)$, and let $\Delta_S\colon S\to (B^2T^{\dagger}_{X}(1))_S$ be the natural section induced by the diagonal map. Let $\bb{Z}_{\ob{Betti},S}\to  (B^2T^{\dagger}_{X}(1))_S$ be the natural $\Z$-linear  extension of $S$ and let  $\s{M}$ be the fiber of the map  $\bb{Z}_{\ob{Betti},S}\to  (B^2T^{\dagger}_{X}(1))_S$. The $\Z$-module   stack is an extension 
\[
BT^{\dagger}_X(1)_S\to \s{M}\to   \bb{Z}_{\ob{Betti},S}.
\] 
We define the \textit{extended Hodge-Tate stack} as the pullback 
\[
\begin{tikzcd}
X^{\ob{HT}, \ob{ext}} \ar[r] \ar[d] &  \s{M} \ar[d] \\
X \ar[r,"\eta_{\HT}"] & S.
\end{tikzcd}
\] 
By construction, $X^{\ob{HT},\ob{ext}}$ is an extension of $\Z$-modules in analytic stacks over $X$
\[
BT^{\dagger}_X(1) \to  X^{\ob{HT},\ob{ext}}\to \bb{Z}_{\Betti,X}
\]
whose fiber at $1$ is precisely the Hodge-Tate stack  $X^{\ob{HT}}$. We have the following theorem.  

\begin{theorem}\label{CartierDualityHodgeTateSimpson}
There is a natural pairing of $\Z$-modules 
\[
X^{\HT,\ob{ext}}\otimes_{\Z} \s{S}_X\to B\bb{G}_m
\]
where $\s{S}_X$ is the Simpson gerbe of \Cref{DefinitionSimpsonGerbe} producing a $1$-categorical Cartier duality  in the kernel category $\ob{K}_{\ob{D},X}$
\begin{equation}\label{eqmpoasi3krnoqwdq}
[X^{\HT,\ob{ext}}]^* \cong [ \s{S}_X]_!
\end{equation}
as in \Cref{DefinitionCartierDuals}. In particular, passing to quasi-coherent sheaves, there is a natural decomposition  of $\ob{D}(T^{*,\an}_X(-1))$-linear categories 
\begin{equation}\label{eqk9jnoq1i3hnikqsda}
\ob{D}(\s{S}_X) = \prod_{n\in \Z} \ob{D}(\s{S}_X)^{\ob{wt}=n}
\end{equation}
such that:
\begin{enumerate}

\item For $n\in \Z$, let $X^{\HT, (n)}$ be the fiber at $n\in \Z_{\Betti,X}$ of $X^{\HT,\ob{ext}}$.  There is a natural equivalence of $\ob{D}(T_X^{*}(-1))$-linear categories $\ob{D}(X^{\HT,n})=\ob{D}(\s{S}_X)^{\ob{wt}=n}$, where $\ob{D}(T_X^{*,\an}(-1))\cong \ob{D}(BT_X^{\dagger}(1))$ acts on $\ob{D}(X^{\HT,n})$ via the  $!$-convolution arising from the $BT_X^{\dagger}(1)$-action on $X^{\HT,n}$.  

\item The $\ob{D}(T^*_X(-1))$-modules $\ob{D}(\s{S}_X)^{\ob{wt}=n}$ are invertible. Moreover,  $\ob{D}(\s{S}_X)^{\ob{wt}=0}\subset \ob{D}(\s{S}_X)$ is equivalent to the fully faithful inclusion $\ob{D}(T_X^{*,\an}(-1))\subset \ob{D}(\s{S}_X)$ obtained by pullback along the map $\s{S}_X\to T^{*,\an}_X(-1)$. 

\item  Given $n,m\in \Z$ there are natural equivalences  of $\ob{D}(T^{*,\an}_X(-1))$-linear categories 
\[
\ob{D}(\s{S}_X)^{\ob{wt}=n}\otimes_{\ob{D}(T^*_X(-1))} \ob{D}(\s{S}_X)^{\ob{wt}=m}=\ob{D}(\s{S}_X)^{\ob{wt}=n+m}.
\]

\end{enumerate}
\end{theorem}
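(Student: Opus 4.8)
The plan is to realize the whole statement as a single instance of the gerbe Cartier duality of \Cref{PropDualityForGerbes}, applied over the base $S:=B^2T^{\dagger}_X(1)$, and then to transport it to $X$ by base change along the Hodge-Tate class $\eta_{\HT}\colon X\to S$. Concretely, I would feed \Cref{PropDualityForGerbes} the ring stack $\n{R}=\Z_{\Betti}$, the $\n{R}$-modules $M=T^{\dagger}_X(1)$ and $N=T^{*,\an}_X(-1)$, and the target $\omega=\bb{D}(\n{R})$. Here the ring-stack duality $[\n{R}]^*\cong[B\bb{D}(\n{R})]_!$ is exactly \Cref{PropCartierZGm}, which forces $\bb{D}(\Z_{\Betti})=\bb{G}_m$ and takes for its pairing the $\Z$-action $\Z_{\Betti}\times\bb{G}_m\to\bb{G}_m$ (so that the abstract $\exp\colon\bb{D}(\n{R})=\bb{G}_m\to\bb{G}_m$ is the identity). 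The module pairing is the exponential pairing $\Psi\colon T^{\dagger}_X(1)\otimes T^{*,\an}_X(-1)\to\bb{G}_a^{\dagger}\xrightarrow{\exp}\bb{G}_m$, and its inducing the $1$-categorical duality $[BT^{\dagger}_X(1)]^*\cong[T^{*,\an}_X(-1)]_!$ is obtained by pulling back the universal analytic vector bundle duality of \Cref{TheoCartierDualityVectorBundles} along the classifying map of $T_X$, via the base-change invariance of Fourier-Mukai transforms (\Cref{RemarkDescentBaseChange}); at this point I would also record the routine $!$-ability and descendability of $BT^{\dagger}_X(1)$ and $B\bb{G}_m$ that are needed to apply the proposition.

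\Cref{PropDualityForGerbes} then yields a $1$-categorical Cartier duality $[\s{N}_\psi]^*\cong[\s{M}]_!$ over $S$ together with the pairing $\s{N}_\psi\otimes_{\n{R}_S}\s{M}\to B\bb{G}_m$ of \Cref{LemmaPairingKey}. The crucial identification step is to recognize the abstract extensions of \Cref{DefinitionNotationGerbes} geometrically: $\s{M}$ is, by construction, the object whose $\eta_{\HT}$-pullback is $X^{\HT,\ob{ext}}$, while $\s{N}_\psi$ is the fiber of $\psi\colon N_S\to B^2\bb{G}_m$, whose pullback along $\eta_{\HT}$ is the fiber of the composite $N_X\xrightarrow{\eta_{\HT}\otimes\id}B^2T^{\dagger}_X(1)\otimes_X N_X\xrightarrow{\Psi}B^2\bb{G}_m$, that is $\ob{Simp}_X$, so that $\eta_{\HT}^*\s{N}_\psi=\s{S}_X$. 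Pulling the duality and the pairing back along $\eta_{\HT}$ (again \Cref{RemarkDescentBaseChange}) then produces \eqref{eqmpoasi3krnoqwdq} in $\ob{K}_{\ob{D},X}$ and the pairing $X^{\HT,\ob{ext}}\otimes_{\Z}\s{S}_X\to B\bb{G}_m$.

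For the three numbered assertions I would pass to quasi-coherent sheaves through the symmetric monoidal global sections functor $\ob{Fun}_X(X,-)$, which turns \eqref{eqmpoasi3krnoqwdq} into an equivalence $\ob{D}(X^{\HT,\ob{ext}})\cong\ob{D}(\s{S}_X)$ exchanging $*$-tensor and $!$-convolution by \Cref{RemarkCartierDualityCompatibility}. Because $X^{\HT,\ob{ext}}\to\Z_{\Betti,X}$ exhibits $X^{\HT,\ob{ext}}=\bigsqcup_{n\in\Z}X^{\HT,(n)}$ as the disjoint union over the components of $\Z_{\Betti}$, the source splits as $\prod_n\ob{D}(X^{\HT,(n)})$, and transporting this splitting across the equivalence \emph{defines} the weight pieces $\ob{D}(\s{S}_X)^{\ob{wt}=n}$, giving \eqref{eqk9jnoq1i3hnikqsda} and (1). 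Assertion (2) follows since each $X^{\HT,(n)}\to X$ is a $BT^{\dagger}_X(1)$-torsor, so $\ob{D}(X^{\HT,(n)})$ is invertible over $\ob{D}(BT^{\dagger}_X(1))\cong\ob{D}(T^{*,\an}_X(-1))$, the trivial torsor $X^{\HT,(0)}=BT^{\dagger}_X(1)$ recovering the pullback copy of $\ob{D}(T^{*,\an}_X(-1))$. Assertion (3) is the additivity of the grading under convolution: the convolution on $\ob{D}(X^{\HT,\ob{ext}})$ is induced by the group law of $\Z_{\Betti}$, hence carries the $n$- and $m$-components into the $(n+m)$-component, and this becomes the multiplicativity of the weight decomposition after exchanging convolution with tensor. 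I expect the only genuinely non-formal point to be the identification $\psi|_X=\ob{Simp}_X$ together with the bookkeeping of the Tate twists $(\pm 1)$; once this matching of the abstract gerbe class with the geometric Simpson class is confirmed, every remaining step is a direct appeal to the results of \Cref{s:Preliminaries} and to the vector bundle and torus dualities.
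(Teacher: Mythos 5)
Your proposal follows essentially the same route as the paper: the duality \eqref{eqmpoasi3krnoqwdq} is obtained by feeding \Cref{PropDualityForGerbes} the ring stack $\Z_{\Betti}$ with Cartier dual $\bb{G}_m$ (\Cref{PropCartierZGm}) and the modules $T^{\dagger}_X(1)$, $T^{*,\an}_X(-1)$ with the exponential pairing supplied by \Cref{TheoCartierDualityVectorBundles}, identifying $\eta_{\HT}^*\s{M}=X^{\HT,\ob{ext}}$ and $\eta_{\HT}^*\s{N}_{\psi}=\s{S}_X$ and pulling back via \Cref{RemarkDescentBaseChange}; the decomposition by the components of $\Z_{\Betti}$ and the passage to quasi-coherent sheaves are exactly as in the paper.

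The one place where you assert what the paper actually has to argue is the claim that the graded multiplication
$\ob{D}(\s{S}_X)^{\ob{wt}=n}\otimes_{\ob{D}(T^{*,\an}_X(-1))}\ob{D}(\s{S}_X)^{\ob{wt}=m}\to\ob{D}(\s{S}_X)^{\ob{wt}=n+m}$
is an \emph{equivalence} (equivalently, that each weight piece is invertible). The group law of $X^{\HT,\ob{ext}}$ over $\Z_{\Betti,X}$ only produces this map and its compatibility with the grading; your justification of (2), namely that $\ob{D}$ of a $BT^{\dagger}_X(1)$-torsor is automatically invertible over $\ob{D}(BT^{\dagger}_X(1))$, is precisely the statement being proved, so as written (2) and (3) lean on each other. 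The paper closes this by working locally in the $!$-topology on $T^{*,\an}_X(-1)$ and pulling back along $\s{S}_X\to T^{*,\an}_X(-1)$ itself so that the $\bb{G}_m$-gerbe splits, reducing the assertion to the evident multiplicativity of the weight decomposition of $\ob{D}(B\bb{G}_m)$. Some such splitting or descent argument should be added; everything else in your outline, including the matching of $\psi|_X$ with $\ob{Simp}_X$ and the bookkeeping of Tate twists that you flag, is correct and is what the paper does.
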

\begin{proof}
The variety $X$ admits an open cover by quasi-affinoid analytic stacks with quasi-affinoid intersection. By \Cref{PropositionKunnethQuasiAffine} any $!$-able map $Y\to X$ satisfies K\"unneth, thus by  \Cref{PropKunnethKernel} the presentable category of kernels $\cat{Pr}_{\ob{D},X}$ over $X$ is equivalent to $\cat{Pr}_{\ob{D}(X)}$ and $\ob{K}_{\ob{D},X}\hookrightarrow \cat{Pr}_{\ob{D}(X)}$ is a $2$-fully faithful functor.

The first statement about Cartier duality is a  formal consequence of \Cref{TheoCartierDualityVectorBundles} and \Cref{PropDualityForGerbes} after taking pullback from the  the universal duality of \Cref{ExampleCartierDualityGerbes}. For (1), the equivalence \eqref{eqmpoasi3krnoqwdq} in $\ob{K}_{\ob{D},X}$ produces a natural equivalence of $\ob{D}(X)$-linear categories 
\[
\ob{D}(X^{\HT,\ob{ext}})\cong \ob{D}(\s{S}_X). 
\]
Then, looking at the fibers of the map $X^{\HT,\ob{ext}}\to \Z_{\Betti}$, we have the $\ob{D}(X)$-linear decomposition 
\[
\ob{D}(X^{\HT,\ob{ext}})=\prod_{n\in \Z} \ob{D}(X^{\HT,(n)}).
\]
This produces a decomposition of   $\ob{D}(\s{S}_X)$ as in \Cref{eqk9jnoq1i3hnikqsda} such that 
\[
\ob{D}(X^{\HT,(n)})\cong \ob{D}(\s{S}_X)^{\ob{wt}=n}
\]
by construction.  We want to see that the decomposition $\ob{D}(\s{S}_X) = \prod_{n\in \Z} \ob{D}(\s{S}_X)^{\ob{wt}=n}\cong \prod_{n\in \Z} \ob{D}(X^{\HT,(n)})$ is as $\ob{D}(T^{*,\an}_{X}(-1))$-linear categories. For that, notice that Cartier duality gives rise to a commutative  diagram of Hopf algebras in the category of kernels of $X$
\[
\begin{tikzcd}
{[T^*_X(-1)]^*} \ar[r] \ar[d] &{ [\s{S}_X]^*} \ar[r] \ar[d] & {[B\bb{G}_{m,X}]^*} \ar[d] \\ 
{[BT_X^{\dagger}(1)]_! } \ar[r] &  {[X^{\HT,\ob{ext}}]_!} \ar[r] & {[\bb{Z}_{\ob{Betti},X}]_! }
\end{tikzcd}
\]
where the vertical arrows are isomorphisms. In particular, the fibers of the map $X^{\HT,\ob{ext}}\to \bb{Z}_{\Betti,X}$ carry an action of $BT^{\dagger}_X(1)$ which translates in an action of $\ob{D}(BT^{\dagger}_X(1))$ by convolution, or equivalently, on a module structure over $\ob{D}(T^{*,\an}_X(-1))$.  

Part (2) follows from (3) since $\ob{D}(\s{S}_X)^{\ob{wt}=0}$ is the Cartier dual of $\ob{D}(BT_{X}^{\dagger}(1))$ which is nothing but $\ob{D}(T_X^{*,\an}(-1))$. Finally, for (3),  the morphism of groups $X^{\ob{HT},\ob{ext}}\to \bb{Z}_{\Betti,X}$ induces multiplication maps  on fibers
\[
X^{\HT,(n)} \times_X X^{\HT, (m)}  \to X^{\HT,(n+m)}
\]
that when passing to the category of kernels on $X$ give rise to a $[BT^{\dagger}_X(1)]_!$-bilinear map $X^{\HT,(n)} \otimes  X^{\HT, (m)}\to X^{\HT,(n+m)}$. Passing to module categories and Cartier duals this produces the natural $\ob{D}(T^{*,\an}_X(-1))$-linear map 
\[
\ob{D}(\s{S}_X)^{\ob{wt}=n}\otimes_{\ob{D}(T^{*,\an}_X(-1))} \ob{D}(\s{S}_X)^{\ob{wt}=m}\to \ob{D}(\s{S}_X)^{\ob{wt}=n+m}.
\] 
To see that this map is an equivalence, we can argue locally in the $!$-topology on $T^{*,\an}_X(-1)$, and by pullying back along the Simpson gerbe $\s{S}_X\to T^{*,\an}_X(-1)$, assume that the $\bb{G}_m$-gerbe  $\s{S}_X\times_{T^{*,\an}_X(-1)} \s{S}_X = B\bb{G}_m\times \s{S}_X$ is split. In this situation, the weight decomposition becomes the base change of the natural weight decomposition of $\ob{D}(B\bb{G}_m)$ which is clearly multiplicative.  
\end{proof}

\begin{remark}\label{RemarkTwistsHTStack}
Let  $X^{\HT,\ob{ext}}=\bigsqcup_{n\in \Z} X^{\HT,(n)}$ be the extended Hodge-Tate stack. We have that $X^{\HT,(0)}\cong BT^{\dagger}_X(1)$ while for $n\neq 0$  the stack $X^{\HT,(n)}$ is isomorphic to the Hodge-Tate stack as $BT^{\dagger}_X(1)$-torsors, namely, the stack $X^{\HT,(n)}$ corresponds to the class $n\cdot \eta_{\HT}\in H^{2}(X, T^{\dagger}_X(1))$, and $n\in \Q_p^{\times}$.  
\end{remark}

\bibliographystyle{alpha}
\bibliography{BiblioGerbes}

\end{document}